\theoremstyle{plain}
\newtheorem{theorem}{Theorem}
\newtheorem{lemma}[theorem]{Lemma}
\newtheorem{proposition}[theorem]{Proposition}
\newtheorem{corollary}[theorem]{Corollary}
\theoremstyle{definition}
\newtheorem{example}{Example}
\theoremstyle{remark}
\newtheorem{remark}{Remark}
\newcommand{\G}{\mathfrak{g}}
\newcommand{\bo}{\mathfrak{b}}
\newcommand{\tligne}[2]{
#2 \\
\cline{1-#1}}
\newenvironment{tyoung}[1]{
\begin{array}{|c|c|c|c|c|c|c|c|c|c|c|c|}
\cline{1-#1} }{\end{array}}
\newcommand{\ttrait}{-\!\!\!-\!\!\!-\!\!\!-\!\!\!-\!\!\!-}
\newcommand{\trait}{-\!\!\!-\!\!\!-}
\newcommand{\niveau}[1]{{ }^{{ }^{{ }^{\displaystyle{#1}}}}}
\newcommand{\itilde}{\tilde{\imath}}
\begin{document}

\title[Springer fibers in the hook, 2-row and 2-column cases]{A unified approach on Springer fibers in the hook, two-row and
two-column cases\thanks{Work supported in part by Minerva grant, No. 8596/1}}

\author{Lucas Fresse}
\address{Department of Mathematics \\ the Weizmann Institute of
Science \\ Rehovot 76100, Israel}
\email{lucas.fresse@weizmann.ac.il }


\maketitle

\begin{abstract}
We consider the Springer fiber over a nilpotent endomorphism. Fix
a Jordan basis and consider the standard torus relative to this.
We deal with the problem to describe the flags fixed by the torus
which belong to a given component of the Springer fiber. We solve
the problem in the hook, two-row and two-column cases. We provide
two main characterizations which are common to the three cases,
and which involve dominance relations between Young diagrams and
combinatorial algorithms. Then, for these three cases, we deduce
topological properties of the components and their
intersections.
\end{abstract}

\section{Introduction}

\label{introduction-introduction}

Let $G$ be a connected reductive algebraic group over $\mathbb{C}$
and let $\G$ be its Lie algebra. The set ${\mathcal B}$ of the
Borel subalgebras $\bo\subset \G$ is a projective algebraic
variety, called {\em the flag variety}. For a nilpotent element
$u\in\G$, the set
$${\mathcal B}_u=\{\bo\in {\mathcal B}:u\in\bo\}$$
is a closed subvariety of ${\mathcal B}$. The variety ${\mathcal
B}_u$ is called a {\em Springer fiber} since it is the fiber over
$u$ of the Springer resolution ${\mathcal X}\rightarrow{\mathcal
N}$, $(\bo,u)\mapsto u$, where ${\mathcal N}\subset \G$ is the
subset of nilpotent elements and ${\mathcal
X}=\{(\bo,u)\in{\mathcal B}\times{\mathcal N}: u\in \bo\}$ (see
\cite{Springer1}).

The study of Springer fibers involves different fields as
algebraic geometry, representation theory and combinatorics. The
origin of the study dates back to the geometric realization, due
to T.A. Springer, of the irreducible representations of the Weyl groups
in the cohomology of Springer fibers (see \cite{Springer2}).
D. Kazhdan and G. Lusztig gave a topological construction of Springer
representations (see \cite{Kazhdan-Lusztig2}) and conjectured a
link between the configuration of the irreducible components of
Springer fibers and the construction of bases for the
representations of the Hecke algebra (see \cite[\S
6.3]{Kazhdan-Lusztig}). These have been strong motivations which
have made Springer fibers be an important topic of study in modern
algebra.

However, up to now, few questions have been solved. Even for the
type $A$, advances have been done only in few particular cases.
One of the major difficulties seems to be that the geometry of
${\mathcal B}_u$ strongly depends on the Jordan form of $u$, and
the study in each case is very specific.

\medskip
Throughout this article, we study the Springer
fibers for $G=GL(\mathbb{C}^n)$: we set $V=\mathbb{C}^n$, and
$u:V\rightarrow V$ is a nilpotent endomorphism. We identify
${\mathcal B}$ with the variety of {\em complete flags}, i.e.
chains of subspaces $(0=V_0\subset V_1\subset\ldots\subset V_n=V)$
with $\dim V_i=i$ for every $i=0,\ldots,n$. We identify ${\mathcal
B}_u$ with the closed subvariety of complete flags
$(V_0,\ldots,V_n)\in{\mathcal B}$ which are stable by $u$, i.e.
$u(V_i)\subset V_i$ for every $i=0,\ldots,n$.

It is known from N. Spaltenstein \cite{Spaltenstein1},
\cite{Spaltenstein} and R. Steinberg \cite{Steinberg} that the
irreducible components ${\mathcal K}^T\subset{\mathcal B}_u$ are
parameterized by a set of standard tableaux $T$. We recall
Spaltenstein's construction in \ref{Spaltenstein-construction}.

We study the components from the point of view of a set of special
flags $F_\tau\in{\mathcal B}_u$ which are parameterized by a set
of row-standard tableaux $\tau$, and depend on the initial choice
of a Jordan basis. Formally, they are the elements in ${\mathcal
B}_u$ which are fixed by the standard torus relative to the basis,
for its linear action on flags.

The main part of this article concerns the problem to determine
whether the flag $F_\tau$ lies in a given component ${\mathcal
K}^T$, and to interpret this problem into a combinatorial one
involving the pair $(\tau,T)$. We formulate the problem in an
intrinsic way in the sequel of the present section.

First, we establish necessary or sufficient criteria in the
general case (sections \ref{2}, \ref{3} and \ref{4}).

Next, we give the full answer in three particular cases: the hook,
two-row and two-column cases. We provide two main
characterizations, which are common to the three cases. The first
one involves a series of dominance relations between Young
diagrams (section \ref{5}). The second one relies on an algorithm,
whose aim is to construct the tableau $\tau$ according to certain
rules depending on $(\tau,T)$: if the construction succeeds, then the
pair $(\tau,T)$ is said to be constructible, and we show that
$F_\tau\in{\mathcal K}^T$ exactly in this case (section \ref{6}).

Finally, in these three cases, we provide a connection between the
combinatorics involved in our criteria and the existing
combinatorics involved in the previous studies of the components
of the Springer fibers in \cite{Fung}, \cite{Ml-p},
\cite{Melnikov-Pagnon1}, \cite{Melnikov-Pagnon}, \cite{Vargas}
(section \ref{7}), and we deduce topological properties of the
components and their intersections, especially the intersections
in codimension one, which play a crucial role in Kazhdan-Lusztig's
conjecture (section \ref{8}). Especially we provide a new
characterization of the pairs of components intersecting in
codimension one in the two-row case (Theorem
\ref{theorem-codim1-section-2row}). In the three cases, we give a
simple necessary condition for having an intersection in
codimension one (Theorem \ref{lastsection-maintheorem}).

\subsection{Spaltenstein's construction of the components of ${\mathcal B}_u$}

\label{young-diagram} \label{Spaltenstein-construction}

Recall that a {\em Young diagram} is a set of empty boxes
displayed along left-justified rows, whose length decreases to the
bottom. If $Y$ is a Young diagram with $n$ boxes, a {\em standard
tableau} of shape $Y$ is a numbering of $Y$ by $1,...,n$ such that
numbers in the rows increase to the right and numbers in the
columns increase to the bottom.

Up to isomorphism, the Springer fiber ${\mathcal B}_u$ only
depends on the Jordan form of $u$, which can be represented by a
Young diagram: let $\lambda_1\geq ...\geq\lambda_r$ be the sizes
of the Jordan blocks of $u$, then we denote by $Y(u)$ the Young diagram
of rows of lengths $\lambda_1,...,\lambda_r$. Let $\lambda^*_1\geq
...\geq\lambda^*_s$ be the lengths of the columns of $Y(u)$. We know that 
(see \cite[\S II.5.5]{Spaltenstein})
$$\nonumber \dim\,{\mathcal
B}_u=\sum_{j=1}^s\frac{1}{2}\,\lambda^*_j(\lambda^*_j-1).$$

Let us recall from \cite{Spaltenstein} that the irreducible
components of ${\mathcal B}_u$ are parameterized by the standard
tableaux of shape $Y(u)$. For $T$ standard, the shape of the
subtableau $T[1,...,i]$ of entries $1,...,i$ is a subdiagram
$Y_i^T\subset Y(u)$. In this manner we associate to $T$ an
increasing sequence of subdiagrams $Y_1^T\subset ...\subset
Y_n^T=Y(u)$. On the other hand, let $F=(V_0,...,V_n)\in{\mathcal
B}_u$. The restriction of $u$ to $V_i$ is a nilpotent endomorphism
of $V_i$, whose Jordan form is represented by the Young diagram
$Y(u_{|V_i})\subset Y(u)$. Thus, we associate to the flag
$F$ an increasing sequence of subdiagrams $Y(u_{|V_1})\subset
...\subset Y(u_{|V_n})=Y(u)$. Then, we define
$${\mathcal B}_u^T=\{F=(V_0,...,V_n)\in{\mathcal B}_u:Y(u_{|V_i})=Y_i^T\ \forall i=1,...,n\}.$$
By \cite[\S II.5]{Spaltenstein}, the ${\mathcal B}_u^T$'s form a
partition of ${\mathcal B}_u$ into locally closed, irreducible,
nonsingular subsets, and $\dim\,{\mathcal B}_u^T=\dim\,{\mathcal B}_u$
for every $T$. Set
$${\mathcal K}^T=\overline{{\mathcal B}_u^T}$$
the Zariski closure. It follows that ${\mathcal K}^T$ is an
irreducible component of the variety ${\mathcal B}_u$, and each
component of ${\mathcal B}_u$ is obtained in that way. In
particular $\dim\,{\mathcal K}^T=\dim\,{\mathcal B}_u$ for every
$T$.

\subsection{Flags $F_\tau\in{\mathcal B}_u$ and statement of the problem}

\label{section_base_Jordan}

If $Y$ is a Young diagram with $n$ boxes, then we call {\em
row-standard tableau of shape $Y$} a numbering of $Y$ by $1,..,n$
such that numbers in the rows increase to the right.

We parameterize some special elements $F_\tau\in{\mathcal B}_u$
with the row-standard tableaux of shape $Y(u)$. To do this, we fix
a Jordan basis of $u$. Since the lengths of the rows of $Y(u)$
correspond to the sizes of the Jordan blocks of $u$, we can index
the basis $(e_x)_{x\in Y(u)}$ on the boxes of $Y(u)$, in such a
way that the following is true:
\begin{itemize}
\item
$u(e_x)=0$ \ when $x$ lies in the first column of $Y(u)$,
\item
$u(e_x)=e_{x'}$, \ where $x'$ is the box just on the left of $x$,
otherwise.
\end{itemize}
Let $\tau$ be a row-standard tableau of shape $Y(u)$. For
$i=1,...,n$, the numbers $1,...,i$ correspond to boxes $x_1,...,x_i\in
Y(u)$ according to the numbering of $Y(u)$ given by $\tau$. Then
we set $V_i=\langle e_{x_1},...,e_{x_i}\rangle$, and we define
$$F_\tau=(V_0=0,V_1,...,V_n).$$
As $\tau$ is row-increasing, it is clear by construction that
$F_\tau\in {\mathcal B}_u$. Moreover, it is easy to see that the
flags $F_\tau$, for $\tau$ running over the set of row-standard
tableaux of shape $Y(u)$, are all the elements of ${\mathcal B}_u$
which are fixed by the action on flags of the torus of the
automorphisms diagonal in the Jordan basis.

\medskip

In this article, we deal with the following problem: let $\tau$ be
a row-standard tableau and let $T$ be a standard tableau of shape
$Y(u)$, when does the flag $F_\tau$ belong to the component ${\mathcal
K}^T$?

\subsection{Orbits ${\mathcal Z}_\tau\subset{\mathcal B}_u$ and an intrinsic statement of the problem}

\label{propositions} \label{introduction-combinatorics}

The definition of the flags $F_\tau$ depends on the choice of a
Jordan basis of $u$. We relate these flags to a notion of special
orbits which is intrinsic.

Let $\tau$ be a row-standard tableau of shape $Y(u)$. A basis
$(e_i)_{i=1,...,n}$ of $V$ is said to be a {\em $\tau$-basis} if
we have
\begin{itemize}
\item
$u(e_i)=0$ \ when $i$ is in the first column of $\tau$,
\item
$u(e_i)=e_j$, \ where $j$ is the number just on the left of $i$ in
$\tau$, otherwise.
\end{itemize}
For $\underline{e}=(e_i)_{i=1,...,n}$ a $\tau$-basis, we denote by
$F(\underline{e})$ the flag defined by
$$F(\underline{e})=(V_0=0,V_1,...,V_n)$$ with $V_i=\langle
e_1,...,e_i\rangle$. It is clear that
$F(\underline{e})\in{\mathcal B}_u$. Finally we denote by
${\mathcal Z}_\tau$ the set of flags $F(\underline{e})$ for
$\underline{e}$ running over the set of $\tau$-bases.

Let $Z(u)=\{g\in GL(V):gu=ug\}$. The group $Z(u)$ is connected,
and its natural action on flags leaves ${\mathcal B}_u$ and every
component of ${\mathcal B}_u$ stable. Observe that $Z(u)$
acts transitively on the set of $\tau$-bases. It follows
that ${\mathcal Z}_\tau$ is a $Z(u)$-orbit of ${\mathcal
B}_u$. In addition, the flag $F_\tau$ belongs to the set
${\mathcal Z}_\tau$. Indeed, if we write $e_i:=e_{x_i}$ for every
$i=1,...,n$, where $e_{x_i}$ is the basic vector involved in the
definition of $F_\tau$, then we get a $\tau$-basis
$\underline{e}=(e_1,...,e_n)$ such that
$F_\tau=F(\underline{e})$. Thus, we have shown:

\begin{proposition}
\begin{itemize}
\item[(a)] The subset ${\mathcal Z}_\tau\subset {\mathcal B}_u$ is the $Z(u)$-orbit of
the flag $F_\tau$.
\item[(b)] Let ${\mathcal K}^T\subset {\mathcal B}_u$ be an irreducible component. The following conditions are
equivalent: \ (i) ${\mathcal Z}_\tau\cap {\mathcal
K}^T\not=\emptyset$; (ii) ${\mathcal Z}_\tau\subset{\mathcal
K}^T$; (iii) $F_\tau\in{\mathcal K}^T$.
\end{itemize}
\end{proposition}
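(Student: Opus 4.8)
\emph{Proof proposal.} The plan is to assemble, and make precise, the three observations already recorded above: the equivariance of the assignment $\underline{e}\mapsto F(\underline{e})$, the transitivity of $Z(u)$ on $\tau$-bases, and the $Z(u)$-stability of the components of ${\mathcal B}_u$.

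For part (a) I would first establish the equivariance. If $\underline{e}=(e_1,\dots,e_n)$ is a $\tau$-basis and $g\in Z(u)$, set $g\underline{e}=(ge_1,\dots,ge_n)$. Since $gu=ug$, we have $u(ge_i)=g(u(e_i))$, so the two defining relations of a $\tau$-basis (namely $u(e_i)=0$ when $i$ is in the first column of $\tau$, and $u(e_i)=e_j$ with $j$ just left of $i$ otherwise) pass from $\underline{e}$ to $g\underline{e}$; hence $g\underline{e}$ is again a $\tau$-basis. Moreover $g\langle e_1,\dots,e_i\rangle=\langle ge_1,\dots,ge_i\rangle$ for each $i$, so $F(g\underline{e})=g\cdot F(\underline{e})$. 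Thus $\underline{e}\mapsto F(\underline{e})$ intertwines the $Z(u)$-action on the set of $\tau$-bases with the $Z(u)$-action on flags, and consequently ${\mathcal Z}_\tau$ is a union of $Z(u)$-orbits. Because $Z(u)$ acts transitively on the set of $\tau$-bases — given two $\tau$-bases, the linear isomorphism sending one to the other matches the two lists of defining relations and therefore commutes with $u$, i.e. lies in $Z(u)$ — the set ${\mathcal Z}_\tau$ is a single $Z(u)$-orbit. Finally, writing $e_i:=e_{x_i}$ as in \ref{section_base_Jordan} exhibits a $\tau$-basis $\underline{e}$ with $F(\underline{e})=F_\tau$, so that orbit is exactly $Z(u)\cdot F_\tau$. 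This gives (a).

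For part (b), the implications (ii)$\Rightarrow$(iii) and (iii)$\Rightarrow$(i) are immediate, since $F_\tau\in{\mathcal Z}_\tau$ by (a). The content is (i)$\Rightarrow$(ii). Here I would use that $Z(u)$ is connected: its action on ${\mathcal B}_u$ permutes the finitely many irreducible components, giving a morphism from $Z(u)$ to a finite symmetric group, which is necessarily constant; hence each component ${\mathcal K}^T$ is $Z(u)$-stable. Now if some $F(\underline{e})\in{\mathcal Z}_\tau\cap{\mathcal K}^T$, then by (a) every element of ${\mathcal Z}_\tau$ has the form $g\cdot F(\underline{e})$ for some $g\in Z(u)$, and $g\cdot F(\underline{e})\in g\cdot{\mathcal K}^T={\mathcal K}^T$; therefore ${\mathcal Z}_\tau\subset{\mathcal K}^T$, proving (ii).

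I do not expect a genuine obstacle: every ingredient is either elementary or already asserted in \ref{introduction-combinatorics} (connectedness of $Z(u)$, transitivity on $\tau$-bases, stability of the components). The only step deserving a line of care is the verification that a linear isomorphism carrying one $\tau$-basis onto another commutes with $u$, which follows at once by comparing the defining relations entry by entry; everything else is bookkeeping.
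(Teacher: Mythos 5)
Your proposal is correct and follows the same route as the paper: the paper's argument is precisely the observations that $Z(u)$ is connected (hence stabilizes each component), acts transitively on $\tau$-bases, and that $F_\tau=F(\underline{e})$ for the $\tau$-basis extracted from the Jordan basis. You have merely filled in the routine verifications (equivariance of $\underline{e}\mapsto F(\underline{e})$ and the check that the change-of-basis map between two $\tau$-bases commutes with $u$), which the paper leaves implicit.
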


Let us introduce an equivalence relation on row-standard tableaux:
we write $\tau\equiv\tau'$ if $\tau'$ can be obtained from $\tau$
by switching one or several couples of rows of same length.
Observe that, for $\tau\equiv\tau'$, $\tau$-bases coincide with
$\tau'$-bases, which implies ${\mathcal Z}_\tau={\mathcal
Z}_{\tau'}$. On the other hand, suppose that $i$ is on the left of
$j$ in $\tau$, and let $F=(V_0,...,V_n)\in {\mathcal Z}_\tau$.
Then $i$ is minimal such that $u(V_j)\subset V_i+u(V_{j-1})$. It follows
that ${\mathcal Z}_\tau\cap{\mathcal Z}_{\tau'}=\emptyset$
whenever $\tau\not\equiv \tau'$. We have then proved:

\begin{proposition}
We have ${\mathcal Z}_\tau={\mathcal Z}_{\tau'}$ if and only if
$\tau\equiv \tau'$.
\end{proposition}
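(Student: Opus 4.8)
The plan is to prove the two implications separately, building on the discussion that precedes the statement. The forward implication is immediate: if $\tau\equiv\tau'$, then $\tau'$ differs from $\tau$ only by transpositions of rows of equal length, so on the value set $\{1,\ldots,n\}$ the ``first column'' and the ``left neighbour'' data of $\tau$ and $\tau'$ agree; hence a basis is a $\tau$-basis exactly when it is a $\tau'$-basis, and therefore ${\mathcal Z}_\tau={\mathcal Z}_{\tau'}$.

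For the converse I would prove the sharper statement that ${\mathcal Z}_\tau\cap{\mathcal Z}_{\tau'}=\emptyset$ whenever $\tau\not\equiv\tau'$; this yields the converse because ${\mathcal Z}_\tau$ and ${\mathcal Z}_{\tau'}$ are nonempty, being $Z(u)$-orbits. The core observation is that one can recover $\tau$ up to $\equiv$ from any single flag $F=(V_0,\ldots,V_n)\in{\mathcal Z}_\tau$. Fix a $\tau$-basis $\underline{e}$ with $F=F(\underline{e})$, so that $V_k=\langle e_1,\ldots,e_k\rangle$, and for each $j$ set
$$i_j:=\min\{k\geq 0:\ u(V_j)\subseteq V_k+u(V_{j-1})\}.$$
I claim that $i_j=0$ precisely when $j$ lies in the first column of $\tau$, and that otherwise $i_j$ is the entry directly to the left of $j$ in $\tau$. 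Since $u(V_j)=u(V_{j-1})+\langle u(e_j)\rangle$, the first-column case gives $u(e_j)=0$ and hence $i_j=0$; otherwise $u(e_j)=e_i$ with $i$ the left neighbour of $j$, whence $u(V_j)\subseteq V_i+u(V_{j-1})$, and no $k<i$ works because $V_k+u(V_{j-1})$ is the span of $\{e_1,\ldots,e_k\}$ together with the vectors $e_\ell$ for which $\ell$ is the left neighbour in $\tau$ of some entry $<j$, and injectivity of the left-neighbour map keeps $e_i$ out of this set of basis vectors when $k<i$. Crucially, the numbers $i_j$ depend only on $F$, not on the chosen $\tau$-basis nor on which tableau we regard $F$ as coming from.

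It then remains to see that the family $(i_j)_j$ pins down $\tau$ up to $\equiv$. The maximal chains of the relation ``$i_j$ is the left neighbour of $j$'', rooted at the entries $j$ with $i_j=0$, are exactly the rows of $\tau$ read as increasing sequences; they partition $\{1,\ldots,n\}$, and their lengths are the parts of the partition $Y(u)$. Thus $(i_j)_j$ determines the multiset of rows of $\tau$, and the only remaining freedom is the vertical ordering of rows of equal length (the lengths still being forced to decrease weakly), which is precisely what $\equiv$ allows. Hence a common flag $F\in{\mathcal Z}_\tau\cap{\mathcal Z}_{\tau'}$ forces $\tau$ and $\tau'$ to yield the same $(i_j)_j$, so $\tau\equiv\tau'$; together with the forward direction this gives the equivalence.

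The step I expect to be the real content, as opposed to bookkeeping, is the minimality half of the computation of $i_j$ — verifying that no $k<i$ satisfies $u(V_j)\subseteq V_k+u(V_{j-1})$ — which is exactly where one uses that each box of $\tau$ has at most one box immediately to its right, i.e.\ that the left-neighbour map is injective; the reconstruction of $\tau$ from its rows and shape is then routine.
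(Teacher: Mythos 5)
Your proof is correct and follows essentially the same route as the paper: the forward direction via the coincidence of $\tau$-bases and $\tau'$-bases, and the converse via the observation that for $F=(V_0,\ldots,V_n)\in{\mathcal Z}_\tau$ the left neighbour of $j$ in $\tau$ is the minimal $i$ with $u(V_j)\subset V_i+u(V_{j-1})$, so that the orbit determines $\tau$ up to $\equiv$. You simply spell out the minimality verification and the reconstruction of $\tau$ from the left-neighbour data, which the paper leaves implicit.
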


\begin{remark}
In general, the ${\mathcal Z}_\tau$'s are not all the
$Z(u)$-orbits of ${\mathcal B}_u$. Suppose for example
$\displaystyle{Y(u)=\mbox{\scriptsize $\yng(3,1)$}}$, let
$(e,e',e'',f)$ with $u(e'')=e'$, $u(e')=e$, $u(e)=u(f)=0$ be a
Jordan basis, then the flag $F=(0\subset\langle
e\rangle\subset\langle e,e'+f\rangle\subset\langle
e,e',f\rangle\subset V)$ does not belong to any ${\mathcal
Z}_\tau$. Nevertheless, the ${\mathcal Z}_\tau$'s are all the
$Z(u)$-orbits of ${\mathcal B}_u$ when the diagram $Y(u)$ has two
columns (see \cite{FM}).
\end{remark}

\medskip

Let $(\tau,T)$ be a pair formed by a row-standard tableau and a
standard tableau of shape $Y(u)$. Observe that the property that
the flag $F_\tau$ belongs to the component ${\mathcal K}^T$ (or
equivalently that the orbit ${\mathcal Z}_\tau$ lies in ${\mathcal
K}^T$) does not depend on $u$. Indeed, if $u':V'\rightarrow V'$ is
a nilpotent endomorphism of same Jordan form as $u$, then we have
$u'=gug^{-1}$ for some linear isomorphism $g:V\rightarrow V'$, and
$g$ induces an isomorphism ${\mathcal B}_u\rightarrow {\mathcal
B}_{u'}$ which sends the component of ${\mathcal B}_u$ associated
to $T$ to the component of ${\mathcal B}_{u'}$ associated to $T$
on one hand, and makes correspond the $\tau$-bases of $V$ with the
$\tau$-bases of $V'$ on the other hand.

Let $\mathrm{sh}(\tau)$ and $\mathrm{sh}(T)$ denote the shapes of
$\tau$ row-standard or $T$ standard, and let $|\tau|$ (resp.
$|T|$) denote the number of boxes in $\tau$ (resp. $T$). Define
$\mathbf{Y}$ to be the set of pairs $(\tau,T)$ formed by $\tau$
row-standard and $T$ standard, with
$\mathrm{sh}(\tau)=\mathrm{sh}(T)$. For $(\tau,T)\in\mathbf{Y}$,
we write $\tau\in T$ if the flag $F_\tau$ belongs to the component
${\mathcal K}^T$ in the Springer fiber ${\mathcal B}_u$ over any
$u$ nilpotent such that $Y(u)=\mathrm{sh}(T)$. Define $\mathbf{K}$
to be the subset of pairs $(\tau,T)\in\mathbf{Y}$ such that
$\tau\in T$.

\medskip

Thus, the main purpose of this article is to study the
combinatorial properties of the set $\mathbf{K}$.

\subsection{The hook, two-row and two-column cases}

A Young diagram $Y$ is said to be of {\em hook type} if it
contains at most one row of length $\geq 2$. It is said to be of
{\em two-row type} if it contains at most two rows. It is said to
be of {\em two-column type} if it contains at most two columns. A
nilpotent endomorphism $u\in \mathrm{End}(V)$ is said to be of
{\em hook} (resp. {\em two-row}) (resp. {\em two-column}) type,
according to its Young diagram $Y(u)$. The most often in this
article, we specialize in these three cases. One common
specificity, which makes the study easier in these cases, is that
the dominance order on diagrams is here linear. Moreover, we point
out some special properties held by Springer fibers in these three
cases (cf. Proposition \ref{proposition-counterexample} and the
remarks in section \ref{last-section}).

Up to now, the description of the components of Springer fibers
has essentially been confined to these three cases.
For instance, in each case, an equational characterization of the components
has been provided (see \cite{F}, \cite{Fung}, \cite{Vargas}) and
sometimes we shall rely on these previous studies.
Nevertheless, these equations are a bit cumbersome and make
each case be very specific.
The aim in studying the components under the point of view of the belonging of the special orbits ${\mathcal Z}_\tau$,
is to get more convenient and unified combinatorial descriptions in the three cases.

\subsection{Notation}

The notation of the previous subsections is kept. Throughout this
article, the flags are written $(V_0,...,V_n)$ or
$(V_0\subset...\subset V_n)$ or $F$. The standard tableaux are
usually written $T,T',S,...$ The row-standard tableaux are written
$\tau,\tau',...$ We will write interchangeably $\tau\in T$ or
$(\tau,T)\in\mathbf{K}$. We denote by $\# A$ the number of
elements in a finite set $A$. The reader can find an index of
notation at the end of this article.


\section{Some basic properties of the set $\mathbf{K}$, and connection to the problem
of intersections of components}

\label{2}

First in this section, we introduce the standardization
$\mathrm{st}(\tau)$ of a row-standard tableau $\tau$ (i.e. the
standard tableau obtained by putting the entries in each column of
$\tau$ in right order). Then, we point out quite natural
properties involving $\mathrm{st}(\tau)$, for instance that the flag $F_\tau$ belongs
to the set ${\mathcal B}_u^{\mathrm{st}(\tau)}$, and thus to the component ${\mathcal K}^{\mathrm{st}(\tau)}$ (hence
$\tau\in \mathrm{st}(\tau)$, or equivalently
$(\tau,\mathrm{st}(\tau))\in\mathbf{K}$). Of course, it does not
solve our initial problem, because there can also exist
$T\not=\mathrm{st}(\tau)$ such that $\tau\in T$. We prove also: $\tau\in T$ $\Rightarrow$
$\mathrm{st}(\tau)\in T$. Finally, we show
that two given components ${\mathcal K}^T,{\mathcal K}^{T'}$ have
a nonempty intersection if and only if there is a common $\tau$
which is standard such that $\tau\in T$ and $\tau\in T'$.

\subsection{Standardization of a row-standard tableau}

\label{standardization}

Let $\tau$ be a row-standard tableau. Arrange the
entries in each column of $\tau$ in increasing order to the
bottom. Then, we see that the numbers in the rows of the tableau
so-obtained remain increasing to the right. In other words, this
tableau is standard. We denote it by $\mathrm{st}(\tau)$.
For example
\[
\tau=\young(348,167,25) \qquad
\mathrm{st}(\tau)=\young(147,258,36)
\]

Embed ${\mathcal Z}_\tau\subset {\mathcal B}_u$. Recall the
partition ${\mathcal B}_u=\bigsqcup_{T}{\mathcal B}_u^T$
parameterized by standard tableaux, introduced in section
\ref{young-diagram}.

\begin{lemma}
\label{lemma-standardization} We have ${\mathcal Z}_\tau\subset
{\mathcal B}_u^T$ for $T=\mathrm{st}(\tau)$.
\end{lemma}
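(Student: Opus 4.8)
The plan is to show that for any flag $F(\underline{e})\in{\mathcal Z}_\tau$, the Jordan type of the restriction $u_{|V_i}$ equals the shape $Y_i^{\mathrm{st}(\tau)}$ for every $i=1,\ldots,n$; by the definition of ${\mathcal B}_u^T$ recalled in section~\ref{young-diagram}, this is exactly the assertion ${\mathcal Z}_\tau\subset{\mathcal B}_u^{\mathrm{st}(\tau)}$. Since the Young diagram $Y(u_{|V_i})$ is determined by the ranks of the powers $u_{|V_i}^k$ for $k\geq 1$ (the number of columns of length $\geq k$ being $\dim V_i - \mathrm{rk}\,u_{|V_i}^k$... more precisely the conjugate partition is read off from these ranks), it suffices to compute $\dim u^k(V_i)$ for all $k$ and compare with the corresponding quantity for the subdiagram $Y_i^{\mathrm{st}(\tau)}$.

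First I would fix a $\tau$-basis $\underline{e}=(e_1,\ldots,e_n)$ and recall that by definition $V_i=\langle e_1,\ldots,e_i\rangle$, with $u(e_j)=e_{j'}$ where $j'$ is the entry immediately to the left of $j$ in $\tau$, and $u(e_j)=0$ if $j$ is in the first column. Thus $u$ maps each basis vector $e_j$ ($1\le j\le i$) either to $0$ or to another basis vector $e_{j'}$, and crucially $j'<j$ (since $\tau$ is row-standard, entries increase along rows), so $e_{j'}\in V_i$ as well. Hence $u(V_i)$ is spanned by $\{e_{j'}: 1\le j\le i,\ j \text{ not in the first column of }\tau\}$, and these $e_{j'}$ are distinct basis vectors, so $\dim u(V_i)$ is simply the number of $j\in\{1,\ldots,i\}$ lying in $\tau$ outside the first column — equivalently, $i$ minus the number of entries among $1,\ldots,i$ that sit in the first column of $\tau$. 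Iterating, $u^k(V_i)$ is spanned by those $e_{j^{(k)}}$ obtained by following $k$ steps leftward along the row of $\tau$ containing $j$, for $j\le i$ reachable that far; so $\dim u^k(V_i)$ counts the entries $j\le i$ whose position in $\tau$ is in column $>k$.

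Next I would make the key combinatorial observation: passing from $\tau$ to $\mathrm{st}(\tau)$ only reorders entries within columns, so for each $k$ and each $i$, the number of entries among $1,\ldots,i$ lying in columns $\geq k+1$ is the same whether computed in $\tau$ or in $\mathrm{st}(\tau)$ — wait, this needs care, since reordering within a column changes which entries are $\le i$ in a given column. The correct statement is that the \emph{multiset of column-indices} of the entries $1,\ldots,i$ is not obviously preserved. So instead I would argue directly: in $\mathrm{st}(\tau)$, the subtableau on $1,\ldots,i$ has shape $Y_i^{\mathrm{st}(\tau)}$, whose number of boxes in columns $\geq k+1$ equals $\dim u^k(V)$ restricted appropriately — and one computes, using that for a standard tableau the restriction flag is the ``obvious'' one in a Jordan basis adapted to $\mathrm{st}(\tau)$, that this count equals the number of $j\le i$ in column $\geq k+1$ of $\mathrm{st}(\tau)$. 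Then the content of the lemma reduces to: \emph{for every $i$ and $k$, the number of entries of $\{1,\ldots,i\}$ lying in columns $\geq k+1$ is the same in $\tau$ and in $\mathrm{st}(\tau)$.} This last claim I would prove column by column: within a single column $c$ of $\tau$, rearranging its entries into increasing order and comparing with the adjacent columns — here one uses that $\tau$ is row-standard both before and after standardization, which forces the number of entries $\le i$ in column $c$ to lie between the numbers in columns $c-1$ and $c+1$, and a short counting argument shows the total in columns $\ge k+1$ is invariant.

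The main obstacle I expect is precisely this invariance claim under standardization: it is intuitively clear but the bookkeeping — tracking, for a fixed threshold $i$, how the count of small entries in each column changes when columns are internally sorted — must be done carefully, perhaps by induction on the number of ``descents'' one removes to get from $\tau$ to $\mathrm{st}(\tau)$, checking that a single sorting swap within one column preserves each of the quantities $\#\{j\le i: j\text{ in column}\ge k+1\}$. Once that invariance is in hand, matching $\dim u^k(V_i)$ with the box-count of $Y_i^{\mathrm{st}(\tau)}$ in columns $\ge k+1$ is immediate, and since a nilpotent's Jordan type is determined by these rank data, we get $Y(u_{|V_i})=Y_i^{\mathrm{st}(\tau)}$ for all $i$, hence ${\mathcal Z}_\tau\subset{\mathcal B}_u^{\mathrm{st}(\tau)}$, as desired. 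An alternative, slicker route would be to exhibit, for a suitable $\tau$-basis, an explicit identification of $F(\underline{e})$ with a flag manifestly in ${\mathcal B}_u^{\mathrm{st}(\tau)}$ by reindexing the Jordan basis according to $\mathrm{st}(\tau)$ — but the rank computation above seems the most transparent.
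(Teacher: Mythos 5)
Your proof is correct and follows essentially the same route as the paper's: both compute the Jordan type of $u_{|V_i}$ from the dimensions $\dim u^q(V_i)$ (equivalently $\dim\ker(u_{|V_i})^q$), identify these with counts of entries $\le i$ by column in $\tau$, and match them against the column-counts of the shape $Y_i^{\mathrm{st}(\tau)}$. The one place you go astray is in treating the invariance claim as the main obstacle: standardization only permutes entries \emph{within} each column, so every entry of $\tau$ keeps its column index in $\mathrm{st}(\tau)$; hence the set of entries lying in any prescribed union of columns is literally unchanged, and the count of entries $\le i$ in columns $>q$ is trivially the same for $\tau$ and for $\mathrm{st}(\tau)$ --- no induction on sorting swaps or comparison with adjacent columns is needed. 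Your parenthetical worry that ``reordering within a column changes which entries are $\le i$ in a given column'' is mistaken: it changes their rows, not the set of entries in the column, and the count in question depends only on that set.
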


\begin{proof}
Let $F=(V_0,...,V_n)\in {\mathcal Z}_\tau$, and let
$\underline{e}=(e_1,...,e_n)$ be a $\tau$-basis such that
$F=F(\underline{e})$. To prove $F\in{\mathcal B}_u^T$, fix
$i\in\{1,...,n\}$ and let us show $Y(u_{|V_i})=Y_i^T$.

Let $q\geq 1$. The number of boxes in the first $q$ columns of
$Y_i^T$ is equal to the number of entries $j\leq i$ in the first
$q$ columns of $T$, which coincide with the entries $j\leq i$ in
the first $q$ columns of $\tau$ because $T=\mathrm{st}(\tau)$. The
lengths of the rows of the diagram $Y(u_{|V_i})$ are the sizes of
the Jordan blocks of $u_{|V_i}$, it follows that the number of
boxes in the first $q$ columns of $Y(u_{|V_i})$ is equal to
$\mathrm{dim}\,\mathrm{ker}\,(u_{|V_i})^q$. Observe that the
subspace $\mathrm{ker}\,(u_{|V_i})^q$ is generated by the basic
vectors $e_j$ associated to the entries $j\leq i$ in the first $q$
columns of $\tau$. Thus both diagrams $Y(u_{|V_i})$ and $Y_i^T$
contain the same number of boxes in their first $q$ columns, for any $q$.
Therefore, we have $Y(u_{|V_i})=Y_i^T$ for any $i$.  
\end{proof}

As the component ${\mathcal K}^T\subset {\mathcal B}_u$ is the
closure of the subset ${\mathcal B}_u^T$, we deduce the following

\begin{proposition}
\label{proposition-standardization} We have
$(\tau,\mathrm{st}(\tau))\in\mathbf{K}$, for any $\tau$
row-standard.
\end{proposition}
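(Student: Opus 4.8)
The plan is to derive Proposition~\ref{proposition-standardization} directly from Lemma~\ref{lemma-standardization}. By the lemma, for $T=\mathrm{st}(\tau)$ we have the inclusion ${\mathcal Z}_\tau\subset{\mathcal B}_u^T$, and in particular $F_\tau\in{\mathcal B}_u^T$ since $F_\tau\in{\mathcal Z}_\tau$ (as observed in section~\ref{introduction-combinatorics}). Taking Zariski closures, $F_\tau\in{\mathcal B}_u^T\subset\overline{{\mathcal B}_u^T}={\mathcal K}^T$. By the definition of the relation $\tau\in T$ (the flag $F_\tau$ lies in ${\mathcal K}^T$ for any nilpotent $u$ with $Y(u)=\mathrm{sh}(T)$), and since $\mathrm{sh}(\mathrm{st}(\tau))=\mathrm{sh}(\tau)=Y(u)$ by construction, this shows $\tau\in\mathrm{st}(\tau)$, i.e. $(\tau,\mathrm{st}(\tau))\in\mathbf{K}$.

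The only point that needs a word of care is the independence of $u$: a priori Lemma~\ref{lemma-standardization} is proved for a fixed $u$, whereas the relation $\tau\in T$ quantifies over all nilpotent $u$ of the prescribed Jordan type. This is harmless because the argument of the lemma is valid for \emph{every} such $u$; alternatively one invokes the paragraph in section~\ref{introduction-combinatorics} establishing that the property ``$F_\tau\in{\mathcal K}^T$'' is independent of the choice of $u$ (via the isomorphism ${\mathcal B}_u\to{\mathcal B}_{u'}$ induced by conjugation). Either way there is no real obstacle here.

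Honestly, there is no substantive difficulty in this proof: all the work has been front-loaded into Lemma~\ref{lemma-standardization}, and the proposition is essentially a repackaging of that lemma together with the observations $F_\tau\in{\mathcal Z}_\tau$ and ${\mathcal K}^T=\overline{{\mathcal B}_u^T}$. If anything, the one step worth making explicit is passing from the set-level inclusion ${\mathcal Z}_\tau\subset{\mathcal B}_u^T$ to the closure statement $F_\tau\in{\mathcal K}^T$, which is immediate from monotonicity of closure. I would therefore write this as a two- or three-line proof that simply chains these facts together.
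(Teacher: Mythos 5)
Your proof is correct and is essentially the paper's own argument: the paper deduces the proposition in one line from Lemma \ref{lemma-standardization} together with ${\mathcal K}^T=\overline{{\mathcal B}_u^T}$, exactly as you do. Your extra remark on independence of $u$ is a fair point of care but adds nothing beyond what section \ref{introduction-combinatorics} already establishes.
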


Next, consider a pair $(\tau,T)\in\mathbf{Y}$. As above, to the
row-standard tableau $\tau$, we associate the standard tableau
$\mathrm{st}(\tau)$. The tableau $\mathrm{st}(\tau)$ is a fortiori
row-standard, and we may consider the pair
$(\mathrm{st}(\tau),T)\in\mathbf{Y}$. Then we show the following

\begin{proposition}
\label{proposition-rectify-standard} $(\tau,T)\in \mathbf{K}$
$\Rightarrow$ $(\mathrm{st}(\tau),T)\in \mathbf{K}$.
\end{proposition}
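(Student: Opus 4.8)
The plan is to exploit the fact (established in Proposition~\ref{proposition-standardization} together with the partition of $\mathcal{B}_u$) that $F_\tau\in\mathcal{B}_u^{\mathrm{st}(\tau)}$, and to understand how the orbit $\mathcal{Z}_{\mathrm{st}(\tau)}$ sits in the closure of $\mathcal{Z}_\tau$. More precisely, I would first show that $\mathcal{Z}_{\mathrm{st}(\tau)}\subset\overline{\mathcal{Z}_\tau}$. This should follow from a one-parameter degeneration argument: starting from a generic $\tau$-basis, one can move along a $\mathbb{C}^*$-action (or a curve in $Z(u)$, or rather in a larger group degenerating toward $Z(u)$) so that in the limit the flag $F_\tau$ becomes a flag in $\mathcal{Z}_{\mathrm{st}(\tau)}$. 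Concretely, the passage from $\tau$ to $\mathrm{st}(\tau)$ only reorders entries within columns, i.e.\ only permutes Jordan blocks of the same length among themselves at various "heights"; such a reordering is realized in the limit of a torus action that rescales the basis vectors $e_x$ column by column. Since $\mathcal{K}^T=\overline{\mathcal{B}_u^T}$ is closed and $Z(u)$-stable, and $\mathcal{Z}_\tau$ is a single $Z(u)$-orbit, $\tau\in T$ means $\mathcal{Z}_\tau\subset\mathcal{K}^T$, whence $\overline{\mathcal{Z}_\tau}\subset\mathcal{K}^T$, and therefore $\mathcal{Z}_{\mathrm{st}(\tau)}\subset\mathcal{K}^T$, i.e.\ $(\mathrm{st}(\tau),T)\in\mathbf{K}$.

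Alternatively, and perhaps more cleanly, I would avoid degenerations and argue directly at the level of a single well-chosen $\tau$-basis. Fix a Jordan basis $(e_x)_{x\in Y(u)}$ and the associated $\tau$-basis $\underline{e}$ with $F_\tau=F(\underline{e})$. I want to produce a $\mathrm{st}(\tau)$-basis $\underline{e}'$ and a flag $F(\underline{e}')$ that I can relate to $F_\tau$ via a curve inside $\mathcal{Z}_\tau$. The key combinatorial observation is that $\mathrm{st}(\tau)$ is obtained from $\tau$ by a sequence of elementary moves, each swapping two entries $i<j$ sitting in the same column, in consecutive rows of the same length, with $i$ below $j$; it suffices to treat one such elementary move and iterate. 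For an elementary move swapping rows (of equal length) at positions corresponding to Jordan blocks $B_1$ (containing $j$, above) and $B_2$ (containing $i$, below), I would exhibit the explicit $\mathbb{C}^*$-family of $\tau$-bases obtained by applying $g_t\in Z(u)$ that sends the generator of $B_1$ to (generator of $B_1$) $+\,t\cdot$(generator of $B_2$) and leaves the rest fixed, then let $t\to\infty$ (after rescaling) to land on a $\mathrm{st}(\tau)$-basis. The limit flag lies in $\overline{\mathcal{Z}_\tau}$, so if $\mathcal{Z}_\tau\subset\mathcal{K}^T$ then the limit flag, and hence all of $\mathcal{Z}_{\mathrm{st}(\tau)}$ (by $Z(u)$-transitivity on $\mathrm{st}(\tau)$-bases), lies in $\mathcal{K}^T$.

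The main obstacle I anticipate is making the degeneration argument precise: one must check that the specific $\mathbb{C}^*$-orbit constructed really does have a limit point that is a \emph{genuine} $\mathrm{st}(\tau)$-basis flag (as opposed to some other flag in $\mathcal{B}_u$ not of the form $F(\underline{e}')$ for a $\mathrm{st}(\tau)$-basis), and that the limit is not already in $\mathcal{Z}_\tau$ but genuinely in $\mathcal{Z}_{\mathrm{st}(\tau)}$ — here Proposition~\ref{propositions} (the characterization $\mathcal{Z}_\sigma=\mathcal{Z}_{\sigma'}\iff\sigma\equiv\sigma'$) is the sanity check, since $\tau$ and $\mathrm{st}(\tau)$ are generally not equivalent. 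One should also verify that a single elementary column-swap move suffices to reach $\mathrm{st}(\tau)$ by induction on the number of column-inversions of $\tau$, which is routine combinatorics on row-standard tableaux: if $\tau$ is not already standard, some column has two consecutive rows (necessarily of equal length, since the columns of a Young diagram have no gaps) in which the entries are out of order, and swapping them strictly decreases the inversion count while preserving row-standardness.

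Finally, once $\mathcal{Z}_{\mathrm{st}(\tau)}\subset\overline{\mathcal{Z}_\tau}$ is established, the conclusion is immediate: by definition $(\tau,T)\in\mathbf{K}$ means $F_\tau\in\mathcal{K}^T$, hence $\mathcal{Z}_\tau\subset\mathcal{K}^T$ by Proposition~\ref{propositions}(b), hence $\overline{\mathcal{Z}_\tau}\subset\mathcal{K}^T$ since $\mathcal{K}^T$ is closed, hence $F_{\mathrm{st}(\tau)}\in\mathcal{Z}_{\mathrm{st}(\tau)}\subset\mathcal{K}^T$, which is exactly $(\mathrm{st}(\tau),T)\in\mathbf{K}$.
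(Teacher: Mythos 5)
Your overall strategy is the paper's: reduce to showing $\mathcal{Z}_{\mathrm{st}(\tau)}\subset\overline{\mathcal{Z}_\tau}$, realize the passage from $\tau$ to $\mathrm{st}(\tau)$ by elementary operations on pairs of rows, and realize each elementary operation as the limit $t\to\infty$ of a one-parameter unipotent subgroup of $Z(u)$ that adds $t$ times one Jordan block of the $\tau$-basis to another. The closing argument ($\mathcal{K}^T$ closed and $Z(u)$-stable, plus the fact that $\mathcal{Z}_{\mathrm{st}(\tau)}$ is a single $Z(u)$-orbit) is also exactly as in the paper. (Your first suggestion, a torus rescaling the $e_x$ column by column, cannot work: the flags $F_\sigma$ are precisely the torus-fixed points, so a diagonal action never moves $F_\tau$ at all; you rightly pivot away from it.)

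The genuine gap is in the combinatorial reduction. Your elementary move --- swapping the two entries of a single column lying in consecutive rows ``of the same length'' --- neither covers the general case nor preserves row-standardness. First, two consecutive rows meeting a common column need not have equal length (rows of lengths $3$ and $2$ share columns $1$ and $2$), so your parenthetical justification is false and the unequal-length case is simply not treated. Second, even when a column is out of order, swapping that single pair can destroy row-standardness: for $\tau$ with rows $(3,4,5)$ and $(1,2)$ one has $\mathrm{st}(\tau)$ with rows $(1,2,5)$ and $(3,4)$, but swapping the column-$1$ entries gives rows $(1,4,5),(3,2)$ and swapping the column-$2$ entries gives $(3,2,5),(1,4)$, neither of which is row-standard; so no move of your type is available although $\tau\neq\mathrm{st}(\tau)$ and $\tau\not\equiv\mathrm{st}(\tau)$. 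The paper's elementary operation is instead the simultaneous componentwise $\min/\max$ merge of two whole rows ($\tilde a_i=\min(a_i,b_i)$, $\tilde b_i=\max(a_i,b_i)$), which always yields a row-standard tableau with the same standardization and provably reaches $\mathrm{st}(\tau)$ after finitely many iterations; the matching degeneration is $d_t(e_{b_i})=e_{b_i}+te_{a_i}$. With that replacement of the elementary move, your argument goes through.
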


\begin{proof}
Write for simplicity $S=\mathrm{st}(\tau)$. Embed ${\mathcal
Z}_\tau,{\mathcal Z}_S,{\mathcal K}^T$ in ${\mathcal B}_u$ for
some $u:V\rightarrow V$ nilpotent. To prove the proposition, it is
sufficient to show ${\mathcal Z}_S\subset \overline{{\mathcal
Z}_\tau}$.

Let us consider the elementary operation to arrange in relative
order two rows $p<q$ of $\tau$: let $a_1<...<a_r$ (resp.
$b_1<...<b_s$) be the entries of the $p$-th row (resp. $q$-th row)
of the tableau $\tau$. For $i=1,...,s$ write $\tilde
a_i=\min(a_i,b_i)$ and $\tilde b_i=\max(a_i,b_i)$. For
$i=s+1,...,r$ write $\tilde a_i=a_i$. Then let $\tilde\tau$ be the
tableau obtained by replacing $a_i$ by $\tilde a_i$ for
$i=1,...,r$ and $b_i$ by $\tilde b_i$ for $i=1,...,s$.
{\tiny\[
\tau=\begin{array}{|c|c|c|c|c|} \hline
\multicolumn{5}{|c|}{\qquad\vdots\qquad } \\
\hline
a_1 & a_2 & \multicolumn{2}{|c|}{\cdots} & a_r \\
\hline
\multicolumn{5}{|c|}{\qquad \vdots\qquad } \\
\hline
b_1 & b_2 & \cdots & b_s \\
\cline{1-4}
\multicolumn{4}{|c|}{\quad \vdots\quad } \\
\cline{1-4}
\end{array}
\ \ \longrightarrow\ \ \tilde\tau=\begin{array}{|c|c|c|c|c|}
\hline
\multicolumn{5}{|c|}{\qquad\vdots\qquad } \\
\hline
\tilde a_1 & \tilde a_2 & \multicolumn{2}{|c|}{\cdots} & \tilde a_r \\
\hline
\multicolumn{5}{|c|}{\qquad \vdots\qquad } \\
\hline
\tilde b_1 & \tilde b_2 & \cdots & \tilde b_s \\
\cline{1-4}
\multicolumn{4}{|c|}{\quad \vdots\quad } \\
\cline{1-4}
\end{array}
\]}
It is clear that this tableau remains row-standard. Observe that
$\mathrm{st}(\tilde \tau)=\mathrm{st}(\tau)=S$. Moreover, the
tableau $S$ is obtained from $\tau$ after a sequence of operations
as $\tau\mapsto \tilde\tau$, hence it is sufficient to show the
inclusion ${\mathcal Z}_{\tilde\tau}\subset \overline{{\mathcal
Z}_\tau}$.

Let $F\in{\mathcal Z}_{\tau}$, and let
$\underline{e}=(e_1,...,e_n)$ be a $\tau$-basis such that
$F=F(\underline{e})$. For $i\notin\{a_1,...,a_r,b_1,...,b_s\}$ set
$\tilde e_i=e_i$. Set in addition $\tilde e_{a_i}=e_{\tilde a_i}$
for $i=1,...,r$, and $\tilde e_{b_i}=e_{\tilde b_i}$ for
$i=1,...,s$. Then $\underline{\tilde e}:=(\tilde e_1,...,\tilde
e_n)$ is a $\tilde\tau$-basis, and consequently $\tilde
F:=F(\underline{\tilde e})$ belongs to ${\mathcal
Z}_{\tilde\tau}$. For $t\in\mathbb{C}$ let $d_t:V\rightarrow V$ be
the automorphism defined by $d_t(e_i)=e_i$ for
$i\notin\{b_1,...,b_s\}$, and $d_t(e_{b_i})=e_{b_i}+te_{a_i}$.
Then $(d_t)_{t\in\mathbb{C}}$ is a one-parameter subgroup of
$Z(u)=\{g\in GL(V):gu=ug\}$. Recall that ${\mathcal Z}_\tau$ is a
$Z(u)$-orbit. Thus the curve $\{d_tF:t\in\mathbb{C}\}$ lies in
${\mathcal Z}_\tau$. 
For all $i=1,\ldots,n$, we have
\[
d_t\langle e_1,...,e_i\rangle = \langle e_j:j\leq i,\ j\notin\{b_1,...,b_s\};\ e_{b_l}:a_l<b_l\leq i;\ e_{b_l}+te_{a_l}:b_l\leq i<a_l\rangle,
\]
whence
$\lim_{t\rightarrow\infty} d_t\langle e_1,...,e_i\rangle = \langle \tilde e_j:1\leq j\leq i\rangle$.
It follows $\tilde F=\lim_{t\rightarrow\infty}d_tF\in\overline{{\mathcal Z}_\tau}$.
Therefore, ${\mathcal Z}_{\tilde\tau}=Z(u)\tilde F\subset \overline{{\mathcal Z}_\tau}$.
\end{proof}

\begin{remark}
The converse of Proposition \ref{proposition-rectify-standard} is not true.
For instance, if 
\[\tau=\young(23,1)\quad\mathrm{st}(\tau)=\young(13,2)\quad T=\young(12,3)\,,\]
then $(\tau,T)\notin\mathbf{K}$, whereas $(\mathrm{st}(\tau),T)\in\mathbf{K}$.
This can be seen by applying the criteria which we provide in this paper,
or more directly by computing that the component ${\mathcal K}^T$
is in this case the set of flags $(0\subset V_1\subset V_2\subset V_3=\mathbb{C}^3)$
with $V_1=\mathrm{Im}\,u$.
\end{remark}

\subsection{Connection to the problem of intersections of components}

\label{criterion-nonempty-intersection}

We connect the problem to determine pairs $(S,T)\in\mathbf{K}$
with $T,S$ both standard to the problem to determine nonempty
intersections of components of the Springer fiber. Consider the
Springer fiber ${\mathcal B}_u$ and an irreducible component
${\mathcal K}^T\subset {\mathcal B}_u$ associated to the standard
tableau $T$. Let $S$ be a standard tableau of same shape as $T$,
and let ${\mathcal B}_u^S\subset {\mathcal B}_u$ be the
Spaltenstein subset corresponding to $S$ (cf.
\ref{young-diagram}). The following lemma will also be used in
section \ref{8}.

\begin{lemma}
\label{lemma-criterion-nonempty-intersection} ${\mathcal
B}_u^S\cap {\mathcal K}^T\not=\emptyset$ $\Leftrightarrow$
$(S,T)\in\mathbf{K}$.
\end{lemma}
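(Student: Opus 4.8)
The plan is to prove both implications of the equivalence ${\mathcal B}_u^S\cap {\mathcal K}^T\neq\emptyset \Leftrightarrow (S,T)\in\mathbf{K}$ by exploiting what is already known about the Spaltenstein pieces: namely that ${\mathcal B}_u=\bigsqcup_T {\mathcal B}_u^T$ is a partition into irreducible, locally closed, nonsingular subsets, each of dimension $\dim {\mathcal B}_u$, with ${\mathcal K}^T=\overline{{\mathcal B}_u^T}$. I would also use Proposition 1 (the equivalence of (i), (ii), (iii) for ${\mathcal Z}_\tau$ and ${\mathcal K}^T$) and Lemma \ref{lemma-standardization} (which says ${\mathcal Z}_\tau\subset {\mathcal B}_u^{\mathrm{st}(\tau)}$; in particular, when $S$ is already standard, $\mathrm{st}(S)=S$, so ${\mathcal Z}_S\subset {\mathcal B}_u^S$).

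For the direction ``$\Leftarrow$'': suppose $(S,T)\in\mathbf{K}$, i.e. $F_S\in{\mathcal K}^T$. By Proposition 1(b) this gives ${\mathcal Z}_S\subset {\mathcal K}^T$. Since $S$ is standard, $\mathrm{st}(S)=S$, so Lemma \ref{lemma-standardization} yields ${\mathcal Z}_S\subset {\mathcal B}_u^S$. Hence $\emptyset\neq {\mathcal Z}_S\subset {\mathcal B}_u^S\cap {\mathcal K}^T$, which is exactly the claimed nonemptiness.

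For the direction ``$\Rightarrow$'': suppose ${\mathcal B}_u^S\cap {\mathcal K}^T\neq\emptyset$. The key dimension-theoretic fact is that ${\mathcal B}_u^S$ is an irreducible locally closed subset of the same dimension as the irreducible component ${\mathcal K}^T$, hence $\overline{{\mathcal B}_u^S}={\mathcal K}^S$ is itself a component. If ${\mathcal B}_u^S$ met ${\mathcal K}^T$ but were not contained in it, then ${\mathcal B}_u^S\cap {\mathcal K}^T$ would be a proper closed subset of the irreducible ${\mathcal B}_u^S$, hence of strictly smaller dimension; but ${\mathcal B}_u^S\cap {\mathcal K}^T$ also contains a point lying in the open dense subset ${\mathcal B}_u^T$ of ${\mathcal K}^T$ — wait, that is not automatic. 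The cleaner argument: pick $F\in {\mathcal B}_u^S\cap {\mathcal K}^T$. Then $F$ lies in the orbit ${\mathcal Z}_\tau$ for some row-standard $\tau$? No — not every flag lies in some ${\mathcal Z}_\tau$ (cf. the Remark with $Y(u)=(3,1)$). So instead I argue with closures directly: since ${\mathcal B}_u^S$ is irreducible and ${\mathcal K}^T$ is closed, ${\mathcal B}_u^S\cap {\mathcal K}^T$ nonempty forces... no, that still does not force containment in general. The right move is: ${\mathcal B}_u^S$ is locally closed and irreducible; its closure ${\mathcal K}^S$ is a component; if ${\mathcal K}^S\neq {\mathcal K}^T$, then since distinct components of ${\mathcal B}_u$ can still intersect, I cannot conclude emptiness. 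So the real content is: a point of ${\mathcal B}_u^S$ lying in ${\mathcal K}^T$ forces $F_S\in{\mathcal K}^T$. For this I would use that $Z(u)$ acts on ${\mathcal B}_u$ preserving every component and every Spaltenstein piece (the pieces ${\mathcal B}_u^T$ are $Z(u)$-stable since $Z(u)$ preserves the Jordan type of restrictions), so ${\mathcal B}_u^S$ is a union of $Z(u)$-orbits and ${\mathcal K}^T$ is $Z(u)$-stable; then take $F\in {\mathcal B}_u^S\cap {\mathcal K}^T$, let $\mathcal{O}=Z(u)F\subset {\mathcal B}_u^S\cap {\mathcal K}^T$, and observe that $\overline{\mathcal O}\cap {\mathcal B}_u^S$ is dense in ${\mathcal B}_u^S$ since ${\mathcal B}_u^S$ consists of finitely many $Z(u)$-orbits of which exactly one is dense; passing to closures, $\overline{{\mathcal O}}\supset$ the dense orbit of ${\mathcal B}_u^S$. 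The dense $Z(u)$-orbit of ${\mathcal B}_u^S$ is precisely ${\mathcal Z}_S$ (this should be a known fact, or provable: $F_S$ is a torus-fixed point lying in ${\mathcal B}_u^S$, and its $Z(u)$-orbit is open dense in ${\mathcal B}_u^S$ by an irreducibility/dimension count). Hence ${\mathcal Z}_S\subset \overline{{\mathcal O}}\subset {\mathcal K}^T$, so $F_S\in{\mathcal K}^T$, i.e. $(S,T)\in\mathbf{K}$.

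The main obstacle I anticipate is justifying cleanly that ${\mathcal Z}_S$ is the \emph{dense} $Z(u)$-orbit inside ${\mathcal B}_u^S$ — equivalently that $\overline{{\mathcal Z}_S}={\mathcal K}^S$ and that ${\mathcal Z}_S$ is open in ${\mathcal B}_u^S$. One way around this subtlety, which may be what the paper does, is to avoid the dense-orbit claim entirely: since ${\mathcal B}_u^S$ is irreducible with $\overline{{\mathcal B}_u^S}={\mathcal K}^S$, and $F_S\in {\mathcal B}_u^S$ (from Lemma \ref{lemma-standardization} with $\mathrm{st}(S)=S$), one has ${\mathcal K}^S\subset \overline{Z(u)F_S}=\overline{{\mathcal Z}_S}\subset {\mathcal K}^S$, giving $\overline{{\mathcal Z}_S}={\mathcal K}^S$; then ${\mathcal B}_u^S\cap {\mathcal K}^T\neq\emptyset$ together with $Z(u)$-stability of ${\mathcal K}^T$ and irreducibility gives ${\mathcal B}_u^S\subset {\mathcal K}^T$ after taking closure of a $Z(u)$-orbit, provided that $Z(u)$-orbit is dense in ${\mathcal B}_u^S$; and the only $Z(u)$-orbit that could be dense and contains a given point is handled by the finiteness of orbits in each Spaltenstein cell. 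I would present the $\Leftarrow$ direction first since it is immediate, then carry out the $\Rightarrow$ direction via the $Z(u)$-orbit/closure argument, flagging the density statement as the crux.
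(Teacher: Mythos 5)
Your ``$\Leftarrow$'' direction is correct and coincides with the paper's (Lemma \ref{lemma-standardization} applied with $\mathrm{st}(S)=S$). The ``$\Rightarrow$'' direction, however, has a genuine gap, and the claim you yourself flag as the crux --- that ${\mathcal Z}_S$ is the dense $Z(u)$-orbit of ${\mathcal B}_u^S$ --- is false. Take $Y(u)=(2,1)$ and $S=\mbox{\scriptsize$\young(13,2)$}$. Then ${\mathcal B}_u^S=\{(V_1\subset\ker u\subset V):\dim V_1=1\}\cong\mathbb{P}^1$, while $F_S=(\mathrm{Im}\,u\subset\ker u\subset V)$ is fixed by all of $Z(u)$ (both subspaces are canonical), so ${\mathcal Z}_S$ is a single point; the dense orbit of ${\mathcal B}_u^S$ is ${\mathcal Z}_\tau$ for $\tau=\mbox{\scriptsize$\young(23,1)$}$, and by the Remark following Proposition \ref{proposition-rectify-standard} that dense orbit is \emph{not} contained in ${\mathcal K}^T$ for $T=\mbox{\scriptsize$\young(12,3)$}$, even though ${\mathcal B}_u^S\cap{\mathcal K}^T\ni F_S$. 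The true picture is the reverse of the one your argument needs: $F_S$ is the most degenerate point, lying in $\overline{{\mathcal Z}_\tau}$ for every $\tau$ with $\mathrm{st}(\tau)=S$ (that is the content of Proposition \ref{proposition-rectify-standard}), not a point whose orbit closure swallows ${\mathcal B}_u^S$. Relatedly, your step ``$\overline{{\mathcal O}}\supset$ the dense orbit of ${\mathcal B}_u^S$'' for the orbit ${\mathcal O}$ of an arbitrary $F\in{\mathcal B}_u^S\cap{\mathcal K}^T$ runs the orbit-closure order backwards: the closure of an orbit contains only smaller orbits, never the dense one unless ${\mathcal O}$ is itself dense. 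Finally, the finiteness of the set of $Z(u)$-orbits in ${\mathcal B}_u^S$ is also unavailable in general (cf.\ the Remark in \ref{propositions}: not every flag lies in some ${\mathcal Z}_\tau$; and for $u$ regular nilpotent $\dim Z(u)=n$ while $\dim{\mathcal B}=n(n-1)/2$).

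The paper's proof of ``$\Rightarrow$'' does not use $Z(u)$ at all. It degenerates $F$ along the one-parameter subgroup $h_t$ of the diagonal torus of the Jordan basis with pairwise distinct weights $\epsilon_{p,q}=nq-p$; since $h_tuh_t^{-1}$ is a scalar multiple of $u$, this subgroup preserves ${\mathcal B}_u$ and every component, and its fixed points in ${\mathcal B}_u$ are exactly the flags $F_\tau$. Hence $\lim_{t\to\infty}h_tF=F_\tau$ lies in ${\mathcal K}^T$ by closedness, and it lies in ${\mathcal B}_u^S$ because the attracting set of $F_\tau$ is the Schubert cell $BF_\tau$ while ${\mathcal B}_u^S$ is the trace on ${\mathcal B}_u$ of an orbit of a parabolic $P\supset B$. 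Lemma \ref{lemma-standardization} then forces $\mathrm{st}(\tau)=S$, and Proposition \ref{proposition-rectify-standard} upgrades $F_\tau\in{\mathcal K}^T$ to $F_S\in{\mathcal K}^T$. Note that this limit point $h_tF$ leaves the $Z(u)$-orbit of $F$ (since $h_t\notin Z(u)$), which is precisely why a degeneration of this kind, rather than a $Z(u)$-orbit argument, is needed.
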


\begin{proof} The implication $(\Leftarrow)$ follows from Lemma \ref{lemma-standardization}.
The proof of the other implication relies on the following
construction.

Let $Y=Y(u)$ be the Young diagram representing the Jordan form
of $u$ (see \ref{young-diagram}). As in section
\ref{section_base_Jordan}, we consider a Jordan basis $(e_x)_{x\in
Y}$ indexed on the boxes of $Y$, with $u(e_x)=0$ for $x$ in the first
column of $Y$, and $u(e_x)=e_{x'}$, where $x'$ is the box on the
left of $x$, otherwise. Let $H\subset GL(V)$ be the subgroup of
automorphisms which are diagonal in the basis $(e_x)_{x\in Y}$.
The flags $F_\tau$, parameterized by row-standard
tableaux of shape $Y$, introduced in section
\ref{section_base_Jordan}, are exactly the
elements of the Springer fiber ${\mathcal B}_u$ which are fixed by
the natural action of $H$ on flags. However, the action of $H$ on
flags does not leave ${\mathcal B}_u$ stable.

We construct a one-parameter subgroup $H'\subset H$ which leaves
${\mathcal B}_u$ stable. To do this, we write $x_{p,q}$ the
$p$-th box of the $q$-th column of $Y$. Let $e_{p,q}=e_{x_{p,q}}$.
Thus
\begin{itemize}
\item $u(e_{p,q})=0$ for $q=1$,
\item $u(e_{p,q})=e_{p,q-1}$ otherwise.
\end{itemize}
Set $\epsilon_{p,q}=nq-p$. Then, we define $h_t:V\rightarrow V$ by
setting $h_t(e_{p,q})=t^{\epsilon_{p,q}}e_{p,q}$. Thus
$H':=(h_t)_{t\in\mathbb{C}^*}$ is a one-parameter subgroup of $H$.
One easily checks that $h_t(u(e_{p,q}))=t^{-n}u(h_t(e_{p,q}))$ for
any $t$, hence the action of $H'$ leaves ${\mathcal B}_u$ and
every component of ${\mathcal B}_u$ stable. Moreover, as the
$\epsilon_{p,q}$'s are pairwise distinct, the flags $F_\tau$ are
exactly the fixed points of ${\mathcal B}_u$ for the action of
$H'$.

Let $F\in{\mathcal K}^T$. Then for any $t\in{\mathbb
C}^*$ we have $h_tF\in{\mathcal K}^T$. It follows
$\lim_{t\rightarrow \infty}h_tF\in{\mathcal K}^T$. Moreover, the
limit $\lim_{t\rightarrow \infty}h_tF$ is necessarily a fixed
point of $H'$, hence $\lim_{t\rightarrow \infty}h_tF=F_\tau$ for
some row-standard tableau $\tau$. 
To show $(\Rightarrow)$, it is now
sufficient to show:
$$F\in{\mathcal B}_u^S\ \Rightarrow\ F_\tau:=\lim_{t\rightarrow \infty}h_tF\in{\mathcal B}_u^S.$$
Indeed, we then get that, if ${\mathcal B}_u^S\cap{\mathcal K}^T$ is nonempty, it contains an
element of the form $F_\tau$. So it follows from Lemma \ref{lemma-standardization}
that $S=\mathrm{st}(\tau)$, and then from Proposition \ref{proposition-rectify-standard} that
$(S,T)\in\mathbf{K}$.

Write $\{(p,q):$ $p=1,...,r$,
$q=1,...,\lambda_p\}=:\{(p_i,q_i):i=1,...,n\}$ so that we have
$\epsilon_{p_1,q_1}<...<\epsilon_{p_n,q_n}$. Write
$e_i=e_{p_i,q_i}$. Let $B\subset GL(V)$ 
be the Borel subgroup of upper triangular automorphisms. in the basis $(e_1,...,e_n)$.
Recall that ${\mathcal B}$ denotes the variety of complete flags
on $V$. Let $S(\tau)\subset {\mathcal B}$ be the $B$-orbit of
$F_\tau$ in ${\mathcal B}$. This is a Schubert cell. Then, it is a
classical fact that
\[S(\tau)=\{F\in{\mathcal B}:\lim_{t\rightarrow \infty}h_tF=F_\tau\}.\]
Let $P=\{g\in GL(V):g(\mathrm{ker}\,u^q)=\mathrm{ker}\,u^q\
\forall q\}$. This is a parabolic subgroup of $GL(V)$. Notice that
${\mathcal B}_u^S$ is the intersection between ${\mathcal B}_u$
and a $P$-orbit of the flag variety, namely: 
\[
{\mathcal P}:=\left\{(V_0,\ldots,V_n)\in{\mathcal B}:
\dim\,V_i\cap\ker\,u^q=c_{i,q}^S\ \ \forall i=1,...,n,\ \forall q\right\},
\]
where $c_{i,q}^S$ denotes the number of entries $j\leq i$ in the
first $q$ columns of $S$ (cf. the proof of Lemma
\ref{lemma-standardization}). Observe that $B\subset P$. Thus,
$F\in {\mathcal B}_u^S$ implies
\[
\lim_{t\rightarrow \infty}h_tF\in(BF)\cap {\mathcal
B}_u\subset{\mathcal B}_u^S.
\]
The proof is now complete.  
\end{proof}

We deduce the following

\begin{proposition}
\label{proposition-nonempty-intersection}
Let ${\mathcal K}^T,{\mathcal K}^{T'}\subset {\mathcal B}_u$ be
two components corresponding to standard tableaux $T$ and $T'$. We
have ${\mathcal K}^T\cap {\mathcal K}^{T'}\not=\emptyset$ if and
only if there is $S$ standard such that $S\in T$ and $S\in T'$.
\end{proposition}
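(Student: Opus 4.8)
The plan is to reduce Proposition \ref{proposition-nonempty-intersection} to Lemma \ref{lemma-criterion-nonempty-intersection} by using the Spaltenstein partition ${\mathcal B}_u=\bigsqcup_S {\mathcal B}_u^S$ indexed by standard tableaux $S$ of shape $Y(u)$. The key geometric input is that ${\mathcal K}^{T'}$, being the closure of the irreducible locally closed subset ${\mathcal B}_u^{T'}$, is covered by finitely many of the strata ${\mathcal B}_u^S$, and in particular ${\mathcal K}^{T'}$ is a union of some of these strata together with lower-dimensional pieces. More precisely, I would use that ${\mathcal K}^{T'}=\overline{{\mathcal B}_u^{T'}}$ meets a stratum ${\mathcal B}_u^S$ if and only if it contains points of ${\mathcal B}_u^S$, and since the strata form a partition of ${\mathcal B}_u$, the intersection ${\mathcal K}^T\cap{\mathcal K}^{T'}$ is nonempty if and only if there is some stratum ${\mathcal B}_u^S$ meeting both ${\mathcal K}^T$ and ${\mathcal K}^{T'}$.

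First I would prove the direction $(\Leftarrow)$: if $S$ is standard with $S\in T$ and $S\in T'$, then by Lemma \ref{lemma-criterion-nonempty-intersection} we have ${\mathcal B}_u^S\cap{\mathcal K}^T\neq\emptyset$ and ${\mathcal B}_u^S\cap{\mathcal K}^{T'}\neq\emptyset$. Since ${\mathcal B}_u^S$ is irreducible (it is even nonsingular, by \cite[\S II.5]{Spaltenstein}), and both ${\mathcal K}^T$ and ${\mathcal K}^{T'}$ are closed, I would argue that each of ${\mathcal K}^T\cap{\mathcal B}_u^S$ and ${\mathcal K}^{T'}\cap{\mathcal B}_u^S$ is a nonempty open subset of the irreducible ${\mathcal B}_u^S$ — this uses the fact, from Lemma \ref{lemma-criterion-nonempty-intersection} together with its proof via the contracting one-parameter subgroup $H'$, that once ${\mathcal K}^T$ meets ${\mathcal B}_u^S$ it actually contains a dense open subset of it (indeed ${\mathcal B}_u^S$ has the same dimension as ${\mathcal K}^T$, so ${\mathcal K}^T\cap{\mathcal B}_u^S$ is open in ${\mathcal B}_u^S$). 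Two nonempty open subsets of an irreducible space intersect, so their common point lies in ${\mathcal K}^T\cap{\mathcal K}^{T'}$, giving ${\mathcal K}^T\cap{\mathcal K}^{T'}\neq\emptyset$.

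For the direction $(\Rightarrow)$: suppose $F\in{\mathcal K}^T\cap{\mathcal K}^{T'}$. Since the strata ${\mathcal B}_u^S$ partition ${\mathcal B}_u$, the flag $F$ lies in a unique stratum ${\mathcal B}_u^S$ for some standard tableau $S$. Then ${\mathcal B}_u^S\cap{\mathcal K}^T\neq\emptyset$ and ${\mathcal B}_u^S\cap{\mathcal K}^{T'}\neq\emptyset$, so Lemma \ref{lemma-criterion-nonempty-intersection} gives $(S,T)\in\mathbf{K}$ and $(S,T')\in\mathbf{K}$, i.e. $S\in T$ and $S\in T'$, as required.

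I expect the main obstacle to be the $(\Leftarrow)$ direction, specifically the claim that $ {\mathcal K}^T\cap{\mathcal B}_u^S$ is \emph{open} (equivalently dense) in ${\mathcal B}_u^S$ rather than merely nonempty; cleanly justifying this is what makes the "two nonempty opens in an irreducible space meet" argument go through. The cleanest route is dimension-counting: $\dim{\mathcal B}_u^S=\dim{\mathcal B}_u=\dim{\mathcal K}^T$, and ${\mathcal K}^T\cap{\mathcal B}_u^S$ is a nonempty closed subset of ${\mathcal K}^T$ which, if proper, would have strictly smaller dimension — but one must rule this out. Alternatively, and perhaps more robustly, I would revisit the proof of Lemma \ref{lemma-criterion-nonempty-intersection}: the contracting one-parameter subgroup $h_t$ shows that whenever $F\in{\mathcal B}_u^S\cap{\mathcal K}^T$, the whole Schubert cell $S(\tau)\cap{\mathcal B}_u$ through $F$ (which meets ${\mathcal K}^T$) is contained in ${\mathcal B}_u^S$, and one can use this cell structure to exhibit that ${\mathcal K}^T$ contains a dense subset of ${\mathcal B}_u^S$. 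Either way the argument is short; I would present the dimension version as the primary one.
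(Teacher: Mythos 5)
Your $(\Rightarrow)$ direction is exactly the paper's argument and is fine: a point of ${\mathcal K}^T\cap{\mathcal K}^{T'}$ lies in a unique stratum ${\mathcal B}_u^S$, and Lemma \ref{lemma-criterion-nonempty-intersection} gives $(S,T),(S,T')\in\mathbf{K}$.

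The $(\Leftarrow)$ direction, however, rests on a claim that is false. You assert that ${\mathcal K}^T\cap{\mathcal B}_u^S$ is a nonempty \emph{open} (equivalently dense) subset of ${\mathcal B}_u^S$, so that two such subsets must meet. But ${\mathcal K}^T$ is closed in ${\mathcal B}_u$, so ${\mathcal K}^T\cap{\mathcal B}_u^S$ is \emph{closed} in ${\mathcal B}_u^S$; and whenever $S\neq T$ it is contained in ${\mathcal K}^T\cap{\mathcal K}^S$, which is a proper closed subset of the irreducible component ${\mathcal K}^S=\overline{{\mathcal B}_u^S}$ and hence has dimension strictly less than $\dim{\mathcal B}_u^S$. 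So ${\mathcal K}^T\cap{\mathcal B}_u^S$ is never dense in ${\mathcal B}_u^S$ unless $S=T$, and neither your dimension count nor the Schubert-cell variant can repair this. The irreducibility-of-strata route simply does not go through.

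What you missed is that the $(\Leftarrow)$ direction needs no topology at all: by the very definition of the relation $S\in T$ (section \ref{propositions}), the hypothesis $S\in T$ and $S\in T'$ says that the single flag $F_S$ belongs to both ${\mathcal K}^T$ and ${\mathcal K}^{T'}$, so ${\mathcal K}^T\cap{\mathcal K}^{T'}\ni F_S$ is nonempty. (This is also how the paper's one-line proof should be read: ${\mathcal B}_u^S\cap{\mathcal K}^T\cap{\mathcal K}^{T'}\neq\emptyset$ is equivalent to $(S,T),(S,T')\in\mathbf{K}$ because the common witness in the reverse direction is the explicit point $F_S$, not because the two intersections with ${\mathcal B}_u^S$ are large.) You invoked Lemma \ref{lemma-criterion-nonempty-intersection} only to get nonemptiness of each intersection separately, which discards the crucial information that the witnessing point can be taken to be the same flag in both.
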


\begin{proof}
As ${\mathcal B}_u$ is the union of the ${\mathcal B}_u^S$'s, the
intersection ${\mathcal K}^T\cap {\mathcal K}^{T'}$ is nonempty if
and only if there is some $S$ standard such that ${\mathcal B}_u^S\cap {\mathcal K}^T\cap {\mathcal K}^{T'}\not=\emptyset$.
By the previous lemma, it is equivalent to have
$(S,T),(S,T')\in\mathbf{K}$.  
\end{proof}

\section{An inductive property of the set $\mathbf{K}$ and stability by the Sch\"utzenberger involution}

\label{3}

In this section, we show two combinatorial properties of the set
$\mathbf{K}$ of pairs $(\tau,T)$ such that $\tau\in T$. First, in
the case where $\tau',T'$ are respective subtableaux of $\tau,T$
of same shape, we relate the belonging of $(\tau',T')$ in
$\mathbf{K}$ to the belonging of $(\tau,T)$. Next, we show that
the set $\mathbf{K}$ is stable by a combinatorial transformation
$(\tau,T)\mapsto (S\cdot \tau,T^S)$ involving the Sch\"utzenberger
involution $T\mapsto T^S$.

\subsection{Inductive property}

\label{subsection-inductive-property}

Let $T,T'$ be standard tableaux. We write $T'\subseteq T$ if $T'$
is a subtableau of $T$, i.e. is obtained from $T$ by deleting
entries $i,...,|T|$ for some $i\geq 1$. More generally, let
$\tau,\tau'$ be row-standard tableaux. We write $\tau'\subseteq
\tau$ if $|\tau'|\leq|\tau|$, and $\tau$ and $\tau'$ satisfy the
following property: if $i,j$ are entries of $\tau'$, then they lie in the same row of $\tau'$
if and only if they lie in the same row of $\tau$. For example:
$$\young(234,1)\ \subseteq\ \young(167,234,58)$$

We have the following

\begin{theorem}
\label{theorem-induction} Let $(\tau,T),(\tau',T')\in\mathbf{Y}$
and we suppose $\tau'\subseteq \tau$ and $T'\subseteq T$.
\begin{itemize}
\item[(a)] $(\tau,T)\in\mathbf{K}\Rightarrow (\tau',T')\in\mathbf{K}$.
\item[(b)] Moreover, if every $i\in\{|T'|+1,...,|T|\}$ lies in the same
column in $\tau$ and $T$, then
$(\tau,T)\in\mathbf{K}\Leftrightarrow (\tau',T')\in\mathbf{K}$.
\end{itemize}
\end{theorem}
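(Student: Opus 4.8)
The plan is to reduce everything to a statement about the orbits $\mathcal{Z}_\tau$ and the Spaltenstein strata $\mathcal{B}_u^T$, and to carry out the induction one entry at a time. Recall from Proposition~\ref{proposition-nonempty-intersection} and Lemma~\ref{lemma-criterion-nonempty-intersection} that membership questions of the form $\tau\in T$ are encoded by incidence of $\mathcal{Z}_\tau$ with the closed set $\mathcal{K}^T$, equivalently with the locally closed stratum $\mathcal{B}_u^T$ since $\mathcal{Z}_\tau$ is a single $Z(u)$-orbit and $\mathcal{K}^T=\overline{\mathcal{B}_u^T}$. The key geometric device is a ``truncation'' map: if $Y'=\mathrm{sh}(\tau')\subseteq Y=\mathrm{sh}(\tau)$ corresponds to keeping the entries $1,\dots,m$ with $m=|\tau'|$, and $u$ is nilpotent with $Y(u)=Y$, then choosing a $\tau$-basis $\underline e=(e_1,\dots,e_n)$ the subspace $V_m=\langle e_1,\dots,e_m\rangle$ is $u$-stable, $u|_{V_m}$ has Jordan type $Y'$, and the truncated flag $(V_0,\dots,V_m)$ lies in $\mathcal{Z}_{\tau'}\subset\mathcal{B}_{u|_{V_m}}$. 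Conversely, any flag in $\mathcal{B}_u$ restricts on its $m$-th space to a flag in $\mathcal{B}_{u|_{V_m}}$. I would set up this restriction as a morphism on an appropriate locally closed piece of $\mathcal{B}_u$ (the locus where $V_m$ has prescribed Jordan type $Y'$ — this is a union of strata, and contains $\mathcal{B}_u^T$ by the hypothesis $T'\subseteq T$), and check that it sends $\mathcal{B}_u^T$ into $\mathcal{B}_{u|_{V_m}}^{T'}$ and $\mathcal{Z}_\tau$ onto (a set meeting) $\mathcal{Z}_{\tau'}$.

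For part (a), suppose $(\tau,T)\in\mathbf{K}$, so $\mathcal{Z}_\tau\subset\mathcal{K}^T$, hence (by Spaltenstein, since $\dim\mathcal{B}_u^{T}=\dim\mathcal{B}_u$ and the strata are irreducible) the generic flag of $\mathcal{Z}_\tau$ actually lies in $\mathcal{B}_u^T$; more simply, $F_\tau\in\mathcal{Z}_\tau\cap\mathcal{K}^T$ and by Lemma~\ref{lemma-standardization} $F_\tau\in\mathcal{B}_u^{\mathrm{st}(\tau)}$, so it suffices to show that for a suitable $F\in\mathcal{Z}_\tau\cap\mathcal{B}_u^T$ its truncation lies in $\mathcal{Z}_{\tau'}\cap\mathcal{B}_u^{T'}$. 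Take $F\in\mathcal{B}_u^T$ arbitrary; since $T'\subseteq T$, the $m$-th space $V_m$ has Jordan type $Y_m^T=\mathrm{sh}(T')$, so restriction gives a flag in $\mathcal{B}_{u|_{V_m}}$ whose associated sequence of subdiagrams is $Y_1^T\subset\dots\subset Y_m^T=Y_i^{T'}$ — that is, it lies in $\mathcal{B}_{u|_{V_m}}^{T'}\subset\mathcal{K}^{T'}$. Applying this to $F\in\mathcal{Z}_\tau\cap\mathcal{B}_u^T$ (which is nonempty by hypothesis, as just noted) and observing that the truncation of a flag in $\mathcal{Z}_\tau$ lies in $\mathcal{Z}_{\tau'}$ — this is exactly the kind of ``$\tau$-basis restricts to a $\tau'$-basis'' statement above — we get $\mathcal{Z}_{\tau'}\cap\mathcal{K}^{T'}\neq\emptyset$, hence $(\tau',T')\in\mathbf{K}$ by Proposition~\ref{propositions}(b).

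For the converse in (b), the extra hypothesis that every $i\in\{m+1,\dots,n\}$ occupies the same column in $\tau$ and in $T$ forces the "tail" beyond $V_m$ to be rigid. I would argue as follows: given $(\tau',T')\in\mathbf{K}$, pick $F'=(V_0,\dots,V_m)\in\mathcal{Z}_{\tau'}\cap\mathcal{B}_{u|_{V_m}}^{T'}$; extend it to a flag $F=(V_0,\dots,V_n)\in\mathcal{Z}_\tau\subset\mathcal{B}_u$ by the recipe defining $\mathcal{Z}_\tau$ (a $\tau'$-basis of $V_m$ completes to a $\tau$-basis of $V$). It remains to check $F\in\mathcal{B}_u^T$, i.e. $Y(u|_{V_i})=Y_i^T$ for $i>m$; using the column-coincidence hypothesis together with the argument of Lemma~\ref{lemma-standardization} (counting $\dim\ker(u|_{V_i})^q$ via the entries $\le i$ in the first $q$ columns, which for $i>m$ are the same in $\tau$ and $T$ on the new boxes), one gets that the sequence $Y_{m+1}^T\subset\dots\subset Y_n^T$ is precisely the one read off from $F$, so $F\in\mathcal{B}_u^T\subset\mathcal{K}^T$, giving $(\tau,T)\in\mathbf{K}$.

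The main obstacle is making the truncation/restriction operation precise as a geometric map with the right source and target and verifying it is well-defined and behaves well on orbits and strata — in particular, checking that restriction sends $\mathcal{Z}_\tau$ onto $\mathcal{Z}_{\tau'}$ (not merely into something bigger) and that $\mathcal{B}_u^T$ really does lie in the locus where $V_m$ has the correct Jordan type $\mathrm{sh}(T')$, which is where the hypothesis $T'\subseteq T$ enters. The combinatorial bookkeeping in part (b) — translating "same column in $\tau$ and $T$ for the tail entries" into the equality of Jordan types of the $u|_{V_i}$, $i>m$ — is routine once the column-counting lemma of Lemma~\ref{lemma-standardization} is in hand, but it must be done carefully because it is exactly this condition that rules out counterexamples like the Remark following Proposition~\ref{proposition-rectify-standard}.
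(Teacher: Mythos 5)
There is a genuine gap, and it sits at the heart of both parts: you repeatedly assume that ${\mathcal Z}_\tau\cap{\mathcal B}_u^T$ (and likewise ${\mathcal Z}_{\tau'}\cap{\mathcal B}_{u'}^{T'}$) is nonempty. It is not: by Lemma \ref{lemma-standardization} the whole orbit ${\mathcal Z}_\tau$ lies in the single stratum ${\mathcal B}_u^{\mathrm{st}(\tau)}$, and the strata ${\mathcal B}_u^S$ are pairwise disjoint, so ${\mathcal Z}_\tau\cap{\mathcal B}_u^T=\emptyset$ whenever $T\neq\mathrm{st}(\tau)$ --- which is exactly the interesting case (for $T=\mathrm{st}(\tau)$ the conclusion is already Proposition \ref{proposition-standardization}). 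The hypothesis $(\tau,T)\in\mathbf{K}$ only places $F_\tau$ in the closure ${\mathcal K}^T=\overline{{\mathcal B}_u^T}$, i.e.\ in the boundary of the stratum; your assertion that ``the generic flag of ${\mathcal Z}_\tau$ actually lies in ${\mathcal B}_u^T$'' is false, and with it the step where you truncate a flag of ${\mathcal Z}_\tau\cap{\mathcal B}_u^T$. A secondary problem: for $F\in{\mathcal B}_u^T$ the space $V_{n'}$ varies with $F$, so the ``truncated'' flags live in different Springer fibers ${\mathcal B}_{u|_{V_{n'}}}$; your truncation has no fixed target variety.

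The real content of part (a) is to repair both defects at once by making truncation an algebraic morphism, with values in the fixed fiber ${\mathcal B}_{u'}$ ($u'=u|_{V'}$), on an open set of ${\mathcal B}_u$ containing both a dense subset of ${\mathcal B}_u^T$ and the point $F_\tau$, so that one can pass to the limit. The paper does this by constructing, for each $u$-stable $W$ of Jordan type $\mathrm{sh}(T')$ with $W\cap V''=0$ (where $V''=\langle e_{n'+1},\dots,e_n\rangle$), an automorphism $\beta_W\in GL(V)$ commuting with $u$ on $W$, carrying $W$ to $V'$, depending algebraically on $W$ and equal to the identity at $W=V'$; the map $F\mapsto(\beta_{V_{n'}}(V_0),\dots,\beta_{V_{n'}}(V_{n'}))$ then sends ${\mathcal B}_u^T$ (intersected with this open locus, which is dense in ${\mathcal K}^T$) into ${\mathcal B}_{u'}^{T'}$ and sends $F_\tau$ to $F_{\tau'}$, whence $F_{\tau'}\in{\mathcal K}^{T'}$ by continuity. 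Nothing in your outline supplies this family $\beta_W$ or any substitute for the limit argument, so part (a) is missing its key construction. Part (b) suffers from the same misstep (``pick $F'\in{\mathcal Z}_{\tau'}\cap{\mathcal B}_{u'}^{T'}$''), but there the repair is immediate: the extension map $\Psi(W_0,\dots,W_{n'})=(W_0,\dots,W_{n'},V_{n'+1},\dots,V_n)$ is algebraic on all of ${\mathcal B}_{u'}$, your column-counting shows $\Psi({\mathcal B}_{u'}^{T'})\subset{\mathcal B}_u^T$, hence $\Psi({\mathcal K}^{T'})\subset{\mathcal K}^T$ and $F_\tau=\Psi(F_{\tau'})\in{\mathcal K}^T$.
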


\begin{proof}[Proof of Theorem \ref{theorem-induction}.]
We first set the notation. 
Let $n=|T|$ and let $V=\mathbb{C}^n$.
The component ${\mathcal K}^T$ is the closure of the subset ${\mathcal B}_u^T$ 
in the Springer fiber ${\mathcal B}_u$, for $u\in\mathrm{End}(V)$ nilpotent of appropriate Jordan form.
Let $\underline{e}=(e_1,\ldots,e_n)$ be a $\tau$-basis and let
$F_\tau=(\langle e_1,\ldots,e_i\rangle)_{i=0,\ldots,n}\in{\mathcal Z}_\tau\subset {\mathcal
B}_u$ be the corresponding adapted flag.
Denote $n'=|T'|$ and $V'=\langle e_1,\ldots,e_{n'}\rangle$.
Let $u'=u_{|V'}\in\mathrm{End}(V')$ be the restriction of $u$ to $V'$
and let ${\mathcal B}_{u'}$ be the variety of $u'$-stable, complete flags of $V'$.
The component ${\mathcal K}^{T'}$ associated to $T'$ is the closure of the subset
${\mathcal B}_{u'}^{T'}\subset {\mathcal B}_{u'}$. 
Moreover, the flag $F_{\tau'}:=(\langle e_1,\ldots,e_i\rangle)_{i=0,\ldots,n'}$ belongs to
the orbit ${\mathcal Z}_{\tau'}\subset{\mathcal B}_{u'}$.

\smallskip

Let us show part (a) of the theorem: 
we suppose that $F_{\tau}\in{\mathcal K}^{T}$
and we have to show that $F_{\tau'}\in {\mathcal K}^{T'}$.
We let $Y'=\mathrm{sh}(T')$ and $V''=\langle e_{n'+1},\ldots,e_n\rangle$.
Let ${\mathcal G}_{n'}(V)$ be the variety of $n'$-dimensional
subspaces of $V$. Let ${\mathcal G}_{Y'}(V)\subset {\mathcal G}_{n'}(V)$ be the subset
of subspaces $W$ which are
stable by $u$ and such that $Y(u_{|W})=Y'$.
This is a locally closed subvariety of ${\mathcal G}_{n'}(V)$,
which contains $V'$, and an open neighborhood of $V'$ in ${\mathcal G}_{Y'}(V)$
is formed by $\Omega=\{W\in {\mathcal G}_{Y'}(V):W\cap V''=0\}$.
We rely on the following

\medskip
\noindent
{\bf Claim.}
{\em There is an algebraic map $\beta:\Omega\rightarrow GL(V)$,
$W\mapsto \beta_W$ with the properties:
\begin{itemize}
\item[(1)] $\beta_W(W)=V'$ and $\beta_W\circ u(x)=u\circ \beta_W(x)$ for all $x\in W$;
\item[(2)] $\beta_{V'}(x)=x$ for all $x\in V'$.
\end{itemize}}

\smallskip
\noindent
Assume the claim is true. Let ${\mathcal O}=\{(V_0,\ldots,V_n)\in {\mathcal B}_u:V_{n'}\in\Omega\}$. Then, the map
\[\Phi:{\mathcal O}\rightarrow {\mathcal B}_{u'},\ \ 
(V_0,\ldots,V_n)\mapsto(\beta_{V_{n'}}(V_0),\ldots,\beta_{V_{n'}}(V_{n'}))\]
is well defined and algebraic.
On one hand, we see that
$\Phi({\mathcal O}\cap {\mathcal B}_u^T)\subset {\mathcal B}_{u'}^{T'}\subset {\mathcal K}^{T'}$.
Note that ${\mathcal O}\cap {\mathcal B}_u^T$ is open in ${\mathcal B}_u^T$, hence it is a dense subset
of the component ${\mathcal K}^T$. Thus, $\Phi({\mathcal O}\cap{\mathcal K}^T)\subset {\mathcal K}^{T'}$.
On the other hand, $\Phi(F_{\tau})=F_{\tau'}$.
Therefore, we obtain $F_{\tau'}\in {\mathcal K}^{T'}$. It remains to show the claim, to complete
the proof of part (a).

\medskip

\noindent
{\em Proof of the claim.}
For a subset $S\subset V$, let $\langle S\rangle_u=\langle u^l(x):x\in S,\ l\geq 0\rangle$
be the smallest $u$-stable subspace containing $S$.
Thus, the Jordan block decomposition of $V$ can be written
$V=\bigoplus_{j=1}^r\langle e_{i_j}\rangle_u$ where $1\leq i_1,\ldots,i_r\leq n$ are pairwise distinct.
In turn, the Jordan block decomposition of $V'$ can be written
$V'=\bigoplus_{j=1}^k\langle e_{i'_j}\rangle_u$ with $1\leq i'_1,\ldots,i'_k\leq n'$.
We may suppose that $\langle e_{i'_j}\rangle_u\subset \langle e_{i_j}\rangle_u$ for all $j$,
and, denoting $l_j=\dim \langle e_{i'_j}\rangle_u$, that we have $l_1\geq\ldots\geq l_k$.
Notice that the numbers $l_1,\ldots,l_k$ are the sizes of the rows of the diagram $Y'$.

Suppose that, for each $j\in\{1,\ldots,k\}$, we are given an element $f_j\in V''\cap\ker u^{l_j}$.
This implies 
\begin{equation}
\label{formula-unit}
f_j\in\langle e_{i_{j+1}}\rangle_u\oplus\ldots\oplus\langle e_{i_r}\rangle_u
\end{equation} 
hence
the subspaces $\langle e_{i'_j}+f_j\rangle_u$ (for $j=1,\ldots,k$)
are in direct sum.
Also we have $\dim \langle e_{i'_j}+f_j\rangle_u=l_j$ for all $j$, thus
$W(f_1,\ldots,f_k):=\langle e_{i'_j}+f_j:j=1,\ldots,k\rangle_u\in \Omega$.

Now, we fix $W\in \Omega$. For each $j\in \{1,\ldots,k\}$, there is a unique $f_j\in V''$ 
(depending algebraically on $W$)
such that $e_{i'_j}+f_j\in W$.
Let us show that
\begin{equation}
\label{formula-noyau}
f_j\in \ker u^{l_j}\ \ \mbox{for all $j\in\{1,\ldots,k\}$}.
\end{equation}
Suppose by contradiction that there is $j$ such that $f_j\notin \ker u^{l_j}$. We take $j$ minimal, so that
$f_1,\ldots,f_{j-1}$ satisfy (\ref{formula-unit}). Let $j'\leq j$ be minimal such that
$f_j\notin \ker u^{l_{j'}}$.
The subspaces $\langle e_{i'_t}+f_t\rangle_u$, for $t\in\{1,\ldots,j'-1,j\}$, are in direct sum,
and all have dimension bigger than $l_{j'}$. Hence $u_{|W}$ has $j'$ Jordan blocks of length bigger than $l_{j'}$.
This contradicts the fact that $Y(u_{|W})=Y'$. Therefore, we have shown (\ref{formula-noyau}).

From (\ref{formula-noyau}), we derive that $W=W(f_1,\ldots,f_k)$.

Then, the vectors $u^l(e_{i'_j})$ and $u^l(e_{i'_j}+f_j)$, for $j=1,\ldots,k$, $l=0,\ldots,l_j-1$, form bases of $V'$
and $W$ respectively. Each one of both is completed into a basis of $V$ by adding
the vectors $e_{n'+1},\ldots,e_n$.
The change of basis isomorphism from the first basis to the second one algebraically depends on $f_1,\ldots,f_k$,
and moreover, using (\ref{formula-unit}), we see that it has determinant $1$. Therefore, also the inverse change of basis isomorphism
algebraically depends on $f_1,\ldots,f_k$, and this is the desired $\beta_W$.
The claim is proved.

\medskip

Finally, let us show part (b) of the theorem. 
It remains to show the implication $(\Leftarrow)$:
we suppose that $F_{\tau'}\in{\mathcal K}^{T'}$ and we have to show that $F_{\tau}\in{\mathcal K}^{T}$.
Let us write $V_i=\langle e_1,\ldots,e_i\rangle$, so that 
$F_\tau=(V_0,\ldots,V_n)$ and $F_{\tau'}=(V_0,\ldots,V_{n'})$.
The map
\[\Psi:{\mathcal B}_{u'}\rightarrow {\mathcal B}_u,\ \ 
(W_0,\ldots,W_{n'})\mapsto(W_0,\ldots,W_{n'},V_{n'+1},\ldots,V_n),\]
is clearly well defined and algebraic.
It is assumed that every $i\in\{n'+1,\ldots,n\}$ is in the same column in $\tau$ and $T$,
this 
easily implies that the Young diagram $Y(u_{|V_i})$ coincides with $Y_i^T$
for all $i=n'+1,\ldots,n$. It follows that $\Psi({\mathcal B}_{u'}^{T'})\subset {\mathcal B}_{u}^{T}$.
Therefore, $\Psi({\mathcal K}^{T'})\subset {\mathcal K}^{T}$, and we derive that
$F_\tau=\Psi(F_{\tau'})\in {\mathcal K}^{T}$.
The proof of the theorem is now complete.
\end{proof}

\subsection{Sch\"utzenberger involution}

\label{section-schutzenberger}

We show that the set $\mathbf{K}$ is stabilized by a combinatorial
operation $(\tau,T)\mapsto (S\cdot\tau,T^S)$ involving the
Sch\"utzenberger transform $T^S$ of $T$.

Let $T$ be a standard tableau, and let $Y=\mathrm{sh}(T)$ be its
shape. For $i=0,...,n$, let $T[i+1,...,n]$ be the skew subtableau
of entries $i+1,...,n$. We refer to \cite{Fulton} for the
definition of jeu de taquin. Jeu de taquin transforms
$T[i+1,...,n]$ into a Young tableau whose shape is a subdiagram
$Y_{n/i}^T\subset Y$. For example:
$$
T=\young(134,257,6)\qquad
T[4,...,7]=\young(::4,:57,6)\stackrel{\mbox{\tiny
jeu\,de\,taquin}}{\longrightarrow}\young(47,5,6)\qquad
Y_{7/3}^T=\yng(2,1,1)
$$
In that way, we obtain an increasing sequence of subdiagrams
\[\emptyset\subset Y_{n/n-1}^T\subset \ldots\subset Y_{n/0}^T=Y.\]
Let $T^S$ be the standard tableau of shape $Y$ obtained by putting
$i$ in the new box of $Y_{n/n-i}^T$ compared to $Y_{n/n-i+1}^T$.
The tableau $T^S$ is called the {\em Sch\"utzenberger transform of
$T$}. The map $T\mapsto T^S$ so-obtained is an involution.
For example
$$
T=\young(134,257,6)\ \ \longrightarrow\ \ T^S=\young(126,357,4)
$$

The Sch\"utzenberger transform $T^S$ has the following
interpretation in terms of components of Springer fibers, which we
recall from \cite{vanLeeuwen}. For $F=(V_0,...,V_n)\in{\mathcal
B}_u$, let $u_{|V_n/V_i}\in\mathrm{End}(V_n/V_i)$ be the nilpotent
endomorphism induced by $u$, and let $Y(u_{|V_n/V_i})$ be the
Young diagram which represents its Jordan form in the sense of
section \ref{young-diagram}. Let $Y_i^T$ be the shape of the subtableau of $T$ with entries
$1,...,i$. Define
\[{\mathcal B}_{u,T}=\{F=(V_0,...,V_n)\in{\mathcal B}_u:Y(u_{|V_n/V_{n-i}})=Y_i^T\ \forall i=1,...,n\}.\]
The ${\mathcal B}_{u,T}$'s form a partition of the variety
${\mathcal B}_u$. By \cite[Theorem 3.3]{vanLeeuwen}, the closure
of ${\mathcal B}_{u,T}$ is the irreducible component ${\mathcal
K}^{T^S}\subset{\mathcal B}_u$ associated to the tableau $T^S$.

\smallskip

Now, let $\tau$ be a row-standard tableau. Let $S\cdot \tau$ denote the
row-standard tableau constructed as follows. First, replace each
entry $i$ in $\tau$ by $n-i+1$. Then, reverse the order of the
entries inside each row, so that the tableau so-obtained becomes
row-standard. The map $\tau\mapsto S\cdot \tau$ is clearly an
involution. For example:
$$\tau=\young(347,156,2)\ \ \longrightarrow\ \ S\cdot\tau=\young(145,237,6)$$
We get finally an involutive map
$\mathbf{Y}\rightarrow\mathbf{Y}$, $(\tau,T)\mapsto (S\cdot
\tau,T^S)$. We show:

\begin{proposition}
\label{proposition-Schutzenberger} We have $(\tau,T)\in
\mathbf{K}$ if and only if $(S\cdot\tau,T^S)\in\mathbf{K}$.
\end{proposition}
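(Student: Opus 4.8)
The plan is to exploit the duality between the two partitions of $\mathcal{B}_u$ encountered in the excerpt: the Spaltenstein partition $\mathcal{B}_u=\bigsqcup_T\mathcal{B}_u^T$, whose closures give $\mathcal{K}^T$, and the partition $\mathcal{B}_u=\bigsqcup_T\mathcal{B}_{u,T}$ coming from the induced nilpotent endomorphisms on the quotients $V_n/V_i$, whose closures give $\mathcal{K}^{T^S}$ by \cite[Theorem 3.3]{vanLeeuwen}. The key geometric device should be a duality isomorphism of Springer fibers. Fix a nilpotent $u:V\to V$ with $Y(u)=\mathrm{sh}(T)$, choose a nondegenerate bilinear form, and let $u^*$ be the adjoint; then $u^*$ has the same Jordan type, and the map sending a flag $F=(V_0,\ldots,V_n)$ to its orthogonal-complement flag $F^\perp=(V_n^\perp\subset\ldots\subset V_0^\perp)$ (reindexed) is an isomorphism $\delta:\mathcal{B}_u\xrightarrow{\sim}\mathcal{B}_{u^*}$. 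The point is that $u^*$ acting on $V_i^\perp$ is, up to the natural pairing, dual to $u$ acting on $V/V_i$, so $Y((u^*)_{|V_i^\perp})=Y(u_{|V/V_i})$. Hence $\delta$ carries $\mathcal{B}_{u,T}$ onto $\mathcal{B}_{u^*}^{T}$, and therefore the component $\mathcal{K}^{T^S}\subset\mathcal{B}_u$ onto the component $\mathcal{K}^{T}\subset\mathcal{B}_{u^*}$.

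Next I would track what $\delta$ does to the special orbits $\mathcal{Z}_\tau$, or equivalently to the fixed flags $F_\tau$. Take a Jordan basis $(e_x)_{x\in Y}$ of $u$ as in section~\ref{section_base_Jordan}, and choose the bilinear form for which the dual basis $(e_x^*)$ is, after relabeling the boxes by $x\mapsto\bar x$ (reflecting $Y$ and reversing rows, matching the recipe $i\mapsto n-i+1$ followed by reversing each row that defines $S\cdot\tau$), again a Jordan basis of $u^*$. A direct check — reversing the chain $F_\tau$ and passing to orthogonals term by term — shows that $\delta(F_\tau)=F^{u^*}_{S\cdot\tau}$, the fixed flag for $u^*$ attached to the tableau $S\cdot\tau$. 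The combinatorial bookkeeping here is exactly the observation already made in section~\ref{section-schutzenberger} that $S\cdot\tau$ is obtained from $\tau$ by $i\mapsto n-i+1$ and then re-sorting rows; matching this against the reindexing $V_i\mapsto V_{n-i}^\perp$ of the flag is the only slightly fiddly part.

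Putting the two pieces together: $(\tau,T)\in\mathbf{K}$ means $F_\tau\in\mathcal{K}^T\subset\mathcal{B}_u$, i.e. (by Spaltenstein, taking closures of $\mathcal{B}_u^T$) it is equivalent to $F_\tau\in\overline{\mathcal{B}_u^T}$. Applying $\delta$, and using that $\delta$ is an isomorphism (so commutes with closures), $F_\tau\in\mathcal{K}^T$ iff $\delta(F_\tau)=F^{u^*}_{S\cdot\tau}\in\delta(\mathcal{K}^T)$. Now I must relate $\delta(\mathcal{K}^T)$ to a Spaltenstein component. One clean way is to iterate the duality: apply the van Leeuwen description. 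Since $\delta$ sends $\mathcal{B}_{u,T}$ to $\mathcal{B}_{u^*}^T$, it sends $\mathcal{B}_u^T$ to $\mathcal{B}_{u^*,T}$, hence $\mathcal{K}^T=\overline{\mathcal{B}_u^T}$ to $\overline{\mathcal{B}_{u^*,T}}=\mathcal{K}^{T^S}\subset\mathcal{B}_{u^*}$. Therefore $F_\tau\in\mathcal{K}^T$ iff $F^{u^*}_{S\cdot\tau}\in\mathcal{K}^{T^S}$ in $\mathcal{B}_{u^*}$, i.e. $(S\cdot\tau,T^S)\in\mathbf{K}$; and since the problem of whether $\tau\in T$ does not depend on the choice of the nilpotent of the given Jordan type (as remarked before the definition of $\mathbf{K}$), this is the statement. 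I expect the main obstacle to be the careful verification that $\delta$ indeed swaps the two partitions in the stated way — i.e. the identity $Y((u^*)_{|V_i^\perp})=Y(u_{|V/V_i})$ together with $\delta(\mathcal{B}_u^T)=\mathcal{B}_{u^*,T}$ — and the parallel bookkeeping identity $\delta(F_\tau)=F^{u^*}_{S\cdot\tau}$; both are elementary linear algebra but require choosing the pairing and the relabeling of the boxes consistently.
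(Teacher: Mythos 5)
Your proposal is correct and follows essentially the same route as the paper: the paper also passes to the dual nilpotent $u^*$ (on the dual space rather than via a chosen bilinear form, an immaterial difference), uses the orthogonal-complement flag isomorphism together with van Leeuwen's theorem to get $\Phi(\mathcal{K}^T)={\mathcal{K}^*}^{T^S}$, and checks that the reversed dual basis of a $\tau$-basis is an $(S\cdot\tau)$-basis so that $\Phi(\mathcal{Z}_\tau)=\mathcal{Z}^*_{S\cdot\tau}$.
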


\begin{proof}
Embed the component ${\mathcal K}^T$ and the orbit ${\mathcal
Z}_\tau$ in the Springer fiber ${\mathcal B}_u$ corresponding to
some nilpotent $u\in\mathrm{End}(V)$. Let $V^*$ be the dual vector
space of $V$ and let $u^*\in\mathrm{End}(V^*)$ be the dual
(nilpotent) endomorphism. Let ${\mathcal B}_{u^*}$ be the
corresponding Springer fiber. Both $u$ and $u^*$ have the same
Jordan shape, hence ${\mathcal B}_u$ and ${\mathcal B}_{u^*}$ are
isomorphic. Let ${{\mathcal K}^*}^{T^S}\subset {\mathcal B}_{u^*}$
be the component of ${\mathcal B}_{u^*}$ associated to $T^S$, and
let ${\mathcal Z}^*_{S\cdot\tau}\subset {\mathcal B}_{u^*}$ be the
$Z(u^*)$-orbit of ${\mathcal B}_{u^*}$ associated to $S\cdot\tau$. For a
subspace $W\subset V$, write $W^\perp=\{\phi\in
V^*:\phi(w)=0\ \forall w\in W\}$. Let $\Phi:{\mathcal
B}_u\rightarrow {\mathcal B}_{u^*}$ be the map which sends
$(V_0,...,V_n)$ to $(V_n^\perp,...,V_0^\perp)$. It is an
involutive isomorphism. 
On one hand, notice that whenever $W\subset V$ is a $u$-stable subspace, the restriction
$u_{|W}\in\mathrm{End}(W)$ and the map $(u^*)_{|V^*/W^\perp}\in\mathrm{End}(V^*/W^\perp)$ induced by $u^*$
have the same Jordan form.
This implies that $\Phi({\mathcal B}_u^T)={\mathcal B}_{u^*,T}$,
whence the equality $\Phi({\mathcal K}^T)={{\mathcal K}^*}^{T^S}$. On the other hand,
let $\underline{e}=(e_1,...,e_n)$ be a $\tau$-basis of $V$ and let
$\underline{e^*}=(e_1^*,...,e_n^*)$ be its dual basis. Then
$\underline{{e^*}'}=(e_n^*,...,e_1^*)$ is a $(S\cdot\tau)$-basis
of $V^*$ and we see that
$\Phi(F(\underline{e}))=F(\underline{{e^*}'})$. It follows
$\Phi({\mathcal Z}_\tau)={\mathcal Z}^*_{S\cdot\tau}$. We get
finally ${\mathcal Z}_\tau\subset {\mathcal K}^T$
$\Leftrightarrow$ ${\mathcal Z}^*_{S\cdot \tau}\subset {{\mathcal
K}^*}^{T^S}$. Therefore, $(\tau,T)\in\mathbf{K}$ $\Leftrightarrow$
$(S\cdot \tau,T^S)\in\mathbf{K}$.  
\end{proof}

\section{The general inclusion $\mathbf{K}\subset\mathbf{R}$}

\label{4}

In this section, we associate to a row-standard tableau $\tau$ a
sequence of Young diagrams $Y_{j/i}(\tau)$ for $0\leq i<j\leq n$,
and we associate to a standard tableau $T$ a sequence of Young
diagrams $Y_{j/i}^T$ for $0\leq i<j\leq n$ (cf.
\ref{sequences-subdiagrams}). We write
$$\tau\preceq T\mbox{ \ if \ }Y_{j/i}(\tau)\preceq Y_{j/i}^T\mbox{ for any }i,j,$$
where $\preceq$ is the dominance relation. The set
$$\mathbf{R}:=\{(\tau,T)\in\mathbf{Y}:\tau\preceq T\}$$
has a purely combinatorial definition.

Actually, the diagrams $Y_{j/i}(\tau)$ (resp. $Y_{j/i}^T$)
represent the Jordan forms of the nilpotent maps induced by $u$ on
the subquotients $V_j/V_i$ of a flag $F\in{\mathcal Z}_\tau$
(resp. of a generic element $F\in{\mathcal K}^T$).

We show the general implication
$$\tau\in T\Rightarrow \tau\preceq T,$$
which means that we have the inclusion $\mathbf{K}\subset
\mathbf{R}$ (cf. Theorem \ref{theorem_necessary_condition}). This
provides a combinatorial necessary condition for having $\tau\in
T$. However, this condition is not sufficient whenever $\tau,T$
are not of hook, two-row or two-column type (cf. Proposition
\ref{proposition-counterexample}).

\subsection{Lower semi-continuity of the Jordan form on a subquotient}

\label{semi-continuity}

The set of Young diagrams is partially ordered with the dominance
order: let $Y,Y'$ be two Young diagrams, let $\lambda_1\geq
...\geq \lambda_r$ (resp. $\lambda'_1\geq ...\geq \lambda'_r$) be
the lengths - possibly $0$ - of the rows of $Y$ (resp. of $Y'$),
we write $Y\preceq Y'$ if
$$\lambda_1+...+\lambda_p\leq \lambda'_1+...+\lambda'_p\ \forall p=1,...,r.$$
Equivalently, denoting by $\mu_1\geq ...\geq \mu_s$ (resp.
$\mu'_1\geq ...\geq \mu'_s$) the lengths of the columns of $Y$
(resp. $Y'$), we have $Y\preceq Y'$ if and only if
$$\mu'_1+...+\mu'_q\leq \mu_1+...+\mu_q\ \forall q=1,...,s.$$

Recall from \ref{young-diagram} that, for
$F=(V_0,...,V_n)\in{\mathcal B}_u$, we denote by $Y(u_{|V_i})$ the
diagram representing the Jordan form of the nilpotent
$u_{|V_i}\in\mathrm{End}(V_i)$ induced by $u$. Observe that the
map $F\mapsto Y(u_{|V_i})$ is lower semicontinuous, i.e. the
subset
$$\{F\in{\mathcal B}_u: Y(u_{|V_i})\preceq Y_0\}$$
is closed for any Young diagram $Y_0$. Indeed, we see that the
number of boxes in the first $q$ columns of $Y(u_{|V_i})$ is $\dim
\ker (u_{|V_i})^q=n-\mathrm{rk}(u_{|V_i})^q$, and the map
$F\mapsto \mathrm{rk}\,(u_{|V_i})^q$ is lower semicontinuous.
Therefore, we get an implication:
$$F\in{\mathcal K}^T\ \Rightarrow\ Y(u_{|V_i})\preceq Y_i^T\ \forall i=1,...,n.$$
More generally, for $0\leq i<j\leq n$ and for
$F=(V_0,...,V_n)\in{\mathcal B}_u$, we may consider the nilpotent
map $u_{|V_j/V_i}\in\mathrm{End}(V_j/V_i)$ induced by $u$ on the
subquotient $V_j/V_i$, and its Young diagram $Y(u_{|V_j/V_i})$.

\begin{lemma}
\label{lemma-semi-continuity} The map $F=(V_0,...,V_n)\mapsto
Y(u_{|V_j/V_i})$ is lower semicontinuous.
\end{lemma}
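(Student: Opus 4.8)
The plan is to reduce the statement to the already-established semicontinuity of the Jordan form on a subspace, by re-expressing the number of boxes in the first $q$ columns of $Y(u_{|V_j/V_i})$ in terms of ranks of linear maps that depend algebraically (hence, for ranks, lower semicontinuously) on the flag $F$. As in section \ref{semi-continuity}, the number of boxes in the first $q$ columns of a Young diagram $Y(\nu)$ associated to a nilpotent $\nu$ equals $\dim\ker\nu^q$. So I must show that, for each fixed $q\geq 1$ and each fixed Young diagram $Y_0$, the locus where $\dim\ker(u_{|V_j/V_i})^q$ is at least the number of boxes in the first $q$ columns of $Y_0$, for every $q$, is closed; equivalently that $F\mapsto \dim\ker(u_{|V_j/V_i})^q$ is lower semicontinuous for each $q$, i.e. that $F\mapsto \mathrm{rk}\,(u_{|V_j/V_i})^q$ is lower semicontinuous.

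The key computation is to identify $\dim\ker(u_{|V_j/V_i})^q$ with a rank-type quantity. Writing $\pi:V_j\to V_j/V_i$ for the projection, one has $\ker(u_{|V_j/V_i})^q=\pi\big(\{x\in V_j: u^q(x)\in V_i\}\big)$, so
\[
\dim\ker(u_{|V_j/V_i})^q=\dim\big(\{x\in V_j:u^q x\in V_i\}\big)-\dim\big(\{x\in V_i:u^q x\in V_i\}\big).
\]
Since $V_i$ is $u$-stable, the second term is simply $\dim V_i=i$, a constant. For the first term, $\{x\in V_j:u^qx\in V_i\}$ is the preimage under $u^q_{|V_j}:V_j\to V$ of $V_i$, so its dimension is $j-\dim\big(u^q(V_j)\big)+\dim\big(u^q(V_j)\cap V_i\big)=j-\mathrm{rk}(u^q_{|V_j})+\dim\big(u^q(V_j)\cap V_i\big)$. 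Hence
\[
\dim\ker(u_{|V_j/V_i})^q=j-i-\mathrm{rk}(u^q_{|V_j})+\dim\big(u^q(V_j)\cap V_i\big).
\]
Now $\mathrm{rk}(u^q_{|V_j})$ is lower semicontinuous in $F$, so $-\mathrm{rk}(u^q_{|V_j})$ is upper semicontinuous; thus to conclude lower semicontinuity of the left-hand side I need $\dim\big(u^q(V_j)\cap V_i\big)$ to be lower semicontinuous, which is false in general — intersection dimension jumps up on closed sets, not down. So this exact decomposition is not quite the right one and must be massaged.

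The cleaner route, which I would actually carry out, is to avoid intersections of moving subspaces by using duality or by a direct flag argument: note that $u^q(V_j)\cap V_i = u^q(V_j)\cap V_i$ can be rewritten using $u^q(V_j)\cap V_i = u^q\big(\{x\in V_j: u^qx\in V_i\}\big)$, combined again with $V_i$ being $u$-stable, to get $\dim\big(u^q(V_j)\cap V_i\big)=\dim\big(\{x\in V_j:u^qx\in V_i\}\big)-\dim\ker(u^q_{|V_j})=\dim\big(\{x\in V_j:u^qx\in V_i\}\big)-(j-\mathrm{rk}(u^q_{|V_j}))$; substituting back just recovers the identity circularly, so instead I will directly express everything through maps with fixed target. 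The point is that $\{x\in V_j:u^qx\in V_i\}$ is exactly $\ker\big(\mathrm{pr}\circ u^q_{|V_j}\big)$ where $\mathrm{pr}:V\to V/V_i$ is the (fixed) quotient map once $i$ is fixed — but $V_i$ moves with $F$, so $\mathrm{pr}$ is not fixed. The honest fix: show $\dim\ker(u_{|V_j/V_i})^q = \dim V_j - \mathrm{rk}\big(N_q\big)$ where $N_q:V_j\to (V/V_i)$ is $x\mapsto u^qx \bmod V_i$, and observe that although $V/V_i$ varies, the rank $\mathrm{rk}(N_q)$ equals $\mathrm{rk}(u^q_{|V_j}) - \dim\big(u^q(V_j)\cap V_i\big)$... this again reintroduces the bad term. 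The resolution I expect to use is to count columns from the bottom instead of box-by-box, or equivalently to run the whole argument on the dual: the subquotient $u_{|V_j/V_i}$ is dual to the restriction $(u^*)_{|V_i^\perp/V_j^\perp}$... but that is still a subquotient, so it only shifts the problem.

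Therefore the step I expect to be the main obstacle is precisely this: proving lower semicontinuity of $F\mapsto \mathrm{rk}\,(u_{|V_j/V_i})^q$ when \emph{both} endpoints of the subquotient move. I believe the correct and robust argument is: fix $F_0=(V_0^0,\ldots,V_n^0)$ in the locus $\{Y(u_{|V_j/V_i})\preceq Y_0\}$, choose a basis adapted to $V_i^0\subset V_j^0$, and note that $\mathrm{rk}\,(u_{|V_j/V_i})^q$ is the rank of a matrix whose entries are regular functions of $F$ on a neighbourhood of $F_0$ inside ${\mathcal B}_u$ — one builds this matrix by picking, locally near $F_0$, algebraic sections $V_j=\langle f_1(F),\ldots,f_j(F)\rangle$ with $V_i=\langle f_1(F),\ldots,f_i(F)\rangle$ (possible since a flag variety is locally trivial in the obvious way), and then the induced map on $V_j/V_i$ in the basis of the classes of $f_{i+1}(F),\ldots,f_j(F)$ has matrix depending regularly on $F$; lower semicontinuity of rank of such a matrix then gives the claim. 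Once this local-algebraicity of the subquotient endomorphism is granted, the lemma follows exactly as the subspace case in \ref{semi-continuity}: the locus $\{F: Y(u_{|V_j/V_i})\preceq Y_0\}$ is the intersection over all $q$ of the closed loci $\{\mathrm{rk}\,(u_{|V_j/V_i})^q \leq c_q\}$, hence closed.
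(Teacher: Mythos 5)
Your final argument is correct, but it reaches the conclusion by a different route than the paper, and only after two abandoned attempts that you could have salvaged. The paper reduces everything to the single identity $\mathrm{rk}\,u^q_{|V_j/V_i}=\dim\,(V_i+u^q(V_j))-i$ (a consequence of the rank formula), and then invokes lower semicontinuity of $F\mapsto\dim\,(V_i+u^q(V_j))$, which is the rank of the algebraically varying map $V_i\oplus V_j\to V$, $(x,y)\mapsto x+u^q(y)$. You were in fact one step away from this: your own identity $\mathrm{rk}\,(u_{|V_j/V_i})^q=\mathrm{rk}\,(u^q_{|V_j})-\dim\bigl(u^q(V_j)\cap V_i\bigr)$ becomes the paper's formula upon applying $\dim(A\cap B)=\dim A+\dim B-\dim(A+B)$; the ``bad'' intersection term that made you abandon the decomposition turns into a sum of subspaces, whose dimension is lower semicontinuous with the right sign. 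Instead, you prove local algebraicity of the subquotient endomorphism directly: choose, on an open cover of the flag variety, regular frames $f_1(F),\ldots,f_j(F)$ adapted to $V_i\subset V_j$, observe that on ${\mathcal B}_u$ the matrix of $(u_{|V_j/V_i})^q$ in the induced frame of $V_j/V_i$ has entries regular in $F$, and conclude that each locus $\{\mathrm{rk}\,(u_{|V_j/V_i})^q\leq c_q\}$ is locally, hence globally, closed. This is a complete and more self-contained argument (it does not lean on the rank formula of \cite[Lemma 2.2]{F}), at the cost of being more hands-on; the paper's formula is slicker and is reused elsewhere. One small phrasing point: to show the locus is closed you should take $F_0$ arbitrary in ${\mathcal B}_u$ and verify closedness on each chart, not fix $F_0$ inside the locus, though your argument works verbatim for any $F_0$.
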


\begin{proof}
It is sufficient to show that the map $F\mapsto
\mathrm{rk}\,u^q_{|V_j/V_i}$ is lower semicontinuous. This is an
easy consequence of the formula
\[
\mathrm{rk}\,u^q_{|V_j/V_i}=\mathrm{dim}\,(V_i+u^q(V_j))-i
\]
which can be proved by using the rank formula (see \cite[Lemma
2.2]{F}).  
\end{proof}

\subsection{A combinatorial necessary condition for having $\tau\in T$}

\label{sequences-subdiagrams} \label{Y-j/i-tau} \label{Y-j/i-T}
\label{section-vanLeeuwen} \label{K-subset-R}

In the purpose to deduce from Lemma \ref{lemma-semi-continuity} a
necessary condition for having $\tau\in T$, we determine
$Y(u_{|V_j/V_i})$ for $F\in{\mathcal Z}_\tau$ and for a generic
$F\in{\mathcal K}^T$.

\smallskip

Let $\tau$ be a row-standard tableau, with $|\tau|=n$ boxes. Let
$0\leq i<j\leq n$. Let $m_1\geq ...\geq m_k$ be the sizes of the
rows of the subtableau $\tau[i+1,...,j]$ of entries $i+1,...,j$.
Define $Y_{j/i}(\tau)$ as the Young diagram of rows of lengths
$m_1,...,m_k$. For example
\[\tau=\young(237,468,15)\qquad
\tau[4,...,7]=\young(::7,46,:5) \qquad Y_{7/3}(\tau)=\yng(2,1,1)\]
Embed ${\mathcal Z}_\tau\subset {\mathcal B}_u$. Let
$F=(V_0,...,V_n)\in {\mathcal Z}_\tau$ and let
$\underline{e}=(e_1,...,e_n)$ be a $\tau$-basis such that
$F=F(\underline{e})$. Then we see that the subbasis
$(e_{i+1},...,e_j)$ forms a Jordan basis of $u_{|V_j/V_i}$ of
Jordan blocks of sizes $m_1,...,m_k$. We have thus
$$Y(u_{|V_j/V_i})=Y_{j/i}(\tau).$$

Let $T$ be a standard tableau, with $|T|=n$ boxes. Let $0\leq
i<j\leq n$. The subtableau $T[i+1,...,j]$ of entries $i+1,...,j$
is a skew tableau. By jeu de taquin, we transform it into a Young
tableau. We denote by $Y_{j/i}^T$ the shape of the so-obtained
Young tableau. For example
\[T=\young(124,358,67)\qquad
T[3,...,6]=\young(::4,35,6)\stackrel{\mbox{\tiny
jeu\,de\,taquin}}{\longrightarrow}\young(34,5,6) \qquad
Y_{6/2}^T=\yng(2,1,1)\] Embed ${\mathcal K}^T\subset {\mathcal
B}_u$. It follows from \cite[Theorem 3.3]{vanLeeuwen} that the set
\[\{(V_0,...,V_n)\in{\mathcal K}^T:Y(u_{|V_j/V_i})=Y_{j/i}^T\}\]
is a nonempty open subset of the component ${\mathcal K}^T$.

\smallskip

Let $(\tau,T)\in\mathbf{Y}$. We write
\begin{center}
$\tau\preceq T$ \ if \ $Y_{j/i}(\tau)\preceq Y_{j.i}^T$ for all
$0\leq i<j\leq n$.
\end{center}
Define $\mathbf{R}$ as the set of pairs $(\tau,T)\in\mathbf{Y}$
such that $\tau\preceq T$. By Lemma \ref{lemma-semi-continuity},
we get:

\begin{theorem}
\label{theorem_necessary_condition} We have $\mathbf{K}\subset
\mathbf{R}$.
\end{theorem}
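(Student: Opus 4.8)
The plan is to deduce Theorem~\ref{theorem_necessary_condition} directly from the semicontinuity result of Lemma~\ref{lemma-semi-continuity}, together with the explicit computations of $Y(u_{|V_j/V_i})$ on the orbit ${\mathcal Z}_\tau$ and on a generic point of ${\mathcal K}^T$ carried out just above the statement. Concretely, I would take a pair $(\tau,T)\in\mathbf{K}$, embed ${\mathcal Z}_\tau,{\mathcal K}^T\subset {\mathcal B}_u$ for a suitable nilpotent $u$ with $Y(u)=\mathrm{sh}(T)$, and observe that since $\tau\in T$ we have $F_\tau\in{\mathcal K}^T$, hence ${\mathcal Z}_\tau\subset{\mathcal K}^T$ by Proposition (b) of section~\ref{propositions}.

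The heart of the argument is then the following chain of inequalities, valid for each fixed pair $0\le i<j\le n$. On one hand, for $F=(V_0,\ldots,V_n)\in{\mathcal Z}_\tau$ we computed $Y(u_{|V_j/V_i})=Y_{j/i}(\tau)$; this holds at \emph{every} point of ${\mathcal Z}_\tau$, in particular at $F_\tau$. On the other hand, by \cite[Theorem 3.3]{vanLeeuwen} the locus $\{F\in{\mathcal K}^T:Y(u_{|V_j/V_i})=Y_{j/i}^T\}$ is a nonempty open subset of the irreducible variety ${\mathcal K}^T$, and by Lemma~\ref{lemma-semi-continuity} the set $\{F\in{\mathcal K}^T:Y(u_{|V_j/V_i})\preceq Y_{j/i}^T\}$ is closed in ${\mathcal K}^T$; since it contains a dense open subset and ${\mathcal K}^T$ is irreducible, it is all of ${\mathcal K}^T$. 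As $F_\tau\in{\mathcal K}^T$, we conclude $Y_{j/i}(\tau)=Y(u_{|V_j/V_i})(F_\tau)\preceq Y_{j/i}^T$. Since $i,j$ were arbitrary, this says exactly $\tau\preceq T$, i.e. $(\tau,T)\in\mathbf{R}$.

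There is essentially no obstacle here: the real work was already done in Lemma~\ref{lemma-semi-continuity} (lower semicontinuity of $F\mapsto Y(u_{|V_j/V_i})$) and in the van Leeuwen input identifying the generic Jordan type of the subquotient $V_j/V_i$ over ${\mathcal K}^T$ with $Y_{j/i}^T$. The only point to be careful about is the direction of the dominance order in the semicontinuity statement — the closed sets are the sublevel sets $\{Y(u_{|V_j/V_i})\preceq Y_0\}$, which is consistent with the fact that on a component the Jordan type of a subquotient \emph{increases} (in dominance order) as one passes to the generic point, so the special value $Y_{j/i}(\tau)$ attained on the closed orbit ${\mathcal Z}_\tau$ sits \emph{below} the generic value $Y_{j/i}^T$. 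Once this is set up correctly the theorem follows in a couple of lines, so the write-up is just assembling the ingredients.
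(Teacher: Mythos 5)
Your proposal is correct and is exactly the argument the paper intends: it combines the computation $Y(u_{|V_j/V_i})=Y_{j/i}(\tau)$ on ${\mathcal Z}_\tau$, van Leeuwen's identification of $Y_{j/i}^T$ as the generic value on ${\mathcal K}^T$, and Lemma~\ref{lemma-semi-continuity} plus irreducibility of ${\mathcal K}^T$ to conclude $Y_{j/i}(\tau)\preceq Y_{j/i}^T$ at $F_\tau$. Nothing further is needed.
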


\subsection{The condition is not sufficient}

\label{counterexample}

The equivalence $\tau\in T\Leftrightarrow \tau\preceq T$ does not
hold in general. As a first example, let
\[
\tau=\young(125,46,3)\qquad T=\young(125,34,6)
\]
Then, we have $\tau\preceq T$ but $\tau\notin T$. Indeed, embed
${\mathcal K}^T,{\mathcal Z}_\tau\subset{\mathcal B}_u$. 
First, note that $\dim \ker u\cap\mathrm{Im}\,u=2$.
On one hand, we have $\ker u\cap\mathrm{Im}\,u\subset V_3$ for all $(V_0,\ldots,V_6)\in{\mathcal B}_u^T$,
hence also for all $(V_0,\ldots,V_6)\in{\mathcal K}^T$.
On the other hand,
$\mathrm{dim}\,(V_3\cap\mathrm{ker}\,u\cap \mathrm{Im}\,u)=1$ for any
$(V_0,...,V_6)\in{\mathcal Z}_\tau$. It follows: ${\mathcal
Z}_\tau\not\subset{\mathcal K}^T$.

Actually, we have the following

\begin{proposition}
\label{proposition-counterexample} Let $Y$ be a Young diagram
containing $\mbox{\scriptsize $\yng(3,2,1)$}$ as a subdiagram.
Then there is $(\tau,T)$ with
$Y=\mathrm{sh}(\tau)=\mathrm{sh}(T)$, such that $\tau\preceq T$
and $\tau\notin T$.
\end{proposition}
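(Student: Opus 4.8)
The plan is to reduce the general statement to the single concrete example already computed just before the proposition, then propagate it upward along the inclusion $\mathbf{K}\subset\mathbf{Y}$ via the inductive property of Theorem \ref{theorem-induction}. Concretely, fix the base pair
\[
\tau_0=\young(125,46,3)\qquad T_0=\young(125,34,6)\,,
\]
for which the discussion preceding the proposition establishes $\tau_0\preceq T_0$ and $\tau_0\notin T_0$, i.e. $(\tau_0,T_0)\in\mathbf{R}\setminus\mathbf{K}$. Both $\tau_0$ and $T_0$ have shape $\mbox{\scriptsize $\yng(3,2,1)$}$. Since $Y$ contains $\mbox{\scriptsize $\yng(3,2,1)$}$ as a subdiagram, I would build $\tau$ and $T$ of shape $Y$ by first placing the entries $1,\dots,6$ so that the subtableaux on the boxes of $\mbox{\scriptsize $\yng(3,2,1)$}$ are exactly $\tau_0$ and $T_0$, and then filling in the remaining boxes of $Y$ with $7,\dots,|Y|$ in the \emph{same} column-by-column order in both $\tau$ and $T$ — e.g. fill the remaining boxes in order of increasing column, breaking ties by increasing row, putting the same entry in the same box of $\tau$ and of $T$. (One must check this yields a row-standard $\tau$ and a standard $T$; filling the extra boxes in a column-compatible reading order does this since $\mbox{\scriptsize $\yng(3,2,1)$}$ is the top-left corner of $Y$.)

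With this construction, every $i\in\{7,\dots,|Y|\}$ lies in the same column in $\tau$ and in $T$, and $\tau_0\subseteq\tau$, $T_0\subseteq T$ in the sense of section \ref{subsection-inductive-property}. Theorem \ref{theorem-induction}(b) then gives the equivalence $(\tau,T)\in\mathbf{K}\Leftrightarrow(\tau_0,T_0)\in\mathbf{K}$, and since $(\tau_0,T_0)\notin\mathbf{K}$ we conclude $\tau\notin T$. It remains to verify $\tau\preceq T$, i.e. $Y_{j/i}(\tau)\preceq Y_{j/i}^T$ for all $0\leq i<j\leq |Y|$. For pairs $i<j$ with $j\leq 6$ this reduces, because the entries $1,\dots,6$ occupy the corner $\mbox{\scriptsize $\yng(3,2,1)$}$ identically, to the already-known relation $\tau_0\preceq T_0$ on the corresponding subtableaux. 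For pairs involving entries $\geq 7$, I would argue directly from the combinatorial descriptions of $Y_{j/i}(\tau)$ (row-lengths of $\tau[i+1,\dots,j]$) and $Y_{j/i}^T$ (jeu-de-taquin rectification of $T[i+1,\dots,j]$): since each extra entry $m\geq 7$ sits in a column of $\tau$ that is at least as deep as its column in $T$ — indeed in the \emph{same} column by construction — the dominance inequalities are preserved. Alternatively, and more cleanly, one can invoke the semicontinuity already packaged in Theorem \ref{theorem_necessary_condition}: since $\tau_0\in\mathrm{st}(\tau_0)$ and by Lemma \ref{lemma-standardization} the flag $F_{\tau_0}$ is controlled by $\mathrm{st}(\tau_0)$, one gets $\tau_0\preceq\mathrm{st}(\tau_0)\preceq T_0$ (using transitivity of $\preceq$ among diagrams in the relevant linear order), and the extension to $Y$ preserves each Jordan-type inequality column by column.

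The main obstacle I anticipate is the bookkeeping in verifying $\tau\preceq T$ on subquotients $V_j/V_i$ that mix the ``old'' entries $\le 6$ with the ``new'' entries $\ge 7$: one must check that adjoining, in column-compatible fashion, the same blocks of boxes to $\tau_0$ and $T_0$ cannot create a new dominance violation. This is where the hypothesis that $\mbox{\scriptsize $\yng(3,2,1)$}$ sits as the \emph{top-left} subdiagram (so the added boxes lie weakly to the right and below, and can be appended in a shared reading order) is essential; without it the row/column-length comparisons underlying $Y_{j/i}(\tau)\preceq Y_{j/i}^T$ need not survive. I would handle this by choosing the filling of the extra boxes so that, for every $i<j$, the skew shape $\tau[i+1,\dots,j]$ and the rectification of $T[i+1,\dots,j]$ either agree outside the $\mbox{\scriptsize $\yng(3,2,1)$}$-region or differ only by the already-understood $\tau_0$-versus-$T_0$ discrepancy, reducing every case to $\tau_0\preceq T_0$.
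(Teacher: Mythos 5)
The half of your argument establishing $\tau\notin T$ is sound: with the six entries $1,\dots,6$ forming $(\tau_0,T_0)$ in the top-left corner and the remaining entries placed identically in $\tau$ and $T$, Theorem \ref{theorem-induction}(a) (part (b) is not even needed) and the known fact $(\tau_0,T_0)\notin\mathbf{K}$ give $(\tau,T)\notin\mathbf{K}$. The gap is in the other half, which you defer as ``bookkeeping'': the claim that appending the extra boxes in a shared, column-compatible order preserves all the dominance relations $Y_{j/i}(\tau)\preceq Y_{j/i}^T$ is false. Take $Y$ with rows $(3,3,1)$. The single extra box is at position $(2,3)$, so your construction forces
\[
\tau=\young(125,467,3)\qquad T=\young(125,347,6)\,.
\]
Then $\tau[4,\dots,7]$ has $5$ in its first row and $4,6,7$ in its second row, so $Y_{7/3}(\tau)=\mbox{\scriptsize$\yng(3,1)$}$, whereas the jeu de taquin rectification of $T[4,\dots,7]$ (reading word $6,4,7,5$, longest increasing subsequence of length $2$) has shape $\mbox{\scriptsize$\yng(2,2)$}$, and $\mbox{\scriptsize$\yng(3,1)$}\not\preceq\mbox{\scriptsize$\yng(2,2)$}$. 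So your pair is not in $\mathbf{R}$ and does not witness the proposition; moreover, for this $Y$ there is no freedom left in the filling, so no variant of ``put $(\tau_0,T_0)$ in the corner with entries $1,\dots,6$ and extend identically'' can work.

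This failure is exactly why the paper's proof splits into two cases. When $\lambda_1>\lambda_2$ it uses a direct extension (filling the remainder row by row), for which membership in $\mathbf{R}$ can be checked. When $\lambda_1=\lambda_2$ it abandons the corner embedding entirely: it builds a pair $(\hat\tau,\hat T)$ in which the relevant seven entries occupy different columns in $\hat\tau$ and $\hat T$, verifies $(\hat\tau,\hat T)\in\mathbf{R}$ separately, and then deduces $(\hat\tau,\hat T)\notin\mathbf{K}$ by combining Theorem \ref{theorem-induction} with the Sch\"utzenberger involution (Proposition \ref{proposition-Schutzenberger}) to extract the known $6$-box counterexample. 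To repair your proof you would need to supply an analogue of this second construction; the ``more cleanly, one can invoke semicontinuity'' alternative you sketch does not address the subquotient relations for $i\geq 1$, which is where the inequality breaks.
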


\begin{proof}
First, let $Y$ be a Young diagram of rows of lengths
$(\lambda_1,\lambda_2,...,\lambda_r)$ with
$\lambda_1>\lambda_2\geq 2$, $\lambda_3\geq 1$, $r\geq 3$. Let
$R_1$ be a row of $\lambda_1-3$ empty boxes, let $R_2$ be a row of
$\lambda_2-2$ boxes, let $R_3$ be a row of $\lambda_3-1$ boxes,
for $p\geq r$ let $R_p$ be a row of $\lambda_p$ boxes. Fill in the
boxes of $R_1$ with numbers $7,8,...,\lambda_1+3$ from left to
right, number $R_2$ in turn by
$\lambda_1+4,\lambda_1+5,...,1+\lambda_1+\lambda_2$ from left to
right, number $R_3$ by
$\lambda_1+\lambda_2+2,...,\lambda_1+\lambda_2+\lambda_3$ from
left to right, for $p\geq r$ number $R_p$ by
$\lambda_1+...+\lambda_{p-1}+1,...,\lambda_1+...+\lambda_p$ from
left to right. It is straightforward to check that the pair
$(\tau,T)$ with \[ \tau=\mbox{\tiny $\begin{array}{|c|c|c|c|c|}
\hline 1 & 2 & 5 & \multicolumn{2}{|l|}{R_1\ \ } \\
\hline 4 & 6 & \multicolumn{2}{|l|}{R_2\ \ } \\
\cline{1-4} 3 & \multicolumn{2}{|l|}{\ R_3\ } \\
\cline{1-3} \multicolumn{3}{|l|}{\ R_4\ } \\
\cline{1-3} \multicolumn{2}{|l|}{ \mbox{etc.} } \\
\cline{1-2}
\end{array}$}
\qquad T=\mbox{\tiny $\begin{array}{|c|c|c|c|c|}
\hline 1 & 2 & 5 & \multicolumn{2}{|l|}{R_1\ \ } \\
\hline 3 & 4 & \multicolumn{2}{|l|}{R_2\ \ } \\
\cline{1-4} 6 & \multicolumn{2}{|l|}{\ R_3\ } \\
\cline{1-3} \multicolumn{3}{|l|}{\ R_4\ } \\
\cline{1-3} \multicolumn{2}{|l|}{ \mbox{etc.} } \\
\cline{1-2}
\end{array}$}
\]
belongs to $\mathbf{R}$. By Theorem \ref{theorem-induction}, it
does not belong to $\mathbf{K}$.

It is straightforward to deduce that the pair $(\hat\tau,\hat T)$
with \[ \hat\tau=\mbox{\tiny $\begin{array}{|c|c|c|c|c|}
\hline 1 & 2 & 5 & \multicolumn{2}{|l|}{R_1\ \ } \\
\hline 0 & 4 & 6 & \multicolumn{2}{|l|}{R_2\ \ } \\
\cline{1-5} 3 & \multicolumn{2}{|l|}{\ R_3\ } \\
\cline{1-3} \multicolumn{3}{|l|}{\ R_4\ } \\
\cline{1-3} \multicolumn{2}{|l|}{ \mbox{etc.} } \\
\cline{1-2}
\end{array}$}+1
\qquad \hat T=\mbox{\tiny $\begin{array}{|c|c|c|c|c|}
\hline 0 & 1 & 2 & \multicolumn{2}{|l|}{R_1\ \ } \\
\hline 3 & 4 & 5 & \multicolumn{2}{|l|}{R_2\ \ } \\
\cline{1-5} 6 & \multicolumn{2}{|l|}{\ R_3\ } \\
\cline{1-3} \multicolumn{3}{|l|}{\ R_4\ } \\
\cline{1-3} \multicolumn{2}{|l|}{ \mbox{etc.} } \\
\cline{1-2}
\end{array}$}+1
\]
belongs to $\mathbf{R}$ (where ``$+1$'' means that we increase by
1 each entry of the tableau, in order to obtain a row-standard and
a standard tableau). (The dominance relations for $(i,j)$ for
$i\geq 1$ follow from the previous case - relations for $(0,j)$ are
easy.) Let us show $(\hat\tau,\hat T)\notin\mathbf{K}$. Let $(\hat
\tau',\hat T')$ be the pair of subtableaux of entries $1,...,7$:
\[\hat \tau'=\young(236,157,4)\quad \hat T'=\young(123,456,7)
\quad\mbox{hence}\ \ S\cdot\hat\tau'=\young(256,137,4)\quad \hat
T'^S=\young(134,267,5)\] Let $\hat\tau''=S\cdot ((S\cdot
\hat\tau')[1,...,6])$ and $\hat T''=((\hat T'^S)[1,...,6])^S$. We
have
\[\hat\tau''=\young(125,46,3)\qquad
\hat T''=\young(125,34,6)\] As observed previously, we have $(\hat
\tau'',\hat T'')\notin \mathbf{K}$. Combining Theorem
\ref{theorem-induction} and Proposition
\ref{proposition-Schutzenberger}, we deduce
$(\tau,T)\notin\mathbf{K}$. It solves the case
$(\lambda_1,\lambda_2,...,\lambda_r)$ with
$\lambda_1=\lambda_2\geq 3$, $\lambda_3\geq 1$, $r\geq 3$. The
proof is now complete.  
\end{proof}

\section{Equivalence $\tau\in T\Leftrightarrow\tau\preceq T$ in the hook, 2-row and 2-column cases}

\label{5}

In the previous section we have established the general
implication
\[\tau\in T \Rightarrow \tau\preceq T.\]
Proposition \ref{proposition-counterexample} shows that the
inverse implication is not true in general. Our purpose in this
section is to show the following

\begin{theorem}
\label{th-KR-3cas} Let $(\tau,T)\in\mathbf{Y}$ be of hook, two-row
or two-column type. We have the equivalence
$$\tau\in T\Leftrightarrow\tau\preceq T.$$
\end{theorem}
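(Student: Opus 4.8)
We already know the implication $\tau\in T\Rightarrow\tau\preceq T$ from Theorem \ref{theorem_necessary_condition}, so the task is to establish the converse $\tau\preceq T\Rightarrow\tau\in T$ in the hook, two-row and two-column cases. The overall strategy I would follow is to reduce the statement, by means of the structural results of sections \ref{2} and \ref{3}, to a minimal family of pairs $(\tau,T)$, and then verify the inclusion ${\mathcal Z}_\tau\subset{\mathcal K}^T$ directly for those. First I would use Proposition \ref{proposition-rectify-standard} together with the fact that, for $(\tau,T)\in\mathbf{R}$, one has $(\mathrm{st}(\tau),T)\in\mathbf{R}$ as well (the relations $Y_{j/i}(\mathrm{st}(\tau))\preceq Y_{j/i}(\tau)\preceq Y_{j/i}^T$ hold because standardization only rearranges entries within columns, hence can only make the subquotient diagrams smaller in dominance order): it would suffice to treat pairs $(S,T)$ with both $S$ and $T$ \emph{standard}. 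By Proposition \ref{proposition-nonempty-intersection} and Lemma \ref{lemma-criterion-nonempty-intersection} this is the same as the assertion that two components ${\mathcal K}^T,{\mathcal K}^S$ meet whenever $S\preceq T$ (and, by symmetry in the dominance chain, this handles the converse direction for free once combined with the known inclusion $\mathbf{K}\subset\mathbf{R}$, exploiting that the dominance order is \emph{linear} in these three cases — a fact stressed in the introduction).

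Next I would bring in the inductive Theorem \ref{theorem-induction} and the Sch\"utzenberger symmetry of Proposition \ref{proposition-Schutzenberger}. Using part (b) of Theorem \ref{theorem-induction}, any common tail of columns shared by $\tau$ and $T$ can be stripped off, reducing to pairs where $\tau$ and $T$ differ already at the last box; iterating, and also applying the dual reduction $(\tau,T)\mapsto(S\cdot\tau,T^S)$ to peel entries from the \emph{bottom} as in the proof of Proposition \ref{proposition-counterexample}, I expect to cut the problem down to a small list of ``irreducible'' pairs in each of the three shape families — essentially the ones where the symmetric difference of the flags $Y_i(\tau)$ and $Y_i^T$ is concentrated in a single step. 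In the hook case $Y(u)$ has the form $(m,1,\dots,1)$, and a row-standard tableau is determined by where the long first row sits; here the analysis is combinatorially very constrained and I would expect ${\mathcal Z}_\tau\subset{\mathcal K}^T$ to follow from an explicit degeneration argument (a one-parameter subgroup of $Z(u)$ limiting $F_\tau$ into ${\mathcal B}_u^T$, in the spirit of the limit computation in the proof of Proposition \ref{proposition-rectify-standard}). The two-column case is self-dual under $Y\mapsto Y^*$, so it can be reduced to, or run in parallel with, the two-row case; and in the two-row case one can fall back, when needed, on the explicit equational descriptions of the components recalled in the introduction (\cite{F}, \cite{Fung}, \cite{Vargas}) to certify membership.

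The alternative, and probably cleaner, route for the two-row and two-column cases is to work with the explicit parametrization of the components by cup diagrams / link patterns (as in \cite{Fung}, \cite{Melnikov-Pagnon1}, \cite{Melnikov-Pagnon}): translate both the orbit condition defining ${\mathcal Z}_\tau$ and the equations cutting out ${\mathcal K}^T$ into conditions on the dimensions $\dim(V_i\cap\ker u^q)$ and $\dim(V_i+u^q(V_j))$, observe that these are exactly governed by the diagrams $Y_{j/i}(\tau)$ and $Y_{j/i}^T$ (cf. Lemma \ref{lemma-semi-continuity} and the proof of Lemma \ref{lemma-standardization}), and then check that the dominance inequalities $Y_{j/i}(\tau)\preceq Y_{j/i}^T$ are precisely what is needed to place $F_\tau$ on the correct side of every defining (in)equation of ${\mathcal K}^T$. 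For the hook case one can argue by hand since the component ${\mathcal K}^T$ is then described by a single linear-algebraic condition on the position of $\mathrm{Im}\,u$ and $\ker u$ within the flag.

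\textbf{Main obstacle.} The delicate point is the converse inclusion at the level of \emph{closures}: ${\mathcal Z}_\tau\preceq$-dominated by $T$ only tells us that $F_\tau$ satisfies the \emph{closed} (semicontinuous) conditions $Y(u_{|V_j/V_i})\preceq Y_{j/i}^T$, whereas lying in the component ${\mathcal K}^T=\overline{{\mathcal B}_u^T}$ is a priori strictly stronger than lying in the union of all ${\mathcal B}_u^S$ with $S\succeq T$. What saves the hook, two-row and two-column cases — and what I would have to verify carefully — is that there the linearity of the dominance order forces these semicontinuous conditions, together with the ones coming from the $V_i$ themselves, to actually cut out the single component ${\mathcal K}^T$; equivalently, that no ``unexpected'' component $S$ can contain ${\mathcal Z}_\tau$ while being incomparable to $T$. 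Establishing this exhaustiveness — ruling out the pathology of Proposition \ref{proposition-counterexample}, which occurs exactly when $Y(u)\supseteq(3,2,1)$ — is the heart of the proof, and it is where the case-by-case combinatorial work, or the appeal to the known equational models of \cite{F,Fung,Vargas}, becomes unavoidable.
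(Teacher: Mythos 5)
Your plan breaks down at its very first reduction. To prove $\tau\preceq T\Rightarrow\tau\in T$ you propose to pass from $(\tau,T)$ to $(\mathrm{st}(\tau),T)$ and then treat only pairs of standard tableaux; but for this to suffice you would need the implication $(\mathrm{st}(\tau),T)\in\mathbf{K}\Rightarrow(\tau,T)\in\mathbf{K}$, i.e.\ the \emph{converse} of Proposition \ref{proposition-rectify-standard}, which the remark following that proposition shows to be false in general. (A posteriori the implication does hold on $\mathbf{R}$, since $\mathbf{R}=\mathbf{K}$ there — but that is the theorem itself, so invoking it is circular.) The non-standard $\tau$ are precisely where the substance of the theorem lies, and the paper never makes your reduction: instead it introduces, in each of the three cases, an auxiliary combinatorial set $\mathbf{A}$ (Theorems \ref{theorem-A-hook-case}, \ref{theorem-KR-2-row}, \ref{theorem-A-2column}) and proves the chain $\mathbf{K}\subset\mathbf{R}\subset\mathbf{A}\subset\mathbf{K}$. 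The hard inclusion $\mathbf{A}\subset\mathbf{K}$ is obtained from concrete geometric inputs that your sketch does not supply: Vargas's theorem in the hook case; Theorem \ref{theorem-induction}\,(b), Proposition \ref{proposition-Schutzenberger} and an explicit product decomposition ${\mathcal B}_{u'}\times{\mathcal B}_{u''}\to{\mathcal B}_u$ in the two-row case; and, in the two-column case, a degeneration ${\mathcal Z}_\tau\subset\overline{{\mathcal Z}_{\tilde\tau}}$ for an elementary switch of two entries, taken from the proof of \cite[Theorem 2.5]{F}. Your ``explicit degeneration in the spirit of Proposition \ref{proposition-rectify-standard}'' is exactly the missing ingredient, and it is not routine.

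Two further points. First, your appeal to a transposition duality reducing the two-column case to the two-row case does not work at the level of row-standard tableaux: the transpose of a row-standard tableau of two-column type is in general not row-standard, and the paper's Proposition \ref{proposition-transposee} concerns only pairs of \emph{standard} tableaux and is deduced \emph{from} the two theorems rather than used to prove them. Second, the assertion that linearity of the dominance order lets the converse implication come ``for free'' from $\mathbf{K}\subset\mathbf{R}$ is unsupported: $\tau\preceq T$ is a family of dominance relations indexed by all subquotients $(i,j)$, not a single comparison, and no symmetry argument of the kind you gesture at is available. Your closing paragraph correctly identifies the closure problem as the crux, but the proposal as written does not resolve it.
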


We treat successively the three cases, and Theorem
\ref{th-KR-3cas} will follow by combining Theorems
\ref{theorem-A-hook-case}, \ref{theorem-KR-2-row} and
\ref{theorem-A-2column}. The proof in each case relies on a
further characterization.

\subsection{Equivalence in the hook case}

\label{A-hook-case}

Let us state some notation. We denote by $\mathbf{Y}_{\mbox{\tiny
hk}}$ the subset of pairs $(\tau,T)\in\mathbf{Y}$ such that the
Young shape $\mathrm{sh}(\tau)=\mathrm{sh}(T)$ is of hook type.
Set $\mathbf{K}_{\mbox{\tiny hk}}=\mathbf{K}\cap
\mathbf{Y}_{\mbox{\tiny hk}}$ and $\mathbf{R}_{\mbox{\tiny
hk}}=\mathbf{R}\cap \mathbf{Y}_{\mbox{\tiny hk}}$. We aim to prove
the equality $\mathbf{K}_{\mbox{\tiny hk}}=\mathbf{R}_{\mbox{\tiny
hk}}$.

We rely on another subset $\mathbf{A}_{\mbox{\tiny hk}}\subset
\mathbf{Y}_{\mbox{\tiny hk}}$. Let $(\tau,T)\in
\mathbf{Y}_{\mbox{\tiny hk}}$. Let $a_1<...<a_s$ (resp.
$a'_1<...<a'_s$) be the entries in the first row of $T$ (resp. of
$\tau$). Say $(\tau,T)\in \mathbf{A}_{\mbox{\tiny hk}}$ if we have
\[a'_{q-1}< a_q\leq a'_q\ \ \forall q\in\{2,...,s\}.\]

We show the following

\begin{theorem}
\label{theorem-A-hook-case} We have $\mathbf{K}_{\mbox{\rm\tiny
hk}}=\mathbf{A}_{\mbox{\rm\tiny hk}}=\mathbf{R}_{\mbox{\rm\tiny
hk}}$.
\end{theorem}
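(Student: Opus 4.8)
The plan is to prove the chain of equalities $\mathbf{K}_{\mbox{\tiny hk}}\subseteq\mathbf{A}_{\mbox{\tiny hk}}\subseteq\mathbf{R}_{\mbox{\tiny hk}}\subseteq\mathbf{K}_{\mbox{\tiny hk}}$ (the middle inclusion together with Theorem \ref{theorem_necessary_condition} being all that is strictly needed once the two outer inclusions are in hand, but proving all three directly is cleanest). The combinatorics is governed entirely by the first row, since in a hook diagram all other boxes lie in the first column and carry no freedom: a row-standard hook tableau $\tau$ is determined by the set $\{a'_1<\ldots<a'_s\}$ of first-row entries (with $a'_1=1$ automatically, as $\tau$ is row-standard and $1$ can only sit in the first column or at the top-left, and in the hook case the top-left box of the first row is forced to receive $1$), and likewise $T$ is determined by $\{a_1<\ldots<a_s\}$ with $a_1=1$.

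\medskip

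\noindent\textbf{Step 1: $\mathbf{A}_{\mbox{\tiny hk}}\subseteq\mathbf{R}_{\mbox{\tiny hk}}$.} Here I would compute the diagrams $Y_{j/i}(\tau)$ and $Y_{j/i}^T$ explicitly. For a hook shape, $Y_{j/i}(\tau)$ is again a hook: its first row has length equal to $\#\{q:i<a'_q\leq j\}$ and it has $j-i$ boxes total, so it is the hook with arm $\#(\{a'_1,\ldots,a'_s\}\cap(i,j])$ and the rest in the first column. Similarly $Y_{j/i}^T$ is the hook whose arm length is the number of boxes in the first row of the jeu-de-taquin rectification of $T[i+1,\ldots,j]$; a short argument (or citing the standard fact that jeu de taquin on a skew hook produces a hook whose first-row length is the number of columns met, i.e. $\#(\{a_1,\ldots,a_s\}\cap(i,j])$ adjusted by whether column $1$ is hit) identifies this arm length. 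Since for hooks the dominance order is just comparison of arm lengths (longer arm = larger in dominance), $Y_{j/i}(\tau)\preceq Y_{j/i}^T$ reduces to an inequality between these two counting functions, and the inequalities $a'_{q-1}<a_q\leq a'_q$ are exactly what makes $\#(\{a_q\}\cap(i,j])\leq\#(\{a'_q\}\cap(i,j])$ hold for every interval $(i,j]$. This step is bookkeeping once the two hook shapes are correctly read off.

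\medskip

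\noindent\textbf{Step 2: $\mathbf{R}_{\mbox{\tiny hk}}\subseteq\mathbf{A}_{\mbox{\tiny hk}}$.} Conversely, specializing $\tau\preceq T$ to well-chosen intervals forces the inequalities defining $\mathbf{A}_{\mbox{\tiny hk}}$. Fixing $q$, take $i=a_q-1$ and $j$ large (say $j=n$): the arm of $Y_{n/i}^T$ counts first-row entries of $T$ exceeding $a_q-1$, which includes $a_q$; comparing with the arm of $Y_{n/i}(\tau)$ yields $a'_q\geq a_q$ after a short count. Similarly, choosing $i=a'_{q-1}$ and $j=a_q$ and using that $Y_{j/i}(\tau)$ must then be dominated forces $a_q>a'_{q-1}$. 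I would package these as: for each $q$, pick the interval that isolates the relevant comparison. This is the step where I would be most careful, since one must make sure the chosen intervals really do separate the entries and that the rectification count on the $T$-side is what one expects — a wrong interval gives a vacuous inequality.

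\medskip

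\noindent\textbf{Step 3: $\mathbf{A}_{\mbox{\tiny hk}}\subseteq\mathbf{K}_{\mbox{\tiny hk}}$ (the geometric step).} This is the main obstacle. Given $(\tau,T)\in\mathbf{A}_{\mbox{\tiny hk}}$ I must exhibit $F_\tau\in\mathcal{K}^T=\overline{\mathcal{B}_u^T}$, i.e. realize $F_\tau$ as a limit of flags in $\mathcal{B}_u^T$. Fix a Jordan basis $(e_x)_{x\in Y(u)}$; write $e_0$ for the top-left box vector and $v_1,\ldots,v_{s-1}$ for the arm vectors, $w_1,\ldots,w_{n-s}$ for the leg (first-column) vectors below $e_0$, with $u$ shifting the arm leftward toward $e_0$ and killing the leg. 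The strategy is to build a one-parameter family of $\tau$-bases (equivalently, apply a one-parameter subgroup of $Z(u)$, as in the proof of Proposition \ref{proposition-rectify-standard}) so that for generic parameter the resulting flag lies in $\mathcal{B}_u^T$ — concretely, so that $Y(u_{|V_i})=Y_i^T$ for all $i$ — while the limit is $F_\tau$. The condition $a'_{q-1}<a_q\leq a'_q$ says precisely that at each stage the subspace $V_{a_q}$ prescribed by $T$ (which must acquire a new arm box, i.e. $\dim V_{a_q}\cap\ker u$ stays put while $\mathrm{rk}\,u_{|V_{a_q}}$ jumps) can be reached from the data available in $V_{a'_{q-1}},\ldots,V_{a'_q-1}$ of $F_\tau$ by a generic perturbation mixing in the next arm vector. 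I would make this precise by an explicit induction on $q$ building the deformed flag $V_i^{(t)}$: between indices the perturbation is the identity, and at the "arm jump" it replaces the relevant basis vector by a generic combination involving the new arm vector; then check $Y(u_{|V_i^{(t)}})=Y_i^T$ for $t\neq0$ using the rank formula, and $V_i^{(t)}\to(V_i)$ as $t\to0$. Alternatively — and this may be cleaner — I would invoke Lemma \ref{lemma-criterion-nonempty-intersection} in reverse together with the equational description of $\mathcal{K}^T$ in the hook case from \cite{F} or \cite{Fung}: the component $\mathcal{K}^T$ is cut out by conditions of the form "$V_{a_q}\supset$ (a fixed subspace depending on $T$)" or "$V_{a_q-1}\subset$ (a fixed subspace)", and one checks directly that every flag in $\mathcal{Z}_\tau$ satisfies the closed (dominance-type) versions of these. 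Whichever route, the crux is verifying that $\mathbf{A}_{\mbox{\tiny hk}}$'s interleaving inequalities match exactly the incidence/rank conditions defining $\mathcal{K}^T$, and I expect the explicit perturbation argument to be the most delicate piece, requiring care that the limit is taken in the right direction ($t\to0$ vs. $t\to\infty$) and that genericity of the perturbation is not obstructed by the already-fixed lower part of the flag.
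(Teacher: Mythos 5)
Your architecture is the same as the paper's: combine $\mathbf{K}_{\mbox{\tiny hk}}\subset\mathbf{R}_{\mbox{\tiny hk}}$ (Theorem \ref{theorem_necessary_condition}) with a combinatorial proof of $\mathbf{R}_{\mbox{\tiny hk}}\subset\mathbf{A}_{\mbox{\tiny hk}}$ via well-chosen intervals, and import $\mathbf{A}_{\mbox{\tiny hk}}\subset\mathbf{K}_{\mbox{\tiny hk}}$ from the known description of hook components (your Step 1 is then redundant). However, Step 2 as written would fail, and it fails in exactly the way you warned yourself about. The relation $Y_{n/a_q-1}(\tau)\preceq Y_{n/a_q-1}^T$ compares the arm $s-q+1$ of $Y_{n/a_q-1}^T$ with $\#\{q':a'_{q'}\geq a_q\}$; the dominance inequality therefore says that at least $q-1$ of the $a'_{q'}$ are $<a_q$, i.e.\ it yields $a'_{q-1}<a_q$, \emph{not} $a'_q\geq a_q$ as you claim. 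The inequality $a_q\leq a'_q$ must instead come from the complementary interval $(0,a_q-1]$: the arm of $Y_{a_q-1/0}^T$ is $q-1$, so dominance bounds $\#\{q':a'_{q'}<a_q\}$ above by $q-1$, forcing $a'_q\geq a_q$. Your proposed substitute interval $(a'_{q-1},a_q]$ is circular: it is a legitimate interval only when $a'_{q-1}<a_q$, which is the inequality you are trying to establish, so no contradiction can be extracted from it. With the two intervals $(0,a_q-1]$ and $(a_q-1,n]$ the argument closes, and this is precisely the paper's proof.

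On Step 3, you have correctly identified the geometric crux, but you do not actually prove it: the perturbation construction is described only as a plan (``I would make this precise by an explicit induction\ldots''), with no verification that the deformed flags lie in ${\mathcal B}_u^T$ or that the limit is $F_\tau$. The fallback you mention --- invoking the equational description of hook components --- is legitimate and is exactly what the paper does, but the relevant reference is \cite[Theorem 4.1]{Vargas}; \cite{F} and \cite{Fung} treat the two-column and two-row cases respectively. Two smaller points: your claim that $a'_1=1$ automatically is false for a merely row-standard hook tableau (e.g.\ first row $(2,3)$ with $1$ below); only standardness of $T$ forces $a_1=1$. This is harmless since $\mathbf{A}_{\mbox{\tiny hk}}$ constrains only $q\geq 2$. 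And in Step 1 the arm of the rectification $Y_{j/i}^T$ for a general interval is genuinely delicate (a first-column entry can slide into the first row), which is another reason to drop Step 1 and close the cycle as $\mathbf{A}\subset\mathbf{K}\subset\mathbf{R}\subset\mathbf{A}$.
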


\begin{proof}
The inclusion $\mathbf{K}_{\mbox{\tiny hk}}\subset
\mathbf{R}_{\mbox{\tiny hk}}$ follows from Theorem
\ref{theorem_necessary_condition}. The inclusion
$\mathbf{A}_{\mbox{\tiny hk}}\subset \mathbf{K}_{\mbox{\tiny hk}}$
follows from \cite[Theorem 4.1]{Vargas}. Then it remains to show
$\mathbf{R}_{\mbox{\tiny hk}}\subset \mathbf{A}_{\mbox{\tiny
hk}}$. Suppose that $(\tau,T)\in\mathbf{R}_{\mbox{\tiny hk}}$ and
let us show $(\tau,T)\in\mathbf{A}_{\mbox{\tiny hk}}$. Let
$q\in\{2,...,s\}$. Define $\hat{q}\in\{1,...,s+1\}$ by
\begin{itemize}
\item if $a'_s\geq a_q$, then $\hat{q}\in\{1,...,s\}$ is minimal such that $a'_{\hat{q}}\geq a_q$,
\item if $a'_s<a_q$, then $\hat{q}=s+1$.
\end{itemize}
We have to prove $q=\hat{q}$.

First, we use the relation $Y_{a_q-1/0}(\tau)\preceq
Y_{a_q-1/0}^T$. The diagram $Y_{a_q-1/0}^T$ is the shape of the
subtableau of $T$ of entries $i<a_q$, hence its first row contains
$q-1$ boxes. The lengths of the rows of the diagram
$Y_{a_q-1/0}(\tau)$ are the lengths of the rows of the subtableau
of $\tau$ of entries $i<a_q$. The first row of this subtableau
contains $\hat{q}-1$ boxes. By the relation above, we get $\hat{q}\leq q$.

Write $n=|\tau|=|T|$. Next, we use the relation
$Y_{n/a_q-1}(\tau)\preceq Y_{n/a_q-1}^T$. The diagram
$Y_{n/a_q-1}^T$ is the shape of the transform by jeu de taquin of
the skew subtableau of $T$ of entries $a_q,...,n$, that is
\[
\begin{array}{|c|c|c|}
\hline
\!a_q\! & \cdots & \!a_s\! \\
\hline
\!b_{p}\! \\
\cline{1-1}
\vdots \\
\cline{1-1}
\!b_r\! \\
\cline{1-1}
\end{array}
\]
for some $b_{p},...,b_r$. Thus the first row of $Y^T_{n/a_q-1}$ has
length $s-q+1$. On the other hand the first row of the subtableau
of $\tau$ of entries $a_q,...,n$ contains the entries
$a'_{\hat{q}},...,a'_s$. This row is nonempty, since we have already
proved $\hat{q}\leq q(\leq s)$. The first row of the Young diagram
$Y_{n/a_q-1}(\tau)$ has thus length $s-\hat{q}+1$. By the dominance
relation above, we obtain $\hat{q}\geq q$. We get finally $\hat{q}=q$.  
\end{proof}

\subsection{Equivalence in the two-row case}

\label{section-A-2row} \label{A-2row-case}

We denote by $\mathbf{Y}_{\mbox{\tiny 2-r}}$ the subset of pairs
$(\tau,T)\in\mathbf{Y}$ such that the Young shape
$\mathrm{sh}(\tau)=\mathrm{sh}(T)$ has (at most) two rows. Set
$\mathbf{K}_{\mbox{\tiny 2-r}}=\mathbf{K}\cap
\mathbf{Y}_{\mbox{\tiny 2-r}}$ and $\mathbf{R}_{\mbox{\tiny
2-r}}=\mathbf{R}\cap \mathbf{Y}_{\mbox{\tiny 2-r}}$. We shall
prove the equality $\mathbf{K}_{\mbox{\tiny
2-r}}=\mathbf{R}_{\mbox{\tiny 2-r}}$. As previously for the hook
case, we rely on another subset $\mathbf{A}_{\mbox{\tiny
2-r}}\subset \mathbf{Y}_{\mbox{\tiny 2-r}}$ that we define by
induction.

\smallskip

First, we define $\mathbf{\hat{A}}_{\mbox{\tiny 2-r}}$ as the set
of $(\tau,T)\in\mathbf{Y}_{\mbox{\tiny 2-r}}$ such that one of the
following conditions is satisfied (writing $n=|\tau|=|T|$):
\begin{itemize}
\item[(1)] there is $i\in\{2,...,n\}$ (necessarily even) such that
the subtableaux $\tau[1,...,i]$ and $T[1,...,i]$ of entries
$1,...,i$ have a rectangular shape with two rows of length $i/2$;
\item[(2)] the subtableau $T[1,...,i]$ has a non-rectangular shape
for every $i\in\{2,...,n\}$ and $1$ has the same place in $\tau$
and $T$.
\end{itemize}
Let $(\tau,T)\in \mathbf{\hat{A}}_{\mbox{\tiny 2-r}}$. We
associate to the pair $(\tau,T)$ a subset $\eta(\tau,T)\subset
\mathbf{Y}_{\mbox{\tiny 2-r}}$ with one or two elements:

\smallskip\noindent
(a) Suppose that (1) holds for some $i<n$, and choose
$i\in\{2,...,n-1\}$ maximal for which the property holds. Then the
subtableaux $\tau':=\tau[1,...,i]$ and $T':=T[1,...,i]$ have the
same shape. On the other hand, the subtableaux $\tau[i+1,...,n]$
and $T[i+1,...,n]$ of entries $i+1,...,n$ have the same shape
which is a Young diagram with $n-i$ boxes. Write
$\tau''=\tau[i+1,...,n]-i$ and $T''=T[i+1,...,n]-i$, where
``$-i$'' means that each entry $j$ in the tableau is replaced by
$j-i$, so that $\tau''$ and $T''$ are respectively row-standard
and standard. Define $\eta(\tau,T)=\{(\tau',T'),(\tau'',T'')\}$.

\smallskip\noindent
(b) Suppose now that (1) holds for no $i<n$, but holds for
$i=n$. Write $T'=T[1,...,n-1]$. Let $\tilde\tau$ be the tableau
obtained by switching the two rows of $\tau$. The entry $n$ is
either in the second row of $\tau$, or in the second row of
$\tilde\tau$. In the former case, write $\tau'=\tau[1,...,n-1]$.
In the latter case, write $\tau'=\tilde\tau[1,...,n-1]$. Define
$\eta(\tau,T)=\{(\tau',T')\}$.

\smallskip\noindent
(c) Finally, suppose that (2) holds. Write $1=a_1<a_2<...<a_r$
(resp. $b_1<b_2<...<b_s$) the entries of the first (resp. second)
row of $T$. By hypothesis, we have $s<r$ and $b_q>a_{q+1}$ for any
$q=1,...,s$, hence the transform by jeu de taquin of the
subtableau $T[2,...,n]$ contains $a_2,...,a_r$ in its first row
and $b_1,...,b_s$ in its second row. Let $T'$ be the standard
tableau with $a_2-1,...,a_r-1$ in its first row and
$b_1-1,...,b_s-1$ in its second row. Write $1=a'_1<a'_2<...<a'_r$
(resp. $b'_1<b'_2<...<b'_s$) the entries of the first (resp.
second) row of $\tau$. Let $\tau'$ be the row-standard tableau
with $a'_2-1,...,a'_r-1$ in its first row and $b'_1-1,...,b'_s-1$
in its second row. Define $\eta(\tau,T)=\{(\tau',T')\}$.

\smallskip

Now, we define the set $\mathbf{A}_{\mbox{\tiny 2-r}}$ by
induction.
\begin{itemize}
\item[-] Say $(\mbox{\scriptsize \young(1)},\mbox{\scriptsize \young(1)})\in \mathbf{A}_{\mbox{\tiny 2-r}}$.
\item[-] For $(\tau,T)\in\mathbf{Y}$ with $|\tau|=|T|>1$, say $(\tau,T)\in \mathbf{A}_{\mbox{\tiny 2-r}}$
if we have $(\tau,T)\in \mathbf{\hat A}_{\mbox{\tiny 2-r}}$ and
$\eta(\tau,T)\subset \mathbf{A}_{\mbox{\tiny 2-r}}$.
\end{itemize}

\begin{example}
Let
\[\tau=\young(235,14)\qquad T=\young(134,25)\]
We see that $(\tau,T)\in\mathbf{\hat A}_{\mbox{\tiny 2-r}}$, and
$\eta(\tau,T)=\{(\tau',T'),(\tau'',T'')\}$ with
\[\tau'=\young(2,1)\quad T'=\young(1,2)\quad\mbox{ and }\quad\tau''=\young(13,2)\quad T''=\young(12,3)\]
We have $(\tau',T'),(\tau'',T'')\in\mathbf{\hat A}_{\mbox{\tiny
2-r}}$ and
\begin{eqnarray*}
&& \eta(\tau',T')=(\young(1),\young(1))\in{\mathbf A}_{\mbox{\tiny 2-r}},\mbox{ hence }(\tau',T')\in {\mathbf A}_{\mbox{\tiny 2-r}}, \\
&&
\eta(\tau'',T'')=(\young(2,1),\young(1,2))=(\tau',T')\in{\mathbf
A}_{\mbox{\tiny 2-r}},\mbox{ hence }(\tau'',T'')\in {\mathbf
A}_{\mbox{\tiny 2-r}}.
\end{eqnarray*}
Thus we get $\eta(\tau,T)\subset \mathbf{A}_{\mbox{\tiny 2-r}}$,
and it finally results $(\tau,T)\in {\mathbf A}_{\mbox{\tiny
2-r}}$.
\end{example}

We prove the following

\begin{theorem}
\label{theorem-KR-2-row} We have $\mathbf{K}_{\mbox{\rm\tiny
2-r}}=\mathbf{A}_{\mbox{\rm\tiny 2-r}}=\mathbf{R}_{\mbox{\rm\tiny
2-r}}$.
\end{theorem}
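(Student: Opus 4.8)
### Plan of proof

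\textbf{Overall strategy.} The statement to prove is a chain of three equalities $\mathbf{K}_{\mbox{\tiny 2-r}}=\mathbf{A}_{\mbox{\tiny 2-r}}=\mathbf{R}_{\mbox{\tiny 2-r}}$. Since Theorem \ref{theorem_necessary_condition} already gives $\mathbf{K}_{\mbox{\tiny 2-r}}\subset\mathbf{R}_{\mbox{\tiny 2-r}}$, it suffices to prove the two inclusions $\mathbf{R}_{\mbox{\tiny 2-r}}\subset\mathbf{A}_{\mbox{\tiny 2-r}}$ and $\mathbf{A}_{\mbox{\tiny 2-r}}\subset\mathbf{K}_{\mbox{\tiny 2-r}}$, and the whole cycle $\mathbf{K}\subset\mathbf{R}\subset\mathbf{A}\subset\mathbf{K}$ closes up. Both remaining inclusions are naturally proved by induction on $n=|\tau|=|T|$, following the recursive definition of $\mathbf{A}_{\mbox{\tiny 2-r}}$ via the map $\eta$. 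The plan is: (i) check that a pair in $\mathbf{R}_{\mbox{\tiny 2-r}}$, resp.\ $\mathbf{A}_{\mbox{\tiny 2-r}}$, necessarily lies in $\mathbf{\hat A}_{\mbox{\tiny 2-r}}$ (i.e.\ condition (1) or (2) holds); (ii) in each of the three cases (a), (b), (c) describing $\eta$, relate membership of $(\tau,T)$ to membership of the element(s) of $\eta(\tau,T)$, both on the $\mathbf{R}$ side (combinatorially, via the dominance relations on the diagrams $Y_{j/i}$) and on the $\mathbf{K}$ side (geometrically, using Theorem \ref{theorem-induction} and Proposition \ref{proposition-Schutzenberger}).

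\textbf{The inclusion $\mathbf{R}_{\mbox{\tiny 2-r}}\subset\mathbf{A}_{\mbox{\tiny 2-r}}$.} First I would show that $(\tau,T)\in\mathbf{R}_{\mbox{\tiny 2-r}}$ forces $(\tau,T)\in\mathbf{\hat A}_{\mbox{\tiny 2-r}}$: if no prefix $T[1,\ldots,i]$ is rectangular, the relations $Y_{a_2-1/0}(\tau)\preceq Y_{a_2-1/0}^T$ and $Y_{n/a_2-1}(\tau)\preceq Y_{n/a_2-1}^T$ (where $a_2$ is the second entry of the first row of $T$) pin down the position of $1$ in $\tau$, exactly as in the hook-case argument for Theorem \ref{theorem-A-hook-case}. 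Then I would verify that $\tau\preceq T$ descends to each element of $\eta(\tau,T)$. In case (a), cutting at the maximal rectangular prefix $i$, the diagrams $Y_{j/i'}$ of $(\tau',T')$ for $i'<j\le i$ and of $(\tau'',T'')$ for $i\le i'<j$ are literally among the $Y_{j/i'}$ of $(\tau,T)$ (up to the shift by $i$), so the dominance relations are inherited. Cases (b) and (c) are similar one-step reductions: deleting the entry $n$ (after possibly swapping the two rows, which does not change $\mathcal Z_\tau$ by the equivalence $\tau\equiv\tau'$), resp.\ applying jeu de taquin to $T[2,\ldots,n]$ and the corresponding shift to $\tau$ — again the relevant diagrams $Y_{j/i}$ of the smaller pair appear among those of the larger pair, so $\tau'\preceq T'$. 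By induction $\eta(\tau,T)\subset\mathbf{A}_{\mbox{\tiny 2-r}}$, whence $(\tau,T)\in\mathbf{A}_{\mbox{\tiny 2-r}}$.

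\textbf{The inclusion $\mathbf{A}_{\mbox{\tiny 2-r}}\subset\mathbf{K}_{\mbox{\tiny 2-r}}$.} Here I would run the induction in the opposite direction: assuming $\eta(\tau,T)\subset\mathbf{K}_{\mbox{\tiny 2-r}}$, deduce $(\tau,T)\in\mathbf{K}_{\mbox{\tiny 2-r}}$. In case (a), $(\tau',T')\in\mathbf{K}$ gives, by part (a) applied in reverse — more precisely, by the equivalence in part (b) of Theorem \ref{theorem-induction}, since the entries $i{+}1,\ldots,n$ sit in the same columns of $\tau$ and $T$ once both prefixes are rectangular of the same shape — that $(\tau,T)\in\mathbf{K}$ is controlled by the ``upper part''; and $(\tau'',T'')\in\mathbf{K}$ handles the ``lower part''; combining the two via the $V_{n'}$-cut construction of Theorem \ref{theorem-induction} yields $F_\tau\in\mathcal K^T$. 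In case (b), $(\tau',T')\in\mathbf{K}$ plus the fact that $n$ lies in the second row (so in the first column of the second row, in the 2-row setting the only possibilities are column $1$ of row $2$ or the last column of row $1$) means $n$ is in the same column of $\tau$ and $T$; then part (b) of Theorem \ref{theorem-induction} upgrades $(\tau',T')\in\mathbf{K}$ to $(\tau,T)\in\mathbf{K}$. In case (c), I would pass through the Sch\"utzenberger involution: Proposition \ref{proposition-Schutzenberger} turns the ``delete the smallest entry after jeu de taquin'' reduction into a ``delete the largest entry'' reduction for the pair $(S\cdot\tau,T^S)$, to which Theorem \ref{theorem-induction}(b) applies, and then one transports back.

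\textbf{Main obstacle.} The genuinely delicate point is case (c) of $\eta$ combined with the $\mathbf{A}\subset\mathbf{K}$ direction: the jeu-de-taquin reduction on $T$ does not have an obvious geometric counterpart on the level of orbits $\mathcal Z_\tau$, so one must route it through the Sch\"utzenberger involution and check carefully that the column-coincidence hypothesis of Theorem \ref{theorem-induction}(b) is met for $(S\cdot\tau, T^S)$ — i.e.\ that the entry being removed really does occupy the same column in $S\cdot\tau$ and in $T^S$. This uses condition (2) ($s<r$ and $b_q>a_{q+1}$) in an essential way, and verifying it is where the bulk of the case analysis lives. A secondary subtlety is making sure, in case (a) of the $\mathbf{R}\subset\mathbf{A}$ direction, that choosing $i$ \emph{maximal} with the rectangular-prefix property guarantees that $(\tau'',T'')$ itself satisfies condition (2) rather than (1) at its first step (so that the induction is well-founded and actually lands in $\mathbf{\hat A}_{\mbox{\tiny 2-r}}$).
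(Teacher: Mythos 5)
Your overall architecture ($\mathbf{K}\subset\mathbf{R}\subset\mathbf{A}\subset\mathbf{K}$, induction along $\eta$, Theorem \ref{theorem-induction}(b) for case (b), Sch\"utzenberger plus Theorem \ref{theorem-induction}(b) for case (c)) coincides with the paper's, and parts I, II and cases (b), (c) of part III are essentially right. But there is a genuine gap in case (a) of the inclusion $\mathbf{A}_{\mbox{\rm\tiny 2-r}}\subset\mathbf{K}_{\mbox{\rm\tiny 2-r}}$. Your justification rests on the claim that once $\tau[1,\ldots,i]$ and $T[1,\ldots,i]$ are both rectangular of shape $(i/2,i/2)$, the entries $i+1,\ldots,n$ sit in the same columns of $\tau$ and $T$, so that Theorem \ref{theorem-induction}(b) applies. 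That claim is false: take the shape $(3,3)$ with $T$ having rows $(1,3,4)$ and $(2,5,6)$ and $\tau$ having rows $(1,3,6)$ and $(2,4,5)$; here $i=2$ is the maximal rectangular prefix, yet the entry $4$ occupies column $3$ of $T$ and column $2$ of $\tau$. So the hypothesis of Theorem \ref{theorem-induction}(b) fails, and neither part of that theorem provides a way to "combine" $(\tau',T')\in\mathbf{K}$ with $(\tau'',T'')\in\mathbf{K}$: part (a) goes in the wrong direction and part (b) only extends a flag by a \emph{fixed} tail.

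What is actually needed — and what the paper supplies — is a separate geometric construction: since $Y_i^T$ is the $(i/2)\times 2$ rectangle, every flag in ${\mathcal B}_u^T$ has $V_i=\ker u^{i/2}$, so one can define a product map $\Phi:{\mathcal B}_{u'}\times{\mathcal B}_{u''}\to{\mathcal B}_u$ with $V'=\ker u^{i/2}$ and $V''=V/\ker u^{i/2}$, check that $\Phi({\mathcal B}_{u'}^{T'}\times{\mathcal B}_{u''}^{T''})\subset{\mathcal B}_u^T$ (hence $\Phi({\mathcal K}^{T'}\times{\mathcal K}^{T''})\subset{\mathcal K}^T$ by taking closures), and observe that ${\mathcal Z}_\tau\subset\Phi({\mathcal Z}_{\tau'}\times{\mathcal Z}_{\tau''})$. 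This is the one idea missing from your proposal; without it the induction does not close in case (a). (Your secondary worry about well-foundedness in case (a) of the $\mathbf{R}\subset\mathbf{A}$ direction is unnecessary: the induction is simply on $|\tau|$, and both members of $\eta(\tau,T)$ are strictly smaller.)
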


\begin{proof}
I) The inclusion $\mathbf{K}_{\mbox{\tiny
2-r}}\subset\mathbf{R}_{\mbox{\tiny 2-r}}$ follows from Theorem
\ref{theorem_necessary_condition}.

\smallskip
\noindent II) Let us show the inclusion $\mathbf{R}_{\mbox{\tiny
2-r}}\subset \mathbf{A}_{\mbox{\tiny 2-r}}$. According to the
inductive definition of $\mathbf{A}_{\mbox{\tiny 2-r}}$, it is
sufficient to check that any $(\tau,T)\in \mathbf{R}_{\mbox{\tiny
2-r}}$ satisfies $(\tau,T)\in\mathbf{\hat A}_{\mbox{\tiny 2-r}}$
and $\eta(\tau,T)\subset\mathbf{R}_{\mbox{\tiny 2-r}}$.

Set $n=|\tau|=|T|$. If for some $i\in\{2,...,n\}$, the subtableau
$T[1,...,i]$ is rectangular, then, using the relation
$Y_{i/0}(\tau)\preceq Y_{i/0}^T$, we get that the subtableau
$\tau[1,...,i]$ is also rectangular. If $T[1,...,i]$ is
rectangular for none $i\in\{2,...,n\}$, then, using the relation
$Y_{n/1}(\tau)\preceq Y_{n/1}^T$, we get that $1$ is in the first
row of $\tau$. Thus $(\tau,T)\in\mathbf{\hat A}_{\mbox{\tiny
2-r}}$.

\noindent In case (a) of the definition of $\eta(\tau,T)$, we have
$Y_{l/k}(\tau')=Y_{l/k}(\tau)$ for any $0\leq k<l\leq i$ and
$Y_{l/k}(\tau'')=Y_{l+i/k+i}(\tau)$ for any $0\leq k<l\leq n-i$ on
one hand, $Y_{l/k}^{T'}=Y_{l/k}^T$ for any $0\leq k<l\leq i$ and
$Y_{l/k}^{T''}=Y_{l+i/k+i}^T$ for any $0\leq k<l\leq n-i$ on the
other hand, therefore $(\tau',T'),(\tau'',T'')\in
\mathbf{R}_{\mbox{\tiny 2-r}}$.

\noindent In case (b) of the definition of $\eta(\tau,T)$, we have
$Y_{l/k}(\tau')=Y_{l/k}(\tau)$ for any $0\leq k<l\leq n-1$ and
$Y_{l/k}^{T'}=Y_{l/k}^T$ for any $0\leq k<l\leq n-1$, hence
$(\tau',T')\in \mathbf{R}_{\mbox{\tiny 2-r}}$.

\noindent In case (c) of the definition of $\eta(\tau,T)$, we have
$Y_{l/k}(\tau')=Y_{l+1/k+1}(\tau)$ for any $0\leq k<l\leq n-1$ and
$Y_{l/k}^{T'}=Y_{l+1/k+1}^T$ for any $0\leq k<l\leq n-1$, hence
$(\tau',T')\in \mathbf{R}_{\mbox{\tiny 2-r}}$.

\noindent In all the cases, we have shown $\eta(\tau,T)\subset
\mathbf{R}_{\mbox{\tiny 2-r}}$. It follows
$\mathbf{R}_{\mbox{\tiny 2-r}}\subset\mathbf{A}_{\mbox{\tiny
2-r}}$.

\smallskip
\noindent III) Now, let us prove the inclusion
$\mathbf{A}_{\mbox{\tiny 2-r}}\subset \mathbf{K}_{\mbox{\tiny
2-r}}$. To do this, it is sufficient to show that
$\eta(\tau,T)\subset \mathbf{K}_{\mbox{\tiny 2-r}}$ implies
$(\tau,T)\in \mathbf{K}_{\mbox{\tiny 2-r}}$, for any $(\tau,T)\in
\mathbf{\hat A}_{\mbox{\tiny 2-r}}$.

\noindent In case (c) of the definition of $\eta(\tau,T)$, we have
$\tau'=S\cdot((S\cdot \tau)[1,...,n-1])$ and
$T'=((T^S)[1,...,n-1])^S$, and $n$ is in the same place of $S\cdot
\tau$ and $T^S$, hence, combining Theorem
\ref{theorem-induction}\,(b) and Proposition
\ref{proposition-Schutzenberger}, we get
$$\eta(\tau,T)\subset \mathbf{K}_{\mbox{\tiny 2-r}}\Rightarrow
(S\cdot \tau,T^S)\in \mathbf{K}_{\mbox{\tiny 2-r}} \Rightarrow
(\tau,T)\in \mathbf{K}_{\mbox{\tiny 2-r}}.$$

\noindent In case (b) of the definition of $\eta(\tau,T)$, we get
the implication $\eta(\tau,T)\subset \mathbf{K}_{\mbox{\tiny
2-r}}\Rightarrow (\tau,T)\in \mathbf{K}_{\mbox{\tiny 2-r}}$, by
applying Theorem \ref{theorem-induction}\,(b).

\noindent Finally, we consider the case (a) of the definition of
$\eta(\tau,T)$. Embed ${\mathcal K}^T,{\mathcal Z}_\tau\subset
{\mathcal B}_u$. Set $V'=\mathrm{ker}\,u^{i/2}$ and let
$u'\in\mathrm{End}(V')$ be the nilpotent map induced by $u$, set
$V''=V/\mathrm{ker}\,u^{i/2}$ and let $u''\in\mathrm{End}(V'')$ be
the nilpotent map induced by $u$, let ${\mathcal B}_{u'}$ and
${\mathcal B}_{u''}$ be the corresponding Springer fibers. The map
\begin{eqnarray}
\Phi:{\mathcal B}_{u'}\times {\mathcal B}_{u''} & \rightarrow & {\mathcal B}_{u}\nonumber \\
(V_0,...,V_i),(W_0,...,W_{n-i}) & \mapsto &
(V_0,...,V_i,\mathrm{ker}\,u^{i/2}+W_1,...,\mathrm{ker}\,u^{i/2}+W_{n-i})\nonumber
\end{eqnarray}
is well defined and algebraic. Easily, we have $\Phi({\mathcal
B}_{u'}^{T'}\times{\mathcal B}_{u''}^{T''})\subset {\mathcal
B}_{u}^{T}$. It follows
\[\Phi({\mathcal K}^{T'}\times {\mathcal K}^{T''})=\Phi(\overline{{\mathcal B}_{u'}^{T'}\times{\mathcal B}_{u''}^{T''}})
\subset {\mathcal K}^{T}.\] On the other hand, we see that
${\mathcal Z}_\tau\subset \Phi({\mathcal Z}_{\tau'}\times{\mathcal
Z}_{\tau''})$. It results:
$$
{\mathcal Z}_{\tau'}\subset{\mathcal K}^{T'},{\mathcal
Z}_{\tau''}\subset{\mathcal K}^{T''}\Rightarrow {\mathcal
Z}_\tau\subset {\mathcal K}^T.
$$
Thus, we get the implication $\eta(\tau,T)\subset
\mathbf{K}_{\mbox{\tiny 2-r}} \Rightarrow
(\tau,T)\in\mathbf{K}_{\mbox{\tiny 2-r}}$. The proof of the
theorem is now complete.  
\end{proof}

\subsection{Equivalence in the two-column case}

\label{A-2column-case} \label{def-A-2column}

Let $\mathbf{Y}_{\mbox{\tiny 2-c}}$ denote the subset of pairs
$(\tau,T)\in\mathbf{Y}$ such that the Young shape
$\mathrm{sh}(\tau)=\mathrm{sh}(T)$ has (at most) two columns. Set
$\mathbf{K}_{\mbox{\tiny 2-c}}=\mathbf{K}\cap
\mathbf{Y}_{\mbox{\tiny 2-c}}$ and $\mathbf{R}_{\mbox{\tiny
2-c}}=\mathbf{R}\cap \mathbf{Y}_{\mbox{\tiny 2-c}}$. As previously
for the hook and two-row cases, we prove the equality
$\mathbf{K}_{\mbox{\tiny 2-c}}=\mathbf{R}_{\mbox{\tiny 2-c}}$,
and, to do this, we rely on another subset
$\mathbf{A}_{\mbox{\tiny 2-c}}\subset\mathbf{Y}_{\mbox{\tiny
2-c}}$.

\smallskip

Let us start with some notation. Let $(\tau,T)\in
\mathbf{Y}_{\mbox{\tiny 2-c}}$. We denote by $C_q(\tau)$ (resp.
$C_q(T)$) the set of the entries in the $q$-th column of $\tau$
(resp. $T$). For $i\in C_2(\tau)$, let $\nu_\tau(i)\in C_1(\tau)$
be the entry on the left of $i$ in $\tau$. For $i\in C_1(\tau)$,
let $\omega_\tau(i)\in C_2(\tau)\cup\{\infty\}$ be either the
entry on the right of $i$ in $\tau$ if there is one, or
$\omega_\tau(i)=\infty$ otherwise. Say by convention $i<\infty$
for any integer $i$. Set $n=|\tau|=|T|$.

As in the two-row case, we define first a subset $\mathbf{\hat
A}_{\mbox{\tiny 2-c}}\subset\mathbf{Y}_{\mbox{\tiny 2-c}}$. Recall
that we denote by $\mathrm{st}(\tau)$ the standard tableau
obtained by putting in increasing order the numbers inside the
columns of $\tau$ (cf. \ref{standardization}). Assume
$T\not=\mathrm{st}(\tau)$. Then there is $i\in\{1,...,n\}$ which
does not lie in the same column of $\tau$ and $T$. Take $i$
minimal. We ask for three conditions:
\begin{itemize}
\item[(1)] We have $i\in C_1(\tau)\cap C_2(T)$.
\item[(2)] There is $j\in\{i+1,...,n\}\cap C_2(\tau)$
such that $\nu_{\tau}(j)\leq i$.
\end{itemize}
Then take $j$ minimal.
\begin{itemize}
\item[(3)] There is $i'\in\{i,...,j-1\}\cap C_1(\tau)$ such that
$\omega_{\tau}(i')>j$.
\end{itemize}
Let $\mathbf{\hat A}_{\mbox{\tiny 2-c}}\subset
\mathbf{Y}_{\mbox{\tiny 2-c}}$ be the subset of pairs $(\tau,T)$
such that $T\not=\mathrm{st}(\tau)$, and satisfying all the
conditions (1), (2) and (3).

\begin{example}
\label{example-eta-1} The tableaux $\tau=\mbox{\scriptsize
$\young(24,17,36,8,5)$}$ and $T=\mbox{\scriptsize
$\young(12,34,56,7,8)$}$ satisfy the conditions with $i=2$ and
$j=4$ and for example $i'=3$. Thus $(\tau,T)\in \mathbf{\hat
A}_{\mbox{\tiny 2-c}}$.
\end{example}

Let us now define a map $\eta:\mathbf{\hat A}_{\mbox{\tiny
2-c}}\rightarrow\mathbf{Y}_{\mbox{\tiny 2-c}}$. Let
$(\tau,T)\in\mathbf{\hat A}_{\mbox{\tiny 2-c}}$, and let $i,j$ be
the entries involved in conditions (1) and (2). We define a
row-standard tableau $\tilde\tau$ with the same shape as $\tau$.
We distinguish two cases.

\smallskip
\noindent (a) Suppose that $\omega_{\tau}(i)<\omega_{\tau}(i')$
for some $i'\in\{i+1,...,j-1\}\cap C_1(\tau)$. Take $i'$ minimal.
Then define $\tilde\tau$ as the tableau obtained from $\tau$ by
switching $i$ and $i'$. By condition (2), we have
$\omega_{\tau}(i)\geq j$, hence $\omega_{\tau}(i)> i'$. By
definition of $i'$ we have $\omega_{\tau}(i')> i$. Hence the
tableau $\tilde\tau$ is row-standard.

\smallskip
\noindent (b) Next suppose that
$\omega_{\tau}(i')\leq\omega_{\tau}(i)$ for any $i'\in
\{i,...,j-1\}\cap C_1(\tau)$. Then define $\tilde\tau$ as the
tableau obtained from $\tau$ by switching $i$ and $j$. By
condition (3), we have $\omega_{\tau}(i)>j$, hence the tableau
$\tilde\tau$ is row-standard.

\smallskip
Then, we set $\eta(\tau,T)=(\tilde\tau,T)$.

\begin{example}
Let $\tau$ and $T$ be as in Example \ref{example-eta-1}. Then
$\tilde\tau=\mbox{\scriptsize $\young(34,17,26,8,5)$}$.
\end{example}

Finally, we define the set $\mathbf{A}_{\mbox{\tiny 2-c}}$ by
induction.
\begin{itemize}
\item[-] Say $(\tau,T)\in \mathbf{A}_{\mbox{\tiny 2-c}}$ whenever $T=\mathrm{st}(\tau)$.
\item[-] For $(\tau,T)\in\mathbf{Y}$ with $T\not=\mathrm{st}(\tau)$,
say $(\tau,T)\in \mathbf{A}_{\mbox{\tiny 2-c}}$ if we have
$(\tau,T)\in \mathbf{\hat A}_{\mbox{\tiny 2-c}}$ and
$\eta(\tau,T)\in \mathbf{A}_{\mbox{\tiny 2-c}}$.
\end{itemize}

Observe that, assuming $(\tilde\tau,T)=\eta(\tau,T)\in\mathbf{\hat A}_{\mbox{\tiny 2-c}}$
and letting $\tilde\imath\in\{1,\ldots,n\}$ be the minimal entry which is not in the
same column of $\tilde\tau$ and $T$, we have either $\tilde\imath>i$
(in case (b) of the definition of $\tilde\tau$ above), or $\tilde\imath=i$ and $\omega_{\tilde\tau}(\tilde\imath)>\omega_{\tau}(i)$
(in case (a)). It follows by induction on the pair $(i,\omega_\tau(i))$
that, applying $\eta$ to the pair $(\tau,T)$ a certain number of times, we obtain an element
which does lie in $\mathbf{\hat A}_{\mbox{\tiny 2-c}}$. Therefore, the definition
of the set $\mathbf{A}_{\mbox{\tiny 2-c}}$ is valid.

\begin{example}
Let $\tau$ and $T$ be as in Example \ref{example-eta-1}. We apply
$\eta$ as many times as possible:
\[\tau=\mbox{\scriptsize $\young(24,17,36,8,5)$}
\ \rightarrow\ \mbox{\scriptsize $\young(34,17,26,8,5)$} \
\rightarrow\ \mbox{\scriptsize $\young(34,17,56,8,2)$} \
\rightarrow\ \mbox{\scriptsize $\young(34,12,56,8,7)$}=:\tau'
\]
(each intermediate tableau belongs to $\mathbf{\hat
A}_{\mbox{\tiny 2-c}}$). Since the last tableau satisfies
$T=\mathrm{st}(\tau')$, we get finally $(\tau,T)\in
\mathbf{A}_{\mbox{\tiny 2-c}}$.
\end{example}

We have the following

\begin{theorem}
\label{theorem-A-2column} We have $\mathbf{K}_{\mbox{\rm\tiny
2-c}}=\mathbf{A}_{\mbox{\rm\tiny 2-c}}=\mathbf{R}_{\mbox{\rm\tiny
2-c}}$.
\end{theorem}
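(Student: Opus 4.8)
The strategy mirrors exactly the two-row case (Theorem \ref{theorem-KR-2-row}). Three inclusions are needed to establish the triple equality $\mathbf{K}_{\mbox{\rm\tiny 2-c}}=\mathbf{A}_{\mbox{\rm\tiny 2-c}}=\mathbf{R}_{\mbox{\rm\tiny 2-c}}$. First, $\mathbf{K}_{\mbox{\rm\tiny 2-c}}\subset \mathbf{R}_{\mbox{\rm\tiny 2-c}}$ is immediate from Theorem \ref{theorem_necessary_condition}. The core of the argument is then to prove the two remaining inclusions $\mathbf{R}_{\mbox{\rm\tiny 2-c}}\subset\mathbf{A}_{\mbox{\rm\tiny 2-c}}$ and $\mathbf{A}_{\mbox{\rm\tiny 2-c}}\subset\mathbf{K}_{\mbox{\rm\tiny 2-c}}$, both of which I would carry out by induction along the map $\eta$, using the fact (verified just before the statement) that iterating $\eta$ eventually reaches a pair $(\tau,T)$ with $T=\mathrm{st}(\tau)$.

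\textbf{The inclusion $\mathbf{R}_{\mbox{\rm\tiny 2-c}}\subset\mathbf{A}_{\mbox{\rm\tiny 2-c}}$.} By the inductive definition of $\mathbf{A}_{\mbox{\rm\tiny 2-c}}$, it suffices to show two things for any $(\tau,T)\in\mathbf{R}_{\mbox{\rm\tiny 2-c}}$ with $T\neq\mathrm{st}(\tau)$: that $(\tau,T)\in\mathbf{\hat A}_{\mbox{\rm\tiny 2-c}}$, i.e.\ conditions (1), (2), (3) hold; and that $\eta(\tau,T)\in\mathbf{R}_{\mbox{\rm\tiny 2-c}}$. For the first part, let $i$ be the minimal entry not in the same column of $\tau$ and $T$. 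Since $i$ is minimal, the entries $1,\ldots,i-1$ occupy the same columns in both tableaux; using the dominance relations $Y_{i/0}(\tau)\preceq Y_{i/0}^T$ and $Y_{i-1/0}(\tau)\preceq Y_{i-1/0}^T$ (which in the two-column case just compare column-lengths of the relevant subtableaux) one forces $i\in C_1(\tau)\cap C_2(T)$, giving (1). Conditions (2) and (3) should then follow by applying the dominance relations $Y_{j/i-1}(\tau)\preceq Y_{j/i-1}^T$ for suitable $j$: the existence of an entry $j>i$ in $C_2(\tau)$ with $\nu_\tau(j)\le i$ and of an $i'$ with $\omega_\tau(i')>j$ are exactly what is needed to make the column-length inequalities consistent with $i$ having moved from $C_1$ in $\tau$ to $C_2$ in $T$. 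For the second part, one checks directly from the definition of $\tilde\tau$ (switching $i$ with $i'$ in case (a), or $i$ with $j$ in case (b)) that all the diagrams $Y_{l/k}(\tilde\tau)$ either coincide with $Y_{l/k}(\tau)$ or differ in a way still dominated by $Y_{l/k}^T$; since only two entries move and they move within the range $[i,j]$, this is a finite bookkeeping over the relevant index pairs.

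\textbf{The inclusion $\mathbf{A}_{\mbox{\rm\tiny 2-c}}\subset\mathbf{K}_{\mbox{\rm\tiny 2-c}}$.} Here the base case $T=\mathrm{st}(\tau)$ is Proposition \ref{proposition-standardization}. For the inductive step it suffices to show: if $(\tau,T)\in\mathbf{\hat A}_{\mbox{\rm\tiny 2-c}}$ and $\eta(\tau,T)=(\tilde\tau,T)\in\mathbf{K}_{\mbox{\rm\tiny 2-c}}$, then $(\tau,T)\in\mathbf{K}_{\mbox{\rm\tiny 2-c}}$. Concretely, $F_{\tilde\tau}\in{\mathcal K}^T$ should be shown to imply $F_\tau\in{\mathcal K}^T$. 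The natural mechanism — as in case (a) of the two-row proof — is to realize both orbits ${\mathcal Z}_\tau$ and ${\mathcal Z}_{\tilde\tau}$ inside a single ${\mathcal B}_u$ and to produce a one-parameter curve in $Z(u)\cdot F_{\tilde\tau}$ (or a degeneration argument) whose limit is $F_\tau$; since ${\mathcal K}^T$ is closed and $Z(u)$-stable, this forces $F_\tau\in{\mathcal K}^T$. The point is that $\tilde\tau$ is obtained from $\tau$ by a single transposition of two entries lying in the first column (case (a)) or moving one across (case (b)), which is a very small modification of the adapted flag, so a one-parameter unipotent subgroup of $Z(u)$ interpolating between the two $\tau$-bases should exist, exactly as the automorphisms $d_t$ did in the proof of Proposition \ref{proposition-rectify-standard}. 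Alternatively one can invoke the equational description of two-column components from \cite{F} to check the closure relation directly.

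\textbf{Main obstacle.} The routine part is the chain of dominance-relation bookkeeping; the delicate point is the last inclusion, namely producing the explicit degeneration ${\mathcal Z}_{\tilde\tau}\subset\overline{{\mathcal Z}_\tau}$ (or the reverse, depending on orientation) in cases (a) and (b) of the definition of $\eta$. One must be careful that in case (a) the swap $i\leftrightarrow i'$ of two first-column entries genuinely corresponds to a limit of flags adapted to $\tau$ — this is where the hypotheses $\omega_\tau(i)\ge j>i'$ and $\omega_\tau(i')>i$ enter, guaranteeing that the appropriate vectors $e_{i'}+t e_i$ (or similar) sweep out a curve inside ${\mathcal Z}_\tau$ whose limit gives $F_{\tilde\tau}$. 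Getting the direction of the degeneration right, and verifying that the limit of each subspace $V_l$ is the correct one for all $l$ simultaneously, is the step that requires genuine care rather than formal manipulation.
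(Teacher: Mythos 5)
Your proposal follows essentially the same route as the paper: the same three inclusions, the same induction along $\eta$ with base case Proposition \ref{proposition-standardization}, and the same two technical ingredients (that membership in $\mathbf{R}_{\mbox{\rm\tiny 2-c}}$ forces membership in $\mathbf{\hat A}_{\mbox{\rm\tiny 2-c}}$ and is preserved by $\eta$, and the closure relation between the orbits ${\mathcal Z}_\tau$ and ${\mathcal Z}_{\tilde\tau}$), which the paper does not reprove but delegates to the proof of Theorem 2.5 of \cite{F}. On the one point where you hedge, the needed direction is ${\mathcal Z}_\tau\subset\overline{{\mathcal Z}_{\tilde\tau}}$, i.e.\ $F_\tau$ is a limit of flags in the orbit of the \emph{modified} tableau $\tilde\tau$; combined with ${\mathcal Z}_{\tilde\tau}\subset{\mathcal K}^T$ and the closedness of ${\mathcal K}^T$, this yields $(\tau,T)\in\mathbf{K}_{\mbox{\rm\tiny 2-c}}$.
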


\begin{proof}
The inclusion $\mathbf{K}_{\mbox{\rm\tiny
2-c}}\subset\mathbf{R}_{\mbox{\rm\tiny 2-c}}$ follows from Theorem
\ref{theorem_necessary_condition}.

Let us show the inclusion $\mathbf{A}_{\mbox{\rm\tiny
2-c}}\subset\mathbf{K}_{\mbox{\rm\tiny 2-c}}$. Let
$(\tau,T)\in\mathbf{A}_{\mbox{\rm\tiny 2-c}}$. If
$T=\mathrm{st}(\tau)$, then we have $(\tau,T)\in
\mathbf{K}_{\mbox{\rm\tiny 2-c}}$ by Proposition
\ref{proposition-standardization}. Assume now
$(\tau,T)\in\mathbf{\hat A}_{\mbox{\rm\tiny 2-c}}$. By induction,
it remains to show the implication
$\eta(\tau,T)\in\mathbf{K}_{\mbox{\rm\tiny 2-c}}\Rightarrow
(\tau,T)\in\mathbf{K}_{\mbox{\rm\tiny 2-c}}$. Write
$\eta(\tau,T)=(\tilde\tau,T)$. Embed ${\mathcal K}^T,{\mathcal
Z}_\tau,{\mathcal Z}_{\tilde\tau}\subset {\mathcal B}_u$. Then, it
is sufficient to see that ${\mathcal Z}_\tau$ is contained in the
closure of ${\mathcal Z}_{\tilde\tau}$. This follows from the
Claim inside the proof of \cite[Theorem 2.5]{F}.

Finally, let us show the inclusion $\mathbf{R}_{\mbox{\rm\tiny
2-c}}\subset\mathbf{A}_{\mbox{\rm\tiny 2-c}}$. Let $(\tau,T)\in
\mathbf{R}_{\mbox{\rm\tiny 2-c}}$. If $T=\mathrm{st}(\tau)$, then
it is immediate that $(\tau,T)\in \mathbf{A}_{\mbox{\rm\tiny
2-c}}$. Assume now that $T\not=\mathrm{st}(\tau)$. By induction,
it is sufficient to see that $(\tau,T)\in \mathbf{\hat
A}_{\mbox{\rm\tiny 2-c}}$ and $\eta(\tau,T)\in
\mathbf{R}_{\mbox{\rm\tiny 2-c}}$. The fact that $(\tau,T)\in
\mathbf{\hat A}_{\mbox{\rm\tiny 2-c}}$ is shown in the first part
of the proof of \cite[Theorem 2.5]{F} (before the Claim). The fact
that $\eta(\tau,T)\in \mathbf{R}_{\mbox{\rm\tiny 2-c}}$ is proved
in the last part of the proof of \cite[Theorem 2.5]{F} (after the
Claim).  
\end{proof}

\subsection{A connection between the two-row and two-column cases}

\label{connection-1-2row-2column} \label{transposee}

We provide a complement to Theorem \ref{theorem-A-2column} where
the combinatorial condition required for having $\tau\in T$ is
weakened in the particular case where $\tau$ is a standard tableau
(i.e. rows and also columns of $\tau$ are increasing) and $\tau,T$
have two columns:

\begin{proposition}
\label{proposition-standard-2column} Let $(\tau,T)\in
\mathbf{Y}_{\mbox{\tiny\rm 2-c}}$. Assume that $\tau$ is a
standard tableau. Then, $\tau\in T$ if and only if
$Y_{i/0}(\tau)\preceq Y_{i/0}^T$ for every $i=1,...,n$.
\end{proposition}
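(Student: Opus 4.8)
The forward implication needs nothing about standardness: if $\tau\in T$ then $(\tau,T)\in\mathbf{K}\subset\mathbf{R}$ by Theorem \ref{theorem_necessary_condition}, and in particular $Y_{i/0}(\tau)\preceq Y_{i/0}^T$ for every $i$. So the plan is to prove the converse. By Theorem \ref{theorem-A-2column} we have $\mathbf{K}_{\mbox{\tiny 2-c}}=\mathbf{R}_{\mbox{\tiny 2-c}}$, hence it is enough to show that, when $\tau$ is standard, the relations $Y_{i/0}(\tau)\preceq Y_{i/0}^T$ for $i=1,\dots,n$ already force $Y_{j/i}(\tau)\preceq Y_{j/i}^T$ for \emph{all} $0\le i<j\le n$, i.e.\ $(\tau,T)\in\mathbf{R}_{\mbox{\tiny 2-c}}$. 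From this point on the argument is purely combinatorial.

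I would set up notation as follows. Since $\tau$ and $T$ are standard of the same two-column shape, list the first-column entries of $\tau$ (resp.\ of $T$) increasingly as $\alpha_1<\dots<\alpha_p$ (resp.\ $\gamma_1<\dots<\gamma_p$) and the second-column entries as $\beta_1<\dots<\beta_q$ (resp.\ $\delta_1<\dots<\delta_q$), so that the $k$-th row of $\tau$ (resp.\ of $T$) is $(\alpha_k,\beta_k)$ (resp.\ $(\gamma_k,\delta_k)$) for $k\le q$. In the two-column case the dominance order on diagrams of a fixed size is just the comparison of the number of boxes in the second column, so the hypothesis says: for every $m$, the number of $\beta_k\le m$ is at most the number of $\delta_k\le m$. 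Since both sequences are increasing of length $q$, this is equivalent to $\beta_k\ge\delta_k$ for all $k$, and, taking complements in $\{1,\dots,n\}$, also to $\alpha_k\le\gamma_k$ for all $k$.

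Next I would rewrite the target relation $Y_{j/i}(\tau)\preceq Y_{j/i}^T$ in terms of second-column counts. The diagram $Y_{j/i}(\tau)$ is the shape of $\tau[i+1,\dots,j]$, so its second column has exactly $\#\{k\le q:\alpha_k>i,\ \beta_k\le j\}$ boxes. On the $T$-side, the standard description of jeu de taquin for skew tableaux lying in two columns gives that the number of boxes in the second column of $Y_{j/i}^T$ equals the maximal number of pairwise disjoint pairs $(x,y)$ with $x$ a first-column entry of $T$, $y$ a second-column entry of $T$, and $i<x<y\le j$ --- equivalently, the number of matched brackets in the word obtained by scanning $i+1,\dots,j$ in increasing order and recording each value as an opening bracket if it lies in the first column of $T$ and a closing bracket otherwise (see e.g.\ \cite{Fulton}). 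Now for every $k\le q$ with $\alpha_k>i$ and $\beta_k\le j$ the hypothesis gives $i<\alpha_k\le\gamma_k<\delta_k\le\beta_k\le j$, so $(\gamma_k,\delta_k)$ is an admissible pair for $T$, and distinct values of $k$ produce disjoint pairs. Hence the second column of $Y_{j/i}^T$ has at least $\#\{k\le q:\alpha_k>i,\ \beta_k\le j\}$ boxes, which is precisely $Y_{j/i}(\tau)\preceq Y_{j/i}^T$. Running this over all $i<j$ yields $(\tau,T)\in\mathbf{R}_{\mbox{\tiny 2-c}}$, hence $\tau\in T$. (Alternatively, one may avoid the maximal-matching formulation and use that the second column of $Y_{j/i}^T$ equals $b^T_j-a^T_i+\min_{i\le m\le j}(a^T_m-b^T_m)$, where $a^T_m,b^T_m$ count the first- and second-column entries of $T$ that are $\le m$; since $T$ is standard, $a^T_m\ge b^T_m$, so this is $\ge\max(0,b^T_j-a^T_i)\ge\max(0,b^\tau_j-a^\tau_i)$, again the required count.)

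The only non-bookkeeping ingredient, and the step I expect to require the most care, is this combinatorial description of the second-column length of the jeu de taquin rectification of a two-column skew standard tableau; the subtle point is to check that the bracket count produced by the reading word of the skew subtableau $T[i+1,\dots,j]$ coincides with the one obtained by scanning the values $i+1,\dots,j$ in increasing order (this is where one uses that in each row the first-column entry precedes the second-column entry). Everything else is immediate from the definitions together with the two previously established facts $\mathbf{K}\subset\mathbf{R}$ (Theorem \ref{theorem_necessary_condition}) and $\mathbf{K}_{\mbox{\tiny 2-c}}=\mathbf{R}_{\mbox{\tiny 2-c}}$ (Theorem \ref{theorem-A-2column}).
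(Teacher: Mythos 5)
Your proof is correct, and its combinatorial core is genuinely different from the paper's. Both arguments make the same reduction: by Theorems \ref{theorem_necessary_condition} and \ref{theorem-A-2column} it suffices to upgrade the relations $Y_{i/0}(\tau)\preceq Y_{i/0}^T$ to $Y_{j/i}(\tau)\preceq Y_{j/i}^T$ for all $i<j$, and in the two-column case dominance is just the comparison of second-column lengths $s_{j/i}(\tau)\leq s_{j/i}^T$. The paper gets this by a telescoping argument: using standardness of $\tau$ it proves the exact identity $s_{j/0}(\tau)=s_{j/i}(\tau)+\#\{1,\dots,i\}\cap C_1(\tau)$, pairs it with the inequality $s_{j/0}^T\leq s_{j/i}^T+\#\{1,\dots,i\}\cap C_1(T)$, and then uses only the hypotheses for the pairs $(0,i)$ and $(0,j)$. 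You instead convert the hypothesis into the entrywise inequalities $\alpha_k\leq\gamma_k$ and $\delta_k\leq\beta_k$ and exhibit an explicit injection from the two-box rows of $\tau[i+1,\dots,j]$ into two-box rows of $T[i+1,\dots,j]$; this is arguably more transparent and shows exactly where standardness of $\tau$ enters (in aligning the $k$-th row of $\tau$ with the $k$-th row of $T$). One remark: the step you single out as delicate, the exact max-matching description of $s_{j/i}^T$, is true (it is the transpose of \cite[Lemma 2.3]{F}, which the paper invokes in the two-row setting in the proof of Lemma \ref{step-3}), but your argument does not actually need the equality. The pairs $(\gamma_k,\delta_k)$ you produce satisfy $i<\gamma_k<\delta_k\leq j$ and lie in a common row of $T$, so they are two-box rows of the skew subtableau $T[i+1,\dots,j]$; hence all you need is the soft inequality that $s_{j/i}^T$ is at least the number of two-box rows of $T[i+1,\dots,j]$, which is elementary (slides never increase the number of nonempty rows, so rectification cannot shorten the second column) and is precisely the jeu-de-taquin input the paper itself uses. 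With that substitution your proof is complete without any appeal to an unproved rectification formula.
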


\begin{proof}
The implication $(\Rightarrow)$ is immediate, let us show
$(\Leftarrow)$. By Theorem \ref{theorem-A-2column}, it is
sufficient to derive the relation $Y_{j/i}(\tau)\preceq Y_{j/i}^T$
for all $0\leq i<j\leq n$. Let $s_{j/i}^T$ (resp. $s_{j/i}(\tau)$)
denote the length of the second column of $Y_{j/i}^T$ (resp. of
$Y_{j/i}(\tau)$). The relation $Y_{j/i}(\tau)\preceq Y_{j/i}^T$
is immediate if $Y_{j/i}(\tau)$ has only one column.
Otherwise, there are $a,b$ in the same row of $\tau$ such that $i<a<b\leq j$.
As $\tau$ is standard, every $a'\in C_1(\tau)$ with 
$a'\leq a$ has a
neighbor on the right $b'\leq b$. Thus, every $a'\in\{1,...,i\}\cap C_1(\tau)$
has a neighbor on the right $b'\in\{1,...,j\}$ in $\tau$,
hence $\#\{1,...,i\}\cap C_1(\tau)$ is the number of rows of $\tau$ 
of the form $(a',b')$ with $a'\leq i$, $a'<b'\leq j$.
Note that $s_{j/0}(\tau)$ (resp. $s_{j/i}(\tau)$) is the number of rows of $\tau$ of the form
$(a',b')$ with $a'<b'\leq j$ (resp. with $i<a'<b'\leq j$). It follows 
$$s_{j/0}(\tau)=s_{j/i}(\tau)+\#\{1,...,i\}\cap C_1(\tau).$$
Similarly, $s_{j/0}^T$ is the number of rows of length $2$ in the subtableau $T[1,...,j]$.
The number $\#\{1,\ldots,i\}\cap C_1(T)$ is greater than or equal to the number of rows of the form
$(a',b')$ with $a'\leq i$ in the subtableau $T[1,...,j]$.
By definition of jeu de taquin, $s_{j/i}^T$ is greater than or equal to
the number of rows of length $2$ in the skew subtableau
$T[i+1,...,j]$. It follows $$s_{j/0}^T\leq
s_{j/i}^T+\#\{1,...,i\}\cap C_1(T).$$
The relation $Y_{i/0}(\tau)\preceq Y_{i/0}^T$ implies $\#\{1,...,i\}\cap
C_1(\tau)\geq \#\{1,...,i\}\cap C_1(T)$. The
relation $Y_{j/0}(\tau)\preceq Y_{j/0}^T$ implies
$s_{j/0}(\tau)\leq s_{j/0}^T$. Therefore, we get
$s_{j/i}(\tau)\leq s_{j/i}^T$. It follows $Y_{j/i}(\tau)\preceq
Y_{j/i}^T$.
\end{proof}

\begin{remark}
Assume $Y(u)$ has two columns of lengths $r\geq s$. Let $\tau_0$
be the standard tableau of shape $Y(u)$, numbered by $1,...,r$ in
the first column and $r+1,...,r+s$ in the second column. Then it
is clear that $Y_{i/0}(\tau_0)\preceq Y_{i/0}^T$ for any $i$, any
$T$ standard, hence ${\mathcal Z}_{\tau_0}\subset {\mathcal K}^T$
for any $T$. In particular, when $Y(u)$ has two columns, the
intersection of all the components of ${\mathcal B}_u$ is always
nonempty (see also \cite[\S 2.1]{F}).
\end{remark}

\medskip

We provide a connection between the two-row and two-column cases.
Let $Y$ be a Young diagram. Denoting by $\lambda_1>...>\lambda_r$
the lengths of the rows of $Y$, we define its transposed diagram
$Y^t$ as the Young diagram with $\lambda_q$ boxes in the $q$-th
column. Let $T$ be a standard tableau. We define $T^t$ as the
standard tableau obtained by transposition: if
$a_1<...<a_{\lambda_p}$ are the entries of the $p$-th row of $T$,
then the $p$-th column of $T^t$ contains $a_1,...,a_{\lambda_p}$
from top to bottom.

\begin{proposition}
\label{proposition-transposee} Let $T,S$ be two standard tableaux
of common shape with two rows. Then we have the implication:
$(S,T)\in\mathbf{K}$ $\Rightarrow$ $(T^t,S^t)\in\mathbf{K}$.
\end{proposition}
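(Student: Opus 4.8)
The plan is to reduce the two-row statement to the already-established two-column characterization (Theorem \ref{theorem-A-2column}) via the transposition operation, by showing that transposition interchanges the combinatorial conditions for $\mathbf{R}$ (equivalently, for $\mathbf{K}$, by Theorems \ref{theorem-KR-2-row} and \ref{theorem-A-2column}) in the two-row and two-column cases. Concretely, given $(S,T)\in\mathbf{K}_{\mbox{\tiny 2-r}}$, I want to conclude $(T^t,S^t)\in\mathbf{K}_{\mbox{\tiny 2-c}}$, and since $T^t,S^t$ are both \emph{standard} and have two columns, Proposition \ref{proposition-standard-2column} applies, so it suffices to check the single family of dominance relations $Y_{i/0}(T^t)\preceq Y_{i/0}^{S^t}$ for all $i=1,\ldots,n$.

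The first step is to translate the hypothesis. Since $(S,T)\in\mathbf{K}_{\mbox{\tiny 2-r}}=\mathbf{R}_{\mbox{\tiny 2-r}}$, we have $Y_{j/i}(S)\preceq Y_{j/i}^T$ for all $0\leq i<j\leq n$; I will use this only for the pairs of the form $(0,i)$ and $(i,n)$, i.e.\ $Y_{i/0}(S)\preceq Y_{i/0}^T$ and $Y_{n/i}(S)\preceq Y_{n/i}^T$. The key observation is the interplay between "second row of a subtableau" and "second column" under transposition: the length of the second row of $Y_{i/0}(S)$ equals the number of entries $\leq i$ in the second row of $S$, which equals the number of entries $\leq i$ in the second column of $S^t$, i.e.\ the length of the second column of $Y_{i/0}(S^t)$ \emph{only if} one is careful, because $S^t$ need not be row-standard as a two-column tableau in the sense that matters for $Y_{i/0}$ — but here $S$ is standard, so $S^t$ is standard too, and $Y_{i/0}(S^t)$ is genuinely the shape of the subtableau $S^t[1,\ldots,i]$. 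For $T$ standard and two-row, I need the parallel fact that the second column length of $Y_{i/0}^{T^t}$ (computed via jeu de taquin, but for standard tableaux the straight-shape and skew computations are compatible) relates to the second row data of $T$; the cleanest route is: $Y_{i/0}^{T^t}$ as a two-column diagram is the transpose of $Y_{i/0}^{T}$ as a two-row diagram, because passing to the subtableau on entries $\{1,\ldots,i\}$ commutes with transposition for standard tableaux. A dominance relation $A\preceq B$ between two-row diagrams is equivalent to $B^t\preceq A^t$ between the corresponding two-column diagrams (this is exactly the column-length reformulation of dominance recalled in \ref{semi-continuity}). Hence $Y_{i/0}(S)\preceq Y_{i/0}^T$ is equivalent to $(Y_{i/0}^T)^t\preceq (Y_{i/0}(S))^t$, i.e.\ $Y_{i/0}^{T^t}\preceq Y_{i/0}(S^t)$. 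That is the wrong direction for what I want, so I will instead feed in the \emph{complementary} relation $Y_{n/i}(S)\preceq Y_{n/i}^T$ and combine it with a conservation identity ($Y_{i/0}$ and $Y_{n/i}$ data sum to the data of the full shape) to extract exactly $Y_{i/0}(T^t)\preceq Y_{i/0}^{S^t}$; this bookkeeping with column-lengths, done for all $i$, is the technical heart.

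Granting those translations, the argument closes as follows: from $(S,T)\in\mathbf{R}_{\mbox{\tiny 2-r}}$ I obtain $Y_{i/0}(T^t)\preceq Y_{i/0}^{S^t}$ for every $i$ (using the relations for all pairs $(i,n)$ together with the fact that the full shape of $S$ equals that of $T$, so the transposed shapes of $S^t$ and $T^t$ agree); then Proposition \ref{proposition-standard-2column} gives $T^t\in S^t$, i.e.\ $(T^t,S^t)\in\mathbf{K}_{\mbox{\tiny 2-c}}\subset\mathbf{K}$, as desired. The main obstacle I anticipate is precisely the second paragraph's bookkeeping: pinning down exactly how the second-column length of $Y_{j/i}^{T}$ computed by jeu de taquin on a \emph{skew} subtableau transforms under transposition, and verifying the conservation identity relating the data at cut $i$ from above and below. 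For standard tableaux this should be routine (the skew jeu-de-taquin shape $Y_{j/i}^T$ has second-column length equal to $s_{j/0}^T - s_{i/0}^T$ adjusted by the number of "carry" rows, exactly as manipulated in the proof of Proposition \ref{proposition-standard-2column}), but it must be stated carefully to make the transposition symmetry manifest. An alternative, possibly cleaner, route that avoids skew jeu de taquin entirely is to invoke Proposition \ref{proposition-Schutzenberger} together with the known compatibility of the Sch\"utzenberger involution with transposition, reducing $(S,T)\in\mathbf{K}$ and $(T^t,S^t)\in\mathbf{K}$ to a single symmetric statement; I would try that first and fall back on the direct dominance computation if the evacuation–transposition identity is not available from the cited references.
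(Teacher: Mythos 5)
Your overall strategy is exactly the paper's: pass from $(S,T)\in\mathbf{K}_{\mbox{\rm\tiny 2-r}}$ to the dominance relations $Y_{i/0}(S)\preceq Y_{i/0}^{T}$ via Theorem \ref{theorem-KR-2-row}, transpose them, and close with Proposition \ref{proposition-standard-2column}. But you derail yourself at the decisive step. You correctly derive $(Y_{i/0}^T)^t\preceq (Y_{i/0}(S))^t$, rewrite it as ``$Y_{i/0}^{T^t}\preceq Y_{i/0}(S^t)$,'' and then declare this ``the wrong direction.'' It is not: for $j=0$ and a \emph{standard} tableau $R$, the diagrams $Y_{i/0}(R)$ and $Y_{i/0}^{R}$ coincide --- both are the shape of the subtableau $R[1,\ldots,i]$, with no skew jeu de taquin involved (the paper makes this point explicitly). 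Hence $(Y_{i/0}^T)^t$ is the shape of $T^t[1,\ldots,i]$, i.e.\ $Y_{i/0}(T^t)$, and $(Y_{i/0}(S))^t$ is the shape of $S^t[1,\ldots,i]$, i.e.\ $Y_{i/0}^{S^t}$. So what you derived is literally the hypothesis $Y_{i/0}(T^t)\preceq Y_{i/0}^{S^t}$ of Proposition \ref{proposition-standard-2column} applied to the pair $(T^t,S^t)$, and the proof is finished at that point. The only thing to check is that dominance reverses under transposition, which is immediate from the column-sum reformulation in \ref{semi-continuity}.

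Because of this misreading you substitute an unexecuted detour --- feeding in the relations $Y_{n/i}(S)\preceq Y_{n/i}^{T}$ and a ``conservation identity'' between the data at cut $i$ from above and below --- and you yourself flag this bookkeeping as the unresolved ``technical heart.'' That detour is both unnecessary and shakier than the direct route: $Y_{n/i}^{T}$ genuinely involves jeu de taquin on a skew tableau, and the second-row count of $Y_{n/i}^{T}$ is not simply the complement of that of $Y_{i/0}^{T}$. The Sch\"utzenberger alternative you mention is likewise not carried out. As written, then, the proposal is not a complete proof; the repair, however, is only to retract the ``wrong direction'' claim and accept the relation you already established.
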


\begin{proof}
Assume that $(S,T)\in\mathbf{K}$. By Theorem
\ref{theorem-KR-2-row}, we obtain in particular the relation
$Y_{i/0}(S)\preceq Y_{i/0}^T$ for any $i$. The diagrams
$Y_{i/0}(S)$ and $Y_{i/0}^T$ are both defined in section
\ref{Y-j/i-T} but not in the same manner. The diagrams
$Y_{i/j}(S)$ and $Y_{i/j}^S$ may be different. Nevertheless, in
the present case $j=0$, the diagrams $Y_{i/0}(S)$ and $Y_{i/0}^S$
both coincide with the shape of the subtableau $S[1,...,i]$. The
relation $Y_{i/0}(S)\preceq Y_{i/0}^T$ implies
$Y_{i/0}(T^t)\preceq Y_{i/0}^{S^t}$ and, as it holds for any $i$,
we conclude that $(T^t,S^t)\in\mathbf{K}$ by applying Proposition
\ref{proposition-standard-2column}.  
\end{proof}

\section{A notion of constructible pairs $(\tau,T)$}

\label{6}

In the previous section, for the hook, two-row and two-column cases,
we give two combinatorial characterizations of the pairs $(\tau,T)$ such that $\tau\in T$.
The purpose of this section is to establish a further common combinatorial characterization,
for these three cases,
by transforming the inductive criterion of the previous section into an iterative one.
The new criterion involves an algorithm.
Starting with a pair $(\tau,T)$ in $\mathbf{Y}_{\mbox{\tiny hk}}$, $\mathbf{Y}_{\mbox{\tiny 2-r}}$
or $\mathbf{Y}_{\mbox{\tiny 2-c}}$, the algorithm aims to construct
the tableau $\tau$ by inserting successively $1,2,3...$ in an empty tableau,
according to rules depending on $\tau$ and $T$. The algorithm
either succeeds or fails to reconstruct $\tau$. We say that the pair $(\tau,T)$
is constructible if it succeeds. Then, our purpose is to show:


\begin{theorem}
Let $(\tau,T)\in\mathbf{Y}_{\mbox{\rm\tiny
hk}}\cup\mathbf{Y}_{\mbox{\rm\tiny
2-r}}\cup\mathbf{Y}_{\mbox{\rm\tiny 2-c}}$. Then, we have $\tau\in
T$ if and only if the pair $(\tau,T)$ is constructible.
\end{theorem}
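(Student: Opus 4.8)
The plan is to reduce the equivalence to the characterizations already obtained in Section~\ref{5}. By Theorem~\ref{th-KR-3cas} — equivalently, by Theorems~\ref{theorem-A-hook-case}, \ref{theorem-KR-2-row} and \ref{theorem-A-2column} — in each of the three cases the condition $\tau\in T$ is the same as $(\tau,T)\in\mathbf{R}$ and as membership in the corresponding set $\mathbf{A}_{\mbox{\tiny hk}}$, $\mathbf{A}_{\mbox{\tiny 2-r}}$ or $\mathbf{A}_{\mbox{\tiny 2-c}}$. So it suffices to prove, separately in the hook, two-row and two-column cases, that
\[(\tau,T)\ \text{is constructible}\ \Longleftrightarrow\ (\tau,T)\in\mathbf{A}_\bullet,\]
where $\mathbf{A}_\bullet$ denotes the relevant one of the three sets. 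I would first restate the algorithm as a loop: for $k=1,\ldots,n$ we maintain a filling of a subdiagram of $\mathrm{sh}(\tau)$ by the entries $1,\ldots,k$, placing $k$ according to the rule dictated by the column of $k$ in $\tau$ together with the shape of $T[1,\ldots,k]$; the pair $(\tau,T)$ is constructible precisely when no placement is illegal and the final tableau equals $\tau$.

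The core of the argument is then an induction on $n=|\tau|=|T|$ in which one advance of the algorithm is matched with the recursive step of the definition of $\mathbf{A}_\bullet$ through the reduction map $\eta$ (and, where it intervenes, through the Sch\"utzenberger involution of Proposition~\ref{proposition-Schutzenberger} and the deletion operations of Theorem~\ref{theorem-induction}). Concretely I would establish a loop invariant: after $k$ successful steps, the pair of subtableaux carried by the algorithm — read together with the appropriate subtableau of $T$ and normalized by the operations ``$-i$'', row-switching, or jeu de taquin used in the definition of $\eta$ — lies in $\mathbf{A}_\bullet$; and conversely, the first step at which the algorithm fails singles out a pair $0\le i<j\le k$ with $Y_{j/i}(\tau)\not\preceq Y_{j/i}^T$, so that $(\tau,T)\notin\mathbf{R}$, whence $(\tau,T)\notin\mathbf{K}$ by Theorem~\ref{theorem_necessary_condition}. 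Granting this, the ``success'' direction follows because after $n$ steps the carried pair is $(\tau,T)$ itself, so $(\tau,T)\in\mathbf{A}_\bullet=\mathbf{K}_\bullet$; and the ``failure'' direction follows from the certificate just produced. Preservation of the invariant under a legal step, and the failure-detection statement, are checked by unwinding the explicit description of $\eta$ in each case: in the hook case this is essentially the inequality $a'_{q-1}<a_q\le a'_q$ defining $\mathbf{A}_{\mbox{\tiny hk}}$; in the two-row case one uses the alternatives (a), (b), (c) in the definition of $\eta(\tau,T)$ on $\mathbf{\hat A}_{\mbox{\tiny 2-r}}$; in the two-column case one uses the chain of elementary row-switches defining $\eta$ on $\mathbf{\hat A}_{\mbox{\tiny 2-c}}$.

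I expect the main obstacle to be reconciling the \emph{iterative} nature of the algorithm — which builds $\tau$ entry by entry, starting from $1$ — with the \emph{recursive} structure of $\mathbf{A}_\bullet$, whose defining map $\eta$ peels off a rectangle, deletes an extremal entry, or performs a switch, and so acts near the top of the tableau rather than at the next entry to be inserted. The way around this is to be precise about the bookkeeping: the only data the algorithm needs at step $k$ is the placement of $1,\ldots,k$, packaged as a pair of subtableaux, and one must verify that advancing the algorithm by one entry realises — possibly after several applications of $\eta$, or none — exactly one recursive step of the definition of $\mathbf{A}_\bullet$, with no loss of information and no spurious failure. The two-column case will be the most delicate, since there a single advance of the algorithm may correspond to a whole string of the elementary switches used in the definition of $\eta$ on $\mathbf{\hat A}_{\mbox{\tiny 2-c}}$, and one has to check that the minimal choices of $i$, $j$ and $i'$ made there are precisely the ones the placement rule of the algorithm reproduces; the hook and two-row cases should then be routine by comparison.
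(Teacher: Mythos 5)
Your overall strategy coincides with the paper's: reduce, via Theorems \ref{theorem-A-hook-case}, \ref{theorem-KR-2-row} and \ref{theorem-A-2column}, to proving that constructibility is equivalent to membership in $\mathbf{A}_{\mbox{\tiny hk}}$, $\mathbf{A}_{\mbox{\tiny 2-r}}$ or $\mathbf{A}_{\mbox{\tiny 2-c}}$, and then match the algorithm against the recursive definition of these sets through $\eta$. But the way you organize the induction has a genuine gap. Your loop invariant --- ``after $k$ successful steps the carried pair lies in $\mathbf{A}_\bullet$'' --- is not a workable statement: the truncations $\tau[1,\ldots,k]$ and $T[1,\ldots,k]$ need not have equal shapes, so the carried pair is in general not even an element of $\mathbf{Y}$; and membership of truncated pairs in $\mathbf{A}_\bullet$ (equivalently in $\mathbf{K}$) does not propagate to the full pair, since Theorem \ref{theorem-induction}\,(a) has no converse in general. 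Moreover the operations performed by $\eta$ (peeling off a rectangle, deleting the entry $1$ or $n$ after a Sch\"utzenberger twist, switching two entries deep inside $\tau$) are not truncations at the next entry to be inserted, so one advance of the algorithm does not realise one recursive step of $\mathbf{A}_\bullet$. What actually works --- and is what the paper does --- is the opposite induction: fix $(\tau,T)$, first prove that constructibility forces $(\tau,T)\in\mathbf{\hat A}_\bullet$ so that $\eta$ is defined (a separate argument your plan omits: Lemma \ref{constructibility-2row-lemma} in the two-row case, Lemmas \ref{lemme_suivant} and \ref{lemme_Qknonvide} in the two-column case), and then, for $(\tau,T)\in\mathbf{\hat A}_\bullet$, compare the \emph{complete} runs of the algorithm on $(\tau,T)$ and on $\eta(\tau,T)$ and show that they fail or succeed simultaneously (the Claim in the proof of Theorem \ref{theorem-constructibility-2row}, and Lemma \ref{lemme_echange_iitilde} for Theorem \ref{theorem-constructibility-2column}). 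The induction is then on the recursion depth of $\mathbf{A}_\bullet$, not on the steps of the algorithm, and these simulation lemmas are where all the content lies; your plan does not supply them.

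Second, your failure certificate --- that the first failed step exhibits $i<j$ with $Y_{j/i}(\tau)\not\preceq Y_{j/i}^T$, whence $(\tau,T)\notin\mathbf{R}$ and so $(\tau,T)\notin\mathbf{K}$ by Theorem \ref{theorem_necessary_condition} --- is asserted without proof, and establishing it directly from the failure rules is essentially as hard as the theorem itself (it amounts to re-deriving $\mathbf{A}_\bullet=\mathbf{R}_\bullet$ from the algorithm). It is also unnecessary: once ``constructible $\Leftrightarrow$ $(\tau,T)\in\mathbf{A}_\bullet$'' is proved, both directions follow from Theorem \ref{th-KR-3cas}. Only the hook case admits the short direct computation you envisage: there the two failure conditions translate literally into $a'_{q-1}\geq a_q$ or $a_q>a'_q$, i.e.\ into the negation of the $\mathbf{A}_{\mbox{\tiny hk}}$ inequalities, as in the proof of Theorem \ref{theorem-constructibility-hook}. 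In the two-row and especially the two-column case, the argument you sketch would need to be replaced by the detailed run-by-run comparisons described above.
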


We treat successively the two-row, two-column and hook cases.
The algorithm in the two-row case is more simple. 
In section \ref{new-charac-int-codim-one}, we will use this algorithm 
in order to study the intersections of components in the two-row case.
By presenting the algorithms in the hook and two-column cases,
we want to highlight that similar iterative procedures hold also in these two cases.
For the reason that we will not invoke the latter two algorithms in the subsequent sections, 
we will spare technical details 
especially in the proof of Theorem \ref{theorem-constructibility-2column}, regarding the two-column case.

The arguments in this section are purely combinatorial, they rely on the
criteria provided in the previous section.

\subsection{Constructibility in the two-row case}

\label{constructibility-2row-case}

Fix $(\tau,T)\in \mathbf{Y}_{\mbox{\tiny 2-r}}$. We define an
algorithm aiming to reconstruct $\tau$ as the final term of a
sequence of tableaux $\theta_1,\theta_2,...$ obtained by inserting
successively the entries $1,2,...$ 
starting with the empty tableau
$\theta_0$. 
For $i\in\{0,...,n\}$ the tableau $\theta_i$ is numbered from
$1,...,i$ and it has the following properties:
\begin{itemize}
\item[(2r-a)] Each row of $\theta_i$ is a sequence of numbered
boxes separated by blanks (some empty boxes). For $p\in\{1,2\}$,
the entries of the $p$-th row of $\theta_i$ are in increasing order
from left to right and they also occur in the $p$-th row of
$\tau$.
\item[(2r-b)] The columns of $\theta_i$ being numbered from left
to right, the number of entries in the $q$-th column of $\theta_i$
equals the number of entries $\leq i$ in the $q$-th column of $T$,
for any $q\geq 1$.
\end{itemize}

\smallskip
\noindent For $i\in\{1,...,n\}$ suppose we have constructed
$\theta_{i-1}$ that satisfies properties (2r-a) and (2r-b).
According to (2r-b), the number of entries in the $q$-th column of
$\theta_{i-1}$ is weakly decreasing from left to right. In
particular the full columns (two entries) are concentrated at the
left of $\theta_{i-1}$. The tableau $\theta_{i-1}$ has the
following aspect:
\[
\theta_{i-1}=
\begin{array}{|c|c|c|c|c|}
\cline{1-1} \cline{3-3} \cline{5-5}
*\ * & \quad & *\ *\ * & \quad & *\ *\ * \\
\hline
\multicolumn{2}{|c|}{*\ *\ *\ *} & \quad & *\ *\ *  \\
\cline{1-2} \cline{4-4}
\end{array}
\]
where the symbol $\begin{tyoung}{1} \tligne{1}{*\ *\ *}
\end{tyoung}$ stands for a strip of numbered boxes. The two strips which begin at the
first column are called {\em ``in-place''} strips (possibly one of
the two is empty). The other strips are called {\em ``to-be-placed''}
strips. We form $\theta_i$ from $\theta_{i-1}$ by inserting $i$
according to the following rule.

\smallskip\noindent (1)
First case: $i$ belongs to the first row of $T$. The entry $i$ is
then put in a new column on the right of $\theta_{i-1}$ in the top
or bottom place depending on whether $i$ is in the first or in the
second row of $\tau$:
\begin{eqnarray}
\theta_{i}=
\begin{array}{|c|c|c|c|c|c|}
\cline{1-1} \cline{3-3} \cline{5-6}
*\,* & \quad & *\,*\,* & \quad & *\,*\,* & i \\
\hline
\multicolumn{2}{|c|}{*\,*\,*\,*} & \quad & *\,*\,*  \\
\cline{1-2} \cline{4-4}
\end{array} \nonumber \\
\mbox{ \ or \ } \theta_{i}=\begin{array}{|c|c|c|c|c|c|}
\cline{1-1} \cline{3-3} \cline{5-5}
*\,* & \quad & *\,*\,* & \quad & *\,*\,*  \\
\hline
\multicolumn{2}{|c|}{*\,*\,*\,*} & \quad & *\,*\,* & \quad & i \\
\cline{1-2} \cline{4-4} \cline{6-6}
\end{array} \nonumber
\end{eqnarray}

\smallskip \noindent (2) Second case: $i$ belongs to the second row of $T$.
Suppose $i$ lies in the $p$-th row of $\tau$, for $p\in\{1,2\}$.
Then $i$ has to be inserted in the $p$-th row of $\theta_{i-1}$.
We state this failure case:

\smallskip
\noindent {\em (Failure case)} {\em (With $i$ in the second row of
$T$ and in the $p$-th row of $\tau$.) If the $p$-th box of the
last column of $\theta_{i-1}$ is nonempty, then the algorithm
fails.}

\smallskip
\noindent Suppose this failure case does not arise. Then, the entry $i$ is put at
the end of the last strip of the $p$-th row. Next, from right to
left, the first entries of the ``to-be-placed'' strips are
successively pushed to the left at the end of the preceding strip
(at the beginning of the row if there is no preceding strip):
\begin{eqnarray}
\begin{array}{|c|c|c|c|c|c|c|c|c|c|c|}
\cline{1-1} \cline{3-4} \cline{7-8}
\!* & \leftarrow\!\!\!-\!\!\!-\!\!\!\!\!\! & a & \!*\,* & \multicolumn{2}{|c|}{\longleftarrow\!\!\!\trait\!\!\!\!\!\!\!\!}  & c & \!*\,*  \\
\cline{1-8} \cline{10-10}
\multicolumn{2}{|c|}{\!*\,*\,*\,*} & \multicolumn{2}{|c|}{\longleftarrow\!\!\!\trait\!\!\!\!\!\!\!\!} & b & \!*\,* & \multicolumn{3}{|c|}{\longleftarrow\!\!\!\ttrait\!\!\!\!\!} & i  \\
\cline{1-2} \cline{5-6} \cline{10-10}
\end{array} \nonumber \\
\ \ \mbox{gives}\ \ \theta_i=
\begin{array}{|c|c|c|c|c|c|c|c|c|c|c|}
\cline{1-2} \cline{5-6} \cline{9-9}
\!* & a & \multicolumn{2}{|c|}{\quad} & \!*\,* & c & \multicolumn{2}{|c|}{\quad} & \!*\,* \\
\hline
\multicolumn{3}{|c|}{\!*\,*\,*\,\,*\,} & b & \multicolumn{2}{|c|}{\quad} & \!*\,* & i \\
\cline{1-4} \cline{7-8}
\end{array} \nonumber
\end{eqnarray}

\smallskip
If no failure case occurs while $i$ runs over $\{1,...,n\}$, then we
get a final tableau $\theta_n$ with entries $1,...,n$. According
to (2r-a) and (2r-b) we have $\theta_n=\tau$. Then we say that the
pair {\em $(\tau,T)$ is constructible.}

\begin{example}
Let $T=\young(12347,5689)$.

\smallskip
\noindent (a) First, we suppose $\tau=\young(23689,1457)$.
\begin{center}
$\niveau{\theta_1=}$\young(1)\quad
$\niveau{\theta_2=}$\young(:2,1)\quad
$\niveau{\theta_3=}$\young(:23,1)\quad
$\niveau{\theta_4=}$\young(:2,1)\young(3,:4)
\end{center}
The number $5$ is in the second row of $T$ and in the second row
of $\tau$. The second box of the last column of $\theta_4$ is not
empty. Due to the failure rule, the algorithm fails.

\smallskip
\noindent
(b) Suppose now $\tau=\young(14689,2357)$. \\
Likewise
\begin{center}
$\niveau{\theta_4=}$\young(1,:2)\young(:4,3)\quad
$\niveau{\theta_5=}$\young(14,2)\young(35)\quad
$\niveau{\theta_6=}$\young(146,23)\young(5)
\quad$\niveau{\theta_7=}$\young(146,23)\young(57)
\end{center}
\begin{center}
$\niveau{\theta_8=}$\young(1468,235)\young(7)\quad
$\niveau{\mbox{and finally\quad
$\theta_9=$}}$\young(14689,2357)$\niveau{\ =\tau.}$
\end{center}
Thus in this example $(\tau,T)$ is constructible.
\end{example}

\medskip

Let $\theta_1,\theta_2,...$ be the intermediate tableaux obtained
by applying the algorithm to the pair $(\tau,T)$. (Implicitly, no
fail has occurred while $\theta_i$ is well defined.) Each tableau
$\theta_i$ is formed by strips that we number in the following
manner:
\[
\theta_i\ =
\begin{array}{|c|c|c|c|c|}
\cline{1-2} \cline{4-4} \multicolumn{2}{|c|}{\ \ \mbox{1st strip}\
\ } & \qquad & \mbox{4th strip} &
\multicolumn{1}{|c}{\qquad} \\
\hline
\ \mbox{2nd strip}\  & \qquad & \mbox{3rd strip} & \qquad \qquad & \mbox{5th strip} \\
\cline{1-1} \cline{3-3} \cline{5-5}
\end{array}
\cdots
\]
or possibly
\[
\theta_i\ =
\begin{array}{|c|c|c|c|c|}
\cline{1-1} \cline{3-3} \cline{5-5}
\ \mbox{1st strip}\  & \qquad & \mbox{3rd strip} & \qquad \qquad & \mbox{5th strip} \\
\hline \multicolumn{2}{|c|}{\ \ \mbox{2nd strip}\ \ } & \qquad &
\mbox{4th strip} &
\multicolumn{1}{|c}{\qquad} \\
\cline{1-2} \cline{4-4}
\end{array}
\cdots
\]
The first two strips are left-justified, possibly one of the two is
empty. We denote by $s_1(\theta_i)$ (resp. by $s_2(\theta_i)$) the
length of the first strip (resp. the second strip) of $\theta_i$.
We show the following

\begin{lemma}
\label{constructibility-2row-lemma} Suppose that there are $i,j$
such that $s_1(\theta_i)<s_2(\theta_i)$ and $s_1(\theta_j)>
s_2(\theta_j)$, then the pair $(\tau,T)$ satisfies condition
\ref{A-2row-case}\,(1).
\end{lemma}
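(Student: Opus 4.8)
The plan is to follow the single integer quantity $d(\theta_l):=s_1(\theta_l)-s_2(\theta_l)$ along the run of the algorithm on $(\tau,T)$, and to combine two observations: that $d$ vanishes \emph{only} when the tableau produced so far is a rectangle with two rows --- in which case the required condition holds --- and that $d$ moves by at most $1$ from one step to the next, so that a change of sign of $d$ forces it to vanish at some intermediate step.

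First I would pin down the structural fact behind the first observation. By property (2r-b), the number of entries in the $q$-th column of $\theta_l$ is weakly decreasing with $q$; hence $\theta_l$ consists of a block of \emph{full} columns (two entries each) followed by a block of columns carrying a single entry. Let $m$ be the number of full columns. If $\theta_l$ is not a solid rectangle, then column $m+1$ carries exactly one entry, which lies in one prescribed row only, so one of the two in-place strips ends at column $m$ while the other reaches at least column $m+1$; in particular $s_1(\theta_l)\neq s_2(\theta_l)$. Conversely, if $s_1(\theta_l)=s_2(\theta_l)$, then $\theta_l$ has no single-entry column, i.e. $\theta_l$ is the solid rectangle with two rows of length $l/2$ (so $l$ is even). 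By rules (1)--(2) of the algorithm each entry $k$ is placed in $\theta_l$ in the same row, first or second, that it occupies in $\tau$; since moreover the entries $\leq l$ form a prefix of each row of the row-standard tableau $\tau$, it follows that $\tau[1,\ldots,l]$ is the solid rectangle with two rows of length $l/2$, and by (2r-b) the same holds for $T[1,\ldots,l]$. Thus $s_1(\theta_l)=s_2(\theta_l)$ implies that condition \ref{A-2row-case}\,(1) is satisfied, with the integer $l$ playing the role of ``$i$'' there.

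Next I would verify that $s_1(\theta_l)$ and $s_2(\theta_l)$ are each nondecreasing in $l$ and grow by at most $1$ when passing from $\theta_{l-1}$ to $\theta_l$, which gives $|d(\theta_l)-d(\theta_{l-1})|\leq 1$. In the first case of the algorithm ($l$ in the first row of $T$) the entry $l$ is adjoined in a new rightmost column, which at most lengthens one in-place strip by one box and does not touch the other. In the second case ($l$ in the second row of $T$) the entry $l$ is appended at the end of a row and the leading entries of the to-be-placed strips are shifted one notch to the left; an in-place strip can only gain boxes, and it gains at most the leading box of the adjacent to-be-placed strip (or, when the relevant row has no to-be-placed strip, the single box $l$), so again $s_p(\theta_l)-s_p(\theta_{l-1})\in\{0,1\}$ for $p\in\{1,2\}$. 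This case analysis, in particular the control of the leftward cascade in the second case, is the delicate point.

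Finally I would run the discrete intermediate value argument. Assume without loss of generality that the index $i$ of the statement is smaller than $j$ (otherwise exchange their roles). Since $\theta_j$ is defined, the algorithm does not fail before step $j$, hence $\theta_l$ is defined for every $l$ with $i\leq l\leq j$; moreover $d(\theta_i)\leq-1<0<1\leq d(\theta_j)$, and $d$ changes by at most $1$ at each step, so there is some $l$ with $i<l<j$ and $d(\theta_l)=0$, i.e. $s_1(\theta_l)=s_2(\theta_l)$. By the first step, $(\tau,T)$ then satisfies condition \ref{A-2row-case}\,(1), which completes the proof.
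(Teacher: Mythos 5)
Your proposal is correct and follows essentially the same route as the paper: bound the one-step variation of $s_1-s_2$ by $1$, apply the discrete intermediate value argument to produce an index $l$ with $s_1(\theta_l)=s_2(\theta_l)$, and deduce from (2r-a)--(2r-b) that $\theta_l$, hence $\tau[1,\ldots,l]$ and $T[1,\ldots,l]$, is a two-row rectangle. You merely present the two halves in the opposite order and spell out in more detail (via the weakly decreasing column counts) why $s_1=s_2$ forces the rectangular shape, which the paper states more tersely.
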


\begin{proof}
%
By the construction of $\theta_k$ from $\theta_{k-1}$, we see
that, for $p\in\{1,2\}$, either $s_p(\theta_k)=s_p(\theta_{k-1})$
or $s_p(\theta_k)=s_p(\theta_{k-1})+1$. Thus we always have
\[
(s_1(\theta_k)-s_2(\theta_k))-(s_1(\theta_{k-1})-s_2(\theta_{k-1}))\in\{-1,0,1\}.
\]
Since $s_1(\theta_i)-s_2(\theta_i)<0$ and
$s_1(\theta_j)-s_2(\theta_j)> 0$, there is
$k\in\{\min(i,j)+1,...,\max(i,j)-1\}$ such that
$s_1(\theta_k)-s_2(\theta_k)=0$. 
The equality $s_1(\theta_k)=s_2(\theta_k)$ implies
that, starting with the third one, all the strips of $\theta_k$ are empty.
Then the shape of $\theta_k$ is a rectangular Young diagram with
two rows of length $k/2$ and, according to (2r-a) and (2r-b), this
is also the shape of the subtableaux $\tau[1,...,k]$ and
$T[1,...,k]$. Therefore, the pair $(\tau,T)$ satisfies condition
\ref{A-2row-case}\,(1). 
\end{proof}

We prove the following result.

\begin{theorem}
\label{theorem-constructibility-2row} Let
$(\tau,T)\in\mathbf{Y}_{\mbox{\rm\tiny 2-r}}$. Then, we have
$\tau\in T$ if and only if the pair $(\tau,T)$ is constructible.
\end{theorem}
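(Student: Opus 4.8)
The plan is to prove Theorem~\ref{theorem-constructibility-2row} by matching the iterative algorithm with the inductive set $\mathbf{A}_{\mbox{\tiny 2-r}}$ from Section~\ref{A-2row-case}, since by Theorem~\ref{theorem-KR-2-row} we already know $\mathbf{K}_{\mbox{\tiny 2-r}}=\mathbf{A}_{\mbox{\tiny 2-r}}$. Thus it suffices to show: \emph{$(\tau,T)$ is constructible $\Leftrightarrow$ $(\tau,T)\in\mathbf{A}_{\mbox{\tiny 2-r}}$}. Both sides are defined recursively (the algorithm runs $i=1,\ldots,n$, while $\mathbf{A}_{\mbox{\tiny 2-r}}$ peels off a rectangular block or the entry $1$), so the natural proof is an induction on $n=|\tau|$, with the base case $n=1$ being trivial since the one-box pair is both constructible and in $\mathbf{A}_{\mbox{\tiny 2-r}}$.

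The first step is to show that constructibility implies membership in $\mathbf{\hat A}_{\mbox{\tiny 2-r}}$. If at some stage $i<n$ a rectangular shape appears, i.e.\ $\theta_i$ is a $2\times(i/2)$ rectangle, then by properties (2r-a) and (2r-b) the subtableaux $\tau[1,\ldots,i]$ and $T[1,\ldots,i]$ are rectangular, which is condition \ref{A-2row-case}(1). If no proper rectangular initial block occurs, then I would argue that the strips $s_1,s_2$ never equalize before stage $n$; since $1$ is always placed in the top-left box by the First-case rule (the entry $1$ is necessarily in the first row of $T$) and since Lemma~\ref{constructibility-2row-lemma} controls the sign of $s_1-s_2$, the entry $1$ ends up in the first row of both $\theta_n=\tau$ (when it's placed there) — more carefully, one checks using Lemma~\ref{constructibility-2row-lemma} in its contrapositive form that failure to produce an intermediate rectangle forces $1$ to occupy the same place in $\tau$ and $T$, which is condition \ref{A-2row-case}(2). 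Conversely, one shows $(\tau,T)\in\mathbf{\hat A}_{\mbox{\tiny 2-r}}$ guarantees that the algorithm does not fail at stage determined by the peeling step.

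The crux is the recursive step: I must verify that the first $\eta$-reduction of $(\tau,T)$ corresponds exactly to a suffix/prefix splitting of the algorithm's run. In case (a) of $\eta$ (rectangle appearing at maximal $i<n$), the algorithm's run naturally factors: the tableaux $\theta_1,\ldots,\theta_i$ reconstruct $\tau[1,\ldots,i]$ exactly as the algorithm applied to $(\tau',T')=(\tau[1,\ldots,i],T[1,\ldots,i])$ would, and the tableaux $\theta_{i+1},\ldots,\theta_n$, after relabeling by subtracting $i$ and deleting the now-full left block of columns, reconstruct $\tau''$ exactly as the algorithm applied to $(\tau'',T'')$ — here I need that once the rectangle is complete the later insertions never again disturb those columns, which follows from the "to-be-placed strips get pushed left" rule together with property (2r-b). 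In cases (b) and (c) of $\eta$ (rectangle only at $i=n$, or case (2) of $\mathbf{\hat A}$), the reduction removes the last entry $n$ or the first entry $1$; one checks the run $\theta_1,\ldots,\theta_{n-1}$ of the original pair matches the full run of the reduced pair (possibly after the row-swap allowed in case (b), which is harmless since $\tau$-bases are invariant under swapping equal-length rows, $\tau\equiv\tilde\tau$). Assembling these compatibilities gives "$(\tau,T)$ constructible $\Leftrightarrow$ $\eta(\tau,T)\subset\{\text{constructible pairs}\}$", which by induction equals the defining recursion of $\mathbf{A}_{\mbox{\tiny 2-r}}$.

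The main obstacle I anticipate is the bookkeeping in case (a): precisely controlling how the column-counts of $\theta_i$ (dictated by (2r-b), i.e.\ by $T$) interact with the left-pushing of the to-be-placed strips, so as to confirm that after the rectangle closes at stage $i$ the algorithm on $\theta_{i+1},\ldots,\theta_n$ really is \emph{isomorphic} to a fresh run on $(\tau'',T'')$ rather than merely similar — in particular that no later to-be-placed strip ever migrates back into the first $i/2$ columns. This needs a short separate lemma, of the same flavour as Lemma~\ref{constructibility-2row-lemma}, asserting that once $s_1(\theta_i)=s_2(\theta_i)$ the quantities $s_1,s_2$ stay above $i/2$ and the first $i/2$ columns remain frozen for all later stages. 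Given that lemma, the rest is a routine matching of the two recursions.
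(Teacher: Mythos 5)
Your proposal is correct and follows essentially the same route as the paper: reduce via Theorem \ref{theorem-KR-2-row} to showing constructibility coincides with membership in $\mathbf{A}_{\mbox{\rm\tiny 2-r}}$, use Lemma \ref{constructibility-2row-lemma} to get $(\tau,T)\in\mathbf{\hat A}_{\mbox{\rm\tiny 2-r}}$ from constructibility, and then match the algorithm's run with the $\eta$-reduction case by case (a), (b), (c). The "freezing" concern you raise in case (a) and the removal-of-$1$ correspondence in case (c) are exactly the points the paper's proof addresses (the latter via an explicit inductive claim comparing $\theta'_{i-1}$ with the shifted tableau $\tilde\theta_i$).
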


\begin{proof}
Write $n=|\tau|=|T|$. First, let us show that, if the pair
$(\tau,T)$ is constructible, then we have $(\tau,T)\in
\mathbf{\hat A}_{\mbox{\rm\tiny 2-r}}$. To do this, we have to
show that $(\tau,T)$ satisfies either \ref{A-2row-case}\,(1) or
\ref{A-2row-case}\,(2). If \ref{A-2row-case}\,(2) does not hold,
then $1$ is in the second row of $\tau$ or there is
$i\in\{2,...,n\}$ such that $T[1,...,i]$ is rectangular. In the
former case $\theta_1=\begin{array}{c} \cdot \\ \young(1)
\end{array}$, which gives $s_1(\theta_1)=0$ and $s_2(\theta_1)=1$.
Since $(\tau,T)$ is constructible, we have $\theta_n=\tau$, hence
$s_1(\theta_n)\geq s_2(\theta_n)$. Then, by Lemma
\ref{constructibility-2row-lemma}, the pair $(\tau,T)$ satisfies
condition \ref{A-2row-case}\,(1). In the latter case it follows
from (2r-b) that $\theta_i$ has a rectangular shape and it follows
from (2r-a) that $\theta_i$ coincides with the subtableau
$\tau[1,...,i]$. Thus $T[1,...,i]$ and $\tau[1,...,i]$ are both
rectangular and condition \ref{A-2row-case}\,(1) holds. Therefore,
$(\tau,T)$ satisfies \ref{A-2row-case}\,(1) or
\ref{A-2row-case}\,(2).

By Theorem \ref{theorem-KR-2-row} and by definition of the set
$\mathbf{A}_{\mbox{\rm\tiny 2-r}}$, in order to prove the theorem,
it is sufficient to show that any $(\tau,T)\in\mathbf{\hat
A}_{\mbox{\tiny 2-r}}$ is constructible if and only if the pair or
the two pairs forming $\eta(\tau,T)$ are constructible. We
distinguish three cases, corresponding to the three cases of the
construction of $\eta(\tau,T)$ in \ref{A-2row-case}.

\medskip
\noindent (a) Suppose that there is $i<n$ such that
$\tau[1,...,i]$ and $T[1,...,i]$ are both rectangular, and choose
$i$ maximal for this property.

Then, let $\eta(\tau,T)=\{(\tau',T'),(\tau'',T'')\}$ be defined as
in \ref{A-2row-case}\,(a). Let $\theta_1,\theta_2,...$ (resp.
$\theta'_1,\theta'_2,...$) (resp. $\theta''_1,\theta''_2,...$) be
the tableaux obtained by applying the algorithm to the pair
$(\tau,T)$ (resp. $(\tau',T')$) (resp. $(\tau'',T'')$). It
immediately follows from the definition of the algorithm that
$\theta_j=\theta'_j$ for $j\leq i$. Therefore $(\tau',T')$ is
constructible if and only if the algorithm for $(\tau,T)$ does not
fail in the first $i$ steps, and in this case $\theta_i$ coincides with
the tableau $\tau'$. Assume that $(\tau',T')$ is constructible. It
follows from the definition of the algorithm that, for $j\geq
i+1$, the tableau $\theta_i$ is obtained by adding
$\theta''_{j-i}$ on the right of $\tau'$ and replacing every entry
$k$ in $\theta''_{j-i}$ by $k+i$. Thus, it is easy to see that
$(\tau'',T'')$ is constructible if and only if the algorithm for
$(\tau,T)$ does not fail also until the end. Finally, we get that
$(\tau,T)$ is constructible if and only if both $(\tau',T')$ and
$(\tau'',T'')$ are constructible.

\medskip
\noindent (b) Suppose that $\tau[1,...,i]$ and $T[1,...,i]$ are
both rectangular only for $i=n$.

If $\tilde\tau$ denotes the tableau obtained by switching the two
rows of $\tau$, then it is immediate from the definition of the
algorithm that $(\tau,T)$ is constructible if and only if
$(\tilde\tau,T)$ is constructible. Up to dealing with $\tilde\tau$
instead of $\tau$, we may suppose that $n$ is the last entry in
the second row of $\tau$. Then $\eta(\tau,T)=\{(\tau',T')\}$ with
$\tau'=\tau[1,...,n-1]$ and $T'=T[1,...,n-1]$. Let
$\theta_1,\theta_2,...$ (resp. $\theta'_1,\theta'_2,...$) be the
tableaux obtained by applying the algorithm to the pair $(\tau,T)$
(resp. $(\tau',T')$). It follows from the definition of the
algorithm that we have $\theta_i=\theta'_i$ for $i\leq n-1$. Thus,
$(\tau',T')$ is constructible if and only if the algorithm for
$(\tau,T)$ does not fail in the first $n-1$ steps, and in this case
$\theta_{n-1}=\tau'$. In the last step of the algorithm relative
to the pair $(\tau,T)$, the entry $n$ has to be inserted in the
second row of $\theta_{n-1}$. The last box in the second row of
$\theta_{n-1}=\tau'$ is empty, hence the algorithm does not fail in
the $n$-th step whenever it has not failed before. Finally, we get
that $(\tau,T)$ is constructible if and only if $(\tau',T')$ is
constructible.

\medskip
\noindent (c) Finally, suppose that \ref{A-2row-case}\,(2) holds.

Let $\eta(\tau,T)=(\tau',T')$ be as in \ref{A-2row-case}\,(c). Let
$\theta_1,\theta_2,...$ (resp. $\theta'_1,\theta'_2,...$) be the
tableaux obtained by applying the algorithm to the pair $(\tau,T)$
(resp. $(\tau',T')$). By hypothesis, $1$ lies in the first row of
$\tau$, hence $s_1(\theta_1)=1$ and $s_2(\theta_1)=0$. By Lemma
\ref{constructibility-2row-lemma}, we get
$s_1(\theta_i)>s_2(\theta_i)$ for any $i$. Thus $\theta_i$ has the
following form:
\[
\theta_i\ =
\begin{array}{|c|c|c|c|c|c|}
\cline{1-3} \cline{5-5} 1 & \multicolumn{2}{|c|}{\ \ A_1 \ } &
\qquad & A_4 &
\multicolumn{1}{|c}{\qquad} \\
\hline
\multicolumn{2}{|c|}{\ A_2\  } & \qquad & A_3 & \qquad \qquad & A_5 \\
\cline{1-2} \cline{4-4} \cline{6-6}
\end{array}
\cdots
\]
with $\#A_1\geq \#A_2$, where $\#A$ denotes the number of boxes in
$A$. Let $\tilde\theta_i$ be the tableau obtained by removing $1$
and moving $A_1,A_3,A_4,...$ by one rank to the left:
\[
\tilde\theta_i\ =
\begin{array}{|c|c|c|c|c|}
\cline{1-2} \cline{4-4} \multicolumn{2}{|c|}{\ \ A_1 \ } & \qquad
& A_4 &
\multicolumn{1}{|c}{\qquad} \\
\cline{1-5}
\multicolumn{1}{|c|}{\ A_2\  } & \quad & A_3 & \qquad \qquad & A_5 \\
\cline{1-1} \cline{3-3} \cline{5-5}
\end{array}
\cdots
\]
Possibly the gap between $A_2$ and $A_3$ in $\tilde\theta_i$ is
empty. The tableau $\tilde\theta_i$ is numbered by $2,...,i$. Let
us show the following claim.

\medskip
\noindent {\bf Claim.} {\em Let $i\in\{2,...,n\}$. The algorithm relative to $(\tau,T)$
does not fail in the first $i$ steps if and only if the
one relative to $(\tau',T')$ does not fail in the first $i-1$ steps.
Moreover, the tableau $\theta'_{i-1}$ is obtained from
$\tilde\theta_i$ by replacing every entry $j$ by $j-1$.}

\medskip

The proof of the claim ends the proof of the Theorem, because it
implies in particular that $(\tau,T)$ is constructible if and only
if $(\tau',T')$ is.

We show the claim by induction on $i\geq 2$. 
Condition \ref{A-2row-case}\,(2) implies in particular that $2$
lies in the first row of $T$, hence the algorithm for $(\tau,T)$
does not fail in the first two steps, and the claim is immediate for
$i=2$. Suppose the claim holds until $i-1\in\{2,...,n-1\}$
and show it for $i$. 
The row number of $i$ in $T$ and the row number of $i-1$ in $T'$ coincide.
Moreover, the row number of $i$ in $\tau$ and the row number of $i-1$ in $\tau'$ are the same, say $p$. 
Assume that $i$ lies in the first row of $T$, then $i-1$ is in the first row of $T'$ and both algorithms do not fail.
Now, assume that $i$ lies in the second row of $T$.
The tableau $T[1,...,i]$ being non-rectangular, it follows from (2r-b)
that also $\tilde\theta_{i-1}$ is non-rectangular, thus the last 
columns of $\theta_{i-1}$ and $\tilde\theta_{i-1}$ coincide.
By induction hypothesis, we get that the $p$-th box of the last column of $\theta_{i-1}$ is
empty if and only if the $p$-th box of the last column of
$\theta'_{i-2}$ is empty. Thus the algorithm for $(\tau,T)$ fails
at the $i$-th step if and only if the algorithm for $(\tau',T')$
fails at the $(i-1)$-th step. In each case, assuming that both algorithms
do not fail, the fact that $\theta'_{i-1}$ coincides with
$\tilde\theta_i$ up to replacing each entry $j$ by $j-1$ follows by
induction hypothesis and the definition of the algorithm. 
\end{proof}

\subsection{Constructibility in the two-column case}

\label{section-constructibility-2column-case}
\label{constructibility-2column-case}

Let $(\tau,T)\in\mathbf{Y}_{\mbox{\tiny 2-c}}$. Write
$n=|\tau|=|T|$. Let $r$ be the common length of the first column
of $\tau,T$. Let $Y$ be the Young diagram with two columns of $r$
empty boxes
\[
Y=\begin{array}{|c|c|} \hline
\ & \ \\
\hline
\ & \ \\
\hline
\vdots & \vdots \\
\hline
\ & \ \\
\hline
\end{array}
\]
The algorithm which we define aims to reconstruct $\tau$ as the
final term of a sequence of tableaux $\theta_1,\theta_2,...$
obtained by inserting successively the entries $1,2,...$ in the
diagram $Y$.

We consider tableaux which are partial numberings of $Y$. The
tabl\-eaux $\tau$ and $T$ and their subtableaux $\tau[i,...,j]$
and $T[i,...,j]$ of entries $i,...,j$, for any integers $1\leq
i\leq j\leq n$, are considered as partial numberings of $Y$. Let
$\theta_0$ be the empty numbering of $Y$. The tableau $\theta_i$
obtained at the $i$-th step of the algorithm is a partial
numbering of entries $1,...,i$. Set
$\overline{\mathbb{N}}=\{0,1,2,...\}\cup\{\infty\}$. By convention
$\infty=\infty+1=\infty-1$ and $a<\infty$ for any
$a\in\mathbb{N}$. Let $P=\{1,...,r\}$ be the rows of $Y$ numbered
from top to bottom. For each step $i=1,2,...$ of the algorithm and
for each row $p\in P$ we define in addition an index
$f_i(p)\in\overline{\mathbb{N}}$. As an initialization, we set
$f_0(p)=\infty$ for any $p\in P$.

For $i\in\{1,...,n\}$ the tableau $\theta_i$ has the following properties:
\begin{itemize}
\item[(2c-a)] For $p\in P$ the entries of the $p$-th row of
$\theta_i$ are in increasing order to the right and they also
occur in the $p$-th row of $\tau$.
\item[(2c-b)] For $q\in\{1,2\}$ the number of entries in the $q$-th column of
$\theta_i$ is equal to the number of entries $\leq i$ in the
$q$-th column of $T$.
\item[(2c-c)] Let $P_i\subset P$ be the subset of rows of $\theta_i$
whose first box is nonempty and let $Q_i\subset P$ be the subset
of the other non-empty rows. For $p\in P$ we have
$f_i(p)\in\mathbb{N}$ if and only if $p\in P_i$. In addition,
$\mathrm{max}_{p\in P_i}f_i(p)=\#Q_i$.
\end{itemize}
For $i\in\{1,...,n\}$ suppose that we have constructed
$\theta_{i-1}$ that satisfies these properties. For instance,
$\theta_{i-1}$ has the following aspect:
\[
\theta_{i-1}=\begin{array}{|c|c|l} \cline{1-2}
 & * & { }^{\infty} \\
 \cline{1-2}
* & * & { }^{f_{i\!-\!1}(2)} \\
\cline{1-2}
 & & { }^{\infty} \\
\cline{1-2}
 * & & { }^{f_{i\!-\!1}(4)} \\
 \cline{1-2}
 * & & { }^{f_{i\!-\!1}(5)} \\
 \cline{1-2}
 \end{array}
\]
The symbol $\begin{tyoung}{1}\tligne{1}{*}\end{tyoung}$ stands for a
numbered box. We have written the values $f_{i-1}(p)$ at the
right. In this example we have $P_{i-1}=\{2,4,5\}$ and
$Q_{i-1}=\{1\}$.

\smallskip
\noindent We form $\theta_i$ from $\theta_{i-1}$ by inserting $i$
according to the following rules.

\smallskip
\noindent (0) Let $p_i\in P$ be the number of the row of $\tau$ to
which $i$ belongs. The entry $i$ has to be put in the $p_i$-th row
of $\theta_{i-1}$. We state this failure case:

\smallskip
\noindent {\em (First failure case) If the second box of the $p_i$-th
row of $\theta_{i-1}$ is not empty, then the algorithm fails.}

\smallskip
\noindent Suppose that no failure of this type arises. Then
put $i$ in the second box of the $p_i$-th row of $\theta_{i-1}$.
Let $\theta'_{i}$ be the tableau so-obtained.

\smallskip
\noindent (1) First case: $i$ belongs to the second column of $T$.
Then define $\theta_i=\theta'_{i}$. Let $p\in P$. If
$f_{i-1}(p)<f_{i-1}(p_i)$, then set $f_i(p)=f_{i-1}(p)+1$. If
$f_{i-1}(p)\geq f_{i-1}(p_i)$, then set $f_i(p)=f_{i-1}(p)$.

\smallskip
\noindent For $\theta_{i-1}$ as in the previous figure, we could
have for example
\[
\theta_{i}=\begin{array}{|c|c|l} \cline{1-2}
 & * & { }^{\infty} \\
 \cline{1-2}
* & * & { }^{f_{i}(2)=f_{i\!-\!1}(2)+1} \\
\cline{1-2}
 & i & { }^{\infty} \\
\cline{1-2}
 * & & { }^{f_{i}(4)=f_{i\!-\!1}(4)+1} \\
 \cline{1-2}
 * & & { }^{f_{i}(5)=f_{i\!-\!1}(5)+1} \\
 \cline{1-2}
\end{array}
\quad\mbox{or}\quad \theta_{i}=\begin{array}{|c|c|l} \cline{1-2}
 & * & { }^{\infty} \\
 \cline{1-2}
* & * & { }^{f_{i}(2)} \\
\cline{1-2}
 & & { }^{\infty} \\
\cline{1-2}
 * & i & { }^{f_{i}(4)=f_{i-1}(4)} \\
 \cline{1-2}
 * &  & { }^{f_{i}(5)} \\
 \cline{1-2}
\end{array}
\]
In the case of the first tableau, the finite indices have been
incremented. In the case of the second tableau, for $p\in\{2,5\}$,
we have $f_i(p)=f_{i-1}(p)$ or $f_i(p)=f_{i-1}(p)+1$ depending on
whether $f_{i-1}(p)\geq f_{i-1}(4)$ or $f_{i-1}(p)< f_{i-1}(4)$.
\\
Properties (2c-a), (2c-b), (2c-c) hold easily for $\theta_i$.

\smallskip
\noindent (2) Second case: $i$ belongs to the first column of $T$.
We construct $\theta_i$ from the intermediate tableau $\theta'_i$
by pushing in addition one entry to the left. We state:

\smallskip
\noindent {\em (Second failure case) Suppose that $i$ is in the first
column of $T$. If $f_{i-1}(p_i)=0$, then the algorithm fails.}

\smallskip
\noindent Suppose no failure arises. Let $F'_i$ be the set of
the entries of the second column of $\theta'_i$ which have an empty
box on their left. The set $F'_i$ is nonempty.
(Indeed, we have either $f_{i-1}(p_i)=\infty$ or $f_{i-1}(p_i)\in\mathbb{N}\setminus\{0\}$.
In the former case, $i\in F'_i$.
In the latter case, due to (2c-c), $Q_{i-1}\not=\emptyset$, 
hence also $F'_i\not=\emptyset$.)
By (2c-a) each $j\in F'_i$
belongs to the first column of $\tau$. If possible, choose $j\in
F'_i$ with a minimal right-neighbor entry in $\tau$. If no element
of $F'_i$ has a right-neighbor entry in $\tau$, then choose $j\in
F'_i$ with $p_j$ minimal. Let $\theta_i$ be the tableau obtained
from $\theta'_i$
by pushing $j$ by one rank to the left. \\
We set $f_i(p_j)=0$. Let $p\in P$ be such that $p\not=p_j$. If
$f_{i-1}(p)< f_{i-1}(p_i)$, then set $f_i(p)=f_{i-1}(p)$. If
$f_{i-1}(p)\geq f_{i-1}(p_i)$, then set $f_i(p)=f_{i-1}(p)-1$.

\smallskip
\noindent For $\theta_{i-1}$ as in the previous figure, we get for
example:
\[
\theta'_{i}=\begin{array}{|c|c|l} \cline{1-2}
 & j & { }^{\infty} \\
 \cline{1-2}
* & * & { }^{f_{i\!-\!1}(2)} \\
\cline{1-2}
 & i & { }^{\infty} \\
\cline{1-2}
 * & & { }^{f_{i\!-\!1}(4)} \\
 \cline{1-2}
 * & & { }^{f_{i\!-\!1}(5)} \\
 \cline{1-2}
\end{array}
\quad\mbox{or}\quad \theta'_{i}=\begin{array}{|c|c|l} \cline{1-2}
 & j & { }^{\infty} \\
 \cline{1-2}
* & * & { }^{f_{i\!-\!1}(2)} \\
\cline{1-2}
 & & { }^{\infty} \\
\cline{1-2}
 * & i & { }^{f_{i\!-\!1}(4)} \\
 \cline{1-2}
 * &  & { }^{f_{i\!-\!1}(5)} \\
 \cline{1-2}
\end{array}
\]
and one entry has yet to be moved to the left. In the case of the
first tableau, no failure can occur, since $f_{i-1}(p_i)=\infty$. We
have $F'_i=\{i,j\}$. We move $i$ or $j$ to the left. If one of
them has an entry on its right in $\tau$ we move the one for which
this is minimal. Otherwise we move $j$, since the row-number of
$j$ is smaller. As $f_{i-1}(p_i)=\infty$, the indices do not
change. In the case of the second tableau, a failure occurs if and
only if $f_{i-1}(4)=0$. If no failure occurs, then $j$ is moved to
the left and we set $f_i(1)=0$. The other indices which are
greater or equal than $f_{i-1}(p_i)$
are reduced by one unit. \\
Easily properties (2c-a), (2c-b) and (2c-c) hold for $\theta_i$.

\smallskip

If no failure case occurs while $i$ runs over $\{1,...,n\}$, then we
get a final tableau $\theta_n$ with entries $1,...,n$. According
to (2c-a) and (2c-b) we have $\theta_n=\tau$. Then we say that the
pair $(\tau,T)$ is {\em constructible.}

\begin{example}
(a) Let $\tau$ and $T$ be the tableaux
\[
\tau=\young(35,14,2) \quad\mbox{and}\quad T=\young(12,34,5)
\]
We get the following $\theta_1,...,\theta_5$. We write the indices
$f_i(p)$ with roman numerals at the right.
\[
\begin{array}{|c|c|c}
\cline{1-2}
 & & {}^{\infty} \\
\cline{1-2}
 1 & \ \, & {}^{0} \\
\cline{1-2}
 & & {}^{\infty} \\
\cline{1-2} \multicolumn{2}{c}{\theta_1}
\end{array}
\quad
\begin{array}{|c|c|l}
\cline{1-2}
 & & {}^{\infty} \\
\cline{1-2}
 1 & & {}^{I} \\
\cline{1-2}
 & 2 & {}^{\infty} \\
\cline{1-2} \multicolumn{2}{c}{\theta_2}
\end{array}
\quad
\begin{array}{|c|c|l}
\cline{1-2}
3 & & {}^{0} \\
\cline{1-2}
1 & & {}^{I} \\
\cline{1-2}
 & 2 & {}^{\infty} \\
\cline{1-2} \multicolumn{2}{c}{\theta_3}
\end{array}
\quad
\begin{array}{|c|c|l}
\cline{1-2}
3 & & {}^{I} \\
\cline{1-2}
1 & 4 & {}^{I} \\
\cline{1-2}
 & 2 & {}^{\infty} \\
\cline{1-2} \multicolumn{2}{c}{\theta_4}
\end{array}
\quad
\begin{array}{|c|c|l}
\cline{1-2}
3 & 5 & {}^{0} \\
\cline{1-2}
1 & 4 & {}^{0} \\
\cline{1-2}
2 & & {}^{0} \\
\cline{1-2} \multicolumn{2}{c}{\theta_5}
\end{array}
\]
No failure has occurred and the pair $(\tau,T)$ is thus
constructible.

\smallskip
\noindent (b) Suppose that $\tau$ and $T$ are the tableaux
\[
\tau=\young(26,35,4,1) \quad\mbox{and}\quad T=\young(12,34,5,6)
\]
\[
\begin{array}{|c|c|c}
\cline{1-2}
 & & {}^{\infty} \\
\cline{1-2}
 & & {}^{\infty} \\
\cline{1-2}
 & & {}^{\infty} \\
\cline{1-2}
 1 & \ \, & {}^{0} \\
\cline{1-2} \multicolumn{2}{c}{\theta_1}
\end{array}
\quad
\begin{array}{|c|c|l}
\cline{1-2}
 & 2 & {}^{\infty} \\
\cline{1-2}
 & & {}^{\infty} \\
\cline{1-2}
 & & {}^{\infty} \\
\cline{1-2}
 1 & & {}^{I} \\
\cline{1-2} \multicolumn{2}{c}{\theta_2}
\end{array}
\quad
\begin{array}{|c|c|l}
\cline{1-2}
 & 2 & {}^{\infty} \\
\cline{1-2}
3 & & {}^{0} \\
\cline{1-2}
 & & {}^{\infty} \\
\cline{1-2}
1 & & {}^{I} \\
\cline{1-2} \multicolumn{2}{c}{\theta_3}
\end{array}
\quad
\begin{array}{|c|c|l}
\cline{1-2}
 & 2 & {}^{\infty} \\
\cline{1-2}
3 & & {}^{I} \\
\cline{1-2}
 & 4 & {}^{\infty} \\
\cline{1-2}
1 &  & {}^{II} \\
\cline{1-2} \multicolumn{2}{c}{\theta_4}
\end{array}
\quad
\begin{array}{|c|c|l}
\cline{1-2}
2 & & {}^{0} \\
\cline{1-2}
3 & 5 & {}^{0} \\
\cline{1-2}
 & 4 & {}^{\infty} \\
\cline{1-2}
1 &  & {}^{I} \\
\cline{1-2} \multicolumn{2}{c}{\theta_5}
\end{array}
\quad
\begin{array}{|c|c|}
\cline{1-2}
2 & 6  \\
\cline{1-2}
3 & 5  \\
\cline{1-2}
 & 4  \\
\cline{1-2}
1 &  \\
\cline{1-2} \multicolumn{2}{c}{\theta'_6}
\end{array}
\]
The entry $6$ is inserted in the first row of $\theta_5$. In
addition $6$ belongs to the second row of $T$ hence one entry has
to be moved to the left. But we have $f_5(1)=0$, hence we are in
the second failure case. Finally $(\tau,T)$ is not constructible.
\end{example}

We have the following result.

\begin{theorem}
\label{theorem-constructibility-2column} Let
$(\tau,T)\in\mathbf{Y}_{\mbox{\rm\tiny 2-c}}$. Then, we have
$\tau\in T$ if and only if the pair $(\tau,T)$ is constructible.
\end{theorem}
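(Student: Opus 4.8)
The plan is to mimic the proof of Theorem \ref{theorem-constructibility-2row}, combining the algorithm of \ref{constructibility-2column-case} with the inductive description of $\mathbf{K}_{\mbox{\tiny 2-c}}=\mathbf{A}_{\mbox{\tiny 2-c}}$ supplied by Theorem \ref{theorem-A-2column}. By that theorem it suffices to prove that a pair $(\tau,T)\in\mathbf{Y}_{\mbox{\tiny 2-c}}$ is constructible if and only if $(\tau,T)\in\mathbf{A}_{\mbox{\tiny 2-c}}$. Since $\mathbf{A}_{\mbox{\tiny 2-c}}$ is defined by the recursion ``$T=\mathrm{st}(\tau)$, or else $(\tau,T)\in\mathbf{\hat A}_{\mbox{\tiny 2-c}}$ and $\eta(\tau,T)\in\mathbf{A}_{\mbox{\tiny 2-c}}$'' --- a recursion which terminates by the argument on the pair $(i,\omega_\tau(i))$ recorded just after its definition --- I would reduce, by that same induction, to establishing two facts: (A) if $T=\mathrm{st}(\tau)$ then $(\tau,T)$ is constructible; and (B) if $T\neq\mathrm{st}(\tau)$, then $(\tau,T)$ is constructible if and only if $(\tau,T)\in\mathbf{\hat A}_{\mbox{\tiny 2-c}}$ and $\eta(\tau,T)$ is constructible.

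For (A) I would check, by induction on $i$, that when every entry occupies the same column in $\tau$ and in $T$ the tableau $\theta_i$ equals $\mathrm{st}(\tau)[1,\ldots,i]$ read as a partial numbering of $Y$, and that neither failure case ever occurs: property (2c-b) rules out the first failure case, while a direct inspection of the indices $f_i(p)$ shows $f_{i-1}(p_i)\neq 0$ whenever $i$ lies in the first column of $T$, which rules out the second; hence $\theta_n=\tau$ and the pair is constructible.

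For (B) there would be two points. First, I would show that constructibility (together with $T\neq\mathrm{st}(\tau)$) forces $(\tau,T)\in\mathbf{\hat A}_{\mbox{\tiny 2-c}}$: taking $i$ minimal not occupying the same column in $\tau$ and $T$, the requirement that the algorithm not fail at step $i$ and beyond translates, through properties (2c-a)--(2c-c), into the existence of entries $j$ and $i'$ witnessing conditions \ref{def-A-2column}(1), (2) and (3); this is precisely the combinatorial content of the first part of the proof of \cite[Theorem 2.5]{F}. Second --- and this is where the real work lies --- I would compare the run of the algorithm on $(\tau,T)$ with the run on $\eta(\tau,T)=(\tilde\tau,T)$. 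Because $\tilde\tau$ is obtained from $\tau$ by a single transposition (of $i$ and $i'$ in case (a), of $i$ and $j$ in case (b) of the definition of $\tilde\tau$) and the two runs share the same $T$, I expect the sequences of intermediate tableaux $(\theta_k)$, $(\tilde\theta_k)$ and the associated index functions $(f_k)$, $(\tilde f_k)$ to agree up to that transposition from the step at which both transposed entries have been inserted on; it then follows that a failure occurs at some step of one run exactly when it occurs at the corresponding step of the other, so that $(\tau,T)$ is constructible if and only if $\eta(\tau,T)$ is.

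The hard part will be this last comparison: one has to follow through the transposition both the ``push to the left'' choices of case (2) of the algorithm --- which element of $F'_i$ is moved, and whether the move triggers the second failure case --- and the evolution of the indices $f_i(p)$ that drive those choices. This is the two-column analogue of the Claim in the proof of Theorem \ref{theorem-constructibility-2row}, but markedly more intricate because of the extra data carried by the $f_i(p)$'s. As announced at the start of this section, I would keep the exposition light here, importing wherever possible the geometric input already packaged in \cite[Theorem 2.5]{F} (which also underlies Theorem \ref{theorem-A-2column}) and spelling out only the combinatorial bookkeeping needed to match the two runs; the remaining verifications are routine if lengthy.
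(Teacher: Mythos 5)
Your proposal is correct and follows essentially the same route as the paper: reduce via Theorem \ref{theorem-A-2column} to showing constructibility coincides with membership in $\mathbf{A}_{\mbox{\rm\tiny 2-c}}$, handle the base case $T=\mathrm{st}(\tau)$ by an induction showing $\theta_k=\tau[1,\ldots,k]$ with no failures, show that constructibility forces the conditions defining $\mathbf{\hat A}_{\mbox{\rm\tiny 2-c}}$, and compare the runs on $(\tau,T)$ and $(\tilde\tau,T)$ step by step (the paper's Lemma \ref{lemme_echange_iitilde}), which is indeed where the technical work lies. The only slight inaccuracy is attributing the ``constructible $\Rightarrow\mathbf{\hat A}_{\mbox{\rm\tiny 2-c}}$'' step to the first part of the proof of \cite[Theorem 2.5]{F}; the paper derives it from purely algorithm-internal lemmas (\ref{lemme_suivant} and \ref{lemme_Qknonvide}) rather than from that geometric source, but the combinatorial content you describe is the right one.
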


%

The proof of Theorem \ref{theorem-constructibility-2column} requires many technical verifications.
For the sake of conciseness, we will underline the main steps of the proof without going into details.
The proof relies on the set
${\mathbf A}_{\mbox{\tiny 2-c}}$ introduced in section
\ref{def-A-2column}. Recall that $\mathrm{st}(\tau)$ is the
standard tableau obtained by putting each column of $\tau$ in
increasing order. Considering the inductive definition of the set
${\mathbf A}_{\mbox{\tiny 2-c}}$ and applying Theorem
\ref{theorem-A-2column}, Theorem
\ref{theorem-constructibility-2column} is a consequence of the
following

\begin{proposition}
\label{proposition-constructibility-2column} Let
$(\tau,T)\in\mathbf{Y}_{\mbox{\rm\tiny 2-c}}$.
\begin{itemize}
\item[(a)] If $T=\mathrm{st}(\tau)$, then the pair $(\tau,T)$ is constructible.
\item[(b)] If $(\tau,T)$ is constructible, then we have $T=\mathrm{st}(\tau)$ or $(\tau,T)\in\mathbf{\hat A}_{\mbox{\rm\tiny 2-c}}$.
\item[(c)] Assume $(\tau,T)\in \mathbf{\hat A}_{\mbox{\rm\tiny 2-c}}$.
Then the pair $(\tau,T)$ is constructible if and only if
$\eta(\tau,T)$ is constructible.
\end{itemize}
\end{proposition}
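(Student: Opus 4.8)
Proposition~\ref{proposition-constructibility-2column} has three parts, and my plan is to prove them in the order (a), (b), (c), since (a) gives the base case of the induction defining $\mathbf{A}_{\mbox{\tiny 2-c}}$, (b) and (c) together handle the inductive step, and (c) is where the bulk of the work lies. Throughout I will keep track of the tableaux $\theta_i$ and the indices $f_i(p)\in\overline{\mathbb{N}}$ produced by the algorithm, exploiting properties (2c-a), (2c-b), (2c-c). The guiding heuristic is that $f_i(p)$ records, for a row $p$ whose first box is filled, how many ``to-be-placed'' entries currently sit to its right, so that pushing an entry to the left in case (2) is possible exactly when $f_{i-1}(p_i)\neq 0$; this makes the second failure case transparent and is the combinatorial shadow of condition \ref{def-A-2column}\,(3).

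\medskip
\textbf{Part (a).} Suppose $T=\mathrm{st}(\tau)$. I would argue by induction on $i$ that the algorithm never fails: the point is that when $T=\mathrm{st}(\tau)$, the entry $i$ lies in the second column of $T$ iff the row $p_i$ of $\tau$ containing $i$ has its left box filled with an entry $<i$, and it lies in the first column of $T$ iff $p_i$ has an empty left box at step $i-1$. In the first case we are in rule~(1), and the first failure case cannot occur because $T=\mathrm{st}(\tau)$ forces the second box of row $p_i$ to be empty before step $i$ (as $i$ is the largest entry of that row seen so far and it goes into the second column). In the second case we are in rule~(2); the first failure case is excluded as before, and for the second failure case I must check $f_{i-1}(p_i)\neq 0$: since $p_i\in Q_{i-1}$ we have $f_{i-1}(p_i)=\infty\neq 0$, so no failure arises and some $j\in F'_i$ with empty left box gets pushed left. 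One then verifies that $\theta_i$ still satisfies (2c-a)--(2c-c). Hence the algorithm terminates with $\theta_n=\tau$.

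\medskip
\textbf{Part (b).} Assume $(\tau,T)$ is constructible and $T\neq\mathrm{st}(\tau)$. Let $i$ be minimal with $i$ not in the same column of $\tau$ and $T$. I must check conditions (1), (2), (3) of section~\ref{def-A-2column}. For (1), minimality of $i$ and the fact that the algorithm reproduces $\theta_{i-1}$ agreeing with $\mathrm{st}(\tau)$ restricted to $\{1,\dots,i-1\}$ forces: if $i$ were in $C_1(T)\cap C_2(\tau)$ then at step $i$ the left box of row $p_i$ is already occupied, triggering the first failure case --- contradiction; and $i\in C_2(T)\cap C_2(\tau)$ is impossible since $i$ changes column. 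So $i\in C_1(\tau)\cap C_2(T)$, giving~(1). Condition~(2) --- the existence of $j>i$ in $C_2(\tau)$ with $\nu_\tau(j)\le i$ --- follows because rule~(1) applied to $i$ (which is in $C_2(T)$) does not push any entry left, so the entry at $i$'s position ends up in the second column of $\theta_i$ at a row $p_i$ whose left box is empty; for $\theta_n=\tau$ to fill that box, some later entry must be placed to its left, forcing the existence of such a $j$. Condition~(3), the existence of $i'\in\{i,\dots,j-1\}\cap C_1(\tau)$ with $\omega_\tau(i')>j$, is obtained by translating the non-failure of the second failure case at the step where $j$ (or the relevant pushed entry) is inserted into the index inequality $f(p_i)\neq 0$; unwinding the bookkeeping of the $f$'s shows that $f$ being positive there is equivalent to the presence of such a row $i'$.

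\medskip
\textbf{Part (c).} This is the main obstacle. Assume $(\tau,T)\in\mathbf{\hat A}_{\mbox{\tiny 2-c}}$ and write $\eta(\tau,T)=(\tilde\tau,T)$, obtained from $\tau$ by switching $i$ with either $i'$ (case (a) of section~\ref{def-A-2column}) or $j$ (case (b)). I would show that the algorithm for $(\tau,T)$ and the algorithm for $(\tilde\tau,T)$ either both fail or both succeed, and --- more precisely --- that after a suitable finite number of steps their intermediate tableaux coincide, so constructibility of one is equivalent to constructibility of the other; combined with the termination argument already given at the end of section~\ref{def-A-2column} (iterating $\eta$ eventually reaches a pair with $T=\mathrm{st}(\cdot)$, handled by~(a)), this closes the induction. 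Concretely: run both algorithms in parallel. They agree for the first $i-1$ steps. At step $i$, in the $\tau$-run we are in rule~(1) (since $i\in C_2(T)$) and place $i$ in the second column of row $p_i$ with empty left box; in the $\tilde\tau$-run the entry $i$ now sits in a row whose behaviour is governed by $\tilde\tau$, and one checks case-by-case (distinguishing the subcases (a),(b) by whether $\omega_\tau(i)<\omega_\tau(i')$ for some $i'$) that the two runs re-synchronize --- either immediately or after the step where the twin entry ($i'$ resp. $j$) is inserted. The core of the argument is a careful comparison of the left-push rule in case~(2): the choice of $j\in F'_i$ ``with minimal right-neighbour in $\tau$, or else with $p_j$ minimal'' is exactly engineered so that the eventual filling of boxes is the same whether we started from $\tau$ or from $\tilde\tau$, and the indices $f_k(p)$ evolve compatibly. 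I expect this verification to be lengthy but mechanical; as the statement of Theorem~\ref{theorem-constructibility-2column} already warns, I would present the key synchronization lemma and the comparison of the push-rules in detail and leave the routine index-tracking to the reader. Given parts (a)--(c), Theorem~\ref{theorem-constructibility-2column} follows by induction on the pair $(i,\omega_\tau(i))$ together with Theorem~\ref{theorem-A-2column}.
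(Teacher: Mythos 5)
Your decomposition into (a), (b), (c) and your treatment of (a) and (c) follow the paper's own route. For (a) the paper shows inductively (Lemma \ref{lemme_constructibilite_tauT}) that $\theta_k=\tau[1,\ldots,k]$ with $f_k(p)=0$ for all $p\in P_k$ --- hence $Q_k=\emptyset$ and $F'_k=\{k\}$ at each push step --- which is what your argument amounts to. For (c) the paper proves a synchronization lemma (Lemma \ref{lemme_echange_iitilde}): the runs for $\tau$ and $\tilde\tau$ fail simultaneously, differ only in the position of the entry $i$ for $i\le k<\itilde$, and at $k=\itilde$ produce tableaux related by the switch of $i$ and $\itilde$ with identical index maps $f_{\itilde}$, after which success depends only on the shape of $\theta_{\itilde}$, on $f_{\itilde}$ and on $\tau[\itilde+1,\ldots,n]$. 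Your plan for (c) is exactly this, and both you and the paper leave the index-tracking unverified.

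The genuine gap is in part (b), for conditions \ref{def-A-2column}\,(2) and (3). The paper derives these from a global invariant (Lemma \ref{lemme_Qknonvide}): as long as every $l\in\{i+1,\ldots,k\}\cap C_2(\tau)$ satisfies $\nu_\tau(l)>i$, the set $Q_k$ of rows whose second box is occupied but whose first box is empty remains nonempty, and $f_k(p')=\#Q_k>f_k(p)$ for $p'\in P_i$ and $p\in P_k\setminus P_i$. Since $\theta_n=\tau$ forces $Q_n=\emptyset$, some $j>i$ in $C_2(\tau)$ with $\nu_\tau(j)\le i$ must exist (condition (2)), and $Q_{j-1}\neq\emptyset$ then yields the $i'$ of condition (3). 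Your substitute argument for (2) --- ``for $\theta_n=\tau$ to fill that box, some later entry must be placed to its left'' --- misreads the push rule: the empty first box of row $p_i$ is filled by moving $i$ \emph{itself} leftward at some later step $l\in C_1(T)$, and the mere existence of such a step does not produce a $j\in C_2(\tau)$ with $\nu_\tau(j)\le i$; one really needs the counting invariant above (or an equivalent), which is the one nontrivial idea in part (b) and is missing from your sketch. Two smaller slips: in your argument for (b)(1), the case $i\in C_2(\tau)\cap C_1(T)$ is excluded by the \emph{second} failure case (since $f_{i-1}(p_i)=0$ by Lemma \ref{lemme_constructibilite_tauT}), not the first; and in (a) the relevant fact is $p_i\notin P_{i-1}$ (the row is empty), not $p_i\in Q_{i-1}$, though the conclusion $f_{i-1}(p_i)=\infty$ is correct.
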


Proposition
\ref{proposition-constructibility-2column}\,(a) is a consequence
of the following lemma, which easily follows,
by induction, from the definition of the algorithm.

\begin{lemma}
\label{lemme_constructibilite_tauT} Let $k\in\{1,...,n\}$. If each
$l\in\{1,...,k\}$ is in the same column in $\tau$ and $T$, then
the algorithm does not fail in $\{1,...,k\}$. In addition
$\theta_k$ coincides with the subtableau $\tau[1,...,k]$ of
entries $1,...,k$, and we have $f_k(p)=0$ for all $p\in P_k$.
\end{lemma}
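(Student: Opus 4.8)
The plan is to prove Lemma \ref{lemme_constructibilite_tauT} by induction on $k$, following the mechanics of the two-column algorithm step by step. Assume the statement holds for $k-1$, so that $\theta_{k-1}=\tau[1,\ldots,k-1]$ and $f_{k-1}(p)=0$ for all $p\in P_{k-1}$. Let $p_k$ be the row of $\tau$ containing $k$. Since $k$ lies in the same column of $\tau$ and $T$ by hypothesis, we split into two cases according to whether $k\in C_1(T)=C_1(\tau)$ or $k\in C_2(T)=C_2(\tau)$.

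First I would handle the case $k\in C_2(\tau)$. Here $k$ is the right-neighbour of some entry $\nu_\tau(k)\in C_1(\tau)$ which, being $<k$, already occupies the first box of row $p_k$ in $\theta_{k-1}=\tau[1,\ldots,k-1]$; hence the first box of row $p_k$ is nonempty and its second box is empty, so no first-failure occurs and rule (1) applies with $\theta_k=\theta'_k$. By the inductive hypothesis $p_k\in P_{k-1}$ and $f_{k-1}(p_k)=0$, so the condition $f_{k-1}(p)<f_{k-1}(p_k)$ is never satisfied; thus every index stays put, giving $f_k(p)=f_{k-1}(p)=0$ for $p\in P_{k-1}=P_k$. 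And $\theta_k$ is exactly $\theta_{k-1}$ with $k$ added in the second column of row $p_k$, i.e.\ $\tau[1,\ldots,k]$, as desired.

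Next I would handle the case $k\in C_1(\tau)$. Then $k$ starts a new row $p_k$ of $\tau[1,\ldots,k]$, so in $\theta_{k-1}$ row $p_k$ is entirely empty (its first and second boxes are both empty): no first-failure. We are in rule (2), and the potential second-failure requires $f_{k-1}(p_k)=0$; but row $p_k$ is empty in $\theta_{k-1}$, so $p_k\notin P_{k-1}$, whence by (2c-c) $f_{k-1}(p_k)=\infty\neq 0$ and no second-failure occurs. In the intermediate tableau $\theta'_k$ the entry $k$ sits in the \emph{second} box of row $p_k$ with an empty box on its left, so $k\in F'_k$. I claim $F'_k=\{k\}$: any other entry $j$ in the second column of $\theta'_k=\theta_{k-1}\cup\{k\text{ in col }2\}$ belongs to $\theta_{k-1}$, and by the inductive hypothesis $\theta_{k-1}=\tau[1,\ldots,k-1]$ is a genuine subtableau of $\tau$ in which, since all entries $\le k-1$ lie in the same column in $\tau$ and $\tau[1,\ldots,k-1]$, every second-column entry has its left-neighbour already present; so no such $j$ has an empty left box. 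Hence the algorithm necessarily pushes $j=k$ one rank to the left, which precisely moves $k$ into the first column of row $p_k$, producing $\theta_k=\tau[1,\ldots,k]$. Finally we set $f_k(p_k)=0$, and for $p\neq p_k$ we have $f_{k-1}(p)<f_{k-1}(p_k)=\infty$ unless $f_{k-1}(p)=\infty$ too; in the finite case $f_k(p)=f_{k-1}(p)=0$ by induction, and in the infinite case $p\notin P_{k-1}=P_k\setminus\{p_k\}$, consistent with (2c-c). Thus $f_k(p)=0$ for all $p\in P_k$.

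The main obstacle is the bookkeeping in the second case, i.e.\ verifying that $F'_k$ is a singleton and that the left-push lands $k$ in the first column: this is where the inductive hypothesis ``$\theta_{k-1}=\tau[1,\ldots,k-1]$ with all indices zero'' is essential, as it guarantees no ``to-be-placed'' entries are lingering in the second column. Once this is in place, properties (2c-a)--(2c-c) for $\theta_k$ follow routinely from the construction, and the induction closes; Proposition \ref{proposition-constructibility-2column}\,(a) then follows by taking $k=n$ and observing that $T=\mathrm{st}(\tau)$ forces every entry to lie in the same column of $\tau$ and $T$.
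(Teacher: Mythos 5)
Your proof is correct and follows exactly the route the paper intends: the paper states only that the lemma ``easily follows, by induction, from the definition of the algorithm,'' and your case analysis ($k\in C_2$ via rule (1) with $f_{k-1}(p_k)=0$, versus $k\in C_1$ via rule (2) with $f_{k-1}(p_k)=\infty$ and $F'_k=\{k\}$) is precisely that induction written out in full.
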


The next goal to show Proposition \ref{proposition-constructibility-2column}\,(b)
requires two preliminary observations.
Let $(\tau,T)\in \mathbf{Y}_{\mbox{\rm\tiny 2-c}}$ be such that $T\not=\mathrm{st}(\tau)$.
Take $i\in\{1,...,n\}$
minimal which does not lie in the same column in $\tau$ and
$T$. By Lemma \ref{lemme_constructibilite_tauT}, the algorithm
does not fail in the first $i-1$ steps. Recall that
$C_q(\tau)$ (resp. $C_q(T)$) is the set of entries in the
$q$-th column of $\tau$ (resp. of $T$). 
By Lemma \ref{lemme_constructibilite_tauT} and the second failure case, we have:

\begin{lemma}
\label{lemme_suivant} If the algorithm does not fail at the $i$-th
step, then $i\in C_1(\tau)\cap C_2(T)$.
\end{lemma}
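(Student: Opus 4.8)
The statement to be proved is Lemma~\ref{lemme_suivant}: if the algorithm does not fail at the $i$-th step, where $i$ is minimal not lying in the same column in $\tau$ and $T$, then $i\in C_1(\tau)\cap C_2(T)$. The plan is to combine Lemma~\ref{lemme_constructibilite_tauT}, which describes the state of the algorithm after the first $i-1$ steps, with a case analysis on the two columns that $i$ occupies in $\tau$ and in $T$. Since $i$ does not lie in the same column of $\tau$ and $T$, exactly one of the following holds: either $i\in C_1(\tau)\cap C_2(T)$, or $i\in C_2(\tau)\cap C_1(T)$. Our goal is to rule out the second possibility under the hypothesis that the algorithm does not fail at step $i$.

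First I would invoke Lemma~\ref{lemme_constructibilite_tauT} with $k=i-1$: since every $l\in\{1,\ldots,i-1\}$ lies in the same column in $\tau$ and $T$, the algorithm does not fail in the first $i-1$ steps, the tableau $\theta_{i-1}$ coincides with the subtableau $\tau[1,\ldots,i-1]$, and $f_{i-1}(p)=0$ for all $p\in P_{i-1}$. Now suppose, for contradiction, that $i\in C_2(\tau)\cap C_1(T)$. Let $p_i\in P$ be the row of $\tau$ containing $i$; since $i\in C_2(\tau)$, the entry $\nu_\tau(i)$ on its left in $\tau$ is some number $l<i$ lying in $C_1(\tau)$. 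Because $l<i$ and $i$ is minimal not in the same column in $\tau$ and $T$, the entry $l$ also lies in $C_1(T)$; moreover $l$ lies in the $p_i$-th row of $\theta_{i-1}=\tau[1,\ldots,i-1]$, so $p_i\in P_{i-1}$ (its first box is filled) and hence $f_{i-1}(p_i)=0$ by Lemma~\ref{lemme_constructibilite_tauT}. On the other hand, $i\in C_1(T)$ means that at step $i$ we are in the ``Second case'' of the algorithm, so the Second failure case applies: since $f_{i-1}(p_i)=0$, the algorithm fails at step $i$, contradicting the hypothesis. (One should also note in passing that, since $i\in C_2(\tau)$, the first box of the $p_i$-th row of $\theta_{i-1}$ is already filled by $l$, so the First failure case does not intervene; the contradiction genuinely comes from the Second failure case.)

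Therefore the only remaining possibility is $i\in C_1(\tau)\cap C_2(T)$, which is exactly the claim. The main (minor) obstacle is bookkeeping: one must be careful that $\theta_{i-1}$ really equals $\tau[1,\ldots,i-1]$ as a partial numbering of $Y$, so that the row of $i$ in $\tau$ is indeed the row into which the algorithm attempts to insert $i$ at step~$i$ (this is the index $p_i$ of rule~(0)), and that the left-neighbor $l=\nu_\tau(i)$ genuinely occupies the first box of row $p_i$ in $\theta_{i-1}$; all of this is furnished by Lemma~\ref{lemme_constructibilite_tauT} together with the minimality of $i$. The rest is a direct reading of the failure rules of the algorithm.
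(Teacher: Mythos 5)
Your proof is correct and follows exactly the route the paper intends (the paper merely says the lemma follows ``by Lemma~\ref{lemme_constructibilite_tauT} and the second failure case''; you have filled in precisely those details: minimality of $i$ gives $\theta_{i-1}=\tau[1,\ldots,i-1]$ and $f_{i-1}(p)=0$ on $P_{i-1}$, and assuming $i\in C_2(\tau)\cap C_1(T)$ forces the second failure). One inessential slip: your parenthetical claims the first failure case is avoided because the \emph{first} box of row $p_i$ is filled, whereas that failure case concerns the \emph{second} box (which is empty since its entry in $\tau$ is $i$ itself); in any event, either failure would yield the desired contradiction, so the argument stands.
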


Let $\nu_{\tau}:C_2(\tau)\rightarrow C_1(\tau)$ and
$\omega_{\tau}:C_1(\tau)\rightarrow C_2(\tau)\cup\{\infty\}$ be
the maps introduced in section \ref{def-A-2column}. The sets
$P_k,Q_k$ are those involved in condition (2c-c).
We need the following technical

\begin{lemma}
\label{lemme_Qknonvide} Let $k\geq i$ be such that $\nu_{\tau}(l)>i$
for any $l\in\{i+1,...,k\}\cap C_2(\tau)$. Suppose that the algorithm does not fail
in the first $k$ steps. Then the set $Q_k$ is nonempty and we
have $f_k(p)<\#Q_k= f_k(p')$ for any $p\in P_k\setminus P_i$ and
$p'\in P_i$.
\end{lemma}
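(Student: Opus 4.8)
The statement refines property (2c-c), and the natural approach is induction on $k\ge i$.

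\emph{Base case $k=i$.} By Lemma \ref{lemme_constructibilite_tauT} applied to $i-1$ (legitimate since, by minimality of $i$, every $l<i$ lies in the same column of $\tau$ and $T$), the tableau $\theta_{i-1}$ equals the subtableau $\tau[1,\dots,i-1]$, with $f_{i-1}(p)=0$ for all $p\in P_{i-1}$. A short argument using the row-standardness of $\tau$ then gives $Q_{i-1}=\emptyset$: a row of $\tau[1,\dots,i-1]$ with an empty first box and a nonempty second box, say containing $j$, would force $\nu_\tau(j)\ge i>j$, which is absurd. By Lemma \ref{lemme_suivant} we have $i\in C_1(\tau)\cap C_2(T)$; hence row $p_i$ of $\theta_{i-1}$ is empty and the $i$-th step is an instance of case~(1): $i$ is placed in the second box of row $p_i$, so $p_i$ enters $Q_i$, giving $Q_i=\{p_i\}$ and $\#Q_i=1$. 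Since $f_{i-1}(p_i)=\infty$ while $f_{i-1}(p)=0$ for $p\in P_{i-1}=P_i$, the update rule raises every such index to $1$. As $P_i\setminus P_i=\emptyset$, the assertion holds for $k=i$.

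\emph{Inductive step.} Assume the claim at $k-1$ for some $k>i$, and examine the $k$-th step; write $p_k$ for the row of $\tau$ containing $k$. We distinguish according to which case of the algorithm applies and, within each, according to whether row $p_k$ is empty, lies in $Q_{k-1}$, or lies in $P_{k-1}$ in $\theta_{k-1}$. The decisive observation is: if row $p_k$ already has a nonempty first box in $\theta_{k-1}$, then the only entry $<k$ occurring in row $p_k$ of $\tau$ is $\nu_\tau(k)$, so $k\in C_2(\tau)$, and the hypothesis $\nu_\tau(l)>i$ for $l\in\{i+1,\dots,k\}\cap C_2(\tau)$ gives $\nu_\tau(k)>i$; hence the first box of row $p_k$ was filled only after step $i$, so $p_k\in P_{k-1}\setminus P_i$ and, by the inductive hypothesis, $f_{k-1}(p_k)<\#Q_{k-1}$. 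Using this together with the update recipes of cases~(1) and (2) and the inductive hypothesis, one checks in every sub-case that $f_k(p')=\#Q_k$ for all $p'\in P_i$ (the common maximal value is preserved because these indices gain $+1$ exactly when a row enters $Q$ and lose $-1$ exactly when a row leaves $Q$ for $P$), and that every row outside $P_i$ that is nonempty in $\theta_k$ — in particular the row that has just left $Q$, whose new index is $0$ — carries an index strictly below $\#Q_k$.

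It remains to see $Q_k\ne\emptyset$, the only delicate point being the sub-case of case~(2) where a row leaves $Q$ for $P$ and hence $\#Q_k=\#Q_{k-1}-1$. This occurs only when $k\in C_1(T)$ and row $p_k$ has a nonempty first box in $\theta_{k-1}$, so by the observation above $p_k\in P_{k-1}\setminus P_i$ and $f_{k-1}(p_k)<\#Q_{k-1}$; moreover, the second failure case (which has not occurred, by assumption) forces $f_{k-1}(p_k)\ge 1$. Combining these, $\#Q_{k-1}\ge 2$, whence $\#Q_k\ge 1$ and $Q_k\ne\emptyset$. This closes the induction. The main obstacle throughout is precisely this bookkeeping of the indices $f_k(p)$ through the case~(2) steps, where one entry is moved to the left and every index $\ge f_{k-1}(p_k)$ is decremented: one must verify simultaneously that the rows of $P_i$ stay at the top value $\#Q_k$, that $Q_k$ does not run empty, and that the row leaving $Q$ drops strictly below $\#Q_k$ — all of which rest on the combination, displayed above, of the hypothesis $\nu_\tau(l)>i$, the absence of failure, and the inductive hypothesis.
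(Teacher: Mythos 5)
Your proof is correct. The paper itself gives no proof of this lemma (it is one of the ``technical verifications'' the author explicitly skips in section \ref{section-constructibility-2column-case}), so there is nothing to compare against; your induction on $k$ is the natural intended argument, and the two points on which everything hinges are exactly right: first, that a nonempty first box in row $p_k$ of $\theta_{k-1}$ forces $k\in C_2(\tau)$ and hence, via the hypothesis $\nu_\tau(k)>i$, that $p_k\in P_{k-1}\setminus P_i$ with $f_{k-1}(p_k)<\#Q_{k-1}$; and second, that in the only sub-case where $\#Q$ decreases, the absence of the second failure gives $f_{k-1}(p_k)\geq 1$, whence $\#Q_{k-1}\geq 2$ and $Q_k\neq\emptyset$. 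I checked the remaining sub-cases of the update rules for $f_k$ (row $p_k$ empty versus in $P_{k-1}$, in each of cases (1) and (2), including both choices of the pushed entry $j$) and your bookkeeping claim --- the indices of rows of $P_i$ track $\#Q_k$ exactly, while rows outside $P_i$ stay strictly below --- holds in each of them.
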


We show Proposition \ref{proposition-constructibility-2column}\,(b):
assume that the pair $(\tau,T)$, $T\not=\mathrm{st}(\tau)$ is
constructible, and let us show that $(\tau,T)\in\mathbf{\hat A}_{\mbox{\tiny 2-c}}$. 
Lemma \ref{lemme_suivant} exactly means that $(\tau,T)$ satisfies condition \ref{def-A-2column}\,(1).
As $(\tau, T)$ is constructible, $Q_n=\emptyset$. Then, by Lemma
\ref{lemme_Qknonvide}, there is $j\in \{i+1,...,n\}\cap C_2(\tau)$
such that $\nu_{\tau}(j)\leq i$. This is condition \ref{def-A-2column}\,(2). 
Take $j$ minimal for this property. Also by Lemma \ref{lemme_Qknonvide}, $Q_{j-1}\not=\emptyset$.
Hence there is $i'\in\{i,...,j-1\}\cap C_1(\tau)$ which is in the second column of $\theta_{j-1}$. 
This implies that $\omega_{\tau}(i')>j$.
Condition \ref{def-A-2column}\,(3) is thus satisfied. Finally, we get $(\tau,T)\in\mathbf{\hat A}_{\mbox{\tiny 2-c}}$.

\medskip

It remains to show Proposition \ref{proposition-constructibility-2column}\,(c). Suppose
$(\tau,T)\in\mathbf{\hat A}_{\mbox{\tiny 2-c}}$. Let $j>i$ be the
integers involved in \ref{def-A-2column}\,(1)--(3). Recall that
the pair $\eta(\tau,T)=(\tilde{\tau},T)$ is defined in section
\ref{def-A-2column}, according to the following rule:
\begin{itemize}
\item[(a)] If there is $i'\in\{i+1,...,j-1\}\cap C_1(\tau)$ such
that $\omega_{\tau}(i)<\omega_{\tau}(i')$, then take $i'$ minimal
for this property and $\tilde{\tau}$ is obtained from $\tau$ by
switching $i$ and $i'$.
\item[(b)] Otherwise, $\tilde{\tau}$ is obtained from $\tau$ by switching $i$
and $j$.
\end{itemize}
We set $\itilde=i'$ in case (a) and $\itilde=j$ in case (b), so
that the tableau $\tilde\tau$ is obtained from $\tau$ by switching $i$
and $\itilde$. Let
$\tilde{p}_j,\tilde{\theta}_j,\tilde{f}_j,\tilde{P}_j,\tilde{Q}_j$
be the analogues of $p_j,\theta_j,f_j,P_j,Q_j$ for the pair
$(\tilde\tau,T)$. By Lemma \ref{lemme_constructibilite_tauT}, the
algorithms relative to $(\tau,T)$ and $(\tilde{\tau},T)$ do not
fail in the first $i-1$ steps.
The following lemma compares both algorithms from the $i$-th step.
It can be proved by induction. The proof, which we skip, 
consists of technical verifications, relying on the definition of the algorithm
and on Lemma \ref{lemme_Qknonvide} as another ingredient.

\begin{lemma} \label{lemme_echange_iitilde}
Let $k\in\{i,...,\itilde\}$.
\begin{itemize}
\item[(a)] The algorithm relative to the tableau $\tau$ does not fail
in the first $k$ steps if and only if the algorithm relative to
$\tilde{\tau}$ does not.
\end{itemize}
Suppose now that both algorithms do not fail in the first $k$ steps.
\begin{itemize}
\item[(b)] Case $i\leq k<\itilde$. The entries of
$\theta_k$ and $\tilde{\theta}_k$ have the same place in both
tableaux except $i$ which is in the second box of the $p_i$-th
row of $\theta_k$ and in the second box of the $p_{\itilde}$-th
row of $\tilde{\theta}_k$. We have $f_k(p)=\tilde{f}_k(p)$
$\forall p\notin P_i$.
\item[(c)] Case $k=\itilde$.
The tableaux $\theta_{\itilde}$ and $\tilde{\theta}_{\itilde}$ are
obtained one from the other by switching $i$ and $\itilde$. We
have $f_{\itilde}(p)=\tilde{f}_{\itilde}(p)$ $\forall p\in P$.
\end{itemize}
\end{lemma}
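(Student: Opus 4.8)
The plan is to prove the three assertions together by induction on $k\in\{i,\ldots,\itilde\}$: at stage $k$ one establishes (a), and, assuming that neither algorithm has failed in its first $k$ steps, the comparison (b) when $k<\itilde$ and (c) when $k=\itilde$. To start the induction I would use that $\tau$ and $\tilde\tau$ coincide on the entries $1,\ldots,i-1$, each of which occupies the same column in $\tau$ (equivalently in $\tilde\tau$) as in $T$; Lemma~\ref{lemme_constructibilite_tauT} then gives that neither algorithm fails in the first $i-1$ steps and that $\theta_{i-1}=\tilde\theta_{i-1}=\tau[1,\ldots,i-1]$, with $f_{i-1}=\tilde f_{i-1}$ ($=0$ on $P_{i-1}$ and $\infty$ elsewhere).

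At the first step, $k=i$, both runs insert $i$. By condition~\ref{def-A-2column}\,(1), $i$ lies in $C_1(\tau)\cap C_2(T)$, so rule~(1) of the algorithm applies in both runs: $i$ is placed in the second box of row $p_i$ of $\theta_{i-1}$ in the $\tau$-run, and in the second box of row $p_{\itilde}$ of $\tilde\theta_{i-1}$ in the $\tilde\tau$-run (the box that $i$ occupies in $\tilde\tau$ being the one $\itilde$ occupies in $\tau$). The first failure case is avoided in either run, because the second boxes in question are empty in $\theta_{i-1}=\tilde\theta_{i-1}$: in $\tau$, resp.\ $\tilde\tau$, they contain no entry $<i$. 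Comparing the index updates prescribed by rule~(1) --- recalling that $f_{i-1}$ is $0$ on $P_{i-1}$ --- one finds that $P_i=\tilde P_i$ and that $f_i(p)=\tilde f_i(p)$ for all $p\notin P_i$ (both equal $\infty$ there); hence $\theta_i$ and $\tilde\theta_i$ differ only in the location of $i$. This is (b) for $k=i$.

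For the inductive step with $i<k<\itilde$ one inserts $k$; since the swap defining $\tilde\tau$ affects only the entries $i$ and $\itilde$, the entry $k$ occupies the same box of $Y$ in $\tau$ and in $\tilde\tau$, so $p_k=\tilde p_k$ and $k$ goes into the same row in both runs. By the induction hypothesis the only difference between $\theta_{k-1}$ and $\tilde\theta_{k-1}$ is that $i$ sits in the second box of row $p_i$ in the former and of row $p_{\itilde}$ in the latter, and $f_{k-1},\tilde f_{k-1}$ agree off $P_i$. The heart of the argument is to check that this single discrepancy cannot be seen by the step-$k$ decisions: that the first and second failure cases are triggered in the $\tau$-run if and only if in the $\tilde\tau$-run, and that rule~(2), when it applies, selects the same entry of $F'_k$ to be pushed one rank to the left. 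I would base this on the minimality of $j$ in condition~\ref{def-A-2column}\,(2) and of $\itilde$ in the definition of $\eta$ --- which keep the entries $i+1,\ldots,k$ from interacting with the rows $p_i$ and $p_{\itilde}$ in any manner detectable by the algorithm --- together with Lemma~\ref{lemme_Qknonvide}: since $\nu_\tau(l)>i$ for every $l\in\{i+1,\ldots,k\}\cap C_2(\tau)$ and likewise for $\tilde\tau$, that lemma makes $f_k$ constant, equal to $\#Q_k$, on $P_i$ and strictly smaller on $P_k\setminus P_i$ (and the same for $\tilde f_k$ with $\#\tilde Q_k$), and one verifies $\#Q_k=\#\tilde Q_k$. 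These are exactly the facts needed to match the two updates and to conclude that the algorithms fail at step $k$ together, and, when they do not, that (b) persists.

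Finally, at $k=\itilde$ one inserts $\itilde$, which in $\tau$ occupies the box that $i$ occupies in $\tilde\tau$, i.e.\ it goes into row $p_i$; a concluding case analysis --- treating $\itilde=i'\in C_1(\tau)$ (case (a), using the minimality of $i'$) and $\itilde=j\in C_2(\tau)$ (case (b), using condition~\ref{def-A-2column}\,(3)) --- shows that this insertion fails in one run if and only if in the other, that the resulting $\theta_{\itilde}$ and $\tilde\theta_{\itilde}$ are obtained from one another by exchanging the entries $i$ and $\itilde$, and that the index sequences re-synchronise: $f_{\itilde}=\tilde f_{\itilde}$ everywhere. This proves (c) and closes the induction. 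I expect the real obstacle to be the middle step $i<k<\itilde$: showing that moving the lone entry $i$ between rows $p_i$ and $p_{\itilde}$ never changes which row's second box is occupied when a failure test is performed, nor which element of $F'_k$ is chosen for the leftward push. That is precisely where the extremal choices of $i$, $j$ and $\itilde$, transmitted through Lemma~\ref{lemme_Qknonvide}, carry the argument, and where the verification --- routine but long --- would best be organised as a short list of invariants maintained along the induction rather than one monolithic computation.
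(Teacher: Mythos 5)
Your plan coincides with the one the paper itself indicates: the authors explicitly skip the proof of Lemma~\ref{lemme_echange_iitilde}, stating only that it ``can be proved by induction'' through ``technical verifications, relying on the definition of the algorithm and on Lemma~\ref{lemme_Qknonvide}'', which is exactly the induction on $k$ (anchored at $k=i$ via Lemma~\ref{lemme_constructibilite_tauT}, driven in the middle steps by Lemma~\ref{lemme_Qknonvide} and the extremal choices of $i$, $j$ and $\itilde$) that you outline. Note only that, like the paper, you stop short of executing the step-$k$ case analysis, so what you have is the correct proof strategy rather than a completed proof; the paper supplies no further detail against which those verifications could be checked.
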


Proposition
\ref{proposition-constructibility-2column}\,(c) then follows from Lemma
\ref{lemme_echange_iitilde}: we have to show that the algorithm relative
to the pair $(\tau,T)$ succeeds if and only if the algorithm relative
to the pair $(\tilde\tau,T)$ succeeds.
By Lemma
\ref{lemme_echange_iitilde}\,(a), we may assume that both
algorithms have not failed in the first $\itilde$ steps. It is
easy to see that the success of the algorithm after the
$\itilde$-th step only depends on the shape of $\theta_{\itilde}$,
on the values of the map $f_{\itilde}$ and on the subtableau
$\tau[\itilde+1,...,n]$ of entries $\itilde+1,...,n$. By Lemma
\ref{lemme_echange_iitilde}\,(c), the tableaux $\theta_{\itilde}$
and $\tilde{\theta}_{\itilde}$ have the same shape, the maps
$f_{\itilde}$ and $\tilde{f}_{\itilde}$ are equal, whereas the
subtableaux $\tau[\itilde+1,...,n]$ and
$\tilde{\tau}[\itilde+1,...,n]$ coincide.
Therefore, both algorithms fail or succeed simultaneously.

\subsection{Constructibility in the hook case}

\label{constructibility-hook-case}

Let $(\tau,T)\in\mathbf{Y}_{\mbox{\tiny hk}}$. Set $n=|\tau|=|T|$.
Let $r$ be the common length of the first column of $\tau,T$ and
let $s$ be the length of their first row. Let $Y$ be the
rectangular Young diagram with $s+1$ columns of length $r$ (the
$(s+1)$-th column may be implicit)
\[
Y=\begin{array}{|c|c|c|c|} \hline
\ & \cdots & \ & \ \\
\hline
\vdots & \ddots & \vdots & \vdots \\
\hline
\ & \cdots & \ & \ \\
\hline
\end{array}
\]
The algorithm which we define aims to reconstruct $\tau$ as the
final term of a sequence of tableaux $\theta_1,\theta_2,...$
obtained by inserting successively the entries $1,2,...$ in the
diagram $Y$. 

We consider tableaux which are partial numberings of $Y$. The
tabl\-eaux $\tau$ and $T$ and their subtableaux $\tau[i,...,j]$
and $T[i,...,j]$ are considered as partial numberings of $Y$. Let
$\theta_0$ be the empty numbering of $Y$. The tableau $\theta_i$
obtained at the $i$-th step of the algorithm is a partial
numbering of entries $1,...,i$,
and it has the following properties:
\begin{itemize}
\item[(h-a)] For $p\in\{1,...,r\}$, the entries of the $p$-th row
of $\theta_i$ belong to the $p$-th row of $\tau$. The entries of
the first row of $\theta_i$ are in increasing order.
\item[(h-b)] For
$q\in\{1,...,s\}$, the number of entries in the $q$-th column of
$\theta_i$ equals the number of entries $\leq i$ in the $q$-th column
of $T$.
\item[(h-c)] 
$\theta_i$ and $\tau[1,\ldots,i]$ differ by at most one entry, whose row number is $\geq 2$.
\end{itemize}
For $i\in\{1,...,n\}$ suppose that we have constructed
$\theta_{i-1}$ satisfying these properties. 
Let $s_{i-1}\in\{1,...,s\}$ be the length
of the first row of the subtableau $\tau[1,\ldots,i-1]$.
Properties (h-a), (h-b) and (h-c)
imply that $\theta_{i-1}$
has the following aspect:
\[
\begin{array}{c}\mbox{a)} \\ \ \\ \ \\ \ \\ \end{array}
\!\!\!\!\!\theta_{i-1}=\begin{tyoung}{3} \tligne{3}{ * & * & \ }
\tligne{3}{ * & \  & \ } \tligne{3}{\ & \  & \ } \tligne{3}{
* & \ & \ }
 \end{tyoung}
\qquad
\begin{array}{c}\mbox{b)} \\ \ \\ \ \\ \ \\ \end{array}
\!\!\!\!\!\theta_{i-1}=\begin{tyoung}{3} \tligne{3}{ * &
* & \ } \tligne{3}{ *   & \ & \ } \tligne{3}{ \  & \
& \ } \tligne{3}{  \ & \ & * }
 \end{tyoung}
\]
\begin{itemize}
\item[a)] either the tableau $\theta_{i-1}$ and $\tau[1,...,i-1]$ coincide,
then the $(s_{i-1}+1)$-th column of $\theta_{i-1}$ is empty,  
\item[b)] or they differ by exactly one entry $j$ which has the same row number
$\geq 2$ in both tableaux, but $j$ is in the first column of $\tau[1,\ldots,i-1]$
and the unique entry in the $(s_{i-1}+1)$-th column of $\theta_{i-1}$.
\end{itemize}
We construct $\theta_i$ from $\theta_{i-1}$ in the following
manner.

\smallskip \noindent (0)
Let $p\in\{1,...,r\}$ be the row number of $i$ in $\tau$. Note
that the $p$-th box of the $(s_{i-1}+1)$-th column of
$\theta_{i-1}$ is empty. 
Put $i$ in this box and denote by
$\theta'_i$ the tableau so-obtained.

For instance:
\[
\begin{array}{c}\mbox{a.1)} \\ \ \\ \ \\ \ \\ \end{array}
\!\!\!\!\!\!\!\!\!\!\theta'_{i}=\begin{tyoung}{3} \tligne{3}{ * &
* & i } \tligne{3}{ * & \  & \ } \tligne{3}{\ & \  & \ } \tligne{3}{ * & \ & \  }
 \end{tyoung}
\quad
\begin{array}{c}\mbox{a.2)} \\ \ \\ \ \\ \ \\ \end{array}
\!\!\!\!\!\!\!\!\!\!\theta'_{i}=\begin{tyoung}{3} \tligne{3}{ * &
* & \ } \tligne{3}{ * & \  & \ } \tligne{3}{\ & \  & i } \tligne{3}{ * & \ & \ }
 \end{tyoung} \quad
\begin{array}{c}\mbox{b.1)} \\ \ \\ \ \\ \ \\ \end{array}
\!\!\!\!\!\!\!\!\!\!\theta'_{i}=\begin{tyoung}{3} \tligne{3}{ * &
* & i } \tligne{3}{ * & \ & \ } \tligne{3}{ \  & \
& \ } \tligne{3}{  \ & \ & \!*\! }
 \end{tyoung}
\quad
\begin{array}{c}\mbox{b.2)} \\ \ \\ \ \\ \ \\ \end{array}
\!\!\!\!\!\!\!\!\!\!\theta'_{i}=\begin{tyoung}{3} \tligne{3}{ * &
* & \ } \tligne{3}{ * & \ & \ } \tligne{3}{ \  & \
& i } \tligne{3}{  \ & \ & * }
 \end{tyoung}
\]

\smallskip \noindent (1) First case: $i$ lies in the first row of $T$. 
We state this failure case:

\smallskip
\noindent {\em (First failure case) If $i$ is in the first row of $T$
whereas the tableaux $\theta_{i-1}$ and
$\tau[1,...,i-1]$ do not coincide, then the algorithm fails.}

\smallskip
\noindent We suppose that the failure case does not occur. 
Then we set $\theta_i=\theta'_i$. 

For example:
\[
\begin{array}{c}\mbox{a.1)} \\ \ \\ \ \\ \ \\ \end{array}
\!\!\!\!\!\!\!\theta_{i}=\begin{tyoung}{3} \tligne{3}{ * &
* & i } \tligne{3}{ * & \  & \ } \tligne{3}{\ & \  & \ } \tligne{3}{ * & \ & \ }
 \end{tyoung}
\qquad
\begin{array}{c}\mbox{a.2)} \\ \ \\ \ \\ \ \\ \end{array}
\!\!\!\!\!\!\!\theta_{i}=\begin{tyoung}{3} \tligne{3}{ * &
* & \ } \tligne{3}{ * & \  & \ } \tligne{3}{\ & \  & i } \tligne{3}{ * & \ & \ }
 \end{tyoung}
\qquad \mbox{b.1), b.2): fail}\]

\smallskip \noindent (2) Second case: $i>1$, and $i$ belongs to the first column of $T$. We
state:

\smallskip
\noindent {\em (Second failure case) If $i>1$ belongs to the first
column of $T$ whereas $s_{i-1}>0$ and the tableaux $\theta'_i$ and $\tau[1,...,i]$
coincide, then the algorithm fails.}

\smallskip
\noindent We suppose that this failure case does not occur. 
Then there are 
nonempty boxes among the last
$r-1$ boxes of the $(s_{i-1}+1)$-th column of $\theta'_i$. 
Choose the entry in the upper row and push it to the first
column. Let $\theta_i$ be the tableau so-obtained.

For example:
\[
\mbox{a.1): fail} \
\begin{array}{c}\mbox{a.2)} \\ \ \\ \ \\ \ \\ \end{array}
\!\!\!\!\!\!\!\!\theta_{i}=\begin{tyoung}{3} \tligne{3}{ * &
* & \ } \tligne{3}{ * & \  & \ } \tligne{3}{i & \  & \ } \tligne{3}{ * & \ & \ }
 \end{tyoung}
\
\begin{array}{c}\mbox{b.1)} \\ \ \\ \ \\ \ \\ \end{array}
\!\!\!\!\!\!\!\!\theta_{i}=\begin{tyoung}{3} \tligne{3}{ * &
* & i } \tligne{3}{ * & \ & \ } \tligne{3}{ \  & \
& \ } \tligne{3}{* & \ & \ }
 \end{tyoung}
\
\begin{array}{c}\mbox{b.2)} \\ \ \\ \ \\ \ \\ \end{array}
\!\!\!\!\!\!\!\!\theta_{i}=\begin{tyoung}{3} \tligne{3}{ * &
* & \  } \tligne{3}{ * & \ & \  } \tligne{3}{ i  & \
& \  } \tligne{3}{  \ & \ & *  }
 \end{tyoung}
\]
In each case, properties (h-a), (h-b), (h-c) are easily satisfied by $\theta_i$.

\smallskip

If no failure case occurs while $i$ runs over $\{1,...,n\}$, then
we get a final tableau $\theta_n$ with entries $1,...,n$.
According to (h-a) and (h-b) we have $\theta_n=\tau$. We say that
the pair $(\tau,T)$ is {\em constructible.}

\begin{example}
(a) Suppose
\[
\tau=\young(245,3,1) \quad\mbox{and}\quad T=\young(134,2,5)
\]
We get successively
\[
\begin{array}{ccccc}
\begin{tyoung}{3} \tligne{3}{\ & \ & \ } \tligne{3}{\ & \ & \ } \tligne{3}{1 & \ & \ } \end{tyoung}
& & \begin{tyoung}{3} \tligne{3}{2 & \ & \ } \tligne{3}{\ & \ & \
} \tligne{3}{1 & \ & \ } \end{tyoung}
& & \begin{tyoung}{3} \tligne{3}{2 & \ & \ } \tligne{3}{\ & 3 & \ } \tligne{3}{1 & \ & \ } \end{tyoung} \\
\theta_1 & & \theta_2 & & \theta_3
\end{array}
\]
There is a failure of first type at the fourth step, since $4$
belongs to the first row of $T$ whereas $\theta_3$ and
$\tau[1,...,3]$ do not coincide.

\smallskip
\noindent (b) Suppose
\[
\tau=\young(134,5,2) \quad\mbox{and}\quad T=\young(125,3,4)
\]
\[
\begin{array}{cccc}
\begin{tyoung}{3} \tligne{3}{1 & \ & \ } \tligne{3}{\ & \ & \ } \tligne{3}{\ & \ & \ } \end{tyoung}\
& \begin{tyoung}{3} \tligne{3}{1 & \ & \ } \tligne{3}{\ & \ & \ }
\tligne{3}{\ & 2 & \ } \end{tyoung} \ & \begin{tyoung}{3}
\tligne{3}{1 & 3 & \ } \tligne{3}{\ & \ & \ } \tligne{3}{2 & \ & \
} \end{tyoung} \
& \begin{tyoung}{3} \tligne{3}{1 & 3 & 4 } \tligne{3}{\ & \ & \ } \tligne{3}{2 & \ & \ } \end{tyoung} \\
\theta_1 & \theta_2 & \theta_3 & \theta'_4
\end{array}
\]
There is a failure of second type at the fourth step since $4$
belongs to the first column of $T$ whereas the tableaux $\theta'_4$
and $\tau[1,...,4]$ coincide.

\smallskip
\noindent (c) Suppose now
\[
\tau=\young(235,4,1) \quad\mbox{and}\quad T=\young(134,2,5)
\]
\[
\begin{array}{ccccc}
\begin{tyoung}{3} \tligne{3}{\ & \ & \ } \tligne{3}{\ & \ & \ } \tligne{3}{1 & \ & \ } \end{tyoung}
& \begin{tyoung}{3} \tligne{3}{2 & \ & \ } \tligne{3}{\ & \ & \ }
\tligne{3}{1 & \ & \ } \end{tyoung} & \begin{tyoung}{3}
\tligne{3}{2 & 3 & \ } \tligne{3}{\ & \ & \ } \tligne{3}{1 & \ & \
} \end{tyoung} & \begin{tyoung}{3} \tligne{3}{2 & 3 & \ }
\tligne{3}{\ & \ & 4 } \tligne{3}{1 & \ & \ } \end{tyoung}
& \begin{tyoung}{3} \tligne{3}{2 & 3 & 5 } \tligne{3}{4 & \ & \ } \tligne{3}{1 & \ & \ } \end{tyoung} \\
\theta_1 & \theta_2 & \theta_3 & \theta_4 & \theta_5
\end{array}
\]
The pair $(\tau,T)$ is constructible.
\end{example}

We show the following result.

\begin{theorem}
\label{theorem-constructibility-hook} Let
$(\tau,T)\in\mathbf{Y}_{\mbox{\rm\tiny hk}}$. Then, we have
$\tau\in T$ if and only if the pair $(\tau,T)$ is constructible.
\end{theorem}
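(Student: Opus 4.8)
The plan is to prove Theorem~\ref{theorem-constructibility-hook} in the same spirit as the two-row case (Theorem~\ref{theorem-constructibility-2row}), by reducing the constructibility of a pair $(\tau,T)\in\mathbf{Y}_{\mbox{\rm\tiny hk}}$ to the inductive structure already available through Theorems~\ref{theorem-A-hook-case} and~\ref{theorem-KR-2-row}-style reasoning. Concretely, we have the characterization $\mathbf{K}_{\mbox{\rm\tiny hk}}=\mathbf{A}_{\mbox{\rm\tiny hk}}=\mathbf{R}_{\mbox{\rm\tiny hk}}$, where $\mathbf{A}_{\mbox{\rm\tiny hk}}$ is the simple condition $a'_{q-1}<a_q\leq a'_q$ on the first rows of $\tau$ and $T$. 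So it suffices to show that the algorithm of section~\ref{constructibility-hook-case} succeeds on $(\tau,T)$ if and only if $(\tau,T)\in\mathbf{A}_{\mbox{\rm\tiny hk}}$, i.e. if and only if $a'_{q-1}<a_q\leq a'_q$ for all $q\in\{2,\ldots,s\}$, where $a_1<\ldots<a_s$ (resp. $a'_1<\ldots<a'_s$) are the entries of the first row of $T$ (resp. of $\tau$).

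First I would track, step by step, the invariant: after step $i$, the tableau $\theta_i$ agrees with $\tau[1,\ldots,i]$ except possibly at one "out-of-place" entry sitting in the $(s_{i-1}+1)$-th column, by property (h-c). The crucial quantity is $s_i$, the length of the first row of $\tau[1,\ldots,i]$, versus the number of first-row entries of $T$ that are $\leq i$. The first-row entries of $T$ are exactly $a_1,\ldots,a_s$, and at step $i=a_q$ the algorithm tries to extend the first row of $\theta$ to length $q$. I would show: the first failure case at step $i=a_q$ occurs precisely when $\theta_{a_q-1}\neq\tau[1,\ldots,a_q-1]$, which (unwinding the invariant) happens exactly when some earlier first-column entry of $\tau$ with row number $\geq 2$ got stranded in the $(s+1)$-th column and has not yet been pushed back — and this corresponds combinatorially to $a_q\leq a'_{q-1}$ being violated, i.e. to $a_q\leq a'_{q-1}$. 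Dually, the second failure case at step $i$ (with $i$ in the first column of $T$) occurs when $\theta'_i=\tau[1,\ldots,i]$ already, i.e. there is no room in the $(s_{i-1}+1)$-th column to push an entry back to the first column; I would check this translates to the condition $a'_q<a_q$ being forced, i.e. to a violation of $a_q\leq a'_q$. Putting these together gives: no failure ever occurs $\iff$ $a'_{q-1}<a_q\leq a'_q$ for all $q$, and in that case $\theta_n=\tau$ by (h-a)--(h-b).

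The cleanest way to organize this is probably a lemma analogous to Lemma~\ref{constructibility-2row-lemma}: for each $i$, express the "discrepancy" $\theta_i\ \triangle\ \tau[1,\ldots,i]$ in terms of the relative positions of the $a_q$'s and $a'_q$'s among $\{1,\ldots,i\}$, proved by induction on $i$ using the four figures (a.1, a.2, b.1, b.2) in the construction. Once that bookkeeping lemma is in place, the equivalence with $\mathbf{A}_{\mbox{\rm\tiny hk}}$ — hence with $\tau\in T$ via Theorem~\ref{theorem-A-hook-case} — is immediate. Alternatively, and perhaps faster given what is already proved, one can mimic the two-row proof verbatim: peel off the largest rectangular (here: single-column or single-row) initial segment on which everything is forced, reduce to a smaller pair, and check that the algorithm on $(\tau,T)$ fails iff the algorithm on the reduced pair $\eta(\tau,T)$ fails; since $\mathbf{A}_{\mbox{\rm\tiny hk}}$ admits the same kind of inductive peeling, this closes the loop. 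The main obstacle is purely the case analysis in tracking the stranded entry through the two failure rules and the "push to first column" operation — it is routine but must be done carefully so that the translation to the inequalities $a'_{q-1}<a_q\leq a'_q$ is exact and no boundary case ($q=2$, or $i=1$, or $s_{i-1}=0$) is mishandled. I would therefore present it via the single tracking lemma to keep the verification localized.
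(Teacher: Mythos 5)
Your proposal is correct and follows essentially the same route as the paper: the paper's proof also reduces, via Theorem \ref{theorem-A-hook-case}, to showing that the algorithm succeeds if and only if $a'_{q-1}<a_q\leq a'_q$ for all $q$, by observing that a first-type failure can only occur at $i=a_q$ and happens exactly when $a'_{q-1}\geq a_q$, while a second-type failure can only occur at $i=a'_q$ and happens exactly when $a'_q<a_q$. Your "tracking lemma" is exactly the bookkeeping the paper carries out (using (h-b) to translate $\theta_{i-1}\neq\tau[1,\ldots,i-1]$ into the inequality on $a'_{q-1}$), so the two arguments coincide.
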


\begin{proof}
Let $\mathbf{A}_{\mbox{\tiny hk}}$ be the set introduced  in
section \ref{A-hook-case}. According to Theorem
\ref{theorem-A-hook-case}, it is sufficient to show that
$(\tau,T)$ is constructible if and only if
$(\tau,T)\in\mathbf{A}_{\mbox{\tiny hk}}$. Let $a_1=1<a_2<...<a_s$
(resp. $a'_1<a'_2<...<a'_s$) denote the entries of the first row
of $T$ (resp. of $\tau$). 
It is easy to see that,
if a failure of first type occurs in $i$, then $i\in\{a_2,...,a_s\}$,
whereas if the failure is of second type, then $i\in\{a'_2,...,a'_s\}$.

A first type failure occurs at
the $i$-th step for $i=a_q$ ($q\geq 2$) if and only if $\theta_{i-1}$ and
$\tau[1,...,i-1]$ do not coincide. By (h-b), this is equivalent to
the relation $a'_{q-1}>i-1$. Equivalently, $a'_{q-1}\geq a_q$.

Suppose $i=a'_q$ with $q\geq 2$. As 
the tableau $\theta_{i-1}$ satisfies condition (h-b),
we have $a_{q-1}\leq i-1$. There is a second type failure at
the step $i$ if and only if $i$ is in the first column of $T$
whereas $\theta_{i-1}$ and $\tau[1,...,i-1]$ coincide.
Equivalently, $i\notin\{a_2,...,a_q\}$ and $i-1<a_q$. This is
equivalent to have $a'_q=i<a_q$.

We get: the algorithm fails if and only if there is $q\in\{2,...,s\}$
such that $a'_{q-1}\geq a_q$ or $a_q>a'_q$. This is equivalent to: $(\tau,T)\notin \mathbf{A}_{\mbox{\tiny hk}}$. The
proof is then complete.  
\end{proof}

\section{Connection to the problem of intersections of components
in codimension one in the two-row case}

\label{7} \label{inter-two-rows}

In the last two sections of this article, we study the pairs of
irreducible components ${\mathcal K}^T,{\mathcal
K}^S\subset{\mathcal B}_u$ which intersect in codimension one,
i.e. $\mathrm{codim}_{{\mathcal K}^T}{\mathcal K}^T\cap{\mathcal
K}^S=1$ (recall that ${\mathcal B}_u$ is equidimensional, cf.
\ref{young-diagram}). The motivation for investigating this
question is a conjecture by D. Kazhdan and G. Lusztig (cf.
\cite[\S 6.3]{Kazhdan-Lusztig}) where these intersections of
codimension one play a crucial role. The description of pairs of
components ${\mathcal K}^T,{\mathcal K}^S\subset {\mathcal B}_u$
having an intersection in codimension one as well as the proof
that they accord to Kazhdan-Lusztig conjecture have been provided
in the hook, two-row and two-column cases and, up to now, only in
these three cases.

Our purpose is to relate this question to the questions involved
in the previous sections. Namely, still assuming that $T,S$ are of
hook, two-row or two-column type, we will show that
$\mathrm{codim}_{{\mathcal K}^T}{\mathcal K}^T\cap{\mathcal
K}^S=1$ implies ($S\in T$ or $T\in S$): this is Theorem
\ref{lastsection-maintheorem} of the next section.

In the present section, to begin with, we only deal with the
two-row case. The pairs of components intersecting in codimension
one have already been characterized in the two-row case (see
\cite{Fung} or \cite{Wolper}). Here, we provide a new
characterization:

\begin{theorem}
\label{theorem-codim1-section-2row} Let $u\in\mathrm{End}(V)$ be a
nilpotent endomorphism of two-row type. Let ${\mathcal
K}^T,{\mathcal K}^{S}\subset {\mathcal B}_u$ be two components
associated to the standard tableaux $T,S$. Then, the following
conditions are equivalent:
\begin{itemize}
\item[(i)] $\mathrm{codim}_{{\mathcal K}^T}{\mathcal K}^T\cap {\mathcal K}^{S}=1$;
\item[(ii)] ($S\in T$ or $T\in S$) and all the entries of the first row of $T$ but one
lie in the first row of $S$.
\end{itemize}
\end{theorem}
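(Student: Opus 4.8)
The equivalence (i) $\Leftrightarrow$ (ii) should be proved by translating the codimension-one condition into the combinatorics developed in the previous sections, chiefly the characterization of $\mathbf{K}_{\mbox{\tiny 2-r}}$ via dominance relations (Theorem \ref{theorem-KR-2-row}) together with the nonempty-intersection criterion of Proposition \ref{proposition-nonempty-intersection}. First I would recall that for two-row shapes the components of $\mathcal{B}_u$ are very explicitly understood (via \cite{Fung}, \cite{Wolper}): write $\lambda=(\lambda_1,\lambda_2)$ for the shape, so $\dim\mathcal{B}_u=\lambda_2$, and observe that a standard tableau $T$ of two-row shape is determined by the set of entries in its second row, $\{b_1<\dots<b_{\lambda_2}\}$, satisfying $b_q>2q-1$; equivalently $T$ is encoded by a Dyck-type path. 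The plan is to compute $\dim(\mathcal{K}^T\cap\mathcal{K}^S)$ combinatorially: by Proposition \ref{proposition-nonempty-intersection} the intersection is nonempty iff there is a standard $S'$ with $(S',T),(S',S)\in\mathbf{K}$, and moreover $\mathcal{K}^T\cap\mathcal{K}^S=\bigcup_{S'}\mathcal{B}_u^{S'}\cap\mathcal{K}^T\cap\mathcal{K}^S$, so its dimension is $\max_{S'}\dim(\mathcal{B}_u^{S'}\cap\mathcal{K}^T)$ over those $S'$ (using Lemma \ref{lemma-criterion-nonempty-intersection}). Since all $\mathcal{B}_u^{S'}$ have dimension $\dim\mathcal{B}_u$ and $\mathcal{B}_u^T$ is the unique piece open in $\mathcal{K}^T$, the intersection has codimension one exactly when the "next-to-largest" Spaltenstein piece $\mathcal{B}_u^{S'}$ meeting $\mathcal{K}^T$ is shared with $\mathcal{K}^S$; I would make this precise via the known poset structure of the $\mathcal{B}_u^{S'}$ inside a two-row component.

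\textbf{Reduction to (ii).} For the implication (i) $\Rightarrow$ (ii): if $\mathrm{codim}_{\mathcal{K}^T}\mathcal{K}^T\cap\mathcal{K}^S=1$, the intersection is in particular nonempty and is a union of Spaltenstein cells of codimension $\le 1$ in $\mathcal{K}^T$; the only codimension-zero cell of $\mathcal{K}^T$ is $\mathcal{B}_u^T$, hence $\mathcal{B}_u^T\subset\mathcal{K}^S$, i.e. $(T,S)\in\mathbf{K}$ by Lemma \ref{lemma-criterion-nonempty-intersection}, which gives $T\in S$; by symmetry, if instead the codimension in $\mathcal{K}^S$ were zero we would get $S\in T$. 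In general one of $T\in S$, $S\in T$ holds (one shows the intersection cannot be a proper codimension-one subset of $\mathcal{K}^T$ without $\mathcal{B}_u^T$ itself lying in $\mathcal{K}^S$, unless $\dim\mathcal K^T\cap\mathcal K^S<\dim\mathcal K^T$ is forced from the $S$ side). The combinatorial half "all but one entry of the first row of $T$ lies in the first row of $S$" I would extract from the dominance inequalities $Y_{i/0}(S')\preceq Y_{i/0}^T$ and $Y_{i/0}(S')\preceq Y_{i/0}^S$ for the witness $S'$: these say the first-row counts of $S'$ dominate those of both $T$ and $S$, and the codimension-one condition forces $T$ and $S$ to agree in all first-row positions except exactly one, where they differ by a single box being shifted between rows. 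Here the constructibility algorithm of section \ref{constructibility-2row-case} is the natural bookkeeping device, since each step records whether a given entry is placed in row $1$ or row $2$.

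\textbf{The converse.} For (ii) $\Rightarrow$ (i): assume $S\in T$ (the case $T\in S$ is symmetric via duality/Schützenberger if needed, or handled directly) and that $T$ and $S$ have the same first row except that one entry, say $b$, is in the first row of one and the second row of the other. Then $\mathcal{B}_u^S\subset\mathcal{K}^T$ by Lemma \ref{lemma-criterion-nonempty-intersection}, so $\mathcal{B}_u^S\subset\mathcal{K}^T\cap\mathcal{K}^S$ and the intersection is nonempty of full dimension $\dim\mathcal{B}_u$; I must then show the intersection is \emph{not} all of $\mathcal{K}^T$, i.e. $\mathcal{K}^T\not\subset\mathcal{K}^S$, equivalently $\mathcal{B}_u^T\not\subset\mathcal{K}^S$, equivalently $(T,S)\notin\mathbf{K}$. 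This is where the hypothesis "only one entry differs" is used sharply: if $T\in S$ also held, then the first rows of $T$ and $S$ would have to be comparable in the strong way forced by $Y_{i/0}(T)\preceq Y_{i/0}^S$ \emph{and} $Y_{i/0}(S)\preceq Y_{i/0}^T$ simultaneously, which would force the two first rows to be equal (hence $T=S$), contradicting that they differ. So $(T,S)\notin\mathbf{K}$, giving $\mathcal{B}_u^T\not\subset\mathcal{K}^S$; and since $\mathcal{B}_u^S$ is the unique Spaltenstein cell of $\mathcal{K}^S$ not meeting $\mathcal{K}^T$ in higher codimension in this near-equal situation, one concludes $\dim\mathcal{K}^T\cap\mathcal{K}^S=\dim\mathcal{B}_u-1$.

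\textbf{Main obstacle.} The routine direction is extracting the combinatorial condition from the dominance order; the genuinely delicate point is the \emph{dimension count}: showing that when (ii) holds the intersection has codimension \emph{exactly} one rather than two or more, and conversely that when the first rows differ in more than one position every common witness $S'$ forces $\mathcal{B}_u^{S'}\cap\mathcal{K}^T$ to have codimension $\ge 2$. For this I expect to need the explicit description of the partial order on two-row standard tableaux governing which $\mathcal{B}_u^{S'}$ lie in the closure $\mathcal{K}^T$ and with what codimension — a "distance in the Dyck-path poset" estimate — which is implicit in \cite{Fung}/\cite{Wolper} and which I would either cite or re-derive from the equations of the components. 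Cross-checking against the already-known characterization of codimension-one pairs in the two-row case will serve as a consistency test.
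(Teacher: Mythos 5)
Your proposal does not follow the paper's route and, more importantly, its two central steps rest on a misreading of Lemma \ref{lemma-criterion-nonempty-intersection}. That lemma says $(S,T)\in\mathbf{K}$ is equivalent to ${\mathcal B}_u^S\cap{\mathcal K}^T\not=\emptyset$ — nonempty intersection, not containment. The containment ${\mathcal B}_u^S\subset{\mathcal K}^T$ that you invoke in the converse direction is impossible for $S\not=T$: since ${\mathcal B}_u^S$ is dense in ${\mathcal K}^S$ and ${\mathcal K}^T$ is closed, it would give ${\mathcal K}^S\subset{\mathcal K}^T$ and hence ${\mathcal K}^S={\mathcal K}^T$ (both are irreducible of dimension $\dim{\mathcal B}_u$). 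Consequently your claim that under (ii) "the intersection is nonempty of full dimension $\dim{\mathcal B}_u$" is self-defeating — full dimension is codimension zero, i.e.\ $T=S$. The same confusion undermines (i) $\Rightarrow$ (ii): the intersection ${\mathcal K}^T\cap{\mathcal K}^S$ is \emph{not} a union of Spaltenstein cells (it meets cells in proper locally closed subsets), so "$\,{\mathcal B}_u^T\subset{\mathcal K}^S$" does not follow; what one actually needs is that ${\mathcal B}_u^T\cap{\mathcal K}^S$ or ${\mathcal K}^T\cap{\mathcal B}_u^S$ is nonempty, which is Corollary \ref{first-corollary} — and in the paper that corollary is a \emph{consequence} of the present theorem (via Theorem \ref{lastsection-maintheorem}), not an available ingredient. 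Finally, the genuinely hard quantitative point you flag yourself — that (ii) yields codimension exactly one and that more than one discrepancy in the first rows forces codimension $\geq 2$ — is left to an unstated "Dyck-path poset distance" estimate, which is precisely the content that has to be supplied.

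For comparison, the paper's proof takes the dimension formula as an external input: by Theorem \ref{th-meanders} (from \cite{Fung} and \cite{Westbury}), $\dim{\mathcal K}^T\cap{\mathcal K}^S$ equals the number of loops of the meander $M_{T,S}$ when the meander is even, so (i) becomes the purely combinatorial condition $(T,S)\in\mathbf{M}_{\mbox{\rm\tiny 2-r}}$. The theorem is then reduced to Theorem \ref{theorem-Vogan-meanders}, proved by a cycle of three lemmas: $\mathbf{V}_{\mbox{\rm\tiny 2-r}}\subset\mathbf{M}_{\mbox{\rm\tiny 2-r}}$ by tracking how Vogan's ${\mathcal T}_{\alpha,\beta}$ moves deform cup-diagrams (Lemma \ref{step-1}); condition (iii) implies $(T,S)\in\mathbf{V}_{\mbox{\rm\tiny 2-r}}$ by an induction run through the constructibility algorithm of section \ref{constructibility-2row-case} (Lemma \ref{step-2}); and $(T,S)\in\mathbf{M}_{\mbox{\rm\tiny 2-r}}$ implies (iii) by reading the dominance relations $Y_{j/i}(S)\preceq Y_{j/i}^T$ off the arcs of the meander and invoking Theorem \ref{theorem-KR-2-row} (Lemma \ref{step-3}). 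If you want to avoid the meander machinery you would have to re-derive the loop-counting dimension formula from the equations of the components, which is essentially redoing \cite{Fung}.
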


The previous characterization in \cite{Fung} involves the
combinatorics of cup-diagrams which also arises in the theory of
representations of the Temperley-Lieb algebra (see
\cite{Westbury}). We recall the definition of cup-diagrams and
this characterization first.

Implicitly, \cite{Fung} involves a procedure introduced by D.
Vogan \cite{Vogan}. Vogan's transformation ${\mathcal
T}_{\alpha,\beta}$ depends on two adjacent simple roots
$\alpha,\beta$ of a given irreducible root system, the
transformation ${\mathcal T}_{\alpha,\beta}$ is defined on
elements of the Weyl group $w\in W$ such that $w^{-1}(\beta)<0$
and $w^{-1}(\alpha)\not<0$. In type $A$, this transformation on
the Weyl group (which is then the symmetric group) has a
translation in terms of standard tableaux by the means of the
Robinson-Schensted algorithm. See \cite[\S 5]{MelnikovIII} for
more details. Here we recall only the procedure induced on
tableaux.

Then, we show the above theorem, by connecting cup-diagrams and
Vogan's procedure to the combinatorics involved in the previous
sections.

\subsection{The combinatorics of cup-diagrams and meanders}

\label{section-meanders}

Let $T$ be a standard tableau with two rows. Let $n=|T|$. Let
$r\geq s$ be the lengths of its two rows. Let $1=a_1<...<a_r$
(resp. $b_1<...<b_s$) be the entries in the first (resp. second)
row of $T$. Let us define a sequence $a^*_1,...,a^*_s$ by
induction. Put $a^*_1=b_1-1$, and having defined
$a^*_1,...,a^*_{q-1}$ for $q\leq s$, put
$$a^*_q=\mathrm{max}\,\{x\in \{a_1,...,a_r\}\setminus\{a^*_1,...,a^*_{q-1}\}:x<b_q\}.$$

Note that this definition interprets in terms of parenthesis diagrams (cf. \cite[\S 2]{Westbury}).
We construct a word of $n$ letters in the alphabet $\{\bullet,(,)\}$.
First put closing parentheses ``\,$)$\,'' at the positions number $b_1,\ldots,b_s$.
Next, consider the letters ``\,$)$\,'' from left to right, for each one
put its corresponding opening parenthesis ``\,$($\,'' in the rightmost non-assigned letter on its left.
Complete the word with $n-2s$ letters ``\,$\bullet$\,'' in the remaining places. 
The position of the ``\,$($\,'' 
corresponding to the ``\,$)$\,''
at the position $b_q$
gives the number $a^*_q$.

Following \cite{Fung}, the {\em cup-diagram of $T$} is the graph
with $n$ points numbered by $1,2,...,n$, displayed along a
horizontal line, and with an arc connecting $a^*_q$ to $b_q$ for
each $q=1,...,s$. We say that the $a^*_q$'s are the {\em left end
points}, the $b_q$'s are the {\em right end points}, and the
remaining points are called {\em fixed points}.

\begin{example} Let $T=\young(12467,3589)$. The corresponding parenthesis diagram is
``\,$\bullet\,(\,)\,(\,)\,(\,(\,)\,)\,$\,''.
The corresponding cup-diagram is
\[
\begin{picture}(180,30)(0,0)
\put(10,10){$\bullet$} \put(30,10){$\bullet$}
\put(50,10){$\bullet$} \put(70,10){$\bullet$}
\put(90,10){$\bullet$} \put(110,10){$\bullet$}
\put(130,10){$\bullet$} \put(150,10){$\bullet$}
\put(170,10){$\bullet$}

\put(10,0){1} \put(30,0){2} \put(50,0){3} \put(70,0){4}
\put(90,0){5} \put(110,0){6} \put(130,0){7} \put(150,0){8}
\put(170,0){9}

\qbezier(32,12)(42,30)(53,12)

\qbezier(72,12)(82,30)(93,12)

\qbezier(132,12)(142,30)(153,12) \qbezier(112,12)(142,56)(173,12)

\end{picture}
\]
%
%
%
%
%
\end{example}

Let $T,S$ be two standard tableaux of same shape with two rows.
Following  \cite{Ml-p}, the {\em meander $M_{T,S}$} is the graph
obtained as follows. We draw the cup-diagrams of $T$ and $S$ on
the same line of points $1,...,n$, the arcs of the cup-diagram of
$T$ being drawn upward and the arcs of the cup-diagram of $S$
being drawn downward.

\begin{example}
\label{example-meander} Let $T=\young(12467,3589)$ and
$S=\young(12567,3489)$. We construct the following meander
$M_{T,S}$:
\[
\begin{picture}(180,37)(0,30)
\put(10,50){$\bullet$} \put(30,50){$\bullet$}
\put(50,50){$\bullet$} \put(70,50){$\bullet$}
\put(90,50){$\bullet$} \put(110,50){$\bullet$}
\put(130,50){$\bullet$} \put(150,50){$\bullet$}
\put(170,50){$\bullet$}

\put(10,40){1} \put(30,40){2} \put(50,40){3} \put(70,40){4}
\put(90,40){5} \put(110,40){6} \put(130,40){7} \put(150,40){8}
\put(170,40){9}

\qbezier(32,52)(42,70)(53,52)

\qbezier(72,52)(82,70)(93,52)

\qbezier(132,52)(142,70)(153,52) \qbezier(112,52)(142,96)(173,52)

\qbezier(132,53)(142,35)(153,53) \qbezier(112,53)(142,9)(173,53)

\qbezier(32,53)(42,35)(53,53) \qbezier(12,53)(42,9)(73,53)

\end{picture}
\]
%
%
%
%
%
%
%
\end{example}

Following \cite[\S 5]{Ml-p} we introduce some terminology. A
connected subset of arcs can be open or closed. We call {\em loop}
a closed subset of arcs. We call {\em interval} an open subset of
arcs. The number of arcs in a path is called its {\em length}.
Note that the length of a loop is always even. The meander
$M_{T,S}$ is said to be {\em even} if all of its intervals are
even. It is said to be {\em odd}, otherwise. For example in the
meander above, there are three loops and one interval of length 2.
The meander is even.

\smallskip

The following theorem is a consequence of \cite[Theorems 7.3 and 7.4]{Fung} and the results
in \cite{Westbury}, as it is explained in \cite[\S 5.5]{Ml-p}.

\begin{theorem}
\label{th-meanders} Let $u\in\mathrm{End}(V)$ be a nilpotent
endomorphism of two-row type. Let ${\mathcal K}^T,{\mathcal
K}^{S}\subset {\mathcal B}_u$ be two components associated to the
standard tableaux $T,S$. Then, we have ${\mathcal K}^T\cap
{\mathcal K}^{S}\not=\emptyset$ if and only if $M_{T,S}$ is even.
Moreover, in this case, $\dim {\mathcal
K}^T\cap {\mathcal K}^{S}$ is the number of loops of $M_{T,S}$.
\end{theorem}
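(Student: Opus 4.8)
The plan is to derive Theorem \ref{th-meanders} from the explicit equational description of the two-row components due to Fung, organized through the cup-diagram calculus, exactly along the lines indicated in \cite[\S 5.5]{Ml-p}; what follows is the structure of that argument. Recall from \cite[Theorems 7.3 and 7.4]{Fung} that, in the two-row case, the component $\mathcal{K}^T$ is cut out inside $\mathcal{B}_u$ by imposing, for each cup of the cup-diagram of $T$, an incidence condition on the corresponding members of the flag (of the shape ``a given $V_i$ contains, or is contained in, a prescribed $u$-stable subspace built from $\mathrm{Im}\,u$ and $\ker u$''), and that $\mathcal{K}^T$ is a smooth projective variety (an iterated $\mathbb{P}^1$-bundle) of the expected dimension; likewise for $\mathcal{K}^S$. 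Consequently $\mathcal{K}^T\cap\mathcal{K}^S$ is the locus of flags satisfying \emph{both} systems of incidence conditions, and superimposing the two cup-diagrams — the upper arcs from $T$, the lower arcs from $S$ — is precisely the meander $M_{T,S}$, whose connected components dictate how the conditions interact.

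First I would analyse $M_{T,S}$ component by component. Along an interval, the conditions coming from the upper and lower arcs alternate between ``$\subset$'' and ``$\supset$'', forcing a chain of equalities among flag members and distinguished subspaces; tracing this chain from one endpoint of the interval to the other shows that the system is satisfiable if and only if the interval has even length. Hence the full system is consistent — equivalently $\mathcal{K}^T\cap\mathcal{K}^S\neq\emptyset$ — exactly when every interval of $M_{T,S}$ is even, i.e. $M_{T,S}$ is even (loops being automatically even). Assuming this, I would then show that along each loop the rigid conditions, once imposed, leave exactly one free parameter — the choice of a line inside a two-dimensional $u$-stable subspace, i.e. a $\mathbb{P}^1$ of flags — while the intervals contribute nothing further. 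Since distinct loops and intervals involve disjoint blocks of the flag, these contributions are independent, so $\mathcal{K}^T\cap\mathcal{K}^S$ fibers over the rigid data with fiber $(\mathbb{P}^1)^{\ell}$, where $\ell$ is the number of loops; therefore $\dim\,\mathcal{K}^T\cap\mathcal{K}^S=\ell$. This is the geometric shadow of Westbury's rule for multiplying Temperley--Lieb diagrams, in which each closed loop contributes the loop parameter; the dictionary between that rule and the geometry is supplied by \cite[Thm 7.3--7.4]{Fung} and assembled in \cite[\S 5.5]{Ml-p}.

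As an internal check, and an alternative route to the non-emptiness half, one can instead combine Proposition \ref{proposition-nonempty-intersection} with Theorem \ref{theorem-KR-2-row}: $\mathcal{K}^T\cap\mathcal{K}^S\neq\emptyset$ if and only if there is a standard $S'$ with $S'\preceq T$ and $S'\preceq S$ in the dominance sense of section \ref{4}, and a short combinatorial lemma on cup-diagrams identifies the existence of such an $S'$ with the even-meander condition. The hard part will be the dimension count along a loop: verifying that after the forced equalities exactly one projective parameter survives, and that the loops contribute independently. This needs Fung's equations in explicit form (not just the abstract parametrization of components) together with careful bookkeeping of which flag members are pinned by which arcs; once the cup-diagram dictionary is in place, everything else in the proof is formal.
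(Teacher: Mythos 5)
The paper gives no proof of this statement beyond the sentence preceding it: the theorem is quoted as a consequence of \cite[Theorems 7.3 and 7.4]{Fung} and the results of \cite{Westbury}, as assembled in \cite[\S 5.5]{Ml-p}. Your outline reconstructs exactly that derivation — Fung's equational, iterated-$\mathbb{P}^1$ description of the two-row components, superposition of the two cup-diagrams into the meander, parity analysis of intervals for non-emptiness, and Westbury's loop rule for the dimension — so it takes the same route as the paper (indeed it is more explicit than the paper's own treatment); the loop-by-loop dimension count you defer is precisely the content of the cited references, not something the paper supplies.
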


If $r\geq s$ are the lengths of the Jordan blocks of $u$,
then by \ref{young-diagram}, we have 
$\dim{\mathcal K}^T=\mathrm{dim}\,{\mathcal B}_u=s$. It
follows:

\begin{corollary}
Let $r\geq s$ be the lengths of the rows of
$\mathrm{sh}(T)=\mathrm{sh}(S)$. Then, $\mathrm{codim}_{{\mathcal
K}^T}{\mathcal K}^T\cap {\mathcal K}^{S}=1$ if and only if
$M_{T,S}$ is an even meander with $s-1$ loops.
\end{corollary}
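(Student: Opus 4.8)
The plan is to obtain the corollary as an immediate consequence of Theorem~\ref{th-meanders} together with the dimension count $\dim{\mathcal K}^T=\dim{\mathcal B}_u=s$ recalled just above. The first step is to unwind the codimension condition. Since ${\mathcal B}_u$ is equidimensional (cf.~\ref{young-diagram}), every component ${\mathcal K}^T$ has dimension $s$, so the equality $\mathrm{codim}_{{\mathcal K}^T}{\mathcal K}^T\cap{\mathcal K}^{S}=1$ is equivalent to the conjunction of two facts: that ${\mathcal K}^T\cap{\mathcal K}^{S}\neq\emptyset$, and that $\dim\,{\mathcal K}^T\cap{\mathcal K}^{S}=s-1$.

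The second step is to translate both of these facts via Theorem~\ref{th-meanders}. That theorem asserts that ${\mathcal K}^T\cap{\mathcal K}^{S}\neq\emptyset$ if and only if the meander $M_{T,S}$ is even, and that, in this case, $\dim\,{\mathcal K}^T\cap{\mathcal K}^{S}$ is precisely the number of loops of $M_{T,S}$. Hence the two facts above hold simultaneously exactly when $M_{T,S}$ is an even meander whose number of loops equals $s-1$. This is the claimed characterization, and the proof is complete.

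There is essentially no obstacle here: the only point deserving a word is that ``codimension one'' may be read off from dimensions alone, which is legitimate precisely because of the equidimensionality of ${\mathcal B}_u$; all the substantive content has already been carried by Theorem~\ref{th-meanders}, itself deduced from the results of \cite{Fung} and \cite{Westbury}.
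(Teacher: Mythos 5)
Your proposal is correct and follows exactly the paper's route: the corollary is deduced from Theorem \ref{th-meanders} combined with the observation that $\dim{\mathcal K}^T=\dim{\mathcal B}_u=s$, so that codimension one amounts to nonemptiness together with $\dim{\mathcal K}^T\cap{\mathcal K}^S=s-1$. Nothing further is needed.
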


\begin{example}
(a) For $T,S$ as in Example \ref{example-meander}, the meander
$M_{T,S}$ is even and has 3 loops.
It results $\mathrm{codim}_{{\mathcal K}^T}{\mathcal K}^T\cap {\mathcal K}^{S}=1$. \\
(b) In addition, let $R=\young(12347,5689)$. We see that the
meander $M_{T,R}$ is even with 3 loops, hence
$\mathrm{codim}_{{\mathcal K}^T}{\mathcal K}^T\cap {\mathcal
K}^{R}=1$. The meander $M_{S,R}$ is even, hence ${\mathcal
K}^S,{\mathcal K}^{R}$ have a nonempty intersection, but $M_{S,R}$
but has only 2 loops, hence this intersection has dimension two.
\end{example}

\smallskip
\noindent
{\em Definition of the set $\mathbf{M}_{\mbox{\rm\tiny
2-r}}$.} Let $T,S$ be standard tableaux of same shape with two rows
of lengths $r\geq s$. Say $(S,T)\in \mathbf{M}_{\mbox{\tiny 2-r}}$
if the meander $M_{T,S}$ is even and has $s-1$ loops.

\subsection{Vogan's ${\mathcal T}_{\alpha,\beta}$ procedure}

\label{T-alpha-beta}

We take the notation of \cite[\S 5]{MelnikovIII}. The set of the
simple roots for the type $A_{n-1}$ is denoted by
$\Pi=\{\alpha_i:i=1,...,n-1\}$ with $\alpha_i=(i,i+1)$. Let $T$ be
a standard tableau with $|T|=n$. For $i\in\{1,...,n\}$, we denote
by $r_T(i)$ the number of the row of $T$ containing $i$ (rows
being numbered from top to bottom). We say that $i\in\{1,...,n-1\}$
is a {\em descent of $T$} if $r_T(i)<r_T(i+1)$.

We define the set ${\mathcal D}_{\alpha,\beta}$ for any two
adjacent simple roots $\alpha,\beta$. Let $i=1,...,n-2$. Define
${\mathcal D}_{\alpha_i,\alpha_{i+1}}$ as the set of standard
tableaux $T$ with $|T|=n$ 
for which $i+1$ is a descent, but not $i$.
Define ${\mathcal D}_{\alpha_{i+1},\alpha_i}$ as the set of
standard tableaux $T$ with $|T|=n$ 
for which $i$ is a descent, but not $i+1$.

Let $T\in {\mathcal D}_{\alpha_i,\alpha_{i+1}}$. We define the standard tableau
${\mathcal T}_{\alpha_i,\alpha_{i+1}}(T)$ as follows. 
\begin{itemize}
\item[(a)]
If $r_T(i)<r_T(i+2)$, then ${\mathcal
T}_{\alpha_i,\alpha_{i+1}}(T)$ is obtained from $T$ by
interchanging $i+1$ and $i+2$.
\item[(b)]
If $r_T(i)\geq r_T(i+2)$, then ${\mathcal
T}_{\alpha_i,\alpha_{i+1}}(T)$ is obtained from $T$ by
interchanging $i$ and $i+1$.
\end{itemize}
Thus ${\mathcal T}_{\alpha_i,\alpha_{i+1}}$ is a bijection from
${\mathcal D}_{\alpha_{i},\alpha_{i+1}}$ to ${\mathcal
D}_{\alpha_{i+1},\alpha_{i}}$, and its inverse bijection is
denoted by ${\mathcal T}_{\alpha_{i+1},\alpha_i}$: we obtain
${\mathcal T}_{\alpha_{i+1},\alpha_i}(S)$ from $S\in {\mathcal
D}_{\alpha_{i+1},\alpha_{i}}$ by interchanging either $i$ and
$i+1$, or $i+1$ and $i+2$, depending on whether $r_T(i)<r_T(i+2)$,
or $r_T(i)\geq r_T(i+2)$.

In addition, for $i=2,...,n-1$, define ${\mathcal D}_i$ as the set
of standard tableaux $T$ with $|T|=n$ such that $i,i+1$ are
neither in the same row nor in the same column of $T$. For
$T\in{\mathcal D}_i$, let ${\mathcal T}_i(T)$ be the tableau
obtained from $T$ by switching $i,i+1$.

If $T$ has two rows, 
then, observe that we have $T\in{\mathcal
D}_{\alpha_i,\alpha_{i+1}}$ if and only if $i+1$ is a descent of
$T$. Likewise, we have $T\in{\mathcal
D}_{\alpha_{i+1},\alpha_{i}}$ if and only if $i$ is a descent of
$T$.

\smallskip
\noindent
{\em Definition of the sets $\mathbf{V}$ and $\mathbf{V}_{\mbox{\rm\tiny
2-r}}$.} We denote by $\mathbf{V}$ the set of pairs of standard
tableaux obtained as follows
\[
(\,{\mathcal T}_{\alpha^{(j)},\beta^{(j)}}\cdots{\mathcal
T}_{\alpha^{(1)},\beta^{(1)}}(T)\,,\ {\mathcal
T}_{\alpha^{(j)},\beta^{(j)}}\cdots{\mathcal
T}_{\alpha^{(1)},\beta^{(1)}}{\mathcal T}_{i}(T)\,)
\]
where $(\alpha^{(k)},\beta^{(k)})_{k=1,...,j}$ is a sequence (possibly empty) of
pairs of adjacent simple roots, and $T\in{\mathcal D}_{i}$ is such
that the tableaux ${\mathcal
T}_{\alpha^{(k)},\beta^{(k)}}\cdots{\mathcal
T}_{\alpha^{(1)},\beta^{(1)}}(T)$ and ${\mathcal
T}_{\alpha^{(k)},\beta^{(k)}}\cdots{\mathcal
T}_{\alpha^{(1)},\beta^{(1)}}{\mathcal T}_{i}(T)$ are well defined
for every $k=1,...,j$. We denote by $\mathbf{V}_{\mbox{\tiny
2-r}}$ the subset of pairs $(S,T)\in\mathbf{V}$ which have two
rows.

\begin{example}
Let $T,S,R$ be as in the previous example. Then we have
\begin{eqnarray}
 & (T,S) = (T,{\mathcal T}_{4}(T))\in\mathbf{V}_{\mbox{\tiny 2-r}}, \nonumber \\
 & (T,R) = ({\mathcal T}_{\alpha_4,\alpha_5}(Q),{\mathcal T}_{\alpha_4,\alpha_5}{\mathcal T}_{3}(Q))
\in\mathbf{V}_{\mbox{\tiny 2-r}} \nonumber
\end{eqnarray}
where $Q=\young(12457,3689)$.
\end{example}

\subsection{A new characterization of the intersections in codimension one}

\label{new-charac-int-codim-one}

Theorem \ref{theorem-codim1-section-2row} follows from the
following

\begin{theorem}
\label{theorem-Vogan-meanders} Let $T,S$ be two standard tableaux
of same shape with two rows. The following conditions are
equivalent:
\begin{itemize}
\item[(i)] $(T,S)\in\mathbf{M}_{\mbox{\rm\tiny 2-r}}$;
\item[(ii)] $(T,S)\in\mathbf{V}_{\mbox{\rm\tiny 2-r}}$;
\item[(iii)] ($S\in T$ or $T\in S$) and every entry in the first row of $T$ but one
lies in the first row of $S$.
\end{itemize}
\end{theorem}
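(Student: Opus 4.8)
The plan is to prove the three-way equivalence by establishing a cycle of implications, using the characterizations already available: the meander description of codimension-one intersections (Corollary after Theorem \ref{th-meanders}), the combinatorial criterion $\tau\in T\Leftrightarrow\tau\preceq T$ from Theorem \ref{theorem-KR-2-row}, and the constructibility algorithm of section \ref{constructibility-2row-case}. I would organize the argument as (i)$\Rightarrow$(iii)$\Rightarrow$(ii)$\Rightarrow$(i), or possibly split it into (i)$\Leftrightarrow$(iii) and (ii)$\Leftrightarrow$(iii), depending on which direction is cleaner; the role of (ii) is to supply the inductive scaffolding that connects the two genuinely geometric conditions (i) and (iii).

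First I would analyze the structure of an even meander $M_{T,S}$ with exactly $s-1$ loops. Since the cup-diagram of a two-row tableau has exactly $s$ arcs, and the meander superimposes $s$ upward arcs and $s$ downward arcs on $n$ points, an Euler-characteristic / arc-counting argument shows that $s-1$ loops together with even intervals forces the meander to consist of $s-1$ loops and exactly one interval (which must then have even length, in fact length $2$ after a closer look, since the fixed points of the two cup-diagrams must nearly coincide). Translating back through the parenthesis-diagram description of the $a_q^*$'s, this says precisely that the first rows of $T$ and $S$ agree except for one entry --- which is condition (iii)'s second clause. The first clause of (iii), that $S\in T$ or $T\in S$, I would derive from Theorem \ref{th-meanders}: evenness of $M_{T,S}$ gives ${\mathcal K}^T\cap{\mathcal K}^S\neq\emptyset$, and then Proposition \ref{proposition-nonempty-intersection} together with the two-row criterion $\tau\preceq T$ should be pushed to show that in fact $S\preceq T$ or $T\preceq S$ holds when the cup-diagrams are ``nested'' in the way dictated by having only one interval. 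This step --- extracting the \emph{ordered} dominance relation from the meander geometry --- is where I expect the main obstacle, since a priori $S\in T$ or $T\in S$ is stronger than mere nonempty intersection, and one must use the single-interval structure rather than just evenness.

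For the converse direction I would assume (iii) and run the two-row reconstruction algorithm of section \ref{constructibility-2row-case} on the pair $(S,T)$ (or $(T,S)$): since $S\in T$ means, by Theorem \ref{theorem-constructibility-2row}, that $(S,T)$ is constructible, and the extra hypothesis that the first rows differ in exactly one entry controls the shape of the intermediate tableaux $\theta_i$ tightly enough --- at each step only one ``to-be-placed'' strip is ever active --- to read off directly that the pair arises by a single application of ${\mathcal T}_i$ followed by a chain of ${\mathcal T}_{\alpha,\beta}$'s, i.e.\ that $(T,S)\in\mathbf{V}_{\mbox{\tiny 2-r}}$. Conversely, each Vogan move ${\mathcal T}_{\alpha,\beta}$ preserves both ``being in $\mathbf K$ in one direction'' (by the inductive property, Theorem \ref{theorem-induction}, since ${\mathcal T}_{\alpha,\beta}$ changes only two or three adjacent entries) and the ``first rows differ in one entry'' property, while ${\mathcal T}_i$ creates exactly that initial discrepancy; so induction on the length $j$ of the Vogan chain gives (ii)$\Rightarrow$(iii). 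Finally (iii)$\Rightarrow$(i): having (iii), reconstruct the meander from the parenthesis diagrams and check it is even with $s-1$ loops --- this is the reverse of the first step's arc-counting, now routine. Assembling the cycle completes the proof, and Theorem \ref{theorem-codim1-section-2row} follows by combining with the Corollary to Theorem \ref{th-meanders}.
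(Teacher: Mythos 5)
Your overall architecture coincides with the paper's: the paper combines Lemma \ref{step-1} ($\mathbf{V}_{\mbox{\rm\tiny 2-r}}\subset\mathbf{M}_{\mbox{\rm\tiny 2-r}}$, i.e.\ (ii)$\Rightarrow$(i)), Lemma \ref{step-2} ((iii)$\Rightarrow$(ii), via the constructibility algorithm and an induction on the position of the first discrepancy between $T$ and $S$) and Lemma \ref{step-3} ((i)$\Rightarrow$(iii), via the meander structure), and your (i)$\Rightarrow$(iii) and (iii)$\Rightarrow$(ii) steps match Lemmas \ref{step-3} and \ref{step-2} in spirit. But there are two genuine gaps. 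In (i)$\Rightarrow$(iii) you leave unresolved exactly the hard part, namely extracting $S\preceq T$ from the meander. The paper does this by identifying the length of the second row of $Y_{j/i}^T$ with the number of arcs of the cup-diagram of $T$ contained in $\{i+1,\ldots,j\}$ (via \cite[Lemma 2.3]{F}), and then proving the key claim that each arc of the cup-diagram of $T$ contains exactly one right end point of the cup-diagram of $S$; this is checked by discarding the length-two loops (at least $s-2$ of them exist when there are $s-1$ loops) and classifying the two remaining configurations. Saying the dominance relation ``should be pushed'' does not supply this argument.

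Second, your closing of the cycle back to (i) does not work as stated. You propose (ii)$\Rightarrow$(iii) by claiming that each ${\mathcal T}_{\alpha,\beta}$ preserves membership in $\mathbf{K}$ ``by the inductive property, Theorem \ref{theorem-induction}''; but that theorem concerns subtableaux obtained by deleting the largest entries, not swaps of two or three adjacent entries in the middle of the tableaux, so it gives nothing here. Your fallback, a direct (iii)$\Rightarrow$(i) that you call routine, is not: knowing that the second rows of $T$ and $S$ share $s-1$ entries yields $s-1$ common right end points of the two cup-diagrams, but it does not by itself force $s-1$ closed loops (nor evenness of the intervals) in $M_{T,S}$. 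The paper avoids both difficulties by proving (ii)$\Rightarrow$(i) directly (Lemma \ref{step-1}): an explicit description of how ${\mathcal T}_{\alpha_i,\alpha_{i+1}}$ acts on cup-diagrams (the arc $(i+1,i+2)$ becomes $(i,i+1)$, and the arc attached to $i$ is re-attached to $i+2$), from which one reads off that the number of loops and the interval lengths of the meander are unchanged. Some such local surgery argument is indispensable for returning to condition (i), and it is missing from your proposal.
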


The end of this section is devoted to the proof of Theorem
\ref{theorem-Vogan-meanders}. The proof is obtained by combining
Lemmas \ref{step-1}, \ref{step-2} and \ref{step-3}. The first step
is the following

\begin{lemma}
\label{step-1} $\mathbf{V}_{\mbox{\rm\tiny 2-r}}\subset
\mathbf{M}_{\mbox{\rm\tiny 2-r}}$.
\end{lemma}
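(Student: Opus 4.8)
The plan is to show $\mathbf{V}_{\mbox{\rm\tiny 2-r}}\subset\mathbf{M}_{\mbox{\rm\tiny 2-r}}$ by tracking how the meander $M_{T,S}$ changes along the sequence of Vogan moves that, by definition, produces a pair $(S,T)\in\mathbf{V}_{\mbox{\rm\tiny 2-r}}$. Recall that such a pair has the form $(\mathcal T(T_0),\mathcal T\mathcal T_i(T_0))$ where $\mathcal T=\mathcal T_{\alpha^{(j)},\beta^{(j)}}\cdots\mathcal T_{\alpha^{(1)},\beta^{(1)}}$ is a composite of adjacent Vogan transformations and $T_0\in\mathcal D_i$. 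So I would proceed by induction on the length $j$ of the sequence. The base case $j=0$ is the pair $(T_0,\mathcal T_i(T_0))$: here $\mathcal T_i$ just swaps $i$ and $i+1$, which are in different rows and different columns of $T_0$, hence in a two-row tableau they are in opposite rows with $i$ in the second row and $i+1$ in the first (or vice versa). I would check directly that the cup-diagrams of $T_0$ and $\mathcal T_i(T_0)$ differ only very locally — essentially one endpoint of one arc moves by one position, past a fixed point — so the meander $M_{T_0,\mathcal T_i(T_0)}$ consists of $s-1$ loops and a single interval of length $2$, and in particular is even with $s-1$ loops. This is the combinatorial heart of the base case and should be a short explicit picture-level verification using the parenthesis-diagram description given just before the statement.

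For the inductive step, suppose $(S',T')=(\mathcal T'(T_0),\mathcal T'\mathcal T_i(T_0))\in\mathbf{M}_{\mbox{\rm\tiny 2-r}}$ where $\mathcal T'$ is the composite of the first $j-1$ transformations, and let $(S,T)=(\mathcal T_{\alpha,\beta}(S'),\mathcal T_{\alpha,\beta}(T'))$ be obtained by applying one more transformation $\mathcal T_{\alpha,\beta}$ with $\{\alpha,\beta\}=\{\alpha_k,\alpha_{k+1}\}$ to \emph{both} tableaux. The key point is that $\mathcal T_{\alpha_k,\alpha_{k+1}}$ (and its inverse) acts on a two-row standard tableau by interchanging either $k,k+1$ or $k+1,k+2$ depending only on the relative row-order of the triple, and — this is the crucial structural fact I would isolate as a sub-lemma — the effect of such a move on the cup-diagram is to apply one of the Temperley–Lieb "generators'', i.e. it modifies the arcs only in the window $\{k,k+1,k+2\}$ in one of a small number of prescribed local ways. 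Consequently, applying the \emph{same} move to the two cup-diagrams drawn on top of each other transforms the meander $M_{S',T'}$ into $M_{S,T}$ by a local surgery in the window $\{k,k+1,k+2\}$ which preserves the parity of every interval and preserves the number of loops. (One must check the move is legitimate on both tableaux simultaneously, which is exactly guaranteed by the hypothesis that both $\mathcal T'(T_0)$ and $\mathcal T'\mathcal T_i(T_0)$ lie in the relevant domain $\mathcal D_{\alpha,\beta}$.) Since $M_{S',T'}$ is even with $s-1$ loops by induction, so is $M_{S,T}$, giving $(T,S)\in\mathbf{M}_{\mbox{\rm\tiny 2-r}}$.

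I expect the main obstacle to be the bookkeeping in the inductive step: one has to enumerate the local configurations of the two superimposed cup-diagrams inside the window $\{k,k+1,k+2\}$ that can arise when both tableaux admit the move $\mathcal T_{\alpha,\beta}$, and verify in each case that the induced surgery on the meander is a loop-count- and parity-preserving isotopy (the standard "pulling a strand across'' move familiar from the Temperley–Lieb / planar-diagram calculus). This is conceptually routine but case-heavy. A secondary subtlety is making sure the correspondence "Vogan move on tableau $\leftrightarrow$ TL generator on cup-diagram'' is stated correctly for two-row tableaux, for which I would lean on the translation via Robinson–Schensted recorded in \cite[\S 5]{MelnikovIII} and the cup-diagram conventions of \cite{Fung},\cite{Westbury} already recalled above; if a direct diagrammatic argument is preferred, one can instead verify the two base local moves ($i\leftrightarrow i+1$ and $i+1\leftrightarrow i+2$) by hand and note that every $\mathcal T_{\alpha,\beta}$ is one of these. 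Either way, once the local-surgery claim is established the conclusion is immediate, so the writeup should foreground that claim and relegate the case analysis to a compact table of pictures.
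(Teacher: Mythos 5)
Your strategy is the same as the paper's: induct on the length of the Vogan sequence, treat the seed pair $(T_0,{\mathcal T}_i(T_0))$ as the base case, and show that applying the same ${\mathcal T}_{\alpha,\beta}$ to both tableaux performs a local surgery on the meander in the window $\{i,i+1,i+2\}$ that preserves the number of loops and the lengths of all intervals. Your inductive step is exactly the paper's case analysis (its cases (a) and (b) record precisely the local arc changes you describe as Temperley--Lieb moves, including the needed rerouting of the possible arcs attached to $i$). Two remarks. For the base case the paper does not argue combinatorially: it quotes from Melnikov--Pagnon that ${\mathcal K}^{T}$ and ${\mathcal K}^{{\mathcal T}_i(T)}$ meet in codimension one and translates back through Theorem \ref{th-meanders}; your direct verification is a legitimate alternative. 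However, your description of that verification is wrong as stated: the cup-diagram change under ${\mathcal T}_i$ is not always ``one endpoint sliding past a fixed point,'' and $M_{T_0,{\mathcal T}_i(T_0)}$ need not contain an interval of length $2$. For $T_0=\mbox{\scriptsize$\young(13,24)$}$ and ${\mathcal T}_2(T_0)=\mbox{\scriptsize$\young(12,34)$}$ the arcs change from $(1,2),(3,4)$ to $(2,3),(1,4)$ --- two arcs move, one left endpoint jumps from $3$ to $1$, there are no fixed points --- and the meander is a single loop of length $4$ with no interval. The conclusion you need (even, with $s-1$ loops) is still true, but establishing it takes a genuine case analysis of the same flavour as the inductive step, not the one-line picture you anticipate; alternatively, just cite the codimension-one result as the paper does.
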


\begin{proof}
If $T\in{\mathcal D}_i$ and $S={\mathcal T}_i(T)$, then the fact
that $\mathrm{codim}_{{\mathcal K}^T}{\mathcal K}^T\cap {\mathcal
K}^{S}=1$ follows from \cite[\S 3.1]{Melnikov-Pagnon1} (it can be
shown also by using Theorem \ref{th-meanders}).

Let $T,S\in{\mathcal D}_{\alpha_i,\alpha_{i+1}}$ and put
$T'={\mathcal T}_{\alpha_i,\alpha_{i+1}}(T)$ and $S'={\mathcal
T}_{\alpha_i,\alpha_{i+1}}(S)$. We show:
$$(T,S)\in \mathbf{M}_{\mbox{\rm\tiny 2-r}}
\Leftrightarrow (T',S')\in \mathbf{M}_{\mbox{\rm\tiny 2-r}}.$$ To
do this, let us describe the changes between the cup-diagrams of
$T$ and $T'$.

\smallskip
\noindent (a) Assume $r_T(i)<r_T(i+2)$. Then $i,i+1$ are in the
first row of $T$ and $i+2$ is in the second row, and $T'$ is
obtained by switching $i+1,i+2$. As $i+1$ is a descent of $T$,
there is an arc joining $(i+1,i+2)$ in the cup-diagram of $T$.
Possibly an arc starts at $i$ until some $i_1>i+2$:
\[
\begin{picture}(210,30)(0,5)
\put(10,10){$\bullet$} \put(30,10){$\bullet$}
\put(50,10){$\bullet$} \put(85,10){$\bullet$}
\put(95,10){$\stackrel{{\mathcal
T}_{\alpha_i,\alpha_{i+1}}}{\longrightarrow}$}
\put(130,10){$\bullet$} \put(150,10){$\bullet$}
\put(170,10){$\bullet$} \put(205,10){$\bullet$}

\put(60,10){$(...)$} \put(180,10){$(...)$}

\put(10,0){$i$} \put(24,0){$i{+}1$} \put(46,0){$i{+}2$}
\put(85,0){$i_1$} \put(130,0){$i$} \put(144,0){$i{+}1$}
\put(166,0){$i{+}2$} \put(200,0){$i_1$}

\qbezier(32,12)(42,30)(53,12) \qbezier[40](12,12)(42,56)(88,12)

\qbezier(132,12)(142,30)(153,12)
\qbezier[30](172,12)(180,56)(208,12)

\end{picture}
\]
%
%
%
%
%
As represented in the picture, it is straightforward to check that
the cup-diagram of $T'$ is obtained by changing the arc joining
$(i+1,i+2)$ into an arc $(i,i+1)$, and changing the arc joining
$(i,i_1)$, if exists, into an arc $(i+2,i_1)$.

\smallskip
\noindent (b) Assume $r_T(i)\geq r_T(i+2)$. Then $i+1$ is in the
first row of $T$ and $i,i+2$ are in the second row, and $T'$ is
obtained by switching $i,i+1$. There is an arc joining $(i+1,i+2)$
in the cup-diagram of $T$. As $i$ is in the second row, there is
an arc $(i_0,i)$ for some $i_0<i$:
\[
\begin{picture}(210,30)(0,5)
\put(10,10){$\bullet$} \put(45,10){$\bullet$}
\put(65,10){$\bullet$} \put(85,10){$\bullet$}
\put(95,10){$\stackrel{{\mathcal
T}_{\alpha_i,\alpha_{i+1}}}{\longrightarrow}$}
\put(130,10){$\bullet$} \put(165,10){$\bullet$}
\put(185,10){$\bullet$} \put(205,10){$\bullet$}

\put(25,10){$(...)$} \put(145,10){$(...)$}

\put(10,0){$i_0$} \put(45,0){$i$} \put(59,0){$i{+}1$}
\put(81,0){$i{+}2$} \put(130,0){$i_0$} \put(165,0){$i$}
\put(179,0){$i{+}1$} \put(201,0){$i{+}2$}

\qbezier(67,12)(77,30)(88,12) \qbezier[80](12,12)(43,56)(48,12)

\qbezier(167,12)(177,30)(188,12)
\qbezier[80](132,12)(178,56)(208,12)

\end{picture}
\]
%
%
%
%
%
As in the picture, it is straightforward to see that the
cup-diagram of $T'$ is obtained by changing the arc joining
$(i+1,i+2)$ into an arc $(i,i+1)$, and changing the arc joining
$(i_0,i)$ into an arc $(i_0,i+2)$.

\smallskip Now let us describe the changes between the meanders
$M_{T,S}$ and $M_{T',S'}$. In any case, there is an arc  joining
$(i+1,i+2)$ in the cup-diagrams of both $T$ and $S$, and possible
arcs $(i,i_2)$ and $(i,i'_2)$ in both cup-diagrams (with either
$i_2<i$ or $i_2>i+2$, and similarly $i'_2<i$ or $i'_2>i+2$,
possibly $i_2=i'_2$). The following picture illustrates the
changes between the meanders $M_{T,S}$ and $M_{T',S'}$ (the
picture assumes for example $i_2,i'_2>i+2$).
\[
\begin{picture}(220,40)(0,25)
\put(10,40){$\bullet$} \put(30,40){$\bullet$}
\put(50,40){$\bullet$} \put(80,40){$\bullet$}
\put(95,40){$\bullet$} \put(105,40){$\longrightarrow$}
\put(130,40){$\bullet$} \put(150,40){$\bullet$}
\put(170,40){$\bullet$} \put(200,40){$\bullet$}
\put(215,40){$\bullet$}

\put(10,30){$i$} \put(22,30){$i{+}1$} \put(48,30){$i{+}2$}
\put(80,30){$i'_2$} \put(95,30){$i_2$} \put(130,30){$i$}
\put(144,30){$i{+}1$} \put(166,30){$i{+}2$} \put(200,30){$i'_2$}
\put(215,30){$i_2$}

\qbezier(32,42)(42,60)(53,42) \qbezier[40](12,42)(42,86)(98,42)
\qbezier(32,42)(42,24)(53,42) \qbezier[40](12,42)(42,-2)(83,42)

\qbezier(132,42)(142,60)(153,42)
\qbezier[30](172,42)(182,86)(218,42)
\qbezier(132,42)(142,60)(153,42)
\qbezier[30](172,42)(182,-2)(203,42)
\qbezier(132,42)(142,24)(153,42)
\end{picture}
\]
%
%
%
It is clear that the change $M_{T,S}\rightarrow M_{T',S'}$
preserves the number of loops and the lengths of the intervals.
Therefore, we get $(T,S)\in \mathbf{M}_{\mbox{\rm\tiny 2-r}}$
$\Leftrightarrow$ $(T',S')\in \mathbf{M}_{\mbox{\rm\tiny 2-r}}$.
Then, the inclusion $\mathbf{V}_{\mbox{\rm\tiny 2-r}}\subset
\mathbf{M}_{\mbox{\rm\tiny 2-r}}$ follows from the definition of
$\mathbf{V}_{\mbox{\rm\tiny 2-r}}$.  
\end{proof}

Next, we show the relation between Vogan's procedure and
constructible pairs:

\begin{lemma}
\label{step-2} Let $T,S$ be two standard tableaux of same shape
with two rows. Assume $S\in T$, and that every entry in the first
row of $T$ but one lies in the first row of $S$. Then we have
$(T,S)\in\mathbf{V}_{\mbox{\rm\tiny 2-r}}$.
\end{lemma}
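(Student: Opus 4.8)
The plan is to proceed by induction on $n = |T| = |S|$, peeling off the largest entry and using the recursive structure of the set $\mathbf{V}_{\mbox{\rm\tiny 2-r}}$ together with the constructibility criterion (Theorem~\ref{theorem-constructibility-2row}), which applies because $S\in T$ guarantees (via Theorem~\ref{theorem-KR-2-row}) that $(T,S)\in\mathbf{K}_{\mbox{\rm\tiny 2-r}}$, hence the pair $(S,T)$, seen as a row-standard/standard pair with $S$ in the $\tau$-slot, is constructible. First I would record the base case $n=1$ (or the smallest $n$ where the hypothesis can hold, namely $n=2$ with $T=\mbox{\scriptsize\young(12)}$, $S=\mbox{\scriptsize\young(1,2)}$ being excluded since they have different shapes, so really the first genuine case involves $T=\mbox{\scriptsize\young(1,2)}$ type shapes), and observe that when $S$ and $T$ differ in a single adjacent transposition of entries $i,i+1$ lying in different rows and different columns, the pair $(T,S)=(T,{\mathcal T}_i(T))$ is in $\mathbf{V}_{\mbox{\rm\tiny 2-r}}$ by definition.

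For the inductive step, the key observation is that the hypothesis ``every entry of the first row of $T$ but one lies in the first row of $S$'' is very rigid: since $T,S$ have the same shape (two rows of lengths $r\ge s$), this means the first rows of $T$ and $S$ agree as sets except that exactly one element is swapped between the rows; call $c$ the entry in row~1 of $T$ but row~2 of $S$, and $c'$ the entry in row~2 of $T$ but row~1 of $S$. I would then look at the position of $n$. If $n$ is in the second row of both $T$ and $S$, then $n\notin\{c,c'\}$, the subtableaux $T[1,\ldots,n-1]$ and $S[1,\ldots,n-1]$ still have the same shape and still satisfy the hypothesis, and $S[1,\ldots,n-1]\in T[1,\ldots,n-1]$ by Theorem~\ref{theorem-induction}(a); by induction $(T[1,\ldots,n-1],S[1,\ldots,n-1])\in\mathbf{V}_{\mbox{\rm\tiny 2-r}}$, and appending $n$ in the second row corresponds to a descent at $n-1$ being absent, so the whole Vogan sequence lifts verbatim. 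If $n$ is in the first row of both, the same argument works. If $n$ is in row~1 of $T$ and row~2 of $S$ (so $n=c$), or vice versa (so $n=c'$), then $n-1$ is a descent of exactly one of $T,S$, and I would apply one step of ${\mathcal T}_{\alpha_{n-1},\alpha_{n-2}}$ or its inverse (using the $r_T(n-2)$ vs $r_T(n)$ dichotomy in the definition of ${\mathcal T}_{\alpha_i,\alpha_{i+1}}$) to replace $(T,S)$ by a pair $(T_1,S_1)$ in which $n$ has been pushed into the same row in both tableaux while preserving the shape and the single-swap hypothesis; this reduces to the previous case. One must check that the transformation stays within $\mathbf{V}_{\mbox{\rm\tiny 2-r}}$, which is exactly the stability built into the definition of $\mathbf{V}$.

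The main obstacle I anticipate is bookkeeping: verifying that after the reduction step the ``all but one entry of the first row coincide'' condition is genuinely preserved, and that the descent positions are where we need them for the Vogan move to be defined (i.e.\ the pair really lies in the relevant ${\mathcal D}_{\alpha_i,\alpha_{i+1}}$ or ${\mathcal D}_i$). It may be cleaner to reformulate the hypothesis in terms of the row-word of $T$ and $S$ (a word in $\{1,2\}$ of content $(r,s)$) and show that any two such words differing in a single swap are connected by a sequence of elementary Vogan moves — essentially a statement about the Weyl group $S_n$ acting on the corresponding coset — after which the lemma follows by translating back through the Robinson--Schensted correspondence as recalled in \cite[\S5]{MelnikovIII}. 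I expect this reformulation to absorb most of the technical difficulty, leaving the induction essentially formal.
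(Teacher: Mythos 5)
Your overall plan (reduce to a smaller pair by a Vogan move and induct) is the right kind of argument, but as written it has three genuine gaps, all concentrated in the case where $n$ is one of the two swapped entries. First, a move ${\mathcal T}_{\alpha,\beta}$ can only be applied to the \emph{pair} if \emph{both} $T$ and $S$ lie in the \emph{same} set ${\mathcal D}_{\alpha,\beta}$; your criterion ``$n-1$ is a descent of exactly one of $T,S$'' is precisely the situation in which no common move at $(n-2,n-1)$ exists, and applying a move to only one component destroys membership in $\mathbf{V}$. In fact, when $n=c'$ (the only swapped position $n$ can occupy, given $S\in T$) and one is not in the base case $c'=c+1$, the correct configuration is that $n-2$ is a descent of \emph{both} and $n-1$ of \emph{neither}, so that ${\mathcal T}_{\alpha_{n-1},\alpha_{n-2}}$ applies to both but acts differently on them (swapping $n-2,n-1$ in one and $n-1,n$ in the other, via the $r(n-2)$ vs.\ $r(n)$ dichotomy). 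Proving that this configuration always occurs is exactly where the hypothesis $S\in T$ must be invoked (e.g.\ the dominance relations $Y_{n/n-2}(S)\preceq Y_{n/n-2}^T$ and $Y_{n/n-3}(S)\preceq Y_{n/n-3}^T$ rule out $n-1$ lying in the first row of both, or $n-2$ lying in the second row of both, or $n-2=c$); your sketch never uses $S\in T$ at this step. Second, your induction needs the reduced pair to still satisfy $S_1\in T_1$, which is a statement about $\mathbf{K}$, not about $\mathbf{V}$; ``the stability built into the definition of $\mathbf{V}$'' does not give it, and it must be proved separately (the paper does this by comparing the constructibility algorithms of $(S,T)$ and $(S_1,T_1)$).

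Third, and most seriously, the proposed fallback — that any two standard tableaux whose row-words differ by a single swap are connected by Vogan moves, ``absorbing'' the hypothesis — is false. Take $T=\mbox{\scriptsize$\young(125,34)$}$ and $S=\mbox{\scriptsize$\young(123,45)$}$: the first rows differ in exactly one entry, but the meander $M_{T,S}$ is a single open interval of length $4$ with no loops, so $(T,S)\notin\mathbf{M}_{\mbox{\rm\tiny 2-r}}$ and hence, by Lemma \ref{step-1}, $(T,S)\notin\mathbf{V}_{\mbox{\rm\tiny 2-r}}$ (consistently, neither $S\in T$ nor $T\in S$ holds here). So the hypothesis $S\in T$ cannot be discarded, and the ``essentially formal'' induction does not exist. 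For comparison, the paper inducts not on $n$ but on $(n-i,\,j-i)$, where $i$ is the first entry placed differently in $T$ and $S$ and $j$ is the first entry after $i$ in the second row of $T$, and performs the Vogan move at $(j-2,j-1)$ — near the discrepancy rather than near $n$ — reading off all the needed structural facts (where $i$, $j$, and $i+1,\dots,j-1$ sit in $S$, and preservation of $S\in T$) from the constructibility algorithm of Section \ref{constructibility-2row-case}. Your top-down reduction could likely be repaired along the same lines, but the repairs constitute the actual proof.
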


\begin{proof}
Following section \ref{constructibility-2row-case}, we consider
the algorithm of constructibility for the pair $(S,T)$. Let
$\theta_1,\theta_2,...$ be the tableaux obtained while applying
the algorithm. 
As $S\in T$, by Theorem \ref{theorem-constructibility-2row}, the pair $(S,T)$ is constructible.
Let $i$ be the minimal entry which has not the same
place in $T$ and $S$.
By definition of the algorithm, the tableau
$\theta_{i-1}$ coincides with the subtableau $S[1,...,i-1]$.
In addition, as the algorithm does not fail at the $i$-th step,
it follows that $i$ is in the first row of $T$ and in the second
row of $S$. 
Thus, $\theta_i$ has the following form:
\[
\theta_{i}=\begin{array}{|c|c|c|} \cline{1-2}
\multicolumn{2}{|c|}{*\ *\ *\ *} \\
\hline
\multicolumn{1}{|c|}{*\ *} & \quad & i \\
\cline{1-1} \cline{3-3}
\end{array}
\]
The shape of this tableau is not a Young diagram since there are
empty boxes on the left of $i$. As $(S,T)$ is
constructible, the final tableau $\theta_n$ coincides with $S$,
hence its shape is a Young diagram. Thus $i$ is pushed to the
left, during the remaining steps. By definition of the algorithm,
it implies that there is some $j>i$ in the second row of $T$. Take
$j$ minimal.

We show $(S,T)\in\mathbf{V}_{\mbox{\rm\tiny 2-r}}$ by induction on
the pair $(n-i,j-i)$ for the lexicographical order. If $j=i+1$,
then $T\in{\mathcal D}_{i}$. Moreover $i+1$ belongs to the
first row of $S$, since the algorithm fails at the $(i+1)$-th
step otherwise. Therefore, $S={\mathcal T}_i(T)$ and we have
$(T,S)=(T,{\mathcal T}_i(T))\in\mathbf{V}_{\mbox{\rm\tiny 2-r}}$.

Now, we suppose $j\geq i+2$. Then $i+1,...,j-1$ are in the first
row of $T$. By hypothesis $i$ is the only entry of the first row
of $T$ which is not in the first row of $S$, hence $i+1,...,j-1$
also belong to the first row of $S$. By definition of the
algorithm, the tableau $\theta_{j-1}$ is as follows:
\[
\theta_{i}=\begin{array}{|c|c|c|c|c|c|} \cline{1-2} \cline{4-6}
\multicolumn{2}{|c|}{*\ *\ *\ *} & \ & \!\!i\!\!+\!\!1\!\! & \cdots & \!\!j\!\!-\!\!1\!\! \\
\hline
\multicolumn{1}{|c|}{*\ *} & \quad & i \\
\cline{1-1} \cline{3-3}
\end{array}
\]

Since $j$ is in the second row of $T$, it is impossible that $j$
is in the first row of $S$, since there is a failure at the $j$-th
step otherwise. Hence $j$ is in the second row of both $T,S$. It
follows $T,S\in{\mathcal D}_{\alpha_{j-2},\alpha_{j-1}}$.

Write $T'={\mathcal T}_{\alpha_{j-2},\alpha_{j-1}}(T)$ and
$S'={\mathcal T}_{\alpha_{j-2},\alpha_{j-1}}(S)$. The tableau $T'$
is obtained from $T$ by switching $j-1,j$. The tableau $S'$ is
obtained from $S$ by switching either $i,i+1$ or $j-1,j$ depending
on whether $j=i+2$ or $j>i+2$. In the former case, $i+2$ is the
minimal entry which has not the same place in $T',S'$. In the latter
case, $i$ is the minimal entry which has not the same place in
$T',S'$ and $j-1$ is the minimal entry, bigger than $i$, which lies
in the second row of $T$. In both cases, the induction hypothesis
applies, so that it is sufficient to show that $S'\in T'$.

To prove $S'\in T'$, we show that the pair $(S',T')$ is
constructible. Let $\theta'_1,\theta'_2,...$ be the tableaux
obtained by applying the algorithm. Since $1,...,i-1$ are in the
same places in $T,S,T',S'$, we have $\theta'_{i-1}=\theta_{i-1}$.

\smallskip
\noindent (a) Suppose $j=i+2$. Then $i$ is in the first row of
$T',S'$, $i+1$ is in the second row of $T',S'$ and $i+2$ is in the
first row of $T'$, in the second row of $S'$. Then we have
\[
\theta'_{i+2}=\begin{array}{|c|c|c|c|c|} \cline{1-4}
\multicolumn{3}{|c|}{*\ *\ *\ *} & i \\
\hline
\multicolumn{1}{|c|}{*\ *} & \!\!i\!\!+\!\!1\!\! & \multicolumn{2}{|c|}{\quad} & \!i\!\!+\!\!2\!\! \\
\cline{1-2} \cline{5-5}
\end{array}
\mbox{ \ whereas \ } \theta_{i+2}=\begin{array}{|c|c|c|c|c|}
\cline{1-4}
\multicolumn{3}{|c|}{*\ *\ *\ *} & \!\!i\!\!+\!\!1\!\! \\
\hline
\multicolumn{1}{|c|}{*\ *} & i & \multicolumn{2}{|c|}{\quad} & \!i\!\!+\!\!2\!\! \\
\cline{1-2} \cline{5-5}
\end{array}
\]
As $\theta_{i+2},\theta'_{i+2}$ have the same shape, and as
$i+3,...,n$ have the same places in $T,T'$ on one hand, and in
$S,S'$ on the other hand, we get that $(S',T')$ is constructible
if and only if $(S,T)$ is constructible.

\smallskip
\noindent (b) Suppose $j>i+2$. Then $i$ is in the first row of
$T'$, in the second row of $S'$, $i+1,...,j-2,j$ are in the first
row of $T'$ and $S'$, $j-1$ is in the second row of $T'$ and $S'$.
Then we have
\begin{eqnarray}
\theta'_{j}=\begin{array}{|c|c|c|c|c|c|c|} \cline{1-4} \cline{6-7}
\multicolumn{3}{|c|}{*\ *\ *\ *} &  \!\!i\!\!+\!\!1\!\! & \  & \cdots & j \\
\hline
\multicolumn{1}{|c|}{*\ *} & i & \multicolumn{2}{|c|}{\quad} & \!\!j\!\!-\!\!1\!\! \\
\cline{1-2} \cline{5-5}
\end{array}
\mbox{ \ whereas \ } \theta_{j}=\begin{array}{|c|c|c|c|c|c|c|}
\cline{1-4} \cline{6-7}
\multicolumn{3}{|c|}{*\ *\ *\ *} &  \!\!i\!\!+\!\!1\!\! & \  & \cdots & \!\!j\!\!-\!\!1\!\! \\
\hline
\multicolumn{1}{|c|}{*\ *} & i & \multicolumn{2}{|c|}{\quad} & j \\
\cline{1-2} \cline{5-5}
\end{array} \nonumber
\end{eqnarray}
Thus $\theta_{j},\theta'_{j}$ have the same shape, moreover
$j+1,...,n$ have the same places in $T,T'$ on one hand, and in
$S,S'$ on the other hand. Therefore, $(S',T')$ is constructible if
and only if $(S,T)$ is constructible.

\smallskip
In both cases, we infer that $S'\in T'$. This argument completes the
proof.  
\end{proof}

Finally, we prove:

\begin{lemma}
\label{step-3} Let $(T,S)\in\mathbf{M}_{\mbox{\rm\tiny 2-r}}$.
Then, we have ($S\in T$ or $T\in S$), and every entry in the first
row of $T$ but one lies in the first row of $S$.
\end{lemma}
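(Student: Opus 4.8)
The statement is the last of three lemmas which together give Theorem \ref{theorem-Vogan-meanders}, so I want to prove $(T,S)\in\mathbf{M}_{\mbox{\rm\tiny 2-r}}\Rightarrow$ (iii). The natural strategy is to \emph{run Vogan's $\mathcal T$-procedure as a reduction}, exactly mirroring the induction in Lemma \ref{step-2} but in the reverse direction, peeling off loops of the meander one at a time. First I would record the combinatorial content of the hypothesis: $(T,S)\in\mathbf{M}_{\mbox{\rm\tiny 2-r}}$ means the meander $M_{T,S}$ is even and has exactly $s-1$ loops, where $s$ is the length of the shorter row. Since $M_{T,S}$ has $s$ cups from $T$ drawn upward and $s$ cups from $S$ drawn downward, and a meander on these $2s$ arcs plus the fixed points decomposes into loops and intervals, the count ``$s-1$ loops and all intervals even'' forces the existence of exactly one interval, and that interval must have its two endpoints at fixed points of \emph{both} cup-diagrams (an even interval joining two fixed points). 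This is the combinatorial shadow of the phrase ``every entry in the first row of $T$ but one lies in the first row of $S$'': fixed points of the cup-diagram of $T$ are precisely the first-row entries of $T$ not matched to a second-row entry, and the unique interval pairs up the ``extra'' first-row entry of $T$ with the ``extra'' first-row entry of $S$.

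**Key steps.** (1) Translate: show that $M_{T,S}$ even with $s-1$ loops is equivalent to the statement that the symmetric difference of the cup-structures has exactly two fixed points lying in one interval, hence $T$ and $S$ have the same number of first-row entries (automatic, same shape), and all first-row entries of $T$ except one occur in the first row of $S$. This half of (iii) is essentially a direct consequence of the loop/interval count and the relation between cup-diagrams and row-placements recalled in section \ref{section-meanders}. (2) For the other half, ($S\in T$ or $T\in S$): induct on the number $n-i$ (or on the pair $(n-i,j-i)$ as in Lemma \ref{step-2}), where $i$ is the minimal entry placed differently in $T$ and $S$. Using the cup-diagram pictures established in the proof of Lemma \ref{step-1} (cases (a) and (b) there), apply an appropriate $\mathcal T_{\alpha_k,\beta_k}$ to \emph{both} $T$ and $S$ simultaneously — this is legitimate because Lemma \ref{step-1}'s analysis shows such a move preserves membership in $\mathbf{M}_{\mbox{\rm\tiny 2-r}}$ — reducing to a pair $(T',S')$ with strictly smaller invariant. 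When the invariant bottoms out we are in the base case $j=i+1$, i.e. $T\in\mathcal D_i$ and $S=\mathcal T_i(T)$, where the desired conclusion $S\in T$ can be read off directly (e.g. from the constructibility algorithm of section \ref{constructibility-2row-case}, or from Proposition \ref{proposition-standardization} together with the explicit description of $\mathcal T_i$). Since $\mathcal T_{\alpha,\beta}$ and $\mathcal T_i$ are bijections with the properties recorded in section \ref{T-alpha-beta}, and since $S\in T$ is equivalent by Theorem \ref{theorem-constructibility-2row} to $(S,T)$ being constructible — a property whose behaviour under $\mathcal T_{\alpha_{j-2},\alpha_{j-1}}$ was already analyzed in cases (a), (b) of the proof of Lemma \ref{step-2} — the inductive step transfers constructibility back up from $(S',T')$ to $(S,T)$.

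**Main obstacle.** The delicate point is the bookkeeping in the inductive reduction: one must check that applying $\mathcal T_{\alpha_k,\beta_k}$ to the pair $(T,S)$ is \emph{well-defined} (i.e. $T$ and $S$ both lie in the relevant $\mathcal D_{\alpha,\beta}$) at each stage, and that the pair stays in $\mathbf{M}_{\mbox{\rm\tiny 2-r}}$ — which needs the ``$\Leftrightarrow$'' of Lemma \ref{step-1} used in the $\Leftarrow$ direction — and simultaneously that the minimal-difference entry $i$ and the auxiliary $j$ behave as in Lemma \ref{step-2}'s induction so the invariant genuinely decreases. The cleanest route is probably to first establish the purely combinatorial statement (half (iii)) directly from the loop count, and then observe that once we know all-but-one first-row entries of $T$ lie in the first row of $S$, the hypotheses of Lemma \ref{step-2} are \emph{almost} met: it remains only to rule out the ``wrong direction'', i.e. to show that if $S\notin T$ then $T\in S$, which by symmetry of $\mathbf{M}_{\mbox{\rm\tiny 2-r}}$ (the meander $M_{T,S}$ is unchanged, up to reflection, under swapping $T$ and $S$) and the characterization already available reduces to applying Lemma \ref{step-2} with the roles of $T$ and $S$ interchanged. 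Thus the real work is entirely in the meander-counting translation of step (1), and the rest is a symmetry argument plus citation of Lemmas \ref{step-1} and \ref{step-2}.
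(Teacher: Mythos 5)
Your first half --- extracting ``all but one first-row entry of $T$ lies in the first row of $S$'' from the loop count --- is essentially the paper's argument (each of the $s-1$ loops has a rightmost point which is a right end point of both cup-diagrams, hence a common second-row entry). Note, however, that your claim that the count forces \emph{exactly one interval} is false: when the $2s$ arcs are exhausted by the $s-1$ loops, one loop has length $4$ and there is no interval containing arcs at all --- this is precisely the second of the two configurations the paper draws at the end of its proof. That slip is repairable.

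The genuine gap is in the second half, ``$S\in T$ or $T\in S$''. The paper proves this by establishing the dominance relation $S\preceq T$ directly from the meander: the size of the second row of $Y_{j/i}^T$ counts arcs of $T$'s cup-diagram inside $\{i+1,\dots,j\}$, the corresponding quantity for $Y_{j/i}(S)$ is a minimum of two entry-counts, and the loop/interval analysis shows that every arc of $T$'s cup-diagram carries exactly one right end point of $S$'s cup-diagram; one then invokes $\mathbf{R}_{\mbox{\rm\tiny 2-r}}=\mathbf{K}_{\mbox{\rm\tiny 2-r}}$ (Theorem \ref{theorem-KR-2-row}). You never produce an argument of this kind. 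Your primary route (peel off Vogan moves guided by the meander until a base pair $(T_0,{\mathcal T}_k(T_0))$ is reached, then transfer constructibility back up) could in principle work, but all of its substance --- that an applicable ${\mathcal T}_{\alpha,\beta}$ exists for every non-base pair in $\mathbf{M}_{\mbox{\rm\tiny 2-r}}$, that both tableaux lie in the relevant ${\mathcal D}_{\alpha,\beta}$ at each stage, and that the induction terminates --- is exactly the bookkeeping you defer; in Lemma \ref{step-2} that bookkeeping is carried out only \emph{under the hypothesis} $S\in T$, since the positions of $i$ and $j$ are read off the constructibility algorithm. Your fallback is worse: it is circular. Lemma \ref{step-2} has ``$S\in T$'' among its hypotheses and concludes $(T,S)\in\mathbf{V}_{\mbox{\rm\tiny 2-r}}$; it cannot be applied, with or without the roles of $T$ and $S$ interchanged, to establish ``$S\in T$ or $T\in S$''. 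As written, the proposal does not prove the first clause of the lemma.
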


\begin{proof}
Let $r\geq s$ be the lengths of the rows of
$\mathrm{sh}(T)=\mathrm{sh}(S)$, the common shape of $T,S$. First
notice that the meander $M_{T,S}$ contains $s-1$ loops. For each
loop, the rightmost point of the loop is a right end point of the
cup-diagrams of both $T$ and $S$. It follows that the second rows
of $T$ and $S$ have $s-1$ common entries. Equivalently, every
entry in the first row of $T$ but one lies in the first row of
$S$.

Say that the minimal entry which is not at the same place in $T,S$
lies in the first row of $T$ and in the second row of $S$. By
\cite[Lemma 2.3]{F} and \ref{section-vanLeeuwen}, the size of the
second row of the diagram $Y_{j/i}^T$ is the number of arcs
$(a,b)\subset \{i+1,...,j\}$ in the cup-diagram of $T$. On the
other hand, the size of the second row of the diagram $Y_{j/i}(S)$
is the minimum between the number of entries among $i+1,...,j$ in
the first row of $S$ and the number of entries among $i+1,...,j$
in the second row of $S$. Thus it is the minimum between the
number of right end points $a\in \{i+1,...,j\}$ in the cup-diagram
of $S$ and the number of points $b\in \{i+1,...,j\}$ which are not
right end points in the cup-diagram of $S$.

We aim to show $S\preceq T$. By the previous observation, it is
sufficient to show that, for any $0\leq i<j\leq n$, the number of
arcs $(a,b)\subset \{i+1,...,j\}$ in the cup-diagram of $T$ is
less than or equal to the number of right end points $a\in
\{i+1,...,j\}$ and than the number of points $b\in \{i+1,...,j\}$
which are not right end points in the cup-diagram of $S$. To do
this, it is sufficient to show that, for any arc joining $(a,b)$
in the cup-diagram of $T$, $\{a,b\}$ contains exactly one right
end-point of the cup-diagram of $S$.

The meander $M_{T,S}$ contains $2s$ arcs and $s-1$ loops. It
follows that at least $s-2$ loops have length $2$. Moreover, if
$M_{T,S}$ contains an (open) interval, then it contains $s-1$
loops of length $2$. If the arc $(a,b)$ of the cup-diagram of $T$
belongs to a loop of length two of the meander $M_{T,S}$, then it
is also an arc in the cup-diagram of $S$. Then, $\{a,b\}$ contains
$b$ as unique right end-point of the cup-diagram of $S$. Remove
all the loops of length two of the meander $M_{T,S}$ and all the
common fixed points of the cup-diagrams of $T$ and $S$, then there
are two possible configurations for the remaining arcs:
\[
\begin{picture}(50,25)(0,15)
\put(10,22){$\bullet$} \put(30,22){$\bullet$}
\put(50,22){$\bullet$}

\put(10,32){a} \put(30,32){b} \put(50,32){c}

\qbezier(12,25)(22,7)(33,25)

\qbezier(32,25)(42,43)(53,25)
\end{picture}
\begin{picture}(35,25)(0,15)
\put(18,22){\mbox{or}}
\end{picture}
\begin{picture}(70,25)(0,15)
\put(10,22){$\bullet$} \put(30,22){$\bullet$}
\put(50,22){$\bullet$} \put(70,22){$\bullet$}

\put(10,32){a} \put(30,32){b} \put(50,32){c} \put(70,32){d}

\qbezier(12,25)(22,7)(33,25)

\qbezier(52,25)(62,7)(73,25)

\qbezier(32,25)(42,43)(53,25)

\qbezier(12,25)(42,69)(73,25)
\end{picture}
\]
%
%
%
%
%
%
%
%
(recall that the arcs of the cup-diagram of $T$ are upward, the
arcs of the cup-diagram of $S$ are downward). In each one of these
two configurations, exactly one point among the end points of any
arc of the cup-diagram of $T$ is a right end-point of the
cup-diagram of $S$. This observation completes the proof.  
\end{proof}

\section{Connection to the problem of intersections of components
in codimension one}

\label{8}

In the previous section, we have shown in particular that for two
components ${\mathcal K}^T,{\mathcal K}^{S}\subset {\mathcal B}_u$
of two-row type, we have:
\[
\mathrm{codim}_{{\mathcal K}^T}{\mathcal K}^T\cap{\mathcal K}^S=1
\Rightarrow (S\in T \mbox{\ or\ } T\in S).
\]
In the present section, we show this implication also in the hook
and two-column cases. In addition, we deduce some topological
properties of the components.

\subsection{On the intersections of codimension one in the three cases}

\label{codim1-hook-two-column-irreducibility}

To begin with we recall from \cite{Ml-p} and \cite{Vargas} the
description of the pairs of components intersecting in codimension
one, for the hook and two-column cases, and we recall the already
known property of irreducibility of the intersections in
codimension one for the hook, two-row and two-column cases.

\smallskip

First, let $u\in\mathrm{End}(V)$ be nilpotent of hook type. Let
${\mathcal K}^T,{\mathcal K}^{S}\subset {\mathcal B}_u$ be the
components associated to the standard tableaux $T,S$. Let
$a_1=1<a_2<...<a_s$ (resp. $a'_1=1<a'_2<...<a'_s$) be the entries
in the first row of $T$ (resp. of $S$). By \cite{Vargas} (or
Proposition \ref{proposition-nonempty-intersection} and Theorem
\ref{theorem-A-hook-case} above) the intersection ${\mathcal
K}^T\cap{\mathcal K}^{S}$ is nonempty if and only if
\[\max(a_{q-1},a'_{q-1})<\min(a_{q},a'_{q})\ \ \forall q=2,...,s.\]
Assume that the intersection ${\mathcal K}^T\cap{\mathcal K}^{S}$
is nonempty. Then, from \cite{Vargas}, we have the following
formula:
\[\mathrm{codim}_{{\mathcal K}^T}{\mathcal K}^T\cap{\mathcal K}^{S}=
\sum_{q=2}^s|a_q-a'_q|\] and we can state:

\begin{theorem}
\label{theorem-codim1-hook-case} Let $u\in\mathrm{End}(V)$ be
nilpotent of hook type. The intersection ${\mathcal
K}^T\cap{\mathcal K}^{S}$ is nonempty of codimension one if and
only if $T,S$ are obtained one from the other by switching two
entries $i,i+1$ for some $i=2,...,n-1$.
\end{theorem}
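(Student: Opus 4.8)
The plan is to read off the criterion from the combinatorial data just recalled: the intersection $\mathcal K^T\cap\mathcal K^S$ is nonempty of codimension one precisely when $\sum_{q=2}^s|a_q-a'_q|=1$, subject to the nonemptiness condition $\max(a_{q-1},a'_{q-1})<\min(a_q,a'_q)$ for all $q$. So the whole statement reduces to the purely combinatorial claim that, for two standard tableaux $T,S$ of a common hook shape, one has $\sum_{q=2}^s|a_q-a'_q|=1$ together with the nonemptiness condition \emph{if and only if} $S$ is obtained from $T$ by switching two adjacent entries $i,i+1$ with $i\in\{2,\dots,n-1\}$.

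First I would prove the easy direction. Suppose $S$ is obtained from $T$ by interchanging $i$ and $i+1$, where $i\geq 2$. In a hook tableau each of $i,i+1$ either lies in the first row or in the first column; since $S$ must remain standard, the only way the swap produces a valid (and different) standard tableau is that one of $i,i+1$ is in the first row of $T$ and the other in the first column (if both were in the first row, or both in the first column, swapping adjacent consecutive entries would violate increase; and $i\ge 2$ guarantees neither is the corner box $1$). Thus exactly one first-row entry changes, by exactly $1$, so $\sum_q|a_q-a'_q|=1$. It remains to check the nonemptiness inequalities hold: one checks directly that moving a single first-row entry by one step, keeping standardness, cannot create a violation $\max(a_{q-1},a'_{q-1})\ge\min(a_q,a'_q)$ — this is a short case analysis on whether the moved entry is $a_q$ or $a'_q$ and on the neighbouring values. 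Hence $\mathrm{codim}_{\mathcal K^T}\mathcal K^T\cap\mathcal K^S=1$ by the displayed formula, using $\mathcal K^T\cap\mathcal K^S\ne\emptyset$.

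Conversely, assume $\mathcal K^T\cap\mathcal K^S\ne\emptyset$ and $\sum_{q=2}^s|a_q-a'_q|=1$. Then there is a unique index $q_0$ with $|a_{q_0}-a'_{q_0}|=1$ and $a_q=a'_q$ for all other $q$; say $a'_{q_0}=a_{q_0}+1$ (the other case is symmetric, giving $S$ and $T$ the other way round). The first rows of $T$ and $S$ differ only in that $T$ contains $a_{q_0}$ where $S$ contains $a_{q_0}+1$; equivalently $a_{q_0}$ is in the first row of $T$ and in the first column of $S$, while $a_{q_0}+1$ is in the first column of $T$ and in the first row of $S$, and all other boxes agree. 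So $S$ is exactly $T$ with $i:=a_{q_0}$ and $i+1$ swapped; and $i=a_{q_0}\ge a_2>a_1=1$, so $i\ge 2$. The one thing to verify carefully here is that the combinatorial position data indeed force a box-by-box identification of $T$ and $S$ away from $\{i,i+1\}$: in a hook, the first row together with the information of which entries are in the first row completely determines the tableau (the first column is then the complement, listed in increasing order), so agreement of all $a_q$ except $a_{q_0}$, together with agreement of the total entry set, pins everything down.

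The main obstacle I anticipate is purely bookkeeping: making the nonemptiness inequalities interact correctly with "differ by one'' in the forward direction, i.e.\ ruling out the degenerate possibilities where the swap $i\leftrightarrow i+1$ does not change the first row at all (both in first row, or both in first column), and confirming that the resulting $S$ is genuinely standard and genuinely distinct from $T$. None of this is deep; it is a finite case check driven entirely by the recalled nonemptiness criterion and the codimension formula from \cite{Vargas}, combined with the elementary structure of hook-shape standard tableaux. I would organise the write-up as: (1) reduce to the combinatorial equivalence using the two recalled formulas; (2) forward direction with the short standardness/inequality check; (3) converse via the first-row-plus-membership reconstruction of a hook tableau.
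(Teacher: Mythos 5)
Your proposal is correct and follows exactly the route the paper intends: the theorem is stated as an immediate consequence of the recalled nonemptiness criterion and the codimension formula $\sum_{q=2}^s|a_q-a'_q|$ from Vargas, and your write-up simply supplies the (sound) combinatorial verification that this sum equals $1$ precisely for adjacent-transposition pairs. No gaps; the paper itself leaves these details implicit.
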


Let $\mathbf{V}_{\mbox{\tiny hk}}$ denote the subset of pairs
$(T,S)\in\mathbf{V}$ which are of hook type (cf.
\ref{T-alpha-beta}). It is easy to see that
$\mathbf{V}_{\mbox{\tiny hk}}$ is the set of pairs $(T,{\mathcal
T}_i(T))$ for $T\in{\mathcal D}_i$ of hook type and $i=2,...,n-1$.
Then, we get:

\begin{corollary}
Let $u\in\mathrm{End}(V)$ be
nilpotent of hook type.
We have $\mathrm{codim}_{{\mathcal K}^T}{\mathcal K}^T\cap
{\mathcal K}^S=1$ if and only if $(T,S)\in
\mathbf{V}_{\mbox{\rm\tiny hk}}$.
\end{corollary}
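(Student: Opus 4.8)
The plan is to deduce the corollary directly by combining Theorem \ref{theorem-codim1-hook-case} with the explicit description of $\mathbf{V}_{\mbox{\rm\tiny hk}}$ just recalled. First I would note that, for nilpotent $u$ of hook type, Theorem \ref{theorem-codim1-hook-case} says precisely that $\mathrm{codim}_{{\mathcal K}^T}{\mathcal K}^T\cap{\mathcal K}^S=1$ holds if and only if $S$ is obtained from $T$ by transposing two consecutive entries $i,i+1$ with $i\in\{2,\ldots,n-1\}$. So the entire task reduces to checking the purely combinatorial equality
\[
\{(T,S):\mathrm{sh}(T)=\mathrm{sh}(S)\text{ of hook type},\ S=(i\ i{+}1)\cdot T\text{ for some }i\in\{2,\ldots,n-1\}\}
\;=\;\mathbf{V}_{\mbox{\rm\tiny hk}},
\]
where on the left $S=(i\ i{+}1)\cdot T$ means the tableau obtained by switching the entries $i$ and $i+1$ in $T$ (which must again be standard).

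Next I would unwind the definition of $\mathbf{V}_{\mbox{\rm\tiny hk}}$ from the statement preceding the corollary: it is exactly the set of pairs $(T,{\mathcal T}_i(T))$ with $T\in{\mathcal D}_i$ of hook type and $i\in\{2,\ldots,n-1\}$, where ${\mathcal D}_i$ is the set of standard tableaux in which $i,i+1$ lie neither in the same row nor in the same column, and ${\mathcal T}_i(T)$ switches $i$ and $i+1$. The inclusion $\mathbf{V}_{\mbox{\rm\tiny hk}}\subset\{(T,S):S=(i\ i{+}1)\cdot T\}$ is then immediate, since ${\mathcal T}_i$ is by definition the transposition of $i$ and $i+1$. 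For the reverse inclusion I must show: if $T$ is a standard hook tableau, $i\in\{2,\ldots,n-1\}$, and switching $i$ and $i+1$ in $T$ yields another standard tableau $S$, then necessarily $T\in{\mathcal D}_i$, i.e. $i$ and $i+1$ are not in the same row and not in the same column of $T$. This is elementary: if $i$ and $i+1$ were in the same row of $T$, then in $S$ the entry $i+1$ would sit immediately to the left of $i$, violating row-increase; if they were in the same column, then in $S$ the entry $i+1$ would sit immediately above $i$, violating column-increase. Hence $T\in{\mathcal D}_i$ and $S={\mathcal T}_i(T)$, so $(T,S)\in\mathbf{V}_{\mbox{\rm\tiny hk}}$.

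I expect no real obstacle here; the only point requiring a line of care is that the two descriptions of ``switching $i,i+1$'' agree, namely that whenever such a switch produces a standard tableau one is automatically in the domain ${\mathcal D}_i$ of Vogan's elementary transformation ${\mathcal T}_i$ — which is the short argument just given. Combining this combinatorial identification with Theorem \ref{theorem-codim1-hook-case} yields the equivalence $\mathrm{codim}_{{\mathcal K}^T}{\mathcal K}^T\cap{\mathcal K}^S=1\Leftrightarrow(T,S)\in\mathbf{V}_{\mbox{\rm\tiny hk}}$, completing the proof.
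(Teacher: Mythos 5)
Your proposal is correct and follows exactly the paper's route: the paper derives the corollary by combining Theorem \ref{theorem-codim1-hook-case} with the identification of $\mathbf{V}_{\mbox{\rm\tiny hk}}$ as the set of pairs $(T,{\mathcal T}_i(T))$ with $T\in{\mathcal D}_i$ of hook type, which is asserted immediately before the corollary. Your added verification that a switch of $i,i+1$ producing a standard tableau forces $T\in{\mathcal D}_i$ is the elementary point the paper leaves implicit.
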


Intersections in codimension one are closely connected in the
two-row and two-column cases. Let $u\in\mathrm{End}(V)$ be
nilpotent of two-column type. Let ${\mathcal K}^T,{\mathcal
K}^{S}\subset {\mathcal B}_u$ be the components associated to the
standard tableaux $T,S$. Let $T^t,S^t$ be the corresponding
tableaux with two rows, obtained by transposition (see section
\ref{transposee}). By \cite[\S 5.4]{Ml-p}, we have:

\begin{theorem}
\label{theorem-codim1-2column-case} Let $u\in\mathrm{End}(V)$ be
nilpotent of two-column type. Then, we have
$\mathrm{codim}_{{\mathcal K}^T}{\mathcal K}^T\cap {\mathcal
K}^{S}=1$ if and only if $\mathrm{codim}_{{\mathcal
K}^{S^t}}{\mathcal K}^{T^t}\cap {\mathcal K}^{S^t}=1$.
\end{theorem}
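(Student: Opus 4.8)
The plan is to deduce Theorem \ref{theorem-codim1-2column-case} from the two-row analogue, using the transposition correspondence established in sections \ref{transposee} and \ref{new-charac-int-codim-one} together with the nonempty-intersection criterion of Proposition \ref{proposition-nonempty-intersection}. The idea is that transposing two-column tableaux into two-row tableaux should convert the statement about $\mathrm{codim}_{{\mathcal K}^T}{\mathcal K}^T\cap{\mathcal K}^S=1$ for $u$ two-column into the corresponding statement for $u^t$ two-row; applying Theorem \ref{theorem-Vogan-meanders} on both sides and checking that the transpose map preserves the relevant combinatorial data then yields the equivalence.

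First I would translate codimension-one intersection into combinatorics in both the two-row and two-column cases. In the two-row case, Theorem \ref{theorem-Vogan-meanders} gives: $\mathrm{codim}_{{\mathcal K}^{T^t}}{\mathcal K}^{T^t}\cap{\mathcal K}^{S^t}=1$ iff ($S^t\in T^t$ or $T^t\in S^t$) and all but one entry of the first row of $T^t$ lies in the first row of $S^t$. In the two-column case one needs an analogous combinatorial criterion: first, ${\mathcal K}^T\cap{\mathcal K}^S\neq\emptyset$ iff there is a common standard $\tau$ with $\tau\in T$ and $\tau\in S$ (Proposition \ref{proposition-nonempty-intersection}), which by Theorem \ref{theorem-A-2column} and Proposition \ref{proposition-standard-2column} is governed by the dominance relations $Y_{i/0}(\tau)\preceq Y_{i/0}^T$; second, one needs to identify when this intersection has codimension exactly one, which should follow from an analysis of $\dim({\mathcal K}^T\cap{\mathcal K}^S)$ analogous to the loop-count in Theorem \ref{th-meanders}. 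The key combinatorial bridge is that $Y_{i/0}(\tau)$, $Y_{i/0}^T$ for two-column tableaux correspond under transposition to the first-row-based diagrams $Y_{i/0}(\tau^t)=Y_{i/0}^{T^t}$ (shapes of subtableaux) in the two-row setting, exactly as exploited in the proofs of Propositions \ref{proposition-standard-2column} and \ref{proposition-transposee}.

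The cleanest route is probably: show $\mathrm{codim}_{{\mathcal K}^T}{\mathcal K}^T\cap{\mathcal K}^S=1$ in the two-column case is equivalent to the pair $(T^t,S^t)$ lying in $\mathbf{M}_{\mbox{\tiny 2-r}}$ (the even meander with $s-1$ loops condition). For the nonemptiness part, Proposition \ref{proposition-transposee} (or a symmetric version of it) already relates $\mathbf{K}$ membership under transposition; one checks that ${\mathcal K}^T\cap{\mathcal K}^S\neq\emptyset$ iff ${\mathcal K}^{T^t}\cap{\mathcal K}^{S^t}\neq\emptyset$ by combining Proposition \ref{proposition-nonempty-intersection} with the fact that standard $\tau$ in the two-column case corresponds to standard $\tau^t$ with two rows and $Y_{i/0}(\tau)\preceq Y_{i/0}^T \Leftrightarrow Y_{i/0}(\tau^t)\preceq Y_{i/0}^{T^t}$ (here using that for $j=0$ the diagrams $Y_{i/0}^T$ and $Y_{i/0}(\mathrm{st}(\tau))$ are just shapes of subtableaux, so transposition acts transparently). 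For the dimension of the intersection, I would invoke \cite[\S 5.4]{Ml-p} — indeed the theorem statement already cites this reference — so the substantive content here is really to record that the transposition $T\mapsto T^t$, $S\mapsto S^t$ interchanges the two-column intersection-dimension data with the two-row meander data; the loop count of $M_{T^t,S^t}$ matches $\dim{\mathcal K}^{T^t}\cap{\mathcal K}^{S^t}$ (Theorem \ref{th-meanders}) and, by the transposition dictionary, equals $\dim{\mathcal K}^T\cap{\mathcal K}^S$.

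The main obstacle will be establishing the numerical equality $\dim({\mathcal K}^T\cap{\mathcal K}^S)=\dim({\mathcal K}^{T^t}\cap{\mathcal K}^{S^t})$ (equivalently, that codimension one transfers), rather than just the weaker equivalence of nonemptiness: the codimensions of the ambient components $\dim{\mathcal K}^T$ and $\dim{\mathcal K}^{T^t}$ are different in general (one is $\sum\frac12\lambda^*_j(\lambda^*_j-1)$ for the two-column shape, the other for the two-row shape), so a purely formal transposition argument does not immediately give codimension one on both sides. The resolution is to show that the intersection \emph{dimensions} themselves coincide — this is precisely the content extracted from \cite[\S 5.4]{Ml-p}, where Melnikov--Pagnon compute these dimensions via the meander and observe the transpose symmetry. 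So the proof reduces to citing that computation and carefully matching notation: if $\dim{\mathcal K}^T\cap{\mathcal K}^S=\dim{\mathcal K}^{S^t}\cap{\mathcal K}^{T^t}$ and $\dim{\mathcal K}^T=\dim{\mathcal K}^{S^t}+$ (a constant depending only on the shape, which cancels), then codimension one on one side forces it on the other. I would write this out by first invoking Proposition \ref{proposition-nonempty-intersection} to handle the empty case trivially, then reducing the nonempty case to the dimension equality from \cite{Ml-p}, and finally observing $\mathrm{dim}\,{\mathcal B}_u=\mathrm{dim}\,{\mathcal B}_{u^t}$ is false in general — so one must instead use that the \emph{codimension} $\mathrm{codim}_{{\mathcal K}^T}{\mathcal K}^T\cap{\mathcal K}^S$ equals $\mathrm{codim}_{{\mathcal K}^{S^t}}{\mathcal K}^{T^t}\cap{\mathcal K}^{S^t}$ directly, as computed there, which is the cleanest formulation and exactly what \cite{Ml-p} provides.
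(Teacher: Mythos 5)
The paper does not actually prove this statement: it is recalled directly from \cite[\S 5.4]{Ml-p}, with no argument supplied. Your proposal, after circling through several candidate routes (some of which, such as deducing a two-column codimension-one criterion from Theorem \ref{theorem-Vogan-meanders}, would be circular at this point in the paper), settles on exactly the same resolution — citing the codimension computation of \cite[\S 5.4]{Ml-p} — so it coincides with the paper's treatment.
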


As in section \ref{T-alpha-beta}, let $\alpha,\beta\in\{\alpha_1,\ldots,\alpha_{n-1}\}$ be two consecutive simple roots.
Then it is easy to see that, if $T$ is a standard tableau of two-column type, then
$T\in{\mathcal D}_{\alpha,\beta}$ if and only if $T^t\in{\mathcal D}_{\beta,\alpha}$,
and moreover in this case ${\mathcal T}_{\beta,\alpha}(T^t)=S^t$, where $S={\mathcal T}_{\alpha,\beta}(T)$.
Likewise, for $i\in\{1,\ldots,n-1\}$ we have $T\in{\mathcal D}_i$ if and only
if $T^t\in {\mathcal D}_i$, and in this case ${\mathcal T}_i(T^t)=({\mathcal T}_i(T))^t$.
Let $\mathbf{V}_{\mbox{\tiny 2-c}}$ denote the subset of pairs
$(T,S)\in\mathbf{V}$ which have two columns. 
It follows that $(T,S)\in \mathbf{V}_{\mbox{\tiny 2-c}}$ if and only if $(T^t,S^t)\in \mathbf{V}_{\mbox{\tiny 2-r}}$.
Combining this observation with Theorems \ref{theorem-Vogan-meanders} and \ref{theorem-codim1-2column-case}, we get:


\begin{corollary}
Let $u\in\mathrm{End}(V)$ be
nilpotent of two-column type.
Then, we have $\mathrm{codim}_{{\mathcal K}^T}{\mathcal K}^T\cap
{\mathcal K}^{S}=1$ if and only if
$(T,S)\in\mathbf{V}_{\mbox{\rm\tiny 2-c}}$.
\end{corollary}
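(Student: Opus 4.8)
The plan is to deduce the statement formally from the two-row case established in Section~\ref{7}, by passing to transposed tableaux. Fix $(T,S)\in\mathbf{Y}_{\mbox{\tiny 2-c}}$ with common shape $Y=\mathrm{sh}(T)=\mathrm{sh}(S)$, and let $T^t,S^t$ be the transposed standard tableaux of shape $Y^t$ (see Section~\ref{transposee}); these have two rows, and if $r\geq s$ denote the lengths of the two rows of $Y^t$, then $\dim\mathcal{K}^{T^t}=\dim\mathcal{K}^{S^t}=s$ by the equidimensionality recalled in Section~\ref{young-diagram}.

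First I would assemble a chain of equivalences. By Theorem~\ref{theorem-codim1-2column-case}, the condition $\mathrm{codim}_{\mathcal{K}^T}\mathcal{K}^T\cap\mathcal{K}^S=1$ is equivalent to $\mathrm{codim}_{\mathcal{K}^{S^t}}\mathcal{K}^{T^t}\cap\mathcal{K}^{S^t}=1$; since $\mathcal{K}^{T^t}$ and $\mathcal{K}^{S^t}$ have the same dimension $s$, this is in turn the same as $\mathrm{codim}_{\mathcal{K}^{T^t}}\mathcal{K}^{T^t}\cap\mathcal{K}^{S^t}=1$. By Theorem~\ref{th-meanders}, $\mathcal{K}^{T^t}\cap\mathcal{K}^{S^t}$ is nonempty precisely when $M_{T^t,S^t}$ is even, and in that case its dimension is the number of loops of $M_{T^t,S^t}$; combined with $\dim\mathcal{K}^{T^t}=s$, the codimension-one condition becomes: $M_{T^t,S^t}$ is even with $s-1$ loops, that is, $(S^t,T^t)\in\mathbf{M}_{\mbox{\tiny 2-r}}$. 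By Theorem~\ref{theorem-Vogan-meanders} we have $\mathbf{M}_{\mbox{\tiny 2-r}}=\mathbf{V}_{\mbox{\tiny 2-r}}$, so this is equivalent to $(S^t,T^t)\in\mathbf{V}_{\mbox{\tiny 2-r}}$, hence also to $(T^t,S^t)\in\mathbf{V}_{\mbox{\tiny 2-r}}$ (see the remark on symmetry below). Finally, by the observation preceding the corollary, $(T^t,S^t)\in\mathbf{V}_{\mbox{\tiny 2-r}}$ holds if and only if $(T,S)\in\mathbf{V}_{\mbox{\tiny 2-c}}$. Chaining these equivalences yields the statement.

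The two points needing a little care are both essentially bookkeeping. First, membership in $\mathbf{V}_{\mbox{\tiny 2-r}}$ is symmetric in its two arguments: this follows from Theorem~\ref{theorem-Vogan-meanders} together with the fact that membership in $\mathbf{M}_{\mbox{\tiny 2-r}}$ is symmetric, since $M_{T^t,S^t}$ and $M_{S^t,T^t}$ are the same graph up to reflecting the arcs of one of the two cup-diagrams, hence have the same loops and intervals. Second, one must keep track of which of $T,S$ plays the distinguished role (and relative to which component the codimension is taken) at each step; all the invoked statements are insensitive to this once equidimensionality of the two-row Springer fiber is used. I do not expect any genuine difficulty: the corollary is purely formal given Theorems~\ref{theorem-Vogan-meanders}, \ref{th-meanders} and~\ref{theorem-codim1-2column-case} and the compatibility of transposition with Vogan's operations established just before it; the only obstacle, if one wishes to call it that, is making the dictionary between ``codimension one'' and ``even meander with $s-1$ loops'' and between the two-column and two-row data entirely explicit.
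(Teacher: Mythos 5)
Your proof is correct and follows exactly the paper's route: the paper also obtains the corollary by combining the compatibility of transposition with Vogan's operations (so that $(T,S)\in\mathbf{V}_{\mbox{\tiny 2-c}}$ iff $(T^t,S^t)\in\mathbf{V}_{\mbox{\tiny 2-r}}$) with Theorem \ref{theorem-codim1-2column-case} and Theorem \ref{theorem-Vogan-meanders}. The only difference is that you spell out the bookkeeping (equidimensionality and the symmetry of $\mathbf{M}_{\mbox{\tiny 2-r}}$) that the paper leaves implicit; both points are handled correctly.
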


Thus it holds that, for $T,S$ a pair of standard tableaux of same
shape of hook, two-row or two-column type, the components
${\mathcal K}^T$ and ${\mathcal K}^S$ intersect in codimension one
if and only if $(T,S)\in\mathbf{V}$.

\smallskip

Finally, let us recall that the intersections in codimension one
are irreducible in the three cases we consider here.

\begin{theorem}
\label{theorem-irreducible-intersection} Let $u\in\mathrm{End}(V)$
be nilpotent of hook, two-row or two-column type. Let ${\mathcal
K}^T,{\mathcal K}^{S}\subset {\mathcal B}_u$ be the components
associated to the standard tableaux $T,S$. If
$\mathrm{codim}_{{\mathcal K}^T}{\mathcal K}^T\cap {\mathcal
K}^S=1$, then ${\mathcal K}^T\cap {\mathcal K}^S$ is irreducible.
\end{theorem}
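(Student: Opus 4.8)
The statement to prove is Theorem \ref{theorem-irreducible-intersection}: the intersection $\mathcal{K}^T\cap\mathcal{K}^S$ is irreducible whenever it has codimension one, in the hook, two-row and two-column cases.

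My plan is to reduce the three cases to the combinatorial picture already established, and then invoke the characterization of codimension-one intersections via $\mathbf{V}$. The key structural fact is that, by the corollaries above (after Theorems \ref{theorem-codim1-hook-case}, \ref{theorem-codim1-2column-case} and \ref{theorem-Vogan-meanders}), in all three cases $\mathrm{codim}_{\mathcal{K}^T}\mathcal{K}^T\cap\mathcal{K}^S=1$ holds if and only if $(T,S)\in\mathbf{V}$, i.e. $(T,S)$ is obtained from a pair $(T_0,\mathcal{T}_i(T_0))$ with $T_0\in\mathcal{D}_i$ by applying a sequence of Vogan transformations $\mathcal{T}_{\alpha,\beta}$. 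So the plan is: first handle the ``base'' pairs of the form $(T_0,\mathcal{T}_i(T_0))$, showing $\mathcal{K}^{T_0}\cap\mathcal{K}^{\mathcal{T}_i(T_0)}$ is irreducible; then show that each elementary Vogan step $(T,S)\mapsto(\mathcal{T}_{\alpha,\beta}(T),\mathcal{T}_{\alpha,\beta}(S))$ preserves irreducibility of the intersection, so that irreducibility propagates along the whole chain.

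For the base case $(T_0,\mathcal{T}_i(T_0))$ with $T_0\in\mathcal{D}_i$: here $T_0$ and $S_0=\mathcal{T}_i(T_0)$ differ only by switching the adjacent entries $i,i+1$, which sit in neither the same row nor the same column. In terms of Spaltenstein's partition, $\mathcal{B}_u^{T_0}$ and $\mathcal{B}_u^{S_0}$ differ only in the constraint on $V_i$: the subdiagrams $Y_j^{T_0}=Y_j^{S_0}$ for $j\neq i$, while $Y_i^{T_0}\neq Y_i^{S_0}$ are the two diagrams covering $Y_{i-1}^{T_0}$ and covered by $Y_{i+1}^{T_0}$. One then wants to identify $\mathcal{K}^{T_0}\cap\mathcal{K}^{S_0}$ directly as a fiber bundle (or at least as an irreducible variety): with the flags $V_j$ for $j\neq i$ fixed appropriately one sees that the relevant locus over which $V_i$ varies is a single $\mathbb{P}^1$-like choice, and the fibered structure over an irreducible base (a component of a Springer fiber in lower rank, which is irreducible) gives irreducibility. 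Concretely I would build an explicit surjective algebraic morphism from an irreducible variety onto $\mathcal{K}^{T_0}\cap\mathcal{K}^{S_0}$: for instance, using the product decomposition used in the proof of Theorem \ref{theorem-KR-2-row}\,(III)(a) (splitting by $\ker u^{i/2}$, resp. the analogous device in the hook and two-column cases), express the intersection as the image of $\overline{\mathcal{B}_{u'}^{?}}\times\overline{\mathcal{B}_{u''}^{?}}$ under an algebraic map, hence irreducible since products and closures of irreducible sets are irreducible and the image of an irreducible set is irreducible.

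For the inductive step, I would use that a Vogan transformation $\mathcal{T}_{\alpha,\beta}$ corresponds geometrically to an isomorphism of an open neighborhood: when $i+1$ is a descent but not $i$, the pair of components $(\mathcal{K}^T,\mathcal{K}^S)$ and $(\mathcal{K}^{\mathcal{T}_{\alpha_i,\alpha_{i+1}}(T)},\mathcal{K}^{\mathcal{T}_{\alpha_i,\alpha_{i+1}}(S)})$ are related by an explicit birational map (indeed the Vogan move is the tableaux shadow of a ``wall-crossing''), which restricts to a birational --- in fact, on a suitable open dense piece, isomorphic --- correspondence between the two intersections; since irreducibility is a birational invariant of varieties (an irreducible variety cannot be birational to a reducible one of the same dimension, because a dense open subset of an irreducible variety is irreducible), it transfers. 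The cleanest route: exhibit, for each elementary step, an algebraic map from one intersection to the other that is dominant with irreducible source, which already forces the target to be irreducible --- so I do not even need full birationality, just dominance from something already known to be irreducible. Since by the corollaries every codimension-one intersecting pair is reached from a base pair by finitely many such steps, induction on the length $j$ of the Vogan sequence finishes the proof.

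The main obstacle I expect is the geometric construction in the base case and, more delicately, producing the dominant morphism witnessing each Vogan step at the level of the actual intersection varieties (not just at the level of the combinatorial data $\mathbf{V}$, $\mathbf{M}_{\mbox{\tiny 2-r}}$, etc., which is already handled by Lemmas \ref{step-1}--\ref{step-3}). The subtlety is that the correspondence underlying $\mathcal{T}_{\alpha,\beta}$ is defined on the ambient Springer fiber and one must check it behaves well on the closed subsets $\mathcal{K}^T\cap\mathcal{K}^S$; here I would lean on the equational descriptions of components in \cite{F}, \cite{Fung}, \cite{Vargas} already invoked earlier, and on Lemma \ref{lemma-criterion-nonempty-intersection} which pins down $\mathcal{B}_u^S\cap\mathcal{K}^T$, to keep the arguments uniform across the three cases. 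If a uniform geometric argument proves awkward, the fallback is to treat the three cases separately, citing \cite{Ml-p} for two-row and two-column and \cite{Vargas} for the hook case, since the theorem is stated as ``already known'' --- so an honest proof may simply assemble these references together with the dictionary $\mathbf{V}\leftrightarrow\{\text{codim-one pairs}\}$ established above.
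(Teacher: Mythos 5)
The paper offers no proof of this theorem at all: it introduces it with ``let us recall'' and disposes of it in one sentence, citing \cite{Vargas} for the hook case, \cite{Fung} for the two-row case and \cite{Melnikov-Pagnon} for the two-column case. So the ``fallback'' in your last paragraph is in fact the paper's entire argument, and on that reading your proposal is acceptable. Your main proposed route, however, is not a proof as it stands, and the obstacles you flag are real, not incidental.

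Concretely: (1) The Vogan move ${\mathcal T}_{\alpha,\beta}$ appears in this paper only as a combinatorial operation on tableaux and cup-diagrams (Lemma \ref{step-1} tracks its effect on meanders, nothing more). Nowhere is there a geometric correspondence between $\mathcal{K}^T\cap\mathcal{K}^S$ and $\mathcal{K}^{{\mathcal T}_{\alpha,\beta}(T)}\cap\mathcal{K}^{{\mathcal T}_{\alpha,\beta}(S)}$, and producing a dominant morphism from an irreducible source onto the new intersection is exactly the hard content of the theorem; asserting that the move is the ``shadow of a wall-crossing'' does not supply it. (2) Your base case leans on the splitting $\Phi:\mathcal{B}_{u'}\times\mathcal{B}_{u''}\to\mathcal{B}_u$ from the proof of Theorem \ref{theorem-KR-2-row}, but that device requires a rectangular initial segment ($V'=\ker u^{i/2}$), which a general $T_0\in\mathcal{D}_i$ does not provide; and even where it applies, it yields a containment $\Phi(\overline{A}\times\overline{B})\subset\mathcal{K}^{T_0}$, not surjectivity onto $\mathcal{K}^{T_0}\cap\mathcal{K}^{S_0}$, which is what irreducibility of the intersection needs. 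Since the theorem is being recalled rather than established here, the honest course is the one you name last: cite the three references together with the dictionary between $\mathbf{V}$ and codimension-one pairs, and drop the inductive scheme unless you are prepared to build the geometric Vogan correspondence from scratch.
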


This result is proved in \cite{Vargas} for the hook case, in
\cite{Fung} for the two-row case and in \cite{Melnikov-Pagnon} for
the two-column case. Notice that the intersection ${\mathcal
K}^T\cap {\mathcal K}^S$ can be reducible when
$\mathrm{codim}_{{\mathcal K}^T}{\mathcal K}^T\cap {\mathcal
K}^S>1$ (see \cite{Melnikov-Pagnon}).

\subsection{A necessary condition for an intersection in codimension one}

The main result of this section is the following

\begin{theorem}
\label{lastsection-maintheorem} Let $u\in\mathrm{End}(V)$ be a
nilpotent endomorphism of hook, two-row or two-column type. Let
${\mathcal K}^T,{\mathcal K}^{S}\subset {\mathcal B}_u$ be the
components associated to the standard tableaux $T,S$. We have:
\[\mathrm{codim}_{{\mathcal K}^T}{\mathcal K}^T\cap {\mathcal K}^{S}=1
\Rightarrow \mbox{($S\in T$ or $T\in S$)}.\]
\end{theorem}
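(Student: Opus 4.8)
The plan is to treat the three cases uniformly by reducing the statement $\mathrm{codim}_{{\mathcal K}^T}{\mathcal K}^T\cap {\mathcal K}^S=1\Rightarrow(S\in T\text{ or }T\in S)$ to a statement about the combinatorial sets we already control. First I would invoke Proposition \ref{proposition-nonempty-intersection}: the hypothesis in particular gives ${\mathcal K}^T\cap{\mathcal K}^S\neq\emptyset$, so there is a standard tableau $R$ with $R\in T$ and $R\in S$; and by Lemma \ref{lemma-criterion-nonempty-intersection} such $R$ can be taken so that ${\mathcal B}_u^R\cap{\mathcal K}^T\cap{\mathcal K}^S\neq\emptyset$. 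The real content is then to show that the codimension-one hypothesis forces $R=T$ or $R=S$, which by $R\in T$, $R\in S$ (and $\mathrm{st}$ being the identity on standard tableaux) gives $S\in T$ or $T\in S$ after applying Proposition \ref{proposition-standardization}/Lemma \ref{lemma-standardization}. Equivalently, I would argue that the closed set ${\mathcal K}^T\cap{\mathcal K}^S$, having codimension one in the equidimensional variety ${\mathcal B}_u$, must coincide with $\overline{{\mathcal B}_u^R}={\mathcal K}^R$ for the relevant $R$, which forces ${\mathcal K}^R\subset{\mathcal K}^T$ and hence (both being irreducible components of the same dimension, or by a dimension count on ${\mathcal B}_u^R$) an inclusion among the Spaltenstein pieces that translates into $R=T$ or $R=S$.

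The most efficient route, however, is probably to go case by case using the explicit descriptions of codimension-one pairs recalled in section \ref{codim1-hook-two-column-irreducibility}. In the hook case, Theorem \ref{theorem-codim1-hook-case} says $T,S$ differ by switching two consecutive entries $i,i+1$ with $i\geq 2$; one checks directly from the characterization $\mathbf{K}_{\mbox{\tiny hk}}=\mathbf{A}_{\mbox{\tiny hk}}$ (Theorem \ref{theorem-A-hook-case}) that switching $i,i+1$ moves the first-row entries $a_q$ by at most the amount allowed by the inequalities $a'_{q-1}<a_q\leq a'_q$, so that indeed $S\in T$ or $T\in S$ (the direction depending on whether the switch moves an entry from the second row into the first or vice versa). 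In the two-row case the implication is exactly Theorem \ref{theorem-codim1-section-2row}(i)$\Rightarrow$(ii), already proved in section \ref{inter-two-rows} via Theorem \ref{theorem-Vogan-meanders}, so nothing new is needed. In the two-column case I would transpose: by Theorem \ref{theorem-codim1-2column-case}, $\mathrm{codim}_{{\mathcal K}^T}{\mathcal K}^T\cap{\mathcal K}^S=1$ is equivalent to $\mathrm{codim}_{{\mathcal K}^{S^t}}{\mathcal K}^{T^t}\cap{\mathcal K}^{S^t}=1$ for the transposed two-row tableaux; the two-row case just established then gives $S^t\in T^t$ or $T^t\in S^t$, and Proposition \ref{proposition-transposee} (applied in the appropriate order, noting that $(T^t)^t=T$) converts this back into $S\in T$ or $T\in S$.

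The step I expect to be the main obstacle is the hook case, because there is no ready-made analogue of Theorem \ref{theorem-Vogan-meanders} for hooks in the excerpt; I would have to carry out the direct combinatorial check that a single adjacent transposition $i\leftrightarrow i+1$ relating $T$ and $S$ (both in a fixed nonempty-intersection class) keeps the pair inside $\mathbf{A}_{\mbox{\tiny hk}}$ in one of the two orders. Concretely: if the switch changes the first-row entry in position $q$ from $a_q$ to $a_q\pm1$ while all other first-row entries stay fixed, then comparing the inequalities $a'_{p-1}<a_p\leq a'_p$ for the pair $(\tau,T)$ versus $(\tau,S)$ with $\tau$ the row-standard tableau of the generic flag in ${\mathcal B}_u^R$ forces $R$ to equal whichever of $T,S$ has the larger first-row entry in position $q$; this is a short but slightly fiddly verification. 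Alternatively one can sidestep it entirely by using the abstract argument of the first paragraph (codimension one $\Rightarrow$ the intersection is a whole component ${\mathcal K}^R$ $\Rightarrow$ $R\in\{T,S\}$), which works in all three cases at once and relies only on equidimensionality of ${\mathcal B}_u$ together with irreducibility of ${\mathcal K}^T\cap{\mathcal K}^S$ from Theorem \ref{theorem-irreducible-intersection}; this is the approach I would ultimately write up, with the case-by-case remarks included only as a sanity check.
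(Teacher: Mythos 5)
Your fallback, case-by-case argument is essentially the paper's proof: the two-row case is exactly Theorem \ref{theorem-codim1-section-2row}, the two-column case is obtained by transposition via Theorem \ref{theorem-codim1-2column-case} and Proposition \ref{proposition-transposee}, and the hook case is the short check you describe (by Theorem \ref{theorem-codim1-hook-case} the tableaux differ by an adjacent switch $i\leftrightarrow i+1$; since both are standard, $i$ and $i+1$ cannot both lie in the first row nor both in the first column, so $i$ lies in the first row of one of them, say $T$, and the inequalities defining $\mathbf{A}_{\mbox{\rm\tiny hk}}$ then give $S\in T$ directly). That part is fine and is what you should write up.

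The ``abstract argument'' that you say you would ultimately prefer, however, is broken at its central step. You claim that ${\mathcal K}^T\cap{\mathcal K}^S$, being closed of codimension one in ${\mathcal B}_u$, ``must coincide with $\overline{{\mathcal B}_u^R}={\mathcal K}^R$.'' This is impossible: ${\mathcal B}_u$ is equidimensional and every ${\mathcal K}^R$ is an irreducible component with $\dim{\mathcal K}^R=\dim{\mathcal B}_u$ (cf.\ \ref{young-diagram}), so a set of dimension $\dim{\mathcal B}_u-1$ can never equal a ${\mathcal K}^R$. Without that identification you only get, from Lemma \ref{lemma-criterion-nonempty-intersection}, some standard $R$ with $R\in T$ and $R\in S$ whose stratum ${\mathcal B}_u^R$ meets the intersection densely; nothing forces $R\in\{T,S\}$, which is exactly what you would need to conclude $S\in T$ or $T\in S$. (The statement that would do this job is Proposition \ref{second-corollary}, but its proof in the paper uses Theorem \ref{lastsection-maintheorem}, so invoking anything of that kind here would be circular.) Drop the abstract route and keep the case-by-case proof.
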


\begin{proof}
The result holds in the two-row case, by Theorem
\ref{theorem-codim1-section-2row}. Then, we deduce that the
statement holds in the two-column case, using Proposition
\ref{proposition-transposee} and Theorem
\ref{theorem-codim1-2column-case}. It remains to show the result
in the hook case. Suppose that $T,S$ are of hook type, and assume
that $\mathrm{codim}_{{\mathcal K}^T}{\mathcal K}^T\cap {\mathcal
K}^{S}=1$. By Theorem \ref{theorem-codim1-hook-case}, the tableaux
$T$ and $S$ are obtained one from the other by switching $i,i+1$
for some $i=2,...,n-1$. Since both tableaux are standard, $i,i+1$
do not lie both in the first column or in the first row of $T$
(resp. of $S$). Thus $i$ is either in the first row of $T$ or in
the first row of $S$. Say that it is in the first row of $T$.
Then, applying Theorem \ref{theorem-A-hook-case}, we easily obtain
that $S\in T$.  
\end{proof}

\subsection{Topological properties of the intersections in codimension one
in the hook, two-row and two-column cases}

\label{section-topological-properties} \label{third-corollary}

We now establish some corollaries of Theorem
\ref{lastsection-maintheorem}. 
We consider two components
${\mathcal K}^T,{\mathcal K}^{S}\subset {\mathcal B}_u$ 
associated to standard tableaux $T,S$. Recall that they are
obtained as the closures of the subsets ${\mathcal
B}_u^T,{\mathcal B}_u^S\subset {\mathcal B}_u$ (see
\ref{young-diagram}).

\begin{corollary}
\label{first-corollary} 
Let $u\in\mathrm{End}(V)$ be of hook, two-row or two-column type. If
${\mathcal K}^T\cap {\mathcal K}^{S}$ has codimension $1$ in ${\mathcal K}^T$, then
one of its open subsets ${\mathcal K}^T\cap {\mathcal B}_u^{S}$ and ${\mathcal B}_u^T\cap
{\mathcal K}^{S}$ is nonempty and dense.
\end{corollary}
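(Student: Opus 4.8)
The plan is to use the necessary condition from Theorem \ref{lastsection-maintheorem}: if $\mathrm{codim}_{{\mathcal K}^T}{\mathcal K}^T\cap{\mathcal K}^S=1$, then $S\in T$ or $T\in S$. By symmetry (swapping the roles of $T$ and $S$), we may assume $S\in T$, i.e. $(S,T)\in\mathbf{K}$. The goal then reduces to showing that ${\mathcal B}_u^S\cap{\mathcal K}^T$ is nonempty and dense in ${\mathcal K}^T\cap{\mathcal K}^S$. Nonemptiness is immediate: by Lemma \ref{lemma-criterion-nonempty-intersection}, $(S,T)\in\mathbf{K}$ is equivalent to ${\mathcal B}_u^S\cap{\mathcal K}^T\not=\emptyset$.

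For density, first I would recall from section \ref{young-diagram} that ${\mathcal B}_u^S$ is locally closed in ${\mathcal B}_u$, so ${\mathcal B}_u^S\cap{\mathcal K}^T$ is locally closed, hence open in its closure. Thus it suffices to show that ${\mathcal B}_u^S\cap{\mathcal K}^T$ is dense in ${\mathcal K}^T\cap{\mathcal K}^S$, equivalently that $\overline{{\mathcal B}_u^S\cap{\mathcal K}^T}={\mathcal K}^T\cap{\mathcal K}^S$. The inclusion $\subseteq$ is clear since ${\mathcal B}_u^S\cap{\mathcal K}^T\subseteq {\mathcal K}^S\cap{\mathcal K}^T$ and the right side is closed. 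For the reverse inclusion, I would use the irreducibility of ${\mathcal K}^T\cap{\mathcal K}^S$ guaranteed by Theorem \ref{theorem-irreducible-intersection} in all three cases under the codimension-one hypothesis. Since ${\mathcal K}^T\cap{\mathcal K}^S$ is irreducible and ${\mathcal B}_u^S\cap{\mathcal K}^T$ is a nonempty (locally closed, in particular constructible) subset of it, it is enough to show ${\mathcal B}_u^S\cap{\mathcal K}^T$ contains a nonempty open subset of ${\mathcal K}^T\cap{\mathcal K}^S$; then its closure, being closed and containing a nonempty open subset of the irreducible set, must equal the whole irreducible set.

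To produce such an open subset, I would argue as follows. Write $C={\mathcal K}^T\cap{\mathcal K}^S$, an irreducible closed subvariety of ${\mathcal B}_u$ of dimension $\dim{\mathcal B}_u-1$. Since ${\mathcal B}_u=\bigsqcup_{S'}{\mathcal B}_u^{S'}$ with each piece irreducible of dimension $\dim{\mathcal B}_u$ and locally closed, the intersection $C\cap{\mathcal B}_u^{S'}$ is locally closed in $C$, and the union of those of dimension $<\dim C$ is a proper closed subset of the irreducible $C$; hence there is a unique $S'$ with $C\cap{\mathcal B}_u^{S'}$ dense in $C$, and this $C\cap{\mathcal B}_u^{S'}$ is then open and dense in $C$. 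It remains to identify $S'$ as $S$ (or, in the symmetric situation, as $T$). The main obstacle is precisely this identification: the generic point of $C$ lies in ${\mathcal B}_u^{S'}$ for some $S'$, and one must rule out $S'\not\in\{S,T\}$. Here I would invoke the hypothesis $(S,T)\in\mathbf{K}$ together with Lemma \ref{lemma-criterion-nonempty-intersection}, which gives ${\mathcal B}_u^S\cap{\mathcal K}^T\not=\emptyset$, and a dimension/closure argument: since $C$ has codimension one in ${\mathcal K}^T$ and ${\mathcal B}_u^S\cap{\mathcal K}^T$ is a nonempty locally closed subset whose closure is a union of components of ${\mathcal K}^T\cap{\mathcal K}^S$, irreducibility of $C$ forces $\overline{{\mathcal B}_u^S\cap{\mathcal K}^T}$ to be all of $C$, whence $S'=S$ works. (In the case $T\in S$ rather than $S\in T$, the same reasoning with roles exchanged shows ${\mathcal B}_u^T\cap{\mathcal K}^S$ is dense in $C$.) This completes the proof.
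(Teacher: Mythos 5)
Your proposal is correct and takes essentially the same route as the paper: nonemptiness of ${\mathcal B}_u^S\cap {\mathcal K}^T$ (or of ${\mathcal B}_u^T\cap{\mathcal K}^S$) from Theorem \ref{lastsection-maintheorem} together with Lemma \ref{lemma-criterion-nonempty-intersection}, and density from the irreducibility of ${\mathcal K}^T\cap{\mathcal K}^S$ given by Theorem \ref{theorem-irreducible-intersection}. The only remark is that your detour through the partition ${\mathcal B}_u=\bigsqcup_{S'}{\mathcal B}_u^{S'}$ is unnecessary, and your assertion that the closure of ${\mathcal B}_u^S\cap{\mathcal K}^T$ is a union of components needs openness in ${\mathcal K}^T\cap{\mathcal K}^S$ rather than mere local closedness: since ${\mathcal B}_u^S$ is open in its closure ${\mathcal K}^S$, the set ${\mathcal B}_u^S\cap{\mathcal K}^T$ is already open in ${\mathcal K}^S\cap{\mathcal K}^T$, and a nonempty open subset of an irreducible variety is dense.
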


\begin{proof}
Using Lemma \ref{lemma-standardization} and applying Theorem
\ref{lastsection-maintheorem}, we get that ${\mathcal
B}_u^T\cap {\mathcal K}^{S}$ or ${\mathcal K}^T\cap {\mathcal
B}_u^{S}$ is nonempty. 
Then it is dense because, 
by Theorem
\ref{theorem-irreducible-intersection}, the intersection
${\mathcal K}^T\cap {\mathcal K}^{S}$ is irreducible. 
\end{proof}

Let ${\mathcal K}^R$ be a third component associated to another
standard tableau $R$.

\begin{proposition}
\label{second-corollary} 
Let $u\in\mathrm{End}(V)$ be of hook, two-row or two-column type.
Suppose that $\mathrm{codim}_{{\mathcal
K}^T}{\mathcal K}^T\cap{\mathcal K}^S\cap{\mathcal K}^R=1$, then
two tableaux among $T,S,R$ coincide.
\end{proposition}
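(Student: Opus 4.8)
The plan is to deduce Proposition \ref{second-corollary} from the topological picture established in Corollary \ref{first-corollary}, together with the irreducibility of pairwise intersections in codimension one (Theorem \ref{theorem-irreducible-intersection}) and the necessary condition of Theorem \ref{lastsection-maintheorem}. First I would argue by contradiction: assume $T,S,R$ are pairwise distinct, while $\mathrm{codim}_{{\mathcal K}^T}{\mathcal K}^T\cap{\mathcal K}^S\cap{\mathcal K}^R=1$. Since ${\mathcal K}^T\cap{\mathcal K}^S\cap{\mathcal K}^R$ is contained in each of the pairwise intersections ${\mathcal K}^T\cap{\mathcal K}^S$ and ${\mathcal K}^T\cap{\mathcal K}^R$, and all three live inside the component ${\mathcal K}^T$ which has dimension $\dim{\mathcal B}_u$, the triple intersection having codimension $1$ forces each of ${\mathcal K}^T\cap{\mathcal K}^S$ and ${\mathcal K}^T\cap{\mathcal K}^R$ to have codimension exactly $1$ in ${\mathcal K}^T$ as well (they contain a codimension-one subset and are proper, being intersections of distinct components of an equidimensional variety). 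Moreover, by Theorem \ref{theorem-irreducible-intersection}, both ${\mathcal K}^T\cap{\mathcal K}^S$ and ${\mathcal K}^T\cap{\mathcal K}^R$ are irreducible of codimension one, and the triple intersection, being a codimension-one closed subset of each, must coincide with both. Hence ${\mathcal K}^T\cap{\mathcal K}^S={\mathcal K}^T\cap{\mathcal K}^R$; call this common irreducible codimension-one subvariety $\mathcal C$.

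Next I would use Corollary \ref{first-corollary} applied to the pair $(T,S)$: one of the open subsets ${\mathcal K}^T\cap{\mathcal B}_u^S$ or ${\mathcal B}_u^T\cap{\mathcal K}^S$ is nonempty and dense in $\mathcal C$. Likewise, applied to the pair $(T,R)$, one of ${\mathcal K}^T\cap{\mathcal B}_u^R$ or ${\mathcal B}_u^T\cap{\mathcal K}^R$ is nonempty and dense in $\mathcal C$. I would split into cases according to which alternative holds. If ${\mathcal B}_u^T\cap{\mathcal K}^S$ is dense in $\mathcal C$ and ${\mathcal B}_u^T\cap{\mathcal K}^R$ is also dense in $\mathcal C$, then these two dense open subsets of the irreducible variety $\mathcal C$ meet, so there is a flag $F\in{\mathcal B}_u^T\cap{\mathcal K}^S\cap{\mathcal K}^R$. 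Since the ${\mathcal B}_u^{T'}$ form a partition of ${\mathcal B}_u$ and $F\in{\mathcal B}_u^T$, while $F\in{\mathcal K}^S=\overline{{\mathcal B}_u^S}$ forces $Y(u_{|V_i})\preceq Y_i^S$ for all $i$ by the semicontinuity argument of section \ref{semi-continuity}, and similarly for $R$, I would combine this with a more careful analysis. Actually the cleanest route here is: $F\in{\mathcal B}_u^T$ means $Y(u_{|V_i})=Y_i^T$ for all $i$; then $F\in{\mathcal K}^S$ together with $F$ being a torus-type flag (after pushing by the one-parameter subgroup $H'$ of Lemma \ref{lemma-criterion-nonempty-intersection}) will relate $T$ and $S$, but to keep the argument uniform I would instead phrase it through $\mathbf{K}$: the existence of $F\in{\mathcal B}_u^T\cap{\mathcal K}^S$ gives, via Lemma \ref{lemma-criterion-nonempty-intersection}, that $(T,S)\in\mathbf{K}$ with $T$ standard, i.e. $T\in S$; similarly $T\in R$.

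Finally I would combine these belonging relations with Theorem \ref{lastsection-maintheorem} (or rather its proof in each of the three cases) to force a coincidence. The key is that in each of the hook, two-row, and two-column cases, the codimension-one condition ${\mathcal K}^T\cap{\mathcal K}^S$ of codimension one pins down $S$ very rigidly relative to $T$ — in the hook case, $S$ is obtained from $T$ by switching two consecutive entries $i,i+1$ (Theorem \ref{theorem-codim1-hook-case}); in the two-row case $(T,S)\in\mathbf{M}_{\mbox{\tiny 2-r}}$ and all but one entry of the first row of $T$ lies in the first row of $S$ (Theorem \ref{theorem-codim1-section-2row}); in the two-column case one transposes to the two-row case. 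In all cases, knowing both ${\mathcal K}^T\cap{\mathcal K}^S$ and ${\mathcal K}^T\cap{\mathcal K}^R$ equal the same $\mathcal C$, and using the explicit descriptions together with the fact (from the equivalence in Theorem \ref{th-meanders} / Theorem \ref{theorem-codim1-hook-case}) that $\dim({\mathcal K}^T\cap{\mathcal K}^S)$ determines $S$ among tableaux differing from $T$ by a prescribed elementary move, one concludes $S=R$, contradicting the assumption that $T,S,R$ are pairwise distinct. The symmetric case where the dense subset is ${\mathcal K}^T\cap{\mathcal B}_u^S$ rather than ${\mathcal B}_u^T\cap{\mathcal K}^S$ is handled identically with the roles of $T$ and $S$ (resp. $T$ and $R$) exchanged, and the mixed case (dense on the $S$-side for one pair and on the $T$-side for the other) is reduced to the previous ones by first intersecting the two dense opens inside $\mathcal C$ to produce a common flag lying in an appropriate ${\mathcal B}_u^{T'}$. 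The main obstacle I anticipate is the bookkeeping in the mixed case and making sure the "two dense opens of an irreducible variety intersect" step is applied to opens that genuinely live in the same irreducible $\mathcal C$; once ${\mathcal K}^T\cap{\mathcal K}^S={\mathcal K}^T\cap{\mathcal K}^R=\mathcal C$ is secured, the rest is combinatorial and follows from the results already established.
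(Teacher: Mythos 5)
Your reduction is sound up to a point: the triple intersection, being closed of codimension one inside the irreducible codimension-one varieties ${\mathcal K}^T\cap{\mathcal K}^S$ and ${\mathcal K}^T\cap{\mathcal K}^R$, does force ${\mathcal K}^T\cap{\mathcal K}^S={\mathcal K}^T\cap{\mathcal K}^R=:{\mathcal C}$, and the use of Lemma \ref{lemma-criterion-nonempty-intersection} to translate nonemptiness of ${\mathcal B}_u^T\cap{\mathcal K}^S$ into $T\in S$ is correct. The gap is in the final step. The assertion that ``$\dim({\mathcal K}^T\cap{\mathcal K}^S)$ determines $S$ among tableaux differing from $T$ by a prescribed elementary move'' is false: \emph{every} component meeting ${\mathcal K}^T$ in codimension one gives an intersection of the same dimension, and in the hook case, say, $T$ typically admits several distinct neighbours $S={\mathcal T}_i(T)$, $R={\mathcal T}_j(T)$ with $i\neq j$, each intersecting ${\mathcal K}^T$ in codimension one. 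What you actually need is that two \emph{distinct} such neighbours give \emph{distinct} codimension-one subvarieties of ${\mathcal K}^T$ --- but that statement is equivalent to the proposition you are proving, so appealing to it is circular. Your case analysis also misfires: in the ``mixed'' case the two dense subsets (one contained in ${\mathcal B}_u^S$ or ${\mathcal B}_u^R$, the other in ${\mathcal B}_u^T$) are \emph{disjoint}, since the Spaltenstein cells partition ${\mathcal B}_u$; they cannot be intersected to produce a common flag --- rather, their disjointness is itself the contradiction. The only genuinely hard case is when Corollary \ref{first-corollary} hands you ${\mathcal B}_u^T\cap{\mathcal K}^S$ and ${\mathcal B}_u^T\cap{\mathcal K}^R$ for both pairs; these are not disjoint, and there your argument produces only $T\in S$ and $T\in R$, from which nothing follows.

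The paper closes exactly this loophole by a normalization you are missing: it introduces a relation on standard tableaux ($S<T$ iff $Y_i^S\prec Y_i^T$ at the minimal entry $i$ placed differently), notes via Theorem \ref{theorem_necessary_condition} that $S<T$ implies $T\notin S$, and then relabels so that $R<S<T$. With $T$ maximal, $T\notin S$ and $T\notin R$, so Lemma \ref{lemma-criterion-nonempty-intersection} forces ${\mathcal B}_u^T\cap{\mathcal K}^S=\emptyset$, and Corollary \ref{first-corollary} must deliver the density of ${\mathcal K}^T\cap{\mathcal B}_u^S$ in ${\mathcal C}$; likewise ${\mathcal K}^T\cap{\mathcal K}^S\cap{\mathcal B}_u^R$ is nonempty of full dimension in ${\mathcal C}$. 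These two locally closed subsets are disjoint because ${\mathcal B}_u^S\cap{\mathcal B}_u^R=\emptyset$, and two disjoint full-dimensional locally closed subsets contradict the irreducibility of ${\mathcal C}$. You should either adopt this ordering device, or supply an independent proof that the map $S\mapsto{\mathcal K}^T\cap{\mathcal K}^S$ is injective on codimension-one neighbours of $T$; as written, your proof does not go through.
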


\begin{proof}
We define a total order on standard tableaux: let $i\in\{1,...,n\}$ be
minimal which has not the same place in $T,S$ and let $Y_i^T$
(resp. $Y_i^S$) be the shape of the subtableau of $T$ (resp. of
$S$) of entries $1,...,i$. We have either $Y_i^T\prec Y_i^S$ or
$Y_i^S\prec Y_i^T$. Then, write $S< T$ if $Y_i^S\prec Y_i^T$. By
Theorem \ref{theorem_necessary_condition}, $S< T$ implies $T\notin
S$.

Assume by contradiction that $T,S,R$ are pairwise distinct. We may
assume $R< S< T$. Then, by Theorem \ref{lastsection-maintheorem},
we have $R\in S$, $R\in T$ and $S\in T$. By Corollary
\ref{first-corollary}, $\mathrm{codim}_{{\mathcal K}^T}{\mathcal
K}^T\cap {\mathcal B}_u^{S}=1$. As ${\mathcal K}^T\cap {\mathcal
K}^S\cap {\mathcal B}_u^{R}$ is a nonempty open subset of
${\mathcal K}^T\cap {\mathcal K}^S\cap {\mathcal K}^{R}$, we have
$\mathrm{codim}_{{\mathcal K}^T}{\mathcal K}^T\cap {\mathcal
K}^S\cap {\mathcal B}_u^{R}=1$. The sets ${\mathcal K}^T\cap
{\mathcal B}_u^{S}$ and ${\mathcal K}^T\cap {\mathcal K}^S\cap
{\mathcal B}_u^{R}$ are disjoint, locally closed subsets
of ${\mathcal K}^T\cap {\mathcal K}^S$ and they have the same
dimension as ${\mathcal K}^T\cap {\mathcal K}^S$. It contradicts
the irreducibility of ${\mathcal K}^T\cap {\mathcal K}^S$.  
\end{proof}

\subsection{A subset $\hat{\mathcal K}^T$ associated to generalized Spaltenstein's partitions}

\label{last-section}

The subset ${\mathcal B}_u^T\subset {\mathcal B}_u$ is the set of
$u$-stable flags $(V_0,...,V_n)\in{\mathcal B}_u$ such that the
Jordan shapes of the restrictions $u_{|V_i}$ of $u$ coincide with
the shapes of the subtableaux $T[1,...,i]$. The set ${\mathcal
B}_u^T$ is thus defined by simple relations. Nevertheless, the
difference between ${\mathcal B}_u^T$ and its closure, the
component ${\mathcal K}^T\subset{\mathcal B}_u$, is big in
general: one can see that, whenever ${\mathcal B}_u$ is reducible,
it admits a component ${\mathcal K}^T$ with
$\mathrm{codim}_{{\mathcal K}^T}{\mathcal K}^T\setminus {\mathcal
B}_u^T=1$. We aim to construct a subset $\hat{\mathcal K}^T\subset
{\mathcal K}^T$ bigger than ${\mathcal B}_u^T$, which is also
defined by simple relations. To do this we rely on a
generalization of the sets ${\mathcal B}_u^T$. Then we show that,
in the hook, two-row and two-column cases, we have
$\mathrm{codim}_{{\mathcal K}^T}{\mathcal K}^T\setminus
\hat{\mathcal K}^T\geq 2$.

Recall a construction from \cite{F-Betti}. Let ${\mathcal R}_n$
denote the set of double sequences of integers
$(i_k,j_k)_{k=0,...,n}$ with $(i_k)_k$ weakly decreasing,
$(j_k)_k$ weakly increasing, $0\leq i_k\leq j_k\leq n$ and
$j_k-i_k=k$ for every $k$. Let $\rho=(i_k,j_k)_{k}\in{\mathcal
R}_n$. Instead of considering the restrictions of $u$ to the
subspaces of the flag, we consider the maximal chain of
subquotients
\[
0=V_{j_0}/V_{i_0}\hookrightarrow V_{j_1}/V_{i_1}\hookrightarrow \ldots\hookrightarrow
V_{j_n}/V_{i_n}=V.
\]
For $k=1,\ldots,n$ we consider the Young diagram
$Y(u_{|V_{j_k}/V_{i_k}})$ associated to the nilpotent
$u_{|V_{j_k}/V_{i_k}}\in\mathrm{End}(V_{j_k}/V_{i_k})$ induced by
$u$. In addition, recall that $Y_{j_k/i_k}^T$ is defined in
\ref{Y-j/i-T}. We denote by ${\mathcal B}_{u,\rho}^T$ the set of
$u$-stable flags $(V_0,...,V_n)$ such that
$Y(u_{|V_{j_k}/V_{i_k}})=Y_{j_k/i_k}^T$ for every $k$. We get a
partition ${\mathcal B}_u=\bigsqcup_T{\mathcal B}_{u,\rho}^T$
parameterized by standard tableaux of shape $Y(u)$. By
\ref{Y-j/i-T}, the set ${\mathcal B}_{u,\rho}^T$ is a dense open
subset of the component ${\mathcal K}^T$.


\begin{proposition}
\label{proposition_avec_lemme} 
Let $u\in\mathrm{End}(V)$ be of hook, two-row or two-column type.
If ${\mathcal K}^T\cap {\mathcal K}^{S}$ has codimension $1$ in ${\mathcal K}^T$,
then there is $\rho\in {\mathcal R}_n$ such that
one of the open subsets
${\mathcal B}_{u,\rho}^T\cap {\mathcal B}_u^{S}$ and ${\mathcal B}_u^{T}\cap {\mathcal B}_{u,\rho}^S$
of ${\mathcal K}^T\cap {\mathcal K}^{S}$ is nonempty and dense.
\end{proposition}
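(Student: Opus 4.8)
The statement to prove is Proposition \ref{proposition_avec_lemme}: assuming $\mathrm{codim}_{{\mathcal K}^T}{\mathcal K}^T\cap{\mathcal K}^S=1$ (in the hook, two-row or two-column case), one finds $\rho\in{\mathcal R}_n$ and a nonempty dense open subset of ${\mathcal K}^T\cap{\mathcal K}^S$ which is contained either in ${\mathcal B}^S_u$ or in ${\mathcal B}^T_u$, and moreover which is cut out from ${\mathcal K}^T\cap{\mathcal K}^S$ by the "generalized Spaltenstein" conditions indexed by $\rho$ on one of the two sides. The first step is to invoke Theorem \ref{lastsection-maintheorem} to conclude that $S\in T$ or $T\in S$; by symmetry (swapping the roles of $T,S$, which only changes the sought inclusion from the ${\mathcal B}^S_u$-type to the ${\mathcal B}^T_u$-type), we may assume $S\in T$, i.e. $(S,T)\in\mathbf{K}$. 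Then by Lemma \ref{lemma-criterion-nonempty-intersection} we have ${\mathcal B}^S_u\cap{\mathcal K}^T\not=\emptyset$, and by Corollary \ref{first-corollary} this open subset ${\mathcal K}^T\cap{\mathcal B}^S_u$ of ${\mathcal K}^T\cap{\mathcal K}^S$ is dense (here Theorem \ref{theorem-irreducible-intersection}, the irreducibility of the codimension-one intersection, is used). So the density/nonemptiness part is already in hand; the work is to produce $\rho$ so that, near a generic point of ${\mathcal K}^T\cap{\mathcal K}^S$, the set ${\mathcal B}^T_{u,\rho}\cap{\mathcal B}^S_u$ is the one that is dense — i.e. to show that one can choose $\rho$ so that the generic flag of ${\mathcal K}^T\cap{\mathcal K}^S$ lies in ${\mathcal B}^T_{u,\rho}$ and not merely in ${\mathcal K}^T$.

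The key point is that ${\mathcal B}^T_{u,\rho}$ is a \emph{dense} open subset of ${\mathcal K}^T$ for \emph{every} $\rho\in{\mathcal R}_n$ (this is recorded at the end of section \ref{last-section}, from \cite{F-Betti} via \ref{Y-j/i-T}), so each such ${\mathcal B}^T_{u,\rho}$ meets the dense subset ${\mathcal K}^T\cap{\mathcal B}^S_u$ of ${\mathcal K}^T\cap{\mathcal K}^S$ in a dense open subset of the latter. Thus, for \emph{any} $\rho$, the set ${\mathcal B}^T_{u,\rho}\cap{\mathcal B}^S_u$ is already nonempty and dense in ${\mathcal K}^T\cap{\mathcal K}^S$. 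In other words the proposition is, given the results already proved, essentially a formal consequence: pick any $\rho\in{\mathcal R}_n$ (for instance the trivial one $\rho=(0,k)_{k=0,\dots,n}$, which recovers ${\mathcal B}^T_{u,\rho}={\mathcal B}^T_u$); intersecting the two dense open subsets ${\mathcal B}^T_{u,\rho}$ and ${\mathcal B}^S_u$ of the irreducible variety ${\mathcal K}^T$ with ${\mathcal K}^T\cap{\mathcal K}^S$ gives a nonempty dense open subset of ${\mathcal K}^T\cap{\mathcal K}^S$ contained in ${\mathcal B}^S_u$. The case $T\in S$ is handled the same way, producing instead a dense open subset contained in ${\mathcal B}^T_u$, of the form ${\mathcal B}^T_u\cap{\mathcal B}^S_{u,\rho}$.

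The only genuine content beyond bookkeeping is checking that the two open subsets of ${\mathcal K}^T\cap{\mathcal K}^S$ we intersect are \emph{dense} in it, not merely open: openness of ${\mathcal B}^T_{u,\rho}\cap{\mathcal B}^S_u$ in ${\mathcal K}^T\cap{\mathcal K}^S$ is clear since both ${\mathcal B}^T_{u,\rho}$ (open in ${\mathcal K}^T$, hence in ${\mathcal B}_u$ up to intersecting, more precisely open in its closure) and ${\mathcal B}^S_u$ (locally closed in ${\mathcal B}_u$, open in ${\mathcal K}^S$) are locally closed, and density follows because ${\mathcal K}^T\cap{\mathcal K}^S$ is irreducible (Theorem \ref{theorem-irreducible-intersection}) and we are removing from it two proper closed subsets — $({\mathcal K}^T\setminus{\mathcal B}^T_{u,\rho})\cap{\mathcal K}^S$ and $({\mathcal K}^S\setminus{\mathcal B}^S_u)\cap{\mathcal K}^T$ — each of which is proper because ${\mathcal K}^T\cap{\mathcal B}^S_u$ is already known (Corollary \ref{first-corollary}) to be dense, and ${\mathcal B}^T_{u,\rho}$ is dense in ${\mathcal K}^T$. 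I expect no real obstacle: the main subtlety is just making sure, when $S\in T$, that it is ${\mathcal B}^S_u$ that appears (giving the open set ${\mathcal B}^T_{u,\rho}\cap{\mathcal B}^S_u$) and not ${\mathcal B}^T_u$, i.e. tracking which of the two inclusions in Theorem \ref{lastsection-maintheorem} holds and matching it with the correct one of the two alternatives in the statement. I would write this out as: "By Theorem \ref{lastsection-maintheorem}, $S\in T$ or $T\in S$; assume $S\in T$ (the other case is symmetric). Fix any $\rho\in{\mathcal R}_n$. The subset ${\mathcal B}^T_{u,\rho}$ is dense open in ${\mathcal K}^T$, and by Corollary \ref{first-corollary} (with $S\in T$) the subset ${\mathcal K}^T\cap{\mathcal B}^S_u$ is dense open in the irreducible variety ${\mathcal K}^T\cap{\mathcal K}^S$; hence ${\mathcal B}^T_{u,\rho}\cap{\mathcal B}^S_u={\mathcal B}^T_{u,\rho}\cap({\mathcal K}^T\cap{\mathcal B}^S_u)$ is again dense open in ${\mathcal K}^T\cap{\mathcal K}^S$, and it is contained in ${\mathcal B}^S_u$."
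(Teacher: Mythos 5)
Your argument contains a fatal error at its central step. You claim that since ${\mathcal B}_{u,\rho}^T$ is dense open in ${\mathcal K}^T$ for every $\rho$, it must meet the dense subset ${\mathcal K}^T\cap{\mathcal B}_u^S$ of ${\mathcal K}^T\cap{\mathcal K}^S$ in a dense subset of the latter. But ${\mathcal K}^T\cap{\mathcal K}^S$ is a \emph{proper closed} subset of ${\mathcal K}^T$ (it has codimension one), and a dense open subset of an irreducible variety need not meet a given proper closed subvariety at all: the complement of the open set is closed of codimension $\geq 1$ and may well contain the codimension-one subvariety ${\mathcal K}^T\cap{\mathcal K}^S$ entirely. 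Your own ``for instance'' choice $\rho=(0,k)_{k=0,\ldots,n}$ makes this concrete: for that $\rho$ one has ${\mathcal B}_{u,\rho}^T={\mathcal B}_u^T$, and since the sets ${\mathcal B}_u^R$ partition ${\mathcal B}_u$ and $T\not=S$, the intersection ${\mathcal B}_{u,\rho}^T\cap{\mathcal B}_u^S={\mathcal B}_u^T\cap{\mathcal B}_u^S$ is \emph{empty}, not dense. The paper itself exploits the failure of your principle: for the $\rho$ it constructs, ${\mathcal K}^T\cap{\mathcal B}_{u,\rho}^S=\emptyset$ even though ${\mathcal B}_{u,\rho}^S$ is dense open in ${\mathcal K}^S$. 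Moreover, Remark \ref{remark-third-corollary} shows the proposition is \emph{false} outside the three special cases, which would be impossible if it were the formal consequence you describe.

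The actual proof must choose $\rho$ with care and genuinely uses the hook/two-row/two-column hypothesis. After reducing (via Corollary \ref{first-corollary}) to the case ${\mathcal K}^T\cap{\mathcal B}_u^S\not=\emptyset$, one invokes Lemma \ref{lemme_avec_proposition} to find $i<j$ with $Y_{j/i}^T\prec Y_{j/i}^S$ and picks $\rho$ passing through $(i,j)$; semicontinuity (Lemma \ref{lemma-semi-continuity}) then forces ${\mathcal K}^T\cap{\mathcal B}_{u,\rho}^S=\emptyset$. Since the ${\mathcal B}_{u,\rho}^R$ partition ${\mathcal B}_u$, some stratum ${\mathcal B}_{u,\rho}^R$ meets ${\mathcal K}^T\cap{\mathcal B}_u^S$ densely; this forces $\mathrm{codim}_{{\mathcal K}^T}\,{\mathcal K}^T\cap{\mathcal K}^S\cap{\mathcal K}^R=1$, so $R\in\{T,S\}$ by Proposition \ref{second-corollary}, and $R=S$ is excluded by the emptiness just established, leaving $R=T$ and hence ${\mathcal B}_{u,\rho}^T\cap{\mathcal B}_u^S\not=\emptyset$ (dense by irreducibility). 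None of this machinery appears in your proposal, and without it the statement does not follow.
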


We need the following technical

\begin{lemma}
\label{lemme_avec_proposition} Let $T,S$ be two standard tableaux
of common shape of hook, two-row or two-column type. Suppose
$T\not=S$. Then there are $i<j$ such that $Y_{j/i}^T\prec
Y_{j/i}^S$.
\end{lemma}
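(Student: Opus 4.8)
The plan is to translate the statement into a purely combinatorial fact about the skew diagrams $Y_{j/i}^T$ and then argue by localizing at the first place where $T$ and $S$ differ. Because the common shape is of hook, two-row or two-column type, the dominance order on all the subquotient diagrams is linear, so for every $i<j$ the diagrams $Y_{j/i}^T$ and $Y_{j/i}^S$ are comparable; hence it suffices to rule out that $Y_{j/i}^S\preceq Y_{j/i}^T$ for all $0\le i<j\le n$. I would first exploit the windows with $i=0$: there $Y_{j/0}^T=\mathrm{sh}(T[1,\ldots,j])$ and $Y_{j/0}^S=\mathrm{sh}(S[1,\ldots,j])$, and since two consecutive shapes determine the box of the next entry, these diagrams cannot all coincide (otherwise $T=S$). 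Thus either there is a $j$ with $Y_{j/0}^T\prec Y_{j/0}^S$, in which case we are done with $i=0$, or else $Y_{j/0}^T\succeq Y_{j/0}^S$ for all $j$ with strict inequality somewhere, i.e. $S<T$ for the total order of the proof of Proposition \ref{second-corollary}. It remains to treat the second case.

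So assume $S<T$, and let $i_0$ be the smallest entry whose box differs in $T$ and $S$. In each of the three cases the common subdiagram $D=\mathrm{sh}(T[1,\ldots,i_0-1])$ has at most two addable boxes of the relevant type, one producing the dominant and one the smaller diagram, so $T$ places $i_0$ on the ``large side'' and $S$ on the ``small side''. I would then handle the two-row case first, using cup-diagrams: by \cite[Lemma 2.3]{F} together with \ref{section-vanLeeuwen}, the length of the second row of $Y_{j/i}^T$ equals the number of arcs of the cup-diagram of $T$ contained in $(i,j]$. When $i_0$ is an endpoint of an arc of $T$ (the generic situation) I would take $j$ to be the right endpoint of that arc and check, by the same count, that the window $(i_0-1,j]$ contains strictly more arcs of $T$ than of $S$, whence $Y_{j/i_0-1}^T\prec Y_{j/i_0-1}^S$. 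The two-column case would then follow by transposition (section \ref{transposee}), exactly as in the proof of Proposition \ref{proposition-transposee} and compatibly with Theorem \ref{theorem-codim1-2column-case}, and the hook case by the parallel bookkeeping using the description of the first rows from Theorem \ref{theorem-A-hook-case}.

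The main obstacle is the boundary case in which $i_0$ is a \emph{fixed point} of the cup-diagram of $T$ (i.e. the ``('' placed at $i_0$ is never matched), since then the window $(i_0-1,p]$ with $p$ the partner of $i_0$ is not available. Here I would fall back on the global count $t^T_{n/i_0-1}-t^S_{n/i_0-1}=f^S_{>i_0}-f^T_{\ge i_0}$, expressing the difference of arc-numbers in the window $(i_0-1,n]$ in terms of the numbers of fixed points of $S$ and $T$ on final segments; using that $T$ and $S$ have the same total number $r-s$ of fixed points, that $i_0$ is fixed for $T$ but a right endpoint for $S$, and that $T,S$ agree as parenthesis words on $[1,i_0-1]$, one forces this difference to be positive, so that already the window $(i_0-1,n]$ witnesses $Y_{n/i_0-1}^T\prec Y_{n/i_0-1}^S$. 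Verifying this inequality of fixed-point counts, and transporting the argument to the hook and two-column settings, is the technical heart of the proof.
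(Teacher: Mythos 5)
Your strategy is genuinely different from the paper's. The paper proves the contrapositive too, but handles the hook case by looking only at the two-box windows $Y_{i+1/i-1}$ (these detect whether $i$ lies in a column to the left of $i+1$, and force the first rows of $T$ and $S$ to coincide), handles the two-row case by induction on $n$ following the three reduction steps (rectangular split, removal of $n$, jeu-de-taquin removal of $1$) used to define $\mathbf{A}_{\mbox{\tiny 2-r}}$, and gets the two-column case by transposition. Your plan --- pass to the contrapositive, localize at the first entry $i_0$ placed differently, and compare arc counts of cup-diagrams in a well-chosen window --- is a viable alternative for the two-row (hence two-column) case, and it has the merit of producing an explicit witnessing window. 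But as written the two counting claims you defer are the entire content, so let me record that they do check out. In your subcase (a), the key point is that the segment $[i_0,p]$ is \emph{perfectly balanced} for $T$: no closing parenthesis of $T$ in $(i_0,p)$ can reach past the still-unmatched $i_0$, and no first-row entry of $T$ in $(i_0,p)$ can remain unmatched there (else $p$ would match it instead of $i_0$). Hence the number of arcs of $T$ inside the window equals $t_p-t_{i_0-1}$ (second-row counts), while for $S$ the number of closing parentheses in $[i_0,p]$ escaping to the left of $i_0$ is, by the usual ballot count, at least $\max(1,\,2(s_p-t_p))>s_p-t_p$; this gives strictly more arcs of $T$ than of $S$ in the window. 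Your subcase (b) is in fact the easy one and needs none of the fixed-point bookkeeping you describe: if $i_0$ is unmatched in $T$, then no arc of $T$ crosses the cut between $i_0-1$ and $i_0$ (any closing parenthesis reaching back would grab $i_0$ first), so all $s-t_{i_0-1}$ arcs of $T$ with right endpoint $\geq i_0$ lie in $[i_0,n]$, whereas the arc of $S$ ending at $i_0$ does cross the cut; thus $Y^T_{n/i_0-1}\prec Y^S_{n/i_0-1}$ directly.

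The genuine gap is the hook case. Cup-diagrams are only defined for two-row tableaux, and transposition does not reduce hooks to either of the other two families, so ``parallel bookkeeping'' is not an argument --- you need a separate proof there. The fix is short (and is what the paper does): for a hook tableau the first row is $\{1\}\cup\{i+1: i\in L_T\}$ where $L_T$ is the set of $i$ lying in a column to the left of $i+1$, and $Y^T_{i+1/i-1}$ equals the horizontal or vertical domino according to whether $i\in L_T$ or not; since the vertical domino is strictly dominated by the horizontal one, the hypothesis $Y^S_{i+1/i-1}\preceq Y^T_{i+1/i-1}$ forces $L_S\subseteq L_T$, hence $L_S=L_T$ by equal cardinality, hence $T=S$. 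With that supplied, and with the two verifications above written out, your proof is complete.
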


\begin{proof}[Proof of Lemma \ref{lemme_avec_proposition}]
Let $n=|T|=|S|$. Consider first the hook case. Let $L_T$ be the
set of elements $i\in\{1,...,n-1\}$ such that $i$ is situated on
the left of $i+1$ in the tableau $T$. Observe that $i\in L_T$ if
and only if $i+1$ is in the first row of $T$. Hence 
$\#L_T+1$ is the length of the first row of $T$. Suppose
that we have $Y_{j/i}^S\preceq Y_{j/i}^T$ for any $i<j$. Then in
particular $Y_{i+1/i-1}^S\preceq Y_{i+1/i-1}^T$ for $i\in L_S$
implies $L_S\subset L_T$. Thus $L_S=L_T$. It follows that the
first rows of $T$ and $S$ coincide, hence $T=S$.

Suppose that $T,S$ have two rows of lengths $r\geq s$. Argue by
induction on $n$, with immediate initialization for $n=1$. Assume
that the property holds until in $n-1$. Suppose $Y_{j/i}^S\preceq
Y_{j/i}^T$ for any $i<j$. We distinguish three cases.

\noindent (a) Suppose that there is $i<n$ such that the subtableau
$T[1,...,i]$ has a rectangular shape with two rows of length
$i/2$. As $Y_{i/0}^S\preceq Y_{i/0}^T$, the subtableaux
$T[1,...,i]$ and $S[1,...,i]$ have the same shape, and the skew
subtableaux $T[i+1,...,n]$ and $S[i+1,...,n]$ have the same shape,
which is a Young diagram. By induction hypothesis, we get
$T[1,...,i]=S[1,...,i]$ and $T[i+1,...,n]=S[i+1,...,n]$, hence
$T=S$.

\noindent (b) Suppose that $r=s=n/2$. Then $n$ has the same place
in $T,S$. Let $T'=T[1,...,n-1]$ and $S'=S[1,...,n-1]$. By
induction hypothesis, we get $T'=S'$. It follows $T=S$.

\noindent (c) Suppose that there is no $i\leq n$ for which
$T[1,...,i]$ is rectangular. Let $T'$ (resp. $S'$) be the
rectification by jeu de taquin of the skew subtableau $T[2,...,n]$
(resp. $S[2,...,n]$). Then, as observed in \ref{section-A-2row}\,(c),
$T'$ has two rows of lengths $(r-1,s)$.
Since $Y_{n/1}^S\preceq Y_{n/1}^T$, the tableaux $T',S'$ have the
same shape. By induction hypothesis, we get $T'=S'$. It follows
$T=S$. The argument in the two-row case is now complete.

Finally, suppose that $T,S$ have a common shape of two-column type
and assume $T\not=S$. Let $T^t,S^t$ be the transposed tableaux
with two rows (see \ref{transposee}). Then there are $i<j$ such
that $Y_{j/i}^{S^t}\prec Y_{j/i}^{T^t}$. It follows
$Y_{j/i}^{T}\prec Y_{j/i}^{S}$.  
\end{proof}

\begin{proof}[Proof of Proposition \ref{proposition_avec_lemme}]
By Corollary \ref{first-corollary}, we may
assume ${\mathcal K}^T\cap{\mathcal B}_u^{S}\not=\emptyset$. By Lemma \ref{lemme_avec_proposition},
there are $i<j$ such that $Y_{j/i}^T\prec Y_{j/i}^S$. We can find
$\rho=(i_k,j_k)_{k=1,...,n}\in {\mathcal R}_n$ such that $i_{k}=i$
and $j_{k}=j$ for $k=j-i$. Then, by Lemma
\ref{lemma-semi-continuity} and the definition of ${\mathcal
B}_{u,\rho}^S$, we have ${\mathcal K}^T\cap{\mathcal
B}_{u,\rho}^S=\emptyset$. On the other hand, as the ${\mathcal
B}_{u,\rho}^R$'s form a partition of ${\mathcal B}_u$ into locally
closed subsets, there is $R$ standard such that
${\mathcal K}^T\cap {\mathcal B}_u^{S}\cap{\mathcal B}_{u,\rho}^R$
is dense in ${\mathcal K}^T\cap{\mathcal B}_u^S$. It implies
$\mathrm{codim}_{{\mathcal K}^T}{\mathcal K}^T\cap {\mathcal
K}^{S}\cap{\mathcal K}^R=1$, hence by Proposition
\ref{second-corollary} we have $R\in\{T,S\}$. Moreover, 
as ${\mathcal K}^T\cap{\mathcal B}_{u,\rho}^S=\emptyset$, necessarily $R=T$.
Therefore, ${\mathcal B}_{u,\rho}^T\cap {\mathcal B}_u^{S}$ is nonempty.
By Theorem \ref{theorem-irreducible-intersection}, it is dense in ${\mathcal K}^T\cap{\mathcal K}^S$.  
\end{proof}

\begin{remark}
\label{remark-third-corollary} Observe that the lemma is not
necessarily true, when $T,S$ are not supposed of hook, two-row or
two-column type. Indeed, consider
\[
T=\young(125,34,6) \quad S=\young(125,36,4)
\]
Then we have $Y_{j/i}^S\preceq Y_{j/i}^T$ for any $i<j$, although
$T\not=S$. One can also see that for any
$\rho=(i_k,j_k)_{k=1,...,6}\in{\mathcal R}_6$ we have
$Y_{j_5/i_5}^S\prec Y_{j_5/i_5}^T$.
Then, by Lemma \ref{lemma-semi-continuity} and the definition of
${\mathcal B}_{u,\rho}^T$, we have ${\mathcal
B}_{u,\rho}^T\cap{\mathcal K}^S=\emptyset$. Hence ${\mathcal
B}_{u,\rho}^T\cap{\mathcal B}_u^S=\emptyset$. 
However, we can compute that the components
${\mathcal K}^T$ and  ${\mathcal K}^S$ have a nonempty
intersection which has codimension one. 
Therefore, the proposition does not hold
in this case. Adding boxes to $T$ and $S$, we can show likewise that
the proposition does not hold whenever the Jordan shape of $u$
contains {\scriptsize $\yng(3,2,1)$} as a subdiagram.
\end{remark}

\medskip

Finally, we define
$$\hat{\mathcal K}^T=\bigcup_{\rho\in{\mathcal R}_n}{\mathcal B}_{u,\rho}^T.$$
This is an open subset of the component ${\mathcal K}^T$. We show:

\begin{theorem}
Assume that $T$ is of hook, two-row or two-column type. Then, we
have $\mathrm{codim}_{{\mathcal K}^T}{\mathcal
K}^T\setminus\hat{\mathcal K}^T\geq 2$.
\end{theorem}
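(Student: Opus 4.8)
The statement asserts that the "bad locus" $\mathcal{K}^T\setminus\hat{\mathcal{K}}^T$ has codimension at least $2$ in $\mathcal{K}^T$. Since $\mathcal{K}^T$ is an irreducible variety, the complement has codimension $\geq 2$ if and only if it contains no irreducible subset of codimension exactly $1$ that is dense in some irreducible component of $\mathcal{K}^T\cap\mathcal{K}^S$ for a neighbouring component. More concretely, the plan is to argue by contradiction: suppose $\mathrm{codim}_{\mathcal{K}^T}\mathcal{K}^T\setminus\hat{\mathcal{K}}^T=1$. The subsets $\mathcal{B}_{u,\rho}^T$ for $\rho\in\mathcal{R}_n$ are dense open in $\mathcal{K}^T$ by \ref{Y-j/i-T}, so $\hat{\mathcal{K}}^T$ is a dense open subset of $\mathcal{K}^T$, and $\mathcal{K}^T\setminus\hat{\mathcal{K}}^T$ is a proper closed subset. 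A codimension-one closed subset of an irreducible variety has pure codimension one along at least one irreducible component $Z$. The goal is to show every such $Z$ actually lies in $\hat{\mathcal{K}}^T$.

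**Key steps.** First I would recall that $\mathcal{K}^T\setminus\mathcal{B}_u^T$ is a union of boundary strata, and that the codimension-one part of $\mathcal{K}^T\setminus\mathcal{B}_u^T$ is contained in $\bigcup_S(\mathcal{K}^T\cap\mathcal{K}^S)$ where the union runs over components $S\neq T$ with $\mathrm{codim}_{\mathcal{K}^T}\mathcal{K}^T\cap\mathcal{K}^S=1$; this follows from the fact that $\mathcal{B}_u=\bigsqcup_S\mathcal{B}_u^S$ and standard dimension counting (each $\mathcal{B}_u^S$ has dimension $\dim\mathcal{B}_u$). So let $S\neq T$ be such that $\mathcal{K}^T\cap\mathcal{K}^S$ has codimension $1$ in $\mathcal{K}^T$; by Theorem \ref{theorem-irreducible-intersection} this intersection is irreducible, and it suffices to show that $\mathcal{K}^T\cap\mathcal{K}^S$ meets $\hat{\mathcal{K}}^T$, i.e. meets $\mathcal{B}_{u,\rho}^T$ for some $\rho\in\mathcal{R}_n$. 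But this is exactly Proposition \ref{proposition_avec_lemme}: one of $\mathcal{B}_{u,\rho}^T\cap\mathcal{B}_u^S$ or $\mathcal{B}_u^T\cap\mathcal{B}_{u,\rho}^S$ is nonempty and dense in $\mathcal{K}^T\cap\mathcal{K}^S$. In the first case $\mathcal{B}_{u,\rho}^T\subset\hat{\mathcal{K}}^T$ already gives a dense open subset of $\mathcal{K}^T\cap\mathcal{K}^S$ inside $\hat{\mathcal{K}}^T$; in the second case $\mathcal{B}_u^T=\mathcal{B}_{u,\rho_0}^T$ for the trivial sequence $\rho_0=(0,k)_{k=0,\ldots,n}$, so again $\mathcal{B}_u^T\subset\hat{\mathcal{K}}^T$ meets $\mathcal{K}^T\cap\mathcal{K}^S$ densely. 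Hence $\mathcal{K}^T\cap\mathcal{K}^S\not\subset\mathcal{K}^T\setminus\hat{\mathcal{K}}^T$, so no codimension-one component $Z$ of the bad locus can be contained in $\mathcal{K}^T\cap\mathcal{K}^S$, a contradiction.

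**Main obstacle.** The delicate point is the reduction in the first step: identifying precisely which codimension-one subsets of $\mathcal{K}^T$ can occur in the boundary, and checking that each is (dense in) $\mathcal{K}^T\cap\mathcal{K}^S$ for a genuinely neighbouring component $\mathcal{K}^S$ — i.e. that there is no codimension-one boundary stratum that is not of this form. This uses equidimensionality of $\mathcal{B}_u$ (\ref{young-diagram}) together with the fact that the closure of a codimension-one boundary stratum of the irreducible variety $\mathcal{K}^T$ is contained in $\mathcal{K}^T\cap\overline{\mathcal{B}_u^S}$ for the stratum $\mathcal{B}_u^S$ whose generic point it contains. Once that bookkeeping is in place, the rest is a direct appeal to Proposition \ref{proposition_avec_lemme} and Theorem \ref{theorem-irreducible-intersection}, plus the trivial observation that both $\mathcal{B}_u^T$ and every $\mathcal{B}_{u,\rho}^T$ lie inside $\hat{\mathcal{K}}^T$. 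I would write this up as follows.

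\begin{proof}
Since each $\mathcal{B}_{u,\rho}^T$ ($\rho\in\mathcal{R}_n$) is a dense open subset of $\mathcal{K}^T$ (see \ref{Y-j/i-T}) and contains $\mathcal{B}_u^T$ when $\rho=\rho_0:=(0,k)_{k=0,\ldots,n}$, the set $\hat{\mathcal{K}}^T$ is a dense open subset of $\mathcal{K}^T$ containing $\mathcal{B}_u^T$. Suppose for contradiction that $\mathrm{codim}_{\mathcal{K}^T}\mathcal{K}^T\setminus\hat{\mathcal{K}}^T=1$, and let $Z\subset\mathcal{K}^T\setminus\hat{\mathcal{K}}^T$ be an irreducible subset of codimension one in $\mathcal{K}^T$. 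As $Z\subset\mathcal{K}^T\setminus\mathcal{B}_u^T$ and $\mathcal{B}_u=\bigsqcup_S\mathcal{B}_u^S$ with $\dim\mathcal{B}_u^S=\dim\mathcal{B}_u$ for all $S$, the generic point of $Z$ lies in $\mathcal{B}_u^S$ for some standard tableau $S\neq T$; hence $Z\subset\mathcal{K}^T\cap\overline{\mathcal{B}_u^S}=\mathcal{K}^T\cap\mathcal{K}^S$, and this intersection has codimension one in $\mathcal{K}^T$ because it contains $Z$. By Theorem \ref{theorem-irreducible-intersection}, $\mathcal{K}^T\cap\mathcal{K}^S$ is irreducible, so $Z$ is dense in it.

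Now apply Proposition \ref{proposition_avec_lemme} to the pair $(T,S)$: there is $\rho\in\mathcal{R}_n$ such that one of $\mathcal{B}_{u,\rho}^T\cap\mathcal{B}_u^S$ and $\mathcal{B}_u^T\cap\mathcal{B}_{u,\rho}^S$ is nonempty and dense in $\mathcal{K}^T\cap\mathcal{K}^S$. In the first case $\mathcal{B}_{u,\rho}^T\subset\hat{\mathcal{K}}^T$, so $\hat{\mathcal{K}}^T$ contains a dense subset of $\mathcal{K}^T\cap\mathcal{K}^S$; in the second case $\mathcal{B}_u^T=\mathcal{B}_{u,\rho_0}^T\subset\hat{\mathcal{K}}^T$, so again $\hat{\mathcal{K}}^T$ contains a dense subset of $\mathcal{K}^T\cap\mathcal{K}^S$. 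In either case $\mathcal{K}^T\cap\mathcal{K}^S$ is not contained in $\mathcal{K}^T\setminus\hat{\mathcal{K}}^T$, contradicting the fact that $Z$ is dense in $\mathcal{K}^T\cap\mathcal{K}^S$ and $Z\subset\mathcal{K}^T\setminus\hat{\mathcal{K}}^T$. Therefore $\mathrm{codim}_{\mathcal{K}^T}\mathcal{K}^T\setminus\hat{\mathcal{K}}^T\geq 2$.
\end{proof}
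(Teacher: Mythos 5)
Your proof is correct and follows essentially the same strategy as the paper's: argue by contradiction, locate the codimension-one piece of ${\mathcal K}^T\setminus\hat{\mathcal K}^T$ inside an irreducible codimension-one intersection ${\mathcal K}^T\cap{\mathcal K}^S$ via the stratification ${\mathcal B}_u=\bigsqcup_S{\mathcal B}_u^S$, and then contradict this with Proposition \ref{proposition_avec_lemme} and Theorem \ref{theorem-irreducible-intersection}. The only (harmless) difference is that you keep both alternatives of Proposition \ref{proposition_avec_lemme}, noting ${\mathcal B}_u^T={\mathcal B}_{u,\rho_0}^T\subset\hat{\mathcal K}^T$, whereas the paper first rules out the second alternative by showing ${\mathcal K}^S\cap{\mathcal B}_u^T=\emptyset$.
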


\begin{proof}
Arguing by contradiction, we assume $\mathrm{codim}_{{\mathcal
K}^T}{\mathcal K}^T\setminus\hat{\mathcal K}^T=1$. Then there is
$S$ standard such that $\mathrm{codim}_{{\mathcal K}^T}({\mathcal
K}^T\setminus\hat{\mathcal K}^T)\cap {\mathcal B}_u^S=1$. In
particular $\mathrm{codim}_{{\mathcal K}^T}{\mathcal K}^T\cap
{\mathcal B}_u^S=1$. As ${\mathcal K}^T\cap{\mathcal K}^S$ is
irreducible and ${\mathcal B}_u^S\cap{\mathcal B}_u^T=\emptyset$,
we have ${\mathcal K}^S\cap {\mathcal B}_u^T=\emptyset$. By
Proposition \ref{proposition_avec_lemme}, there is
$\rho\in{\mathcal R}_n$ such that $\mathrm{codim}_{{\mathcal
K}^T}{\mathcal B}_{u,\rho}^T\cap {\mathcal B}_u^S=1$. We have thus
$\mathrm{codim}_{{\mathcal K}^T}({\mathcal
K}^T\setminus\hat{\mathcal K}^T)\cap {\mathcal B}_u^S=1$ and
$\mathrm{codim}_{{\mathcal K}^T}\hat{\mathcal K}^T\cap {\mathcal
B}_u^S=1$. However, the set ${\mathcal K}^T\cap {\mathcal B}_u^S$
is irreducible, since it is open in ${\mathcal K}^T\cap {\mathcal
K}^S$. This brings a contradiction.  
\end{proof}

\begin{remark}
Let $T,S$ be as in Remark \ref{remark-third-corollary}. Then we
have ${\mathcal B}_{u,\rho}^T\cap {\mathcal K}^S=\emptyset$ for
any $\rho\in{\mathcal R}_n$, hence ${\mathcal K}^T\cap{\mathcal
K}^S\subset {\mathcal K}^T\setminus\hat{\mathcal K}^T$. Since the
components ${\mathcal K}^T,{\mathcal K}^S$ have an intersection in
codimension one, it follows $\mathrm{codim}_{{\mathcal
K}^T}{\mathcal K}^T\setminus\hat{\mathcal K}^T=1$. The theorem
does not hold in this case. Adding boxes to $T,S$, we show
likewise that the theorem does not hold whenever the Jordan shape
of $u$ contains {\scriptsize $\yng(3,2,1)$} as a subdiagram.
\end{remark}



\section*{Index of the notation}

\begin{itemize}
\item[\ref{introduction-introduction}\ ] ${\mathcal B}$, ${\mathcal B}_u$
\item[\ref{young-diagram}\ ] $Y(u)$, $Y_i^T$, ${\mathcal B}_u^T$, ${\mathcal K}^T$
\item[\ref{section_base_Jordan}\ ] $F_\tau$
\item[\ref{propositions}\ ] $F(\underline{e})$, ${\mathcal Z}_\tau$, $Z(u)$,
$\mathrm{sh}(\tau)$, $\mathrm{sh}(T)$, $|\tau|$, $|T|$, $\tau\in T$, $\mathbf{Y}$, $\mathbf{K}$
\item[\ref{standardization}\ ] $\mathrm{st}(\tau)$
\item[\ref{subsection-inductive-property}\ ] $\tau'\subseteq \tau$, $T'\subseteq T$
\item[\ref{section-schutzenberger}\ ] $T^S$, $S\cdot \tau$
\item[\ref{semi-continuity}\ ] $Y\preceq Y'$
\item[\ref{sequences-subdiagrams}\ ] $Y_{j/i}(\tau)$, $Y_{j/i}^T$, $\tau\preceq T$, $\mathbf{R}$
\item[\ref{A-hook-case}\ ] $\mathbf{Y}_{\mbox{\tiny hk}}$, $\mathbf{K}_{\mbox{\tiny hk}}$, $\mathbf{R}_{\mbox{\tiny hk}}$, $\mathbf{A}_{\mbox{\tiny hk}}$
\item[\ref{section-A-2row}\ ] $\mathbf{Y}_{\mbox{\tiny 2-r}}$, $\mathbf{K}_{\mbox{\tiny 2-r}}$, $\mathbf{R}_{\mbox{\tiny 2-r}}$, $\mathbf{\hat A}_{\mbox{\tiny 2-r}}$, $\mathbf{A}_{\mbox{\tiny 2-r}}$, $\eta(\tau,T)$ (two-row case)
\item[\ref{A-2column-case}\ ] $\mathbf{Y}_{\mbox{\tiny 2-c}}$, $\mathbf{K}_{\mbox{\tiny 2-c}}$, $\mathbf{R}_{\mbox{\tiny 2-c}}$,
$C_q(\tau)$, $C_q(T)$, $\omega_\tau(i)$, $\nu_\tau(j)$,\\
$\mathbf{\hat A}_{\mbox{\tiny 2-c}}$, $\mathbf{A}_{\mbox{\tiny
2-c}}$, $\eta(\tau,T)$ (two-column case)
\item[\ref{connection-1-2row-2column}\ ] $Y^t$, $T^t$, $S^t$
\item[\ref{section-constructibility-2column-case}\ ] $\overline{\mathbb{N}}$, $f_i(p)$, $P_i$, $Q_i$
\item[\ref{section-meanders}\ ] $M_{T,S}$, $\mathbf{M}_{\mbox{\tiny 2-r}}$
\item[\ref{T-alpha-beta}\ ] $r_T(i)$, ${\mathcal D}_{\alpha,\beta}$, ${\mathcal T}_{\alpha,\beta}$, ${\mathcal D}_i$, ${\mathcal T}_i$,
$\mathbf{V}$, $\mathbf{V}_{\mbox{\tiny 2-r}}$
\item[\ref{codim1-hook-two-column-irreducibility}\ ] $\mathbf{V}_{\mbox{\tiny hk}}$, $\mathbf{V}_{\mbox{\tiny 2-c}}$
\item[\ref{last-section}\ ] ${\mathcal R}_n$, ${\mathcal B}_{u,\rho}^T$, $\hat{\mathcal K}^T$
\end{itemize}

\end{document}